\newcommand\horiz{\tikz{\draw[thick] (0,0) -- (1,0); }}
\newcommand\NW{\tikz{\draw[thick] (0,0) -- (120:1); }}
\newcommand\NE{\tikz{\draw[thick] (0,0) -- (240:1); }}
\tikzset{mynode/.style={circle,draw=black,fill=black,inner sep=1.8pt,outer sep=0pt}}
\tikzset{edgelabel/.style={\mcol,inner sep=0pt}}
\tikzset{invlabel/.style={draw=black,text=black,circle,inner sep=0pt,minimum size=3mm}}
\newcommand\tikzif[2][]{
\tikzifinpicture{#2}{\begin{tikzpicture}[#1]#2\end{tikzpicture}}
}
\tikzset{math mode/.style = {execute at begin node=$, execute at end node=$}}
\def\dr{red!80!black} 
\def\dg{green!80!black} 
\def\db{blue!80!black} 
\tikzset{d/.style={ultra thick}}
\tikzset{dr/.style={draw=\dr,d}}
\tikzset{dg/.style={draw=\dg,d}}
\tikzset{db/.style={draw=\db,d}} 
\def\mcol{black}
\def\m#1{{\color{\mcol}#1}}
\tikzset{rt/.style={text=blue,execute at begin node=$\sf,execute at end node=$}}
\def\rt#1{{\color{blue}\mathsf{#1}}}
\newcommand{\gettikzxy}[3]{
  \tikz@scan@one@point\pgfutil@firstofone#1\relax
\pgfmathsetmacro{#2}{\the\pgf@x/\linkpatternunit}
\pgfmathsetmacro{#3}{\the\pgf@y/\linkpatternunit}
}
\tikzset{label anchor/.code={%
    \let\tikz@auto@anchor=\pgfutil@empty
    \def\tikz@anchor{#1}
  },
  label anchor/.default=center
}
\tikzset{arrow/.style={postaction={decorate,thick,decoration={markings,mark = at position #1 with {\arrow{>}}}}},arrow/.default=0.5}
\tikzset{invarrow/.style={postaction={decorate,thick,decoration={markings,mark = at position #1 with {\arrow{<}}}}},invarrow/.default=0.5}
\newdimen\linkpatternunit%
\newif\iflinkpatterninverted
\newif\iflinkpatterntikzstarted
\newif\iflinkpatternboxed
\newif\iflinkpatternaxis
\newif\iflinkpatternstraightlines
\newif\iflinkpatternnumbered
\newif\iflinkpatternalias
\newif\iflinkpatternnode
\newif\iflinkpatterncentered
\def\linkpatternlooseness{0.2}
\def\linkpatternsquareness{0.35}
\def\linkpatternvertexcolor{red}%
\def\linkpatternedgecolor{blue}%
\def\linkpatternboxcolor{none}%
\def\linkpatternheight{0}
\def\linkpatternwidth{0}
\def\linkpatternshape{default}
\def\linkpatternnumbering{default}
\def\linkpatternpos{(0,0)}
\def\linkpatternextraspace{0}
\def\firstchar#1#2\empty{#1}%
\def\linkpatterndo#1#2{
\edef\param{\csname linkpattern#2\endcsname}
\edef\firstcharparam{\expandafter\firstchar\param\empty}
\expandafter\ifcat\firstcharparam a
\expandafter\ifx\csname linkpattern#1\param\endcsname\relax
\csname linkpattern#1unknown\endcsname
\else
\csname linkpattern#1\csname linkpattern#2\endcsname\endcsname
\fi
\else
\csname linkpattern#1unknown\endcsname
\fi
}%
\def\linkpatterncoordtangle{\ifnum\x>\lphalfsize\pgfmathparse{\lpsize+1-\x}\xdef\lpcoordx{\pgfmathresult}\xdef\lpcoordy{\lpheight}\xdef\lpangle{270}\else\xdef\lpcoordx{\x}\xdef\lpcoordy{-\lpheight}\xdef\lpangle{90}\fi}
\def\linkpatterncoordpipedream{\ifnum\x>\lphalfsize\pgfmathparse{\lpsize+1-\x-0.5}\xdef\lpcoordx{\pgfmathresult}\xdef\lpcoordy{0}\xdef\lpangle{270}\else\pgfmathparse{0.5-\x}\xdef\lpcoordy{\pgfmathresult}\xdef\lpcoordx{0}\xdef\lpangle{0}\fi}
\def\linkpatterncoordrectangle{
\ifnum\x>\lptqsize
\pgfmathparse{\lpsize+1-\x-0.5}\xdef\lpcoordx{\pgfmathresult}\xdef\lpcoordy{0}\xdef\lpangle{270}
\else\ifnum\x>\lphalfsize
\pgfmathparse{\x-\lptqsize-0.5}\xdef\lpcoordy{\pgfmathresult}\xdef\lpcoordx{\linkpatternwidth}\xdef\lpangle{180}
\else\ifnum\x>\linkpatternheight
\pgfmathparse{\x-\linkpatternheight-0.5}\xdef\lpcoordx{\pgfmathresult}\xdef\lpcoordy{-\linkpatternheight}\xdef\lpangle{90}
\else
\pgfmathparse{0.5-\x}\xdef\lpcoordy{\pgfmathresult}\xdef\lpcoordx{0}\xdef\lpangle{0}
\fi\fi\fi
}%
\def\linkpatternsetsizeunknown{
\global\lpsize=\linkpatternsize
\if\linkpatternheight0
\xdef\maxsep{0}
\foreach \x/\xx in \mylist%
{%
\edef\tempx{\withoutprime{\x}}
\edef\tempxx{\withoutprime{\xx}}
\pgfmathparse{max(\maxsep,abs(\tempx-\tempxx))}
\xdef\maxsep{\pgfmathresult}
}%
\pgfmathparse{0.25+0.8*\linkpatternsquareness*\maxsep}
\xdef\lpheight{\pgfmathresult}
\else
\xdef\lpheight{\linkpatternheight}
\fi
}
\def\linkpatternrightmostunknown{
\global\lpsize=0
\global\tempsize=0
\foreach\x/\labx in \linkpatternnumbering
{
\edef\tempx{\withoutprime{\x}}
\ifnum\lpsize<\tempx\global\lpsize=\tempx\fi
\global\advance\tempsize by 1
}
\ifnum\tempsize>\lpsize\global\lpsize=\tempsize\fi
}%
\def\linkpatternrightmostdefault{
\global\lpsize=0
\global\tempsize=0
\foreach \x/\y in \mylist
{
\edef\tempx{\withoutprime{\x}}
\ifnum\lpsize<\tempx\global\lpsize=\tempx\fi
\ifx\x\y
\global\advance\tempsize by 1
\else
\edef\tempy{\withoutprime{\y}}
\ifnum\lpsize<\tempy\global\lpsize=\tempy\fi%
\global\advance\tempsize by 2
\fi
}
\ifnum\tempsize>\lpsize\global\lpsize=\tempsize\fi
}%
\def\linkpatternrightmosttangle{
\global\lpsize=0
\global\tempsize=0
\foreach \x/\y in \mylist
{
\edef\tempx{\withoutprime{\x}}
\ifnum\lpsize<\tempx\global\lpsize=\tempx\fi
\ifx\x\y
\global\advance\tempsize by 1
\else
\edef\tempy{\withoutprime{\y}}
\ifnum\lpsize<\tempy\global\lpsize=\tempy\fi%
\global\advance\tempsize by 2
\fi
}
\global\advance\lpsize by\lpsize
\ifnum\tempsize>\lpsize\global\lpsize=\tempsize\fi
}%
\newcommand\linkpattern[2][]{
{
\pgfkeys{/linkpattern/.cd,#1}
\edef\mylist{#2}
\def\primetest##1'{}%
\def\hasaprime##1{\expandafter\primetest##1''}
\def\internalwithoutprime##1'{##1}%
\def\withoutprime##1{\if\hasaprime##1 %
\expandafter\internalwithoutprime##1\else ##1\fi}%
\iflinkpatternnumbered%
\iflinkpatterninverted
\tikzset{/linkpattern/lbl/.style n args={3}{label={[/linkpattern/labeloptionslist=-##1,##3] ##1:##2}}}%
\else%
\tikzset{/linkpattern/lbl/.style n args={3}{label={[/linkpattern/labeloptionslist=##1,##3] ##1:##2}}}%
\fi%
\else%
\tikzset{/linkpattern/lbl/.style={}}%
\fi%
\tikzifinpicture{\linkpatterntikzstartedtrue%
\begin{scope}[shift=\linkpatternpos,/linkpattern/every linkpattern]
}{%
\linkpatterntikzstartedfalse%
\iflinkpatterncentered
\begin{tikzpicture}[baseline=(current  bounding  box.center),/linkpattern/every linkpattern]%
\else
\begin{tikzpicture}[baseline=0,/linkpattern/every linkpattern]%
\fi
}%
\begin{scope}[local bounding box=link pattern box]
\iflinkpatterninverted%
\begin{scope}[yscale=-1]%
\fi%
\linkpatterndo{setsize}{shape}
\ifnum\lpsize=0
\linkpatterndo{rightmost}{numbering}
\fi
\pgfmathtruncatemacro{\lphalfsize}{\lpsize/2}
\linkpatterndo{numbering}{numbering}
\iflinkpatternboxed
\linkpatterndo{drawbox}{shape}
\else
\iflinkpatternaxis
\linkpatterndo{drawaxis}{shape}
\fi
\fi
\foreach\xx/\xlab/\opt in \lpnumbering
{
\ifx\xlab\opt\def\opt{}\fi
\if\hasaprime\xx %
\pgfmathtruncatemacro{\xx}{\lpsize+1-\withoutprime{\xx}}
\fi
%
%
\ifnum\linkpatternfused>1
\pgfmathsetmacro{\x}{0.4*(0.5+\linkpatternfused*(0.5+floor((\xx-1)/\linkpatternfused)))+0.6*\xx}
\else
\def\x{\xx}
\fi
\linkpatterndo{coord}{shape}
\iflinkpatternalias\def\xlabb{\xlab}\else\def\xlabb{\xx}\fi
\path (\lpcoordx,\lpcoordy) coordinate[/linkpattern/vertex,/linkpattern/lbl={\lpangle+180}{\xlab}{\opt},alias=v\xlabb] (v\xx) ++(\lpangle:\linkpatternunit) coordinate[alias=vv\xlabb] (vv\xx); 
}
\foreach \a/\b/\c in \mylist
{
\if\hasaprime\a %
\pgfmathtruncatemacro{\a}{\lpsize+1-\withoutprime{\a}}
\fi
\ifx\b\c\def\c{}\fi
\draw[/linkpattern/edge]
\ifx\a\b
(v\a)
\c
--
++(0,\lpheight);
\else
\pgfextra{
\if\hasaprime\b %
\pgfmathtruncatemacro{\b}{\lpsize+1-\withoutprime{\b}}
\fi
\gettikzxy{(v\a)}{\ax}{\ay}
\gettikzxy{(v\b)}{\bx}{\by}
\gettikzxy{(vv\a)}{\axx}{\ayy}
\gettikzxy{(vv\b)}{\bxx}{\byy}
\pgfmathsetmacro{\dist}{sqrt((\ax-\bx)*(\ax-\bx)+(\ay-\by)*(\ay-\by))}
\pgfmathsetmacro{\abx}{(\axx-\ax)*\dist*\linkpatternsquareness+(\bx-\ax)*\linkpatternlooseness)}
\pgfmathsetmacro{\aby}{(\ayy-\ay)*\dist*\linkpatternsquareness+(\by-\ay)*\linkpatternlooseness)}
\pgfmathsetmacro{\bax}{(\bxx-\bx)*\dist*\linkpatternsquareness+(\ax-\bx)*\linkpatternlooseness)}
\pgfmathsetmacro{\bay}{(\byy-\by)*\dist*\linkpatternsquareness+(\ay-\by)*\linkpatternlooseness)}
}
(v\a)
\c
\iflinkpatternstraightlines
\pgfextra{
\pgfmathsetmacro{\t}{((\ax-\bx)*\bay-(\ay-\by)*\bax)/(\aby*\bax-\abx*\bay)}
\pgfmathsetmacro{\abx}{\t*\abx}
\pgfmathsetmacro{\aby}{\t*\aby}
}
[rounded corners=0.2\linkpatternunit] -- ++(\abx,\aby) -- (v\b);
\else
.. controls ++(\abx,\aby) and ++(\bax,\bay) .. 
\fi
(v\b);
\fi
}
\end{scope}
\iflinkpatternnode
\node[fit=(link pattern box),/linkpattern/nodeoptionslist] {};
\fi
\iflinkpatterninverted
\end{scope}
\fi
\iflinkpatterntikzstarted
\end{scope}
\else%
\end{tikzpicture}%
\fi%
}}%
\newcommand\tanglelinkpattern[3][]{%
{
\pgfkeys{/linkpattern/.cd,#1}
\iflinkpatterninverted
\begin{tikzpicture}[/linkpattern/every linkpattern,baseline=\linkpatternunit]%
\else
\begin{tikzpicture}[/linkpattern/every linkpattern,baseline=-\linkpatternunit]%
\fi
\linkpattern[#1,tikzstarted,numbered=false]{#3}
\pgfmathtruncatemacro{\lptempsize}{2*\linkpatternsize}
\iflinkpatterninverted
\begin{scope}[yshift=0.5*\linkpatternunit]
\else
\begin{scope}[yshift=-0.5*\linkpatternunit]
\fi
\linkpattern[tangle,#1,tikzstarted,size=\lptempsize,
numbering=halftangle,
height=0.5]{#2}
\end{scope}
\end{tikzpicture}%
}}
\newcommand\diag[4][]{%
\pgfkeys{/linkpattern/.cd,#1}
\iflinkpatterntikzstarted\else%
\begin{tikzpicture}[scale=0.5]
\fi%
\iflinkpatterninverted%
\begin{scope}[yscale=-1]%
\fi%
\draw (0,0) grid (#2,#3);
\edef\mylist{#4}
\foreach\y/\x/\z in \mylist
{
\ifx\x\z
\draw[decorate,decoration={zigzag,
amplitude=1pt,segment length=5pt}]
(\x-0.5,#3) -- (\x-0.5,\y-0.5) node[circle,fill=black,inner sep=2pt] {} -- (#2,\y-0.5);
\else
\node at (\x-0.5,\y-0.5) {$\z$};
\fi
}
\iflinkpatterninverted
\end{scope}
\fi
\iflinkpatterntikzstarted\else%
\end{tikzpicture}%
\fi%
}
\tikzset{circle split part fill/.style  args={#1,#2}{%
 alias=tmp@name,
  postaction={%
    insert path={
     \pgfextra{%
     \pgfpointdiff{\pgfpointanchor{\pgf@node@name}{center}}%
                  {\pgfpointanchor{\pgf@node@name}{east}}%
     \pgfmathsetmacro\insiderad{\pgf@x}
      \fill[#1] (\pgf@node@name.base) ([xshift=-\pgflinewidth]\pgf@node@name.east) arc
                          (0:180:\insiderad-\pgflinewidth)--cycle;
      \fill[#2] (\pgf@node@name.base) ([xshift=\pgflinewidth]\pgf@node@name.west)  arc
                           (180:360:\insiderad-\pgflinewidth)--cycle;                    }}}}}  
\tikzset{bdot/.style={circle,circle split,draw,circle split part fill={black,white},thin,inner sep=1pt}}%
\tikzset{wdot/.style={circle,circle split,draw,circle split part fill={white,black},thin,inner sep=1pt}}%
\newcommand\circlelinkpattern[2][]{
{
\pgfkeys{/linkpattern/.cd,#1}
\iflinkpatterntikzstarted\else%
\begin{tikzpicture}[/linkpattern/every linkpattern]%
\fi%
\iflinkpatterninverted%
\begin{scope}[yscale=-1]%
\fi%
\global\lpsize=\linkpatternsize
\edef\mylist{#2}
\foreach \x/\y in \mylist
{
\ifnum\x>\lpsize\global\lpsize=\x\fi
\ifnum\y>\lpsize\global\lpsize=\y\fi
}
%
\iflinkpatternaxis
\draw (0,0) circle (1);
\fi
\foreach\x in {1,...,\lpsize}
{
\pgfmathparse{(0.3*floor((\x-1)/\linkpatternfused)+0.7*((\x-0.5)/\linkpatternfused-0.5))*\linkpatternfused*360/\lpsize}
\coordinate[/linkpattern/vertex] (v\x) at (\pgfmathresult:1);
}
\foreach \x/\y/\z in \mylist
{
\ifx\y\z%
\draw[/linkpattern/edge] (v\x) .. controls ($0.5*(v\x)$) and  ($0.5*(v\y)$) .. (v\y);
\else
\draw[/linkpattern/edge] \z (v\x) .. controls ($0.5*(v\x)$) and  ($0.5*(v\y)$) .. (v\y);
\fi
}
\iflinkpatternnumbered%
\pgfmathparse{\lpsize/\linkpatternfused}
\global\lpsize=\pgfmathresult
\def\linkpatternnumbering{1,...,\lpsize}
\newdimen\angle
\foreach\x/\xx/\opt in \linkpatternnumbering
{
  \pgfmathsetmacro{\angle}{360/\lpsize*(\x-1)}
\ifx\xx\opt%
  \node[outer sep=1pt,anchor=180+\angle] at (\angle:1) {$\scriptstyle\xx$}; 
\else
  \node[outer sep=1pt,anchor=180+\angle,\opt] at (\angle:1) {$\scriptstyle\xx$}; 
\fi
}
\fi%
\iflinkpatterninverted%
\end{scope}
\fi%
\iflinkpatterntikzstarted\else%
\end{tikzpicture}%
\fi%
}}%
\newdimen{\loopcellsize}\setlength{\loopcellsize}{0.75cm}
\tikzset{bgplaq/.style={draw=black,fill=\linkpatternboxcolor}}
\def\plaqwest{}
\def\plaqeast{}
\def\plaqnorth{}
\def\plaqsouth{}
\def\plaqname{plaq}
\newcommand\plaq[2][]{
\node[bgplaq,rectangle,draw,use as bounding box,minimum size=\loopcellsize,transform shape] (\plaqname) {};
\pgfkeys{/linkpattern/.cd,#1}
\ifx#2\empty\else
\begin{scope}[x=\loopcellsize,y=\loopcellsize]
\csname plaq#2\endcsname
\end{scope}\fi
}
\tikzset{loop/.code={\def\plaqname{loop-\the\pgfmatrixcurrentrow-\the\pgfmatrixcurrentcolumn}},loop/.append style={matrix,row sep={\loopcellsize,between origins},column sep={\loopcellsize,between origins}}}
\newlength\myshift
\newcommand\getshift{%
\pgfmathsetlength{\myshift}{0.8mm}
}
\def\noparenx#1{%
 \ifx\relax#1
 \else
  \if)#1%
  \else
  \if(#1%
  \else
   #1%
   \fi
   \fi
  \expandafter\noparenx
 \fi
}
\def\noparen#1{\noparenx #1\relax}
\newcommand\rh[5][]{
\tikzif[baseline=0,scale=1.25]{
\getshift
\draw[thick,black] (0,0) coordinate (ad) -- node[edgelabel,left,xshift=\myshift] {$#2$} ++(-60:1) coordinate (ba) -- node[edgelabel,right,xshift=-\myshift] {$#3$} ++(60:1) coordinate (cb) -- node[edgelabel,right,xshift=-\myshift] {$#4$} ++(120:1) coordinate (dc) -- node[edgelabel,left,xshift=\myshift] {$#5$} cycle;
\ifx\&#1\&\else\node[invlabel] at (0.5,0) {$\ss#1$};\fi
}}
\newcommand\eqrh[2]{\rh[-1]{#1}{#2}{#1}{#2}}
\newcommand\uptri[4][]{
\tikzif[baseline=0.34cm,scale=1.25]{
\getshift
\draw[thick,black] (-0.5,0) -- node[edgelabel] (horiz) {$\vphantom{\noparen{#3}}\smash{#3}$} ++(0:1) -- node[edgelabel,right,xshift=-\myshift] (NE) {$#4$} ++(120:1) -- node[edgelabel,left,xshift=\myshift] (NW) {$#2$} ++(240:1) -- cycle; 
\ifx\&#1\&\else\node[invlabel] at (0,0.33) {$\ss#1$};\fi
}}
\newcommand\downtri[4][]{
\tikzif[baseline=-0.54cm,scale=1.25]{
\getshift
\begin{scope}[scale=-1]
\draw[thick,black] (-0.5,0) -- node[edgelabel] (horiz) {$\vphantom{\noparen{#3}}\smash{#3}$} ++(0:1) -- node[edgelabel,left,xshift=\myshift] (NE) {$#4$} ++(120:1) -- node[edgelabel,right,xshift=-\myshift] (NW) {$#2$} ++(240:1) -- cycle; 
\ifx\&#1\&\else\node[invlabel] at (0,0.33) {$\ss#1$};\fi
\end{scope}
}}
\newcommand\Deltatri{{\tikz[baseline=0,scale=0.3]{\uptri{}{}{}}}}
\newcommand\nablatri{{\tikz[baseline=-0.25cm,scale=0.3]{\downtri{}{}{}}}}
\renewcommand\ss{\scriptstyle}
\newcommand\sss{\scriptscriptstyle}
\newcommand\agemo{\!\!\tikz[baseline=-1mm]{\node[rotate=180]{$\omega$}}\!\!}
\DeclareMathOperator{\sign}{sign}
\newcommand\ZZ{{\mathbb Z}}
\newcommand\QQ{{\mathbb Q}}
\newcommand\CC{{\mathbb C}}
\newcommand\PP{{\mathbb P}}
\theoremstyle{plain}
\newtheorem{thm}{Theorem}[section]
\newtheorem{prop}[thm]{Proposition}
\newtheorem{lem}[thm]{Lemma}
\newtheorem{cor}[thm]{Corollary}
\newtheorem{ex}[thm]{Example}
\theoremstyle{definition}
\newtheorem{pty}{Property}
\theoremstyle{remark}
\newtheorem*{rmk*}{Remark}
\title[Schubert puzzles and integrability I]{Schubert puzzles and integrability I: \\ invariant trilinear forms}
\author{Allen Knutson}
\address{Allen Knutson, Cornell University, Ithaca, New York}
\email{allenk@math.cornell.edu}
\author{Paul Zinn-Justin}
\address{Paul Zinn-Justin, School of Mathematics and Statistics, The University of Melbourne, 
Victoria 3010, Australia}
\email{pzinn@unimelb.edu.au}
\thanks{PZJ was supported by ARC grant FT150100232.
}
\date{\today}
\newcommand\rem[2][]{}
\long\def\junk#1{}
\begin{document}

\begin{abstract}
  The {\em puzzle rules}\/ for computing Schubert calculus
  on $d$-step flag manifolds, proven in \cite{KT} for 
  $1$-step, in \cite{BKPT} for $2$-step, and 
  conjectured in \cite{CV} for $3$-step, 
  lead to vector configurations (one vector for each puzzle edge label) 
  that we recognize as the weights of some minuscule representations.
  The $R$-matrices of those representations (which, for $2$-step flag
  manifolds, involve triality of $D_4$) degenerate to give us puzzle
  formul\ae\ for two previously unsolved Schubert calculus problems:
  $K_T(2$-step flag manifolds$)$ and $K(3$-step flag manifolds$)$.
  The $K(3$-step
  flag manifolds$)$ formula, which involves 151 new puzzle pieces,
  implies Buch's correction to the first author's 1999 conjecture for
  $H^*(3$-step flag manifolds$)$. 
  \junk{We conclude with a (computer-based) proof that, under
  certain assumptions, there will be {\em no}\/ puzzle-based
  combinatorial formula for $H^*_T(3$-step$)$ nor for $H^*(4$-step$)$.}
\end{abstract}

\vspace*{-1cm}
\maketitle
\vspace*{-0.5cm}

\tableofcontents

\newcommand\defn[1]{{\bf #1}}
\newcommand\into\hookrightarrow
\newcommand\onto\twoheadrightarrow
\newcommand\otno\twoheadleftarrow
\newcommand\dom\backslash
\newcommand\tensor\otimes
\newcommand\iso\cong
\newcommand\lie[1]{{\mathfrak {#1}}}

\junk{For Allen to do:
  \begin{enumerate}
\junk{  \item \checkmark write up a proof of this single-number thing, for $d\leq 3$.
  \item \checkmark ? Try to process the monomial rule into something readable.
  \item \checkmark Discuss the $E_6$ rep, i.e. lemma~\ref{lem:greend3}.
  \item    \checkmark  Stop claiming manifest duality of $K_T(2$-step$)$
  \item    \checkmark Does the set of labels have a name?
  \item \checkmark Fix the unnumbered eq AFTER EQ \ref{eq:stdtri}!!!
  \item \checkmark? Clarify the example in 2.2 
  \item \checkmark rewrite theorem~\ref{thm:classif}
  \item \checkmark Fix citations (put them in biblio.bib)
  \item \checkmark Unify/standardize the numbering of Dynkin diagrams/irreps  
  \item \checkmark fix the factor of $2$ in $B$ compared to inversion number
\item \checkmark Have $A_d$ generated by $f_0,\ldots,f_d$
\item \checkmark Fix \S\ref{sec:finalproof}
\item \checkmark
  Fill in the part p27 about sum over reduced words after lemma
  \ref{lem:D4a}
\item \checkmark Define $B$ for general $d$
\item \checkmark fill the gaps in sect.~2.1, 2.2.
\item fix Theorem 4, or remove remark after it.
\item read the remaining comments and respond if necessary
}
\end{enumerate}
}

\section{Introduction}
In 1912 Weyl asked what the spectrum of a sum of two Hermitian
matrices could be, knowing the spectra of the matrices individually.
This received two solutions in the 1990s (we recommend the surveys
\cite{FultonSurvey,Zelevinsky}): a geometric one due to Klyachko (extending
work of Helmke--Rosenthal, Hersch--Zwahlen, and Totaro, among others) based on
Schubert calculus of Grassmannians, and a combinatorial one \cite{KTW-II}
in terms of certain triangular tilings called ``puzzles''. 
We take from this coincidence the oracular statement that 
{\em puzzles should be related to Schubert calculus;} hopefully not just for 
ordinary cohomology of Grassmannians, but also for more exotic cohomology
theories of \defn{$d$-step flag manifolds}
$Fl(n_1,\ldots,n_d;\, \CC^n) :=
\{ (0 \leq V_{1} \leq \cdots \leq V_{d} \leq \CC^n)\colon \dim V^i = n_i \}$.

Our general definition of ``Schubert calculus'' is a ring-with-basis problem,
the ring being a cohomology theory applied to a flag manifold, the basis
a tailor-made basis of ``Schubert classes'', and the problem being to
compute the coefficients when expanding a product of two basis elements
into the basis. Any such problem is solvable by giving a presentation of
the ring and locating the Schubert classes in that presentation.
However, it has been a recurrent phenomenon that Schubert structure constants
are {\em nonnegative} (in some sense appropriate to the cohomology theory;
see \cite{BuchKGr,Brion-KPos,Graham,AGM-Kpos}).
Many Schubert calculus problems have been solved nonnegatively with puzzles,
some in no other ways; see \cite{KT, Vakil,
  BKPT, KPur, BuchHT, PY, artic67, artic68}.

\subsection{Puzzles for the $d=1$ case, of Grassmannians}\label{sec:introd1}
We will index the Schubert classes $\{S^\lambda\}$ on $P_- \dom GL_n(\CC)$ 
(precise definitions given in \S\ref{sec:gen}) by strings $\lambda$
in $0,1,\ldots,d$, where the number of $j$s in the string is
$n_{j+1}-n_j$ (by convention $n_0 = 0, n_{1+d} = n$).
In particular those strings 
for the Grassmannian $Gr_k(\CC^n)$ of $k$-planes have%
\footnote{%
  One can get away with the dual convention $k\leftrightarrow n-k$ when dealing
  with only Grassmannians, but this becomes untenable when going to higher $d$.}
content $0^k 1^{n-k}$. We define three \defn{edge labels} $L_1 := \{0,1,10\}$, 
and three \defn{puzzle pieces}:
\begin{equation}\label{eq:d1tri}
  \uptri{0}{0}{0} \qquad \uptri{1}{1}{1} \qquad \uptri{1}{10}{0} 
  \qquad\text{plus all rotations, but not reflections}   
\end{equation}
A \defn{Grassmannian puzzle of size $n$} is a size $n$ triangle (oriented like $\Deltatri$)
decomposed into $n^2$ puzzle pieces, whose boundary labels are only $0$ or $1$,
not $10$. For example, here are the two puzzles whose NW and NE boundaries 
are both labeled $0,1,0,1$ left to right:

\begin{center}\def\ss{}
\begin{tikzpicture}[math mode,nodes={\mcol},x={(-0.577cm,-1cm)},y={(0.577cm,-1cm)},scale=1.5]
\draw[thick] (0,0) -- node[pos=0.5] {\ss 0} ++(0,1); \draw[thick] (0,0) -- node[pos=0.5] {\ss 1} ++(1,0); \draw[thick] (0+1,0) -- node {\ss 10} ++(-1,1); 
\draw[thick] (0,1) -- node[pos=0.5] {\ss 1} ++(0,1); \draw[thick] (0,1) -- node[pos=0.5] {\ss 1} ++(1,0); \draw[thick] (0+1,1) -- node {\ss 1} ++(-1,1); 
\draw[thick] (0,2) -- node[pos=0.5] {\ss 0} ++(0,1); \draw[thick] (0,2) -- node[pos=0.5] {\ss 0} ++(1,0); \draw[thick] (0+1,2) -- node {\ss 0} ++(-1,1); 
\draw[thick] (0,3) -- node[pos=0.5] {\ss 1} ++(0,1); \draw[thick] (0,3) -- node[pos=0.5] {\ss 10} ++(1,0); \draw[thick] (0+1,3) -- node {\ss 0} ++(-1,1); 
\draw[thick] (1,0) -- node[pos=0.5] {\ss 0} ++(0,1); \draw[thick] (1,0) -- node[pos=0.5] {\ss 0} ++(1,0); \draw[thick] (1+1,0) -- node {\ss 0} ++(-1,1); 
\draw[thick] (1,1) -- node[pos=0.5] {\ss 10} ++(0,1); \draw[thick] (1,1) -- node[pos=0.5] {\ss 0} ++(1,0); \draw[thick] (1+1,1) -- node {\ss 1} ++(-1,1); 
\draw[thick] (1,2) -- node[pos=0.5] {\ss 1} ++(0,1); \draw[thick] (1,2) -- node[pos=0.5] {\ss 1} ++(1,0); \draw[thick] (1+1,2) -- node {\ss 1} ++(-1,1); 
\draw[thick] (2,0) -- node[pos=0.5] {\ss 0} ++(0,1); \draw[thick] (2,0) -- node[pos=0.5] {\ss 1} ++(1,0); \draw[thick] (2+1,0) -- node {\ss 10} ++(-1,1); 
\draw[thick] (2,1) -- node[pos=0.5] {\ss 1} ++(0,1); \draw[thick] (2,1) -- node[pos=0.5] {\ss 1} ++(1,0); \draw[thick] (2+1,1) -- node {\ss 1} ++(-1,1); 
\draw[thick] (3,0) -- node[pos=0.5] {\ss 0} ++(0,1); \draw[thick] (3,0) -- node[pos=0.5] {\ss 0} ++(1,0); \draw[thick] (3+1,0) -- node {\ss 0} ++(-1,1); 
\end{tikzpicture}
\qquad
\begin{tikzpicture}[math mode,nodes={\mcol},x={(-0.577cm,-1cm)},y={(0.577cm,-1cm)},scale=1.5]
\draw[thick] (0,0) -- node[pos=0.5] {\ss 0} ++(0,1); \draw[thick] (0,0) -- node[pos=0.5] {\ss 1} ++(1,0); \draw[thick] (0+1,0) -- node {\ss 10} ++(-1,1); 
\draw[thick] (0,1) -- node[pos=0.5] {\ss 1} ++(0,1); \draw[thick] (0,1) -- node[pos=0.5] {\ss 1} ++(1,0); \draw[thick] (0+1,1) -- node {\ss 1} ++(-1,1); 
\draw[thick] (0,2) -- node[pos=0.5] {\ss 0} ++(0,1); \draw[thick] (0,2) -- node[pos=0.5] {\ss 1} ++(1,0); \draw[thick] (0+1,2) -- node {\ss 10} ++(-1,1); 
\draw[thick] (0,3) -- node[pos=0.5] {\ss 1} ++(0,1); \draw[thick] (0,3) -- node[pos=0.5] {\ss 1} ++(1,0); \draw[thick] (0+1,3) -- node {\ss 1} ++(-1,1); 
\draw[thick] (1,0) -- node[pos=0.5] {\ss 0} ++(0,1); \draw[thick] (1,0) -- node[pos=0.5] {\ss 0} ++(1,0); \draw[thick] (1+1,0) -- node {\ss 0} ++(-1,1); 
\draw[thick] (1,1) -- node[pos=0.5] {\ss 1} ++(0,1); \draw[thick] (1,1) -- node[pos=0.5] {\ss 10} ++(1,0); \draw[thick] (1+1,1) -- node {\ss 0} ++(-1,1); 
\draw[thick] (1,2) -- node[pos=0.5] {\ss 0} ++(0,1); \draw[thick] (1,2) -- node[pos=0.5] {\ss 0} ++(1,0); \draw[thick] (1+1,2) -- node {\ss 0} ++(-1,1); 
\draw[thick] (2,0) -- node[pos=0.5] {\ss 1} ++(0,1); \draw[thick] (2,0) -- node[pos=0.5] {\ss 1} ++(1,0); \draw[thick] (2+1,0) -- node {\ss 1} ++(-1,1); 
\draw[thick] (2,1) -- node[pos=0.5] {\ss 0} ++(0,1); \draw[thick] (2,1) -- node[pos=0.5] {\ss 0} ++(1,0); \draw[thick] (2+1,1) -- node {\ss 0} ++(-1,1); 
\draw[thick] (3,0) -- node[pos=0.5] {\ss 10} ++(0,1); \draw[thick] (3,0) -- node[pos=0.5] {\ss 0} ++(1,0); \draw[thick] (3+1,0) -- node {\ss 1} ++(-1,1); 
\end{tikzpicture}
\end{center}

The connection to \defn{Schubert calculus}, the study of the
product of Schubert classes $\{S^\lambda\}$, is simple \cite{KTW-II}: 
\begin{equation}
  \label{eq:LR}
  S^\lambda\, S^\mu \ =\  
  \sum_\nu \#\left(\text{puzzles labeled \tikz[scale=1.8,baseline=0.5cm]{\uptri{\lambda}{\nu}{\mu}}
      left to right (not clockwise) }\right) \ S^\nu 
\end{equation}

\subsection{Overview of the subject and this series of papers}

We remind the reader where the basis of $H^*(Gr_k(\CC^n))$ originates.
To specify a $k$-plane, we can list a basis of $k$ row vectors,
organized into a $k\times n$ matrix of full rank $k$. Since such bases
aren't unique we use row operations (left multiplication by $GL_k(\CC)$)
to put our matrix into row-reduced echelon form (RREF). The benefit is that
each $k$-plane now has a unique description; the drawback is that it
becomes hard to see the topology of the compact manifold $Gr_k(\CC^n)$.
Specifically, if we let
$$ C_\lambda := \text{image}\left(
  \left\{
    \text{RREF matrices with pivots in columns $\lambda$}
  \right\}
  \xrightarrow{\text{row span}}
  Gr_k(\CC^n)
\right)
$$
then $C_\lambda \iso \CC^{\#\{(i<j)\colon i\notin \lambda \ni j\}}$
and $Gr_k(\CC^n) = \coprod_{\lambda \in {[n]\choose k}} C_\lambda$.
(For technical reasons we will prefer to reverse our matrices left-right
before taking row span.)
This even-real-dimensional cell decomposition of $Gr_k(\CC^n)$,
the \defn{Bruhat decomposition}, also arises as an orbit decomposition
(by right-multiplication by upper triangular matrices) and as a
Morse/Bia\l ynicki-Birula decomposition. All of these descriptions
generalize beyond Grassmannians to $d$-step flag manifolds.

Because each cell is invariant under the action of the diagonal matrices
$T$ (acting on these $k\times n$ matrices by right multiplication),
the closures of the cells define a basis of the $T$-equivariant cohomology
$H^*_T(Gr_k(\CC^n))$, not just of ordinary cohomology. However, the
coefficient ring is now $H^*_T(pt) \iso \ZZ[y_1,\ldots,y_n]$, which is where
the multiplicative structure constants will live.
In \cite{KT} the puzzle rule of \S\ref{sec:introd1}
was extended to compute these polynomials,
using one new \defn{equivariant piece} $\rh{1}{0}{1}{0}$.
Since the structure constants to be computed are polynomials,
instead of {\em counting} puzzles (i.e. each contributing $1$)
we {\em sum over} puzzles, where each puzzle contributes a
product of roots $y_i-y_j$ based on the locations of the equivariant pieces.

For example, here are the computations of $S^{010}S^{100}$ and $S^{100}S^{010}$
(which are of course equal, as $H^*_T(Gr_k(\CC^n))$ is commutative): \\
{\def\posa{.5}\def\posb{.5}\def\thescale{1.5}
\begin{tikzpicture}[math mode,nodes={edgelabel},x={(-0.577cm,-1cm)},y={(0.577cm,-1cm)},scale=\thescale]
\draw[thick] (0,0) -- node[pos=\posa] {1} ++(0,1); \draw[thick] (0,0) -- node[pos=\posb] {0} ++(1,0); \node at (0+0.5,0+0.5) {y_1-y_3}; 
\draw[thick] (0,1) -- node[pos=\posa] {0} ++(0,1); \draw[thick] (0,1) -- node[pos=\posb] {0} ++(1,0); \draw[thick] (0+1,1) -- node {0} ++(-1,1); 
\draw[thick] (0,2) -- node[pos=\posa] {0} ++(0,1); \draw[thick] (0,2) -- node[pos=\posb] {0} ++(1,0); \draw[thick] (0+1,2) -- node {0} ++(-1,1); 
\draw[thick] (1,0) -- node[pos=\posa] {1} ++(0,1); \draw[thick] (1,0) -- node[pos=\posb] {1} ++(1,0); \draw[thick] (1+1,0) -- node {1} ++(-1,1); 
\draw[thick] (1,1) -- node[pos=\posa] {0} ++(0,1); \draw[thick] (1,1) -- node[pos=\posb] {0} ++(1,0); \draw[thick] (1+1,1) -- node {0} ++(-1,1); 
\draw[thick] (2,0) -- node[pos=\posa] {10} ++(0,1); \draw[thick] (2,0) -- node[pos=\posb] {0} ++(1,0); \draw[thick] (2+1,0) -- node {1} ++(-1,1); 
\end{tikzpicture}
\raisebox{2cm}{vs}
\begin{tikzpicture}[math mode,nodes={edgelabel},x={(-0.577cm,-1cm)},y={(0.577cm,-1cm)},scale=\thescale]
\draw[thick] (0,0) -- node[pos=\posa] {0} ++(0,1); \draw[thick] (0,0) -- node[pos=\posb] {0} ++(1,0); \draw[thick] (0+1,0) -- node {0} ++(-1,1); 
\draw[thick] (0,1) -- node[pos=\posa] {1} ++(0,1); \draw[thick] (0,1) -- node[pos=\posb] {10} ++(1,0); \draw[thick] (0+1,1) -- node {0} ++(-1,1); 
\draw[thick] (0,2) -- node[pos=\posa] {0} ++(0,1); \draw[thick] (0,2) -- node[pos=\posb] {0} ++(1,0); \draw[thick] (0+1,2) -- node {0} ++(-1,1); 
\draw[thick] (1,0) -- node[pos=\posa] {1} ++(0,1); \draw[thick] (1,0) -- node[pos=\posb] {0} ++(1,0); \node at (1+0.5,0+0.5) {y_1-y_2}; 
\draw[thick] (1,1) -- node[pos=\posa] {0} ++(0,1); \draw[thick] (1,1) -- node[pos=\posb] {0} ++(1,0); \draw[thick] (1+1,1) -- node {0} ++(-1,1); 
\draw[thick] (2,0) -- node[pos=\posa] {1} ++(0,1); \draw[thick] (2,0) -- node[pos=\posb] {1} ++(1,0); \draw[thick] (2+1,0) -- node {1} ++(-1,1); 
\end{tikzpicture}
\begin{tikzpicture}[math mode,nodes={edgelabel},x={(-0.577cm,-1cm)},y={(0.577cm,-1cm)},scale=\thescale]
\draw[thick] (0,0) -- node[pos=\posa] {0} ++(0,1); \draw[thick] (0,0) -- node[pos=\posb] {0} ++(1,0); \draw[thick] (0+1,0) -- node {0} ++(-1,1); 
\draw[thick] (0,1) -- node[pos=\posa] {1} ++(0,1); \draw[thick] (0,1) -- node[pos=\posb] {0} ++(1,0); \node at (0+0.5,1+0.5) {y_2-y_3}; 
\draw[thick] (0,2) -- node[pos=\posa] {0} ++(0,1); \draw[thick] (0,2) -- node[pos=\posb] {0} ++(1,0); \draw[thick] (0+1,2) -- node {0} ++(-1,1); 
\draw[thick] (1,0) -- node[pos=\posa] {0} ++(0,1); \draw[thick] (1,0) -- node[pos=\posb] {0} ++(1,0); \draw[thick] (1+1,0) -- node {0} ++(-1,1); 
\draw[thick] (1,1) -- node[pos=\posa] {1} ++(0,1); \draw[thick] (1,1) -- node[pos=\posb] {10} ++(1,0); \draw[thick] (1+1,1) -- node {0} ++(-1,1); 
\draw[thick] (2,0) -- node[pos=\posa] {1} ++(0,1); \draw[thick] (2,0) -- node[pos=\posb] {1} ++(1,0); \draw[thick] (2+1,0) -- node {1} ++(-1,1); 
\end{tikzpicture}
}

To go from ordinary to equivariant cohomology is but one of the many potential
axes of generalization. There are three others most relevant to the present
series of papers:
\begin{enumerate}
\item {\em $d$-step Schubert calculus,} i.e. on $d$-step flag
  manifolds, as defined above. We do not know any a priori reason that
  the complexity of the subject should increase with $d$, but at least
  for the approach taken in these papers, that has been the case.
\item {\em $K$-theoretic Schubert calculus,} which is sensitive to
  solution sets of all dimensions rather than just top dimension. For
  a simple example, let $L_1 \neq L_2$ be two lines in $\PP^3$ passing
  through a point $q$, hence lying in a common plane $P$. Then the
  space of lines $M$ touching both $L_1$ {\em and} $L_2$ has two
  components (each a $\PP^2$): those touching $q$ vs. those lying in
  $P$, and these two $\PP^2$s intersect in the $\PP^1$ worth of lines
  $M$ that satisfy both conditions. The cohomology class of this union
  is the sum of the classes of the loci from the two cases, but to compute the
  $K$-theory class one must also subtract off the class from the
  $\PP^1$ intersection (negligible in cohomology).
\item {\em Cotangent Schubert calculus,} in which the cycles involved
  are not those of Schubert varieties in flag manifolds but (much more
  complicated) cycles living in the cotangent bundles to the flag manifolds.
  In particular, calculations with these will involve an extra parameter
  coming from the equivariant cohomology of the dilation action
  on the cotangent fibers.
\end{enumerate}

Two other well-known axes of generalization, that we do not address at all,
are {\em quantum cohomology} (though it is deeply connected
in \cite{MO} to cotangent Schubert calculus) and going beyond type
$A$ flag manifolds to other Lie types, e.g. orthogonal flag manifolds
(though a small step in this direction appeared in \cite{artic73}).

It was quite unclear to the authors of \cite{KT} as to what the jigsaw puzzle
edge-matching requirement should have to do with, well, anything!
As such the main proofs were extremely ad hoc.
The situation was cleared up in \cite{artic46,artic68}: the usual rule for
matrix multiplication has a similar matching requirement,
$$ (AB)_{i\ell} = \sum \{A_{ij} B_{kl}\colon (j,k) \text{ s.t. } j = k \}
$$
In the present series of papers we generalize the puzzle rules for
$H^*(Gr_k(\CC^n)),H^*_T(Gr_k(\CC^n))$ essentially
by looking for the right matrices
to multiply. The main algebraic property they will satisfy is the
``Yang-Baxter equation'', and conveniently, solutions to this equation
(called ``$R$-matrices'') have been studied for several decades.
As the Yang-Baxter equation is traditionally used to show that some
large families of matrices commute, the field goes by the name
``quantum integrability'' or ``quantum integrable systems''
(``quantum'' for the vector spaces, ``integrable'' for the common eigenbasis
of the commuting matrices).
\begin{enumerate}
\item In this first paper, and especially in \S\ref{sec:proofs},
  we use algebraic properties of certain specific $R$-matrices
  (associated to certain representations bearing invariant
  trilinear forms) to show that they compute multiplication of some
  rather abstractly defined ``classes''. Those classes are {\em not}
  the Schubert classes; rather, we have to take certain limits to recover
  multiplication of Schubert classes. These limits are easy to handle
  for $d=1,2$ but become very tricky at $d=3$.

  One of the strengths of this framework is that the generalizations
  to $K$-theory come essentially for free. In particular, we {\em discover}
  and prove a puzzle rule for $K(Fl(n_1,n_2,n_3;\, \CC^n))$,
  involving 151 new puzzle pieces.
\item In the second paper, we observe that the classes most naturally
  connected to the $R$-matrices (i.e. without taking the limits)
  are those from cotangent Schubert calculus. This is fundamentally
  based on the work of \cite{MO-qg} realizing the representation theory
  using {\em Nakajima quiver varieties}, a class that includes the cotangent
  bundles of $d$-step flag manifolds. However our approach makes
  crucial use, in an intermediate step,
  of Nakajima quiver varieties that are {\em not} $d$-step flag manifolds.

  One side result, whose statement does not require direct reference to
  cotangent bundles (although the proof does),
  is a formula for the Euler characteristic of the
  intersection of three (transversely placed) Bruhat cells.
  This combinatorial result led us to conjecture a positivity statement
  for such Euler characteristics in general $G/P$ (our theorem only
  holding for $d\leq 3$-step type $A$ flag manifolds).
  This was eventually proven in \cite{SSW}.
\item In the third paper, we push the framework to its limits;
  as we expect combinatorial multiplication whenever we have a map from
  the tensor product of two ``minuscule'' representations to a third
  (generalizing the trilinear form picture from paper \#1),
  we study the few cases thereof systematically. The most basic is
  $\CC^n \tensor \CC^n \to Alt^2\, \CC^n$, which we use to discover and
  solve the case of \defn{separated-descent Schubert calculus}.
  This concerns the pullback map along the inclusion
  $Fl(1,\ldots,n;\, \CC^n)
  \into Fl(1,\ldots,k;\, \CC^n) \times Fl(k,\ldots,n;\, \CC^n)$
  of a full flag manifold into a certain product of partial flag manifolds.
  (Note the single overlap $k$ of subspace dimensions.)
  There is also an infinite family in type $D$, giving us
  \defn{almost separated-descent Schubert calculus} based on 
  $Fl(n_1,\ldots,n_d;\, \CC^n)
  \into Fl(n_1,\ldots,n_{k+1};\, \CC^n) \times Fl(n_k,\ldots,n_d;\, \CC^n)$,
  where the targets share two subspaces. Oddly, the second formula does
  not reduce to the first when $n_k = n_{k+1}$.

  Since our announcement of these rules, another paper \cite{Huang}
  has given an alternative formula for separated-descent Schubert calculus.
\end{enumerate}

\subsection{Studying puzzle boundaries with Green's theorem}

Since we cannot multiply cohomology classes living on different spaces,
it would be odd if the puzzle boundaries $\lambda,\mu,\nu$ 
were strings with different content (numbers of $0$s and $1$s). 
Here is one way to prove that that can't happen:
\newcommand\vf{\vec f}

\begin{lem}\label{lem:greend1}
  To each edge of a $\Deltatri$ triangle in a puzzle $P$, we assign a
  $2$-dimensional vector, rotation-equivariantly:
\begin{center}
\begin{tikzpicture}[>=latex]
\draw[thick,->] (0,0) -- (90:1.5);
\draw[thick,->] (0,0) -- (210:1.5);
\draw[thick,->] (0,0) -- (-30:1.5);
\begin{scope}[shift={(-1.25,1.7)}]\uptri{}{0}{}\end{scope}
\begin{scope}[shift={(0,1.7)}]   \uptri{1}{}{}\end{scope}
\begin{scope}[shift={(1.25,1.7)}]\uptri{}{}{10}\end{scope}
\begin{scope}[shift={(2,-1)}]\uptri{}{10}{}\end{scope}
\begin{scope}[shift={(3.25,-1)}]   \uptri{0}{}{}\end{scope}
\begin{scope}[shift={(4.5,-1)}]\uptri{}{}{1}\end{scope}
\begin{scope}[shift={(-2,-1)}]\uptri{}{1}{}\end{scope}
\begin{scope}[shift={(-3.25,-1)}]   \uptri{10}{}{}\end{scope}
\begin{scope}[shift={(-4.5,-1)}]\uptri{}{}{0}\end{scope}
\end{tikzpicture}
\end{center}
The boundary of $P$ has vector sum
  $\vec 0$, which forces the content (the numbers of $0$s and $1$s) on
  the three sides to be the same.
\end{lem}

\begin{proof}
  Every $\Deltatri$ puzzle piece has vector sum $\vec 0$:
  \begin{center}
    \tikz[scale=1.8,>=latex]{
      \uptri{0}{0}{0}
      \draw[->] (horiz.center) -- ++(90:0.25);
      \draw[->] (NE.center) -- ++(210:0.25);
      \draw[->] (NW.center) -- ++(-30:0.25);
    }
    \qquad
    \tikz[scale=1.8,>=latex]{
      \uptri{1}{1}{1}
      \draw[->] (NW.center) -- ++(90:0.25);
      \draw[->] (horiz.center) -- ++(210:0.25);
      \draw[->] (NE.center) -- ++(-30:0.25);
    }
    \qquad
    \tikz[scale=1.8,>=latex]{
      \uptri{1}{10}{0}
      \draw[->] (NW.center) -- ++(90:0.25);
      \draw[->] (horiz.center) -- ++(-30:0.25);
      \draw[->] (NE.center) -- ++(210:0.25);
    }
  \end{center}
  To edges of $\nablatri$ triangles, we assign the negatives of these
  vectors, obtaining a cancelation at each internal edge of $P$.
  Those $\nablatri$ pieces also have vector sum $\vec 0$.
  Then by Green's theorem, the boundary of $P$ has vector sum $\vec 0$.
\end{proof}

\rem{AK adds:}
(If one allows $10$s on the South side of a puzzle but not the other two,
then this argument shows that the number of such $10$s is the number of $1$s
on the Northwest side minus the number of $1$s on the Northeast side.
Such puzzles are given a cohomological interpretation in \cite{artic73}.)

The original result \cite{KTW-II} about ordinary cohomology of
Grassmannians has been generalized (for Grassmannians) to equivariant
cohomology \cite{KT,artic46}, to $K$-theory \cite{Vakil}, and recently to
equivariant $K$-theory \cite{PY,artic68}, in each case adding some pieces. 
Our methods in this paper are closest to those of \cite{artic68},
which recognizes the three vectors above as the
{\em weights\junk{We warn the reader that
    this leads to an unfortunate collision later:
    puzzle labels get weights in one weight lattice, 
    but puzzle pieces are also given weights, in a different weight lattice,
    for an entirely different reason.}
 of the standard representation} $V^{A_2}_{\omega_1}$ of $SL_3(\CC)$
and makes use of the corresponding $R$-matrix and the
Yang--Baxter equation it satisfies.
However, our proof in \S\ref{sec:proofs} is cleaner 
than that of \cite{artic68} even in the $d=1$ case,
through exploitation of the bootstrap equation of the relevant $R$-matrix.

There is another way to label the edges of a $\Deltatri$ and still
get vector sum $\vec 0$: $\uptri{10}{10}{10}$. 
\junk{
  If we take a puzzle piece and rotate the vectors {\em in place,} not
  rotating the triangle, we get
  \[ \uptri{1}{10}{0} \to \uptri{10}{0}{1} \to \uptri{0}{1}{10} \to 
  \uptri{1}{10}{0} \qquad \qquad
  \uptri{0}{0}{0} \to\uptri{1}{1}{1} \to\uptri{10}{10}{10} \to\uptri{0}{0}{0} 
  \]
  suggesting that a new $\uptri{10}{10}{10}$ piece has a role to play.
}
And indeed, if we add this piece {\em but only in the $\Deltatri$ orientation}%
\footnote{If one includes both orientations, even with independent
  weightings, the resulting product rule
  is again commutative associative. We will address this
  in a future publication \cite{artic80}.}
we get a rule for the $K$-theory product, with respect to the
basis of structure sheaves of Schubert varieties \cite[theorem~4.6]{Vakil};
whereas if we add this piece {\em but only in the $\nablatri$ orientation}
we get a rule for the $K$-theory product, with respect to the dual basis 
(the ideal sheaves of boundaries of Schubert varieties) \cite{artic68}.

To be precise, equation (\ref{eq:LR}) is modified in this $K$-theory
product, in that each $K$-piece carries a \defn{fugacity}\footnote{%
  In prior publications it seemed natural to call this a ``weight'',
  but that might cause confusion with the ``weights'' we assign
  to edge labels, and ``fugacity'' is a standard term 
  in the physics literature for such local contributions.}
of $-1$,
and the \defn{fugacity of a puzzle} is defined as the product of the fugacities 
of its pieces. Then we sum the fugacities of the puzzles, 
rather than simply counting them. 
Luckily, in this formula all the puzzle fugacities contributing to a given
coefficient have the same sign so there is no cancelation.

An important difference between $K(Gr_k(\CC^n))$ and $H^*(Gr_k(\CC^n))$ 
is that the latter is a graded ring.
Equation (\ref{eq:LR}) for $H^*$ then implies that 
$\ell(\lambda)+\ell(\mu)=\ell(\nu)$, where 
\[
 \ell(\kappa) \  := \  \#\big\{ i<j\ :\ \kappa_i > \kappa_j \big\} \  
= \frac{1}{2} \deg S^\kappa
\]
is the number of \defn{inversions} of the string $\kappa$.

In \S\ref{ssec:Bmatrix} we define the {\em inversion number} of a
path through a puzzle, and in particular, the inversion number 
of (a clockwise path around) a puzzle piece. If two paths $\gamma$,
$\gamma'$ through a puzzle have the same start and end points,
then the difference of their inversion numbers is proven in
\S\ref{ssec:Bmatrix} to be the sum of the inversion numbers of 
the pieces in between $\gamma$ and $\gamma'$.

It is straightforward to calculate the inversion numbers of puzzle pieces;
the ones in \eqref{eq:d1tri} have vanishing inversion number, whereas
the $K$-pieces have inversion number $1$.
When drawing a piece we'll put its inversion number in the center, e.g. 
$\uptri[1]{10}{10}{10}$, omitting that only when the inversion number is zero.

\subsection{Puzzles for $d=2$ flag manifolds}
\label{ssec:d2intro}
In 1999 the first author trusted the oracle and looked for puzzles to
compute Schubert calculus of $2$-step flag manifolds, where the 
Schubert classes (hence the boundaries of the puzzles) are strings in $0,1,2$. 
Already with puzzles of edge-length $3$, 
one discovers the following eight labels $L_2$ and puzzle pieces:
\begin{equation}\label{eq:d2tri}
\uptri{0}{0}{0} \quad \uptri{1}{1}{1} \quad \uptri{2}{2}{2} 
\qquad \uptri{1}{10}{0} \quad \uptri{2}{20}{0} \quad \uptri{2}{21}{1} 
\qquad \uptri{21}{(21)0}{0} \quad \uptri{2}{2(10)}{10}
\end{equation}
(again permitting rotations but not reflections). 
We will call parenthesized expressions $X$ such as these
(really encoding planar binary trees with labeled leaves) \defn{multinumbers}, 
and call a multinumber $X$ \defn{valid} if it appears in a puzzle rule 
(i.e., on one of the puzzle piece edges here in \eqref{eq:d2tri}, 
or later in \S\ref{ssec:d3puz}).
We emphasize that this is a {\em historical} definition and not a 
mathematical one, with a mathematical retrodiction to come in
proposition \ref{prop:valid}.
Write $|X|$ for the number of digits in $X$ (i.e. $|X|=1$ exactly
for ``single-numbers''). 
Proving the analogue of equation (\ref{eq:LR}) for $d=2$,
and its generalization to equivariant cohomology, 
took until 2014 \cite{BKPT,BuchHT}.

The $d=2$ analogue of lemma~\ref{lem:greend1} is an assignment with
values in $D_4$'s weight lattice of row vectors
$\{(a_1,a_2,a_3,a_4)\ :\ 2a_i\in \ZZ,\ a_i-a_j \in \ZZ\}$.  
We regard the (counterclockwise) $120^\circ$ rotation $\tau$ as acting 
on this lattice on the right by
\[
  \qquad\qquad
\tau = \frac{\ 1\ }{2} 
\begin{bmatrix} -&-&-&- \\ +&-&-&+ \\ +&+&-&- \\ +&-&+&- \end{bmatrix}
\qquad\qquad \text{where $+=+1$, $-=-1$}
\]
and mention that the corresponding automorphism of the Lie algebra 
$D_4$ is outer; it is the triality automorphism.

\begin{lem}\label{lem:greend2}
  To each of the NW labels $L_2$ in the puzzle pieces above, we assign a 
  weight in the $D_4$ weight lattice
  \begin{align*}
    \m2&\mapsto(+,0,0,0)     &\m1&\mapsto(0,+,0,0)&\m0&\mapsto(0,0,+,0)&\m{10}&\mapsto(0,0,0,+)\\
    \m{(21)0}&\mapsto(-,0,0,0)&\m{20}&\mapsto(0,-,0,0)&\m{2(10)}&\mapsto(0,0,-,0)&\m{2 1}&\mapsto(0,0,0,-)
  \end{align*}
  and assign $D_4$ weights to labels on other sides in a
  rotation-equivariant way, using the $4\times 4$ matrix above.

  Then every puzzle piece has vector sum $\vec 0$. By Green's theorem
  the boundary of the whole puzzle also has vector sum $\vec 0$,
  which forces the content 
  (the numbers of $0$s, $1$s, and $2$s) on the three sides to be the same.
\end{lem}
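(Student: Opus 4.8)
The plan is to mimic the proof of Lemma~\ref{lem:greend1} almost verbatim; the only genuinely new ingredients are the larger table of weights and two elementary facts about $\tau$. First I would record that $\tau^2+\tau+I=0$ --- equivalently, $\tau$ is orthogonal with $\operatorname{tr}\tau=-2$, so that its eigenvalues are the primitive cube roots of unity each with multiplicity two, equivalently again that $\tau$ realises the order-three triality automorphism. In particular $\tau^3=I$, so the rotation-equivariant extension of the weight assignment is unambiguous: in a $\Delta$ triangle an edge in the NE (resp.\ NW) position carries $w(\ell)\tau$ (resp.\ $w(\ell)\tau^2$), where $w(\ell)$ is the tabulated weight read as a row vector; and, exactly as in the $d=1$ case, an edge of a $\nabla$ triangle carries the negative of what it would carry in a $\Delta$ triangle.

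The substance of the lemma is the finite check that every piece in \eqref{eq:d2tri} has vanishing vector sum, i.e.\ that $w(a)\tau^2+w(b)+w(c)\tau=\vec0$ for each listed $\Delta$ piece \uptri{a}{b}{c}. For the three monochromatic pieces \uptri{i}{i}{i} ($i=0,1,2$) this is precisely $w(i)\,(\tau^2+\tau+I)=\vec0$; for each of the remaining five it is a single substitution of two of the eight table rows into the explicit matrix --- for instance \uptri{1}{10}{0} gives $(0,1,0,0)\tau^2+(0,0,0,1)+(0,0,1,0)\tau=\tfrac12(-1,-1,1,-1)+(0,0,0,1)+\tfrac12(1,1,-1,-1)=\vec0$. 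Every rotation of a listed piece then also has vanishing sum, since the vector sum of the rotated piece \uptri{b}{c}{a} is that of \uptri{a}{b}{c} right-multiplied by the invertible matrix $\tau^2$; and every $\nabla$ piece, being a listed piece used upside down with its vectors negated, inherits a vanishing sum. These exhaust the pieces.

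The global assertion now follows by the same argument as for $d=1$. Each internal edge of a puzzle $P$ lies in exactly one $\Delta$ and one $\nabla$ triangle, occupying the same position (horizontal, NE, or NW) relative to each, so the two triangles assign it opposite vectors; summing the (vanishing) vector sums of all small triangles of $P$ therefore leaves only the boundary edges, whose vectors sum to $\vec0$. The boundary of $P$ consists of its three sides $\nu$ (horizontal), $\mu$ (NE), $\lambda$ (NW), each a word of common length $n$ in the single-numbers $0,1,2$ only and each built from $\Delta$-edges; so, writing $c_i(\kappa)$ for the number of $i$'s in a word $\kappa$ and putting $p=(c_2(\nu),c_1(\nu),c_0(\nu),0)$, $q=(c_2(\mu),c_1(\mu),c_0(\mu),0)$, $r=(c_2(\lambda),c_1(\lambda),c_0(\lambda),0)$ in $\ZZ^4$, the boundary identity reads $p+q\tau+r\tau^2=\vec0$. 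Using $\tau^2+\tau+I=0$ this rewrites as $(q-p)\tau+(r-p)\tau^2=\vec0$, hence $q-p=-(r-p)\tau$; since $q-p$ and $r-p$ both lie in the plane $H_0=\{v:v_4=0,\ v_1+v_2+v_3=0\}$ of content differences (all three words having length $n$), it remains only to check --- one line with $\tau$ --- that the sole $v$ with $v\in H_0$ and $v\tau\in H_0$ is $v=\vec0$. That forces $r=p$, then $q=p$, i.e.\ $\nu$, $\mu$ and $\lambda$ have the same content.

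I do not expect a conceptual obstacle --- the skeleton is identical to Lemma~\ref{lem:greend1}. Two points need care. First, the closing linear-algebra step: for $d=1$ the single relation $\epsilon_1+\epsilon_2+\epsilon_3=0$ among the standard weights forced equal content with no computation, whereas the three weights carried by single-numbers on a $d=2$ boundary are three coordinate vectors of $\ZZ^4$ and satisfy no linear relation, so one must genuinely invoke $\tau$ together with the equal side-lengths. Second --- and this is the real hazard --- the bookkeeping: one must fix once and for all which power of $\tau$ attaches to the NE versus the NW position, and the precise sense in which $\nabla$ pieces ``carry the negatives'', so that the eight piece-verifications and the edge-by-edge cancellation are literally compatible. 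Once a single consistent convention is pinned down, everything above is routine.
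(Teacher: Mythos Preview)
Your proposal is correct. The paper does not supply a proof of this lemma at all --- it is stated as a finite verification the reader can carry out, with the remark that \S\ref{sec:gen} will later explain where the assignment comes from --- so your direct check (the identity $\tau^2+\tau+I=0$, the eight piece-verifications, the rotation argument via right-multiplication by $\tau^2$, and the internal-edge cancellation) is exactly the intended argument; your closing linear-algebra step deriving equal content from $p+q\tau+r\tau^2=\vec 0$ is in fact more explicit than anything the paper writes down.
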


The proof, other than immediate calculation, is exactly as in lemma 
\ref{lem:greend1}. Here is an example of the vectors adding to $\vec 0$,
on the $\uptri{21}{(21)0)}{0}$ puzzle piece:
$$
\begin{array}{ccccl}
  21 \mapsto \begin{pmatrix} 0&0&0&-1 \end{pmatrix} 
&&
  0 &\mapsto& \begin{pmatrix} 0&0&1&0 \end{pmatrix} \tau^2 
\\
  & \Deltatri &&=&  \begin{pmatrix} -1&-1&-1&1 \end{pmatrix}/2 \\
  & 
    \begin{array}{rcl}
      (21)0 &\mapsto&  \begin{pmatrix} -1&0&0&0 \end{pmatrix} \tau \\      
      &   = & \begin{pmatrix} 1&1&1&1 \end{pmatrix}/2   \\
    \end{array} \\
\end{array}
$$

The eight vectors in lemma \ref{lem:greend2}
are the weights of the standard representation 
$V^{D_4}_{\omega_1}$ of $\lie{so}(8)$, 
whose $\ZZ/3$ rotations are the two spin
representations $\tau\cdot V^{D_4}_{\omega_1} \iso V^{D_4}_{\omega_2}$ and
$\tau^2\cdot V^{D_4}_{\omega_1} \iso V^{D_4}_{\omega_3}$.

At this point, the assignments in lemmas~\ref{lem:greend1} and
\ref{lem:greend2} may look quite magical.  In \S\ref{sec:gen} we will
explain a recipe that produces them without effort, at least up to $d\leq 4$.

With the clue provided by lemma~\ref{lem:greend2}, the methods of \cite{artic68}
generalize straightforwardly to give our first major theorem of this paper.
To state it, we first recall

\begin{thm}\label{thm:HTd2}\cite{BuchHT}\cite[for $d=1$]{KT}\label{thm:Buch}
  To compute Schubert calculus in $T$-equivariant cohomology of a 
  $2$-step flag manifold, whose coefficients live in
  $H^*_T(pt) := \ZZ[y_1,\ldots,y_n]$,
  one needs the following \defn{equivariant rhombi}:
  \[
\eqrh{1}{0}\quad\eqrh{2}{0}\quad\eqrh{2}{1}\quad\eqrh{21}{0}\quad
  \eqrh{2}{10}\quad\eqrh{21}{10}\quad\eqrh{2(10)}{0}\quad\eqrh{2}{(21)0}
\]
  These may not be rotated. The \defn{fugacity of an equivariant rhombus}
  is $y_j-y_i \in H^*_T(pt)$, where lines drawn Southwest and Southeast
  from the rhombus hit the bottom edge at positions $i<j$.
  The \defn{fugacity $fug(P)$ of a puzzle $P$} 
  is the product of the fugacities of its rhombi. Then
  \begin{equation}
    \label{eq:LRT}
    S^\lambda\, S^\mu \ =\  
    \sum_\nu \left( \sum_{P\text{ with boundary }\lambda,\mu,\nu} fug(P) \right)
    \ S^\nu 
  \end{equation}
  where as usual the summation is over puzzles $P$ of the form
  $\tikz[scale=1.8,baseline=0.5cm]{\uptri{\lambda}{\nu}{\mu}}$ and all
  strings are read left to right.

  This formula~\eqref{eq:LRT} is manifestly positive in the sense of
  \cite{Graham}.
\end{thm}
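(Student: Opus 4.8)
The statement recalls a theorem of Buch \cite{BuchHT}, so what follows is how I would re-derive it within the present framework. Set
\[
  P^\nu_{\lambda\mu}(y) \ :=\ \sum\big\{\,fug(P)\ :\ P \text{ a puzzle with boundary }\lambda,\nu,\mu\,\big\},
\]
a polynomial in $\ZZ[y_1,\ldots,y_n]$; the plan is to show $P^\nu_{\lambda\mu}$ satisfies the three properties that pin down the equivariant Schubert structure constant $c^\nu_{\lambda\mu}(y)$ of $H^*_T$ of the $2$-step flag manifold: (i) \emph{grading and support} --- it is homogeneous of degree $\ell(\lambda)+\ell(\mu)-\ell(\nu)$ and vanishes unless $\lambda\leq\nu$ and $\mu\leq\nu$ in Bruhat order (in particular unless the three contents agree); (ii) \emph{diagonal values} --- $P^\nu_{\lambda\nu}(y)$ equals the restriction $S^\lambda|_\nu$ of the Schubert class to the fixed point $\nu$; and (iii) a \emph{recursion} in the $y_i$, of the Chevalley--Monk or Molev--Sagan type used for $d=1$ in \cite{KT}, which together with (i)--(ii) determines the constants uniquely.

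Property (i) is where the earlier lemmas pull their weight: content-matching is exactly the conclusion of the $d=2$ Green's-theorem Lemma~\ref{lem:greend2}, and the degree statement follows by refining that weight bookkeeping with the inversion number of \S\ref{ssec:Bmatrix} --- each equivariant rhombus carries one linear factor $y_i-y_j$ and, by the inversion-number count, exactly one unit of the degree drop $\ell(\lambda)+\ell(\mu)-\ell(\nu)$, while the pieces of \eqref{eq:d2tri} contribute neither; Bruhat support then follows because the non-equivariant specialization $y=0$ already vanishes off the support, by \cite{BKPT}. For (ii) one exhibits, for $\mu=\nu$, a \emph{unique} ``fully rigid'' puzzle --- assembled entirely from equivariant rhombi strung along one family of lines --- and checks its fugacity against the known product formula for $S^\lambda|_\nu$.

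The real content is (iii), where I would follow the integrable-systems strategy of this paper (and of \cite{artic68}). One realizes $P^\nu_{\lambda\mu}(y)$ as a matrix element of a composition of $R$-matrices for the three $8$-dimensional minuscule representations $V^{D_4}_{\omega_1}, V^{D_4}_{\omega_2}, V^{D_4}_{\omega_3}$ permuted by triality, evaluated at the spectral value at which $R$ degenerates onto the puzzle pieces: the first-order part of $R(z)$ near that value produces precisely the equivariant rhombi, with fugacity proportional to $z=y_i-y_j$, one rapidity line carrying $y_i$ for each bottom-edge position. The Yang--Baxter equation makes the matrix element independent of how the rapidity lines are routed, and the bootstrap (fusion) identity for these $R$-matrices then forces $P^\nu_{\lambda\mu}(y)$ to obey the same three-term recursion satisfied by the equivariant Schubert basis of $H^*_T$. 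I would run this first for $d=1$, reproving \cite{KT} cleanly, then transport it verbatim to $D_4$, the only genuinely new ingredient being the existence and bootstrap relation of the $D_4$ minuscule $R$-matrix (whose combinatorial shadow is Lemma~\ref{lem:greend2}). As a fallback that avoids identifying the integrable state space, one can prove (iii) combinatorially by adapting the Knutson--Tao ``propagation of gashes'': for a puzzle with $\nu$ fixed, a canonical local surgery near the bottom-right implements the recursion fugacity-preservingly, the equivariant rhombi absorbing the error terms.

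The main obstacle is exactly (iii). On the $R$-matrix route the delicate point is identifying the model's state space with $H^*_T$ of the flag manifold in a way that makes the resulting invariant trilinear form \emph{literally} the Schubert structure constant, rather than a rescaled or basis-twisted version --- the triality, which forces the three sides of the triangle to carry three inequivalent representations, is what makes that bookkeeping subtle. On the combinatorial route the obstacle is instead verifying that every error term in the recursion is produced by a rhombus with the correct fugacity and no stray sign, precisely where the $2$-step case exceeds the Grassmannian case in difficulty.
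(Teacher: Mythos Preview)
Your (i)--(ii)--(iii) characterization is the classical route of \cite{KT,BuchHT} and is sound in outline, but this paper takes a different path. Here Theorem~\ref{thm:HTd2} is merely cited, and then recovered as the first-order $u_i=\exp(y_i)$ expansion of the $K_T$ result Theorem~\ref{thm:KTd2}, whose proof (\S\ref{sec:d2}) does \emph{not} establish a recursion. Instead it proves directly, for every permutation $\sigma$, the localized identity $\sum_\nu (\text{puzzle sum})\,\mathbf S^\nu|_\sigma = \mathbf S^\lambda|_\sigma\,\mathbf S^\mu|_\sigma$ (Theorem~\ref{thm:final}) by sliding an entire $\sigma$-block of same-color crossings through the puzzle triangle via Yang--Baxter and bootstrap (Proposition~\ref{prop:key}), and then trivializing the triangle over the sorted string $\omega$ (Proposition~\ref{prop:weight}). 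The $q\to 0$ limit of the same-color $R$-matrix is the nilHecke matrix (Lemma~\ref{lem:D4a}), which identifies $\mathbf S^\lambda|_\sigma$ with the genuine restriction $S^\lambda|_{\sigma'}$ (Corollary~\ref{cor:schub}); injectivity of localization finishes. So the bootstrap is used to transport an arbitrary permutation across the puzzle in one stroke, not to generate a three-term recurrence, and the basis-matching you flag as the main obstacle is resolved by the explicit nilHecke degeneration rather than by a separate normalization argument. What your approach buys is conceptual alignment with the original proofs; what the paper's buys is that no uniqueness-by-recursion theorem for the $c^\nu_{\lambda\mu}$ is ever invoked. One correction to your step (ii): for $\mu=\nu$ the puzzle is \emph{not} unique --- $S^\lambda|_\nu$ is an AJS/Billey sum over reduced subwords, with one puzzle per term, not a single rigid filling.
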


The \textcircled{${\ss -1}$}s 
in these pieces are to indicate that they, too, spoil 
lemma~\ref{lem:sympd1}'s inversion count, 
but in the opposite way from $K$-theory; more detail 
will come in \S\ref{ssec:Bmatrix}.

We come to our first major new result of the paper, the
extension of the above to $T$-equivariant $K$-theory,
for which there was no known positive formula (in the
sense of \cite{AGM-Kpos}) or even extant conjecture
for the structure constants.
These live in 
$K_T(pt)\cong \ZZ[u_1^{\pm},\ldots,u_n^{\pm}]$.

\begin{thm}\label{thm:KTd2}
  Equation \eqref{eq:LRT} generalizes
  to compute Schubert calculus in equivariant $K$-theory of a 
  $2$-step flag manifold. One needs the equivariant rhombi
  from theorem~\ref{thm:Buch}, whose fugacities are now $1-u_j/u_i$,
  and also the following \defn{$K$-pieces}:
  \begin{gather*}
    \uptri[1]{10}{10}{10}\qquad\qquad \uptri[1]{20}{20}{20}\qquad\qquad \uptri[1]{21}{21}{21}
    \\
    \uptri[1]{2(10)}{20}{21}\quad 
    \uptri[1]{21}{2(10)}{20}\quad \uptri[1]{20}{21}{2(10)}
    \\
    \uptri[1]{(21)0}{10}{20}\quad     \downtri[1]{20}{(21)0}{10}\quad
        \uptri[1]{10}{20}{(21)0}
    \qquad    \qquad
    \downtri[1]{1}{2(10)}{(21)0}\quad \uptri[1]{(21)0}{1}{2(10)}\quad
    \downtri[1]{2(10)}{(21)0}{1}
    \\
    \downtri[2]{(21)0}{(21)0}{(21)0}
  \end{gather*}
  (no other rotations!) each with fugacity $-1$, or $(-1)^2$ for the last one.

  \junk{below needs more gluing, e.g. discussion of vertical rhombi...  
    Both usual rhombi and $K$-rhombi also acquire an extra monomial fugacity
    (which does not factor as a product over triangles). The power of
    $u_j/u_i$ ($0$ or $1$) in the fugacity can be obtained by the
    following ad hoc rule:}
  Some vertical rhombi $\rh{X}{Y}{Z}{W}$
  also acquire an extra monomial fugacity $u_j/u_i$
  (which does not factor as a product over triangles):  
  a rhombus gets a $u_j/u_i$ if $|W|+|Z| > |X|+|Y|$, 
  or (in the case $|W|+|Z| = |X|+|Y|$) if equal to
  \[
    \tikz[baseline=0,scale=1.25]{\uptri{20}{0}{2}\downtri{10}{}{1}}
    \qquad
    \tikz[baseline=0,scale=1.25]{\uptri{21}{1}{2}\downtri{0}{}{10}}
    \qquad
    \tikz[baseline=0,scale=1.25]{\uptri{0}{2}{20}\downtri{1}{}{21}}
    \qquad
    \tikz[baseline=0,scale=1.25]{\uptri[1]{21}{21}{21}\downtri{0}{}{(21)0}}
    \qquad
    \tikz[baseline=0,scale=1.25]{\uptri[1]{21}{2(10)}{20}\downtri[1]{1}{}{(21)0}}
  \]
  This formula for the Schubert structure constants is manifestly positive in
  the sense of \cite{AGM-Kpos}.
\end{thm}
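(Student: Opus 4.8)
The plan is to prove Theorem~\ref{thm:KTd2} by the same $R$-matrix strategy sketched after Lemma~\ref{lem:greend2}: realise the puzzle pieces (including the $K$-pieces and the equivariant rhombi, the latter read as a degenerate limit of a triangular piece with a spectral parameter attached to a horizontal line) as matrix elements of a solution of the Yang--Baxter equation, built from the $R$-matrix of the standard and spin representations of $U_q(\widehat{\lie{so}(8)})$ together with the triality symmetry $\tau$. Concretely, I would first set up the relevant quantum-affine module: to each puzzle edge label $X \in L_2$ attach the $D_4$ weight vector of Lemma~\ref{lem:greend2}, give the three sides of a triangle the three $\ZZ/3$-rotated copies (standard and the two spin reps), and introduce the multiplicative spectral parameters $u_i$ on the vertical ``rapidity lines'' so that an equivariant rhombus straddling lines $i<j$ contributes the ratio $u_j/u_i$. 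The $K$-theory ``fugacity $-1$'' and the extra monomial fugacities should then emerge as the scalar normalisation of the $R$-matrix (the ratio of the action on a given weight space to the action on the highest-weight line), exactly as in \cite{artic68}; this is where the ad hoc rule ``$|W|+|Z|>|X|+|Y|$, or equality with $\max(Z)>\max(Y)$'' comes from — it records on which weight spaces the scalar prefactor of $R$ has a pole/zero.

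Second, I would verify that the list of pieces in the statement is \emph{exactly} the support of this $R$-matrix — that every nonzero matrix element corresponds to one of the listed triangles/rhombi (with the stated fugacity) and conversely. This is the bookkeeping heart of the argument: enumerate the weight spaces of $V^{D_4}_{\omega_1}\tensor V^{D_4}_{\omega_2}$ (and the rotated versions), compute the action of the $R$-matrix on each, read off which transitions are allowed, and match them against \eqref{eq:d2tri} plus the thirteen $K$-pieces and the last $\downtri[2]{(21)0}{(21)0}{(21)0}$ piece with its $(-1)^2$. The orientation restrictions (``no other rotations'') must fall out of the fact that the three sides carry three \emph{different} (triality-rotated) representations, so a piece and its $120^\circ$ rotation are genuinely different matrix elements, only some of which are nonzero.

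Third, granting that the pieces assemble into a Yang--Baxter-integrable lattice model, I would deduce the product formula \eqref{eq:LRT} by the standard transfer-matrix / wall-crossing argument: the partition function of the model on the triangle with fixed boundary $\lambda,\mu,\nu$ is, on one hand, the sum of $fug(P)$ over puzzles $P$, and on the other hand — by sliding rapidity lines across the triangle using YBE and the bootstrap/unitarity relations of the $R$-matrix — equal to a Schubert structure constant in $K_T$ of the $2$-step flag manifold. Here I would lean on the bootstrap equation of the relevant $R$-matrix (the ``cleaner'' ingredient advertised in the introduction) to pass from the $R$-matrix identities to the associativity and to the identification with the geometric structure constants, rather than re-deriving everything from scratch; the base case $d=1$ and the cohomological case Theorem~\ref{thm:Buch} serve as consistency checks and as the classical ($q\to 1$, then non-equivariant) limits.

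The main obstacle I anticipate is the \emph{completeness and correctness of the piece list together with the monomial-fugacity rule}: showing that no further $K$-pieces are needed, that the asymmetric inclusion of $\downtri[2]{(21)0}{(21)0}{(21)0}$ in only one orientation is forced, and — most delicately — that the non-multiplicative monomial fugacities on vertical rhombi are captured precisely by the combinatorial condition on $|X|,|Y|,|Z|,|W|$ and $\max$. That last point is where the $R$-matrix normalisation interacts with the degeneration from a spectral-parameter-dependent model to the puzzle model, and I expect it to require a careful case analysis of the scalar factor of $R$ on each weight space (equivalently, a careful choice of the ``gauge'' in which the $R$-matrix entries become the stated fugacities), plus a check that all contributions to a fixed structure constant still share a sign so that \eqref{eq:LRT} is manifestly positive in the sense of \cite{AGM}.
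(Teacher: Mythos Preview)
Your proposal captures the right architecture --- quantized affine $D_4$, three triality-related modules on the three edge directions, Yang--Baxter plus bootstrap to slide the ``$\sigma$-rectangle'' through the triangle --- and that is indeed what the paper does in \S\ref{sec:d2}. But there is a genuine gap in your understanding of \emph{how} the puzzle fugacities arise from the $R$-matrix, and it shows up most visibly in your remark about ``classical ($q\to 1$)'' limits.

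The puzzle rule is \emph{not} the $R$-matrix itself; it is the $q\to 0$ degeneration. At generic $q$ the $\check R_{3,1}$ of \eqref{eq:RD4} has far more nonzero entries than there are puzzle pieces, and those entries are rational functions of $q$ and $u_1/u_2$, not $\pm 1$ or $1-u_j/u_i$. What the paper does (Lemmas~\ref{lem:D4a} and \ref{lem:D4b}) is compute, for each entry, the \emph{leading} power of $q$ and its coefficient. The power is governed precisely by the antisymmetric form $B$ of \S\ref{ssec:Bmatrix}: an entry $\check R_{XY}^{WZ}$ behaves like $(-q)^{\frac{1}{2}(B(\vf_X,\vf_Y)-B(\vf_W,\vf_Z))}$ times (fugacity $+\,O(q^2)$), and that exponent is the inversion number of the rhombus. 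Summed over a puzzle this gives $q^{\ell(\nu)-\ell(\lambda)-\ell(\mu)}$, which is what makes both sides of Theorem~\ref{thm:final} homogeneous of the same order in $q$ so that one can equate leading coefficients. This is the mechanism you are missing: the $-1$'s on the $K$-pieces, the ad hoc $u_j/u_i$ rule on rhombi, and the absence of ``other rotations'' all come out of this leading-order computation (displayed in Appendix~\ref{app:Rd2}), not from a normalisation of $R$ at generic $q$. Without the $q\to 0$ step and the $B$-grading there is no reason the $R$-matrix should collapse to a finite list of pieces with unit-or-monomial fugacities.

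A second, related gap: you do not say how the partition function on either side is identified with an actual Schubert-calculus quantity. The paper does this by restricting the same-colour $R$-matrix to the single-number sector (Proposition~\ref{prop:weight}), observing it becomes --- again in the $q\to 0$ limit --- the nilHecke $R$-matrix \eqref{eq:defnil}, and then invoking the AJS/Billey--Graham--Willems subword formula (Corollary~\ref{cor:schub}) to recognise $\mathbf S^\lambda|_\sigma$ as $S^\lambda|_{\sigma'}$. Your ``transfer-matrix / wall-crossing'' paragraph gestures at Proposition~\ref{prop:key} but does not explain what the boundary quantities \emph{are}; this identification is not automatic and is where the geometry enters. (Incidentally, the cohomological Theorem~\ref{thm:Buch} is recovered by setting $u_i=\exp(y_i)$ and expanding to first order in $y$, not by any $q\to 1$ limit.)
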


\junk{if the sum of lengths of top
  multi-numbers is greater than that of bottom multi-numbers -- in
  case of equality, if it satisfies $\max(\text{NE})>\max(\text{SE})$.}

In particular, rotation-invariant rhombi (such as equivariant rhombi)
get no $u_j/u_i$, and among a rhombus and its
$180^\circ$ rotation, assuming they're distinct and valid (i.e., not
involving unrotatable $K$-pieces), exactly one gets a $u_j/u_i$ factor.

\begin{ex}\def\ss{}
$c^{0102,0201}_{0210}$ is computed from one puzzle
\begin{center}
\begin{tikzpicture}[math mode,nodes={\mcol},x={(-0.577cm,-1cm)},y={(0.577cm,-1cm)},scale=1.5]
\draw[thick] (0,0) -- node[pos=0.5] {\ss 0} ++(0,1); \draw[thick] (0,0) -- node[pos=0.5] {\ss 2} ++(1,0); \draw[thick] (0+1,0) -- node {\ss 20} ++(-1,1); 
\draw[thick] (0,1) -- node[pos=0.5] {\ss 2} ++(0,1); \draw[thick] (0,1) -- node[pos=0.5] {\ss 2} ++(1,0); \draw[thick] (0+1,1) -- node {\ss 2} ++(-1,1); 
\draw[thick] (0,2) -- node[pos=0.5] {\ss 0} ++(0,1); \draw[thick] (0,2) -- node[pos=0.5] {\ss 0} ++(1,0); \draw[thick] (0+1,2) -- node {\ss 0} ++(-1,1); 
\draw[thick] (0,3) -- node[pos=0.5] {\ss 1} ++(0,1); \draw[thick] (0,3) -- node[pos=0.5] {\ss 10} ++(1,0); \draw[thick] (0+1,3) -- node {\ss 0} ++(-1,1); 
\draw[thick] (1,0) -- node[pos=0.5] {\ss 0} ++(0,1); \draw[thick] (1,0) -- node[pos=0.5] {\ss 0} ++(1,0); \draw[thick] (1+1,0) -- node {\ss 0} ++(-1,1); 
\draw[thick] (1,1) -- node[pos=0.5] {\ss 20} ++(0,1); \draw[thick] (1,1) -- node[pos=0.5] {\ss 0} ++(1,0); \draw[thick] (1+1,1) -- node {\ss 2} ++(-1,1); 
\draw[thick] (1,2) -- node[pos=0.5] {\ss 1} ++(0,1); \draw[thick] (1,2) -- node[pos=0.5] {\ss 1} ++(1,0); \draw[thick] (1+1,2) -- node {\ss 1} ++(-1,1); 
\draw[thick] (2,0) -- node[pos=0.5] {\ss 0} ++(0,1); \draw[thick] (2,0) -- node[pos=0.5] {\ss 1} ++(1,0); \draw[thick] (2+1,0) -- node {\ss 10} ++(-1,1); 
\draw[thick] (2,1) -- node[pos=0.5] {\ss 21} ++(0,1); \draw[thick] (2,1) -- node[pos=0.5] {\ss 1} ++(1,0); \draw[thick] (2+1,1) -- node {\ss 2} ++(-1,1); 
\draw[thick] (3,0) -- node[pos=0.5] {\ss 0} ++(0,1); \draw[thick] (3,0) -- node[pos=0.5] {\ss 0} ++(1,0); \draw[thick] (3+1,0) -- node {\ss 0} ++(-1,1); 
\end{tikzpicture}
\end{center}
whose fugacity is $u_3/u_2$, whereas the opposite multiplication
$c^{0201,0102}_{0210}$ comes from two puzzles
\begin{center}
\begin{tikzpicture}[math mode,nodes={\mcol},x={(-0.577cm,-1cm)},y={(0.577cm,-1cm)},scale=1.5]
\draw[thick] (0,0) -- node[pos=0.5] {\ss 0} ++(0,1); \draw[thick] (0,0) -- node[pos=0.5] {\ss 1} ++(1,0); \draw[thick] (0+1,0) -- node {\ss 10} ++(-1,1); 
\draw[thick] (0,1) -- node[pos=0.5] {\ss 1} ++(0,1); \draw[thick] (0,1) -- node[pos=0.5] {\ss 1} ++(1,0); \draw[thick] (0+1,1) -- node {\ss 1} ++(-1,1); 
\draw[thick] (0,2) -- node[pos=0.5] {\ss 0} ++(0,1); \draw[thick] (0,2) -- node[pos=0.5] {\ss 0} ++(1,0); \draw[thick] (0+1,2) -- node {\ss 0} ++(-1,1); 
\draw[thick] (0,3) -- node[pos=0.5] {\ss 2} ++(0,1); \draw[thick] (0,3) -- node[pos=0.5] {\ss 20} ++(1,0); \draw[thick] (0+1,3) -- node {\ss 0} ++(-1,1); 
\draw[thick] (1,0) -- node[pos=0.5] {\ss 0} ++(0,1); \draw[thick] (1,0) -- node[pos=0.5] {\ss 0} ++(1,0); \draw[thick] (1+1,0) -- node {\ss 0} ++(-1,1); 
\draw[thick] (1,1) -- node[pos=0.5] {\ss 10} ++(0,1); \draw[thick] (1,1) -- node[pos=0.5] {\ss 0} ++(1,0); \draw[thick] (1+1,1) -- node {\ss 1} ++(-1,1); 
\draw[thick] (1,2) -- node[pos=0.5] {\ss 2} ++(0,1); \draw[thick] (1,2) -- node[pos=0.5] {\ss 21} ++(1,0); \draw[thick] (1+1,2) -- node {\ss 1} ++(-1,1); 
\draw[thick] (2,0) -- node[pos=0.5] {\ss 0} ++(0,1); \draw[thick] (2,0) -- node[pos=0.5] {\ss 2} ++(1,0); \draw[thick] (2+1,0) -- node {\ss 20} ++(-1,1); 
\draw[thick] (2,1) -- node[pos=0.5] {\ss 2} ++(0,1); \draw[thick] (2,1) -- node[pos=0.5] {\ss 2} ++(1,0); \draw[thick] (2+1,1) -- node {\ss 2} ++(-1,1); 
\draw[thick] (3,0) -- node[pos=0.5] {\ss 0} ++(0,1); \draw[thick] (3,0) -- node[pos=0.5] {\ss 0} ++(1,0); \draw[thick] (3+1,0) -- node {\ss 0} ++(-1,1); 
\end{tikzpicture}
\qquad
\begin{tikzpicture}[math mode,nodes={\mcol},x={(-0.577cm,-1cm)},y={(0.577cm,-1cm)},scale=1.5]
\draw[thick] (0,0) -- node[pos=0.5] {\ss 0} ++(0,1); \draw[thick] (0,0) -- node[pos=0.5] {\ss 1} ++(1,0); \draw[thick] (0+1,0) -- node {\ss 10} ++(-1,1); 
\draw[thick] (0,1) -- node[pos=0.5] {\ss 1} ++(0,1); \draw[thick] (0,1) -- node[pos=0.5] {\ss 1} ++(1,0); \draw[thick] (0+1,1) -- node {\ss 1} ++(-1,1); 
\draw[thick] (0,2) -- node[pos=0.5] {\ss 0} ++(0,1); \draw[thick] (0,2) -- node[pos=0.5] {\ss 21} ++(1,0); \draw[thick] (0+1,2) -- node {\ss (21)0} ++(-1,1); 
\node[invlabel,text=black] at (0+0.7,2+0.7) {\sss 1};
\draw[thick] (0,3) -- node[pos=0.5] {\ss 2} ++(0,1); \draw[thick] (0,3) -- node[pos=0.5] {\ss 20} ++(1,0); \draw[thick] (0+1,3) -- node {\ss 0} ++(-1,1); 
\draw[thick] (1,0) -- node[pos=0.5] {\ss 0} ++(0,1); \draw[thick] (1,0) -- node[pos=0.5] {\ss 0} ++(1,0); \draw[thick] (1+1,0) -- node {\ss 0} ++(-1,1); 
\draw[thick] (1,1) -- node[pos=0.5] {\ss 2} ++(0,1); \draw[thick] (1,1) -- node[pos=0.5] {\ss 0} ++(1,0); 
\node[invlabel,text=black] at (1+0.5,1+0.5) {\sss -1};
\draw[thick] (1,2) -- node[pos=0.5] {\ss 10} ++(0,1); \draw[thick] (1,2) -- node[pos=0.5] {\ss 0} ++(1,0); \draw[thick] (1+1,2) -- node {\ss 1} ++(-1,1); 
\draw[thick] (2,0) -- node[pos=0.5] {\ss 0} ++(0,1); \draw[thick] (2,0) -- node[pos=0.5] {\ss 2} ++(1,0); \draw[thick] (2+1,0) -- node {\ss 20} ++(-1,1); 
\draw[thick] (2,1) -- node[pos=0.5] {\ss 2} ++(0,1); \draw[thick] (2,1) -- node[pos=0.5] {\ss 2} ++(1,0); \draw[thick] (2+1,1) -- node {\ss 2} ++(-1,1); 
\draw[thick] (3,0) -- node[pos=0.5] {\ss 0} ++(0,1); \draw[thick] (3,0) -- node[pos=0.5] {\ss 0} ++(1,0); \draw[thick] (3+1,0) -- node {\ss 0} ++(-1,1); 
\end{tikzpicture}
\end{center}
whose fugacities are $1$ and $(-1)(1-u_3/u_2)$, respectively.
\end{ex}

Theorem~\ref{thm:HTd2} is recovered by writing $u_i=1-y_i$ and
expanding at first nontrivial order in the $y_i$.

Define \defn{dual Schubert classes} $\{S_\lambda\}$ by 
$\left< S^\lambda S_\mu\right>=\delta^\lambda_\mu$,
where $\langle \rangle$ is the $K$-theoretic analogue of integration,
definable by $\langle S^\lambda \rangle = 1\ \forall \lambda$.
Then theorem~\ref{thm:KTd2} also provides a dual puzzle rule
\[
S_\lambda\, S_\mu \ =\  
    \sum_\nu \left( \sum_{P\text{ with boundary }\lambda,\mu,\nu} fug(P) \right)
    \ S_\nu 
  \]
where now the puzzles $P$ are of the form $\tikz[scale=1.8,baseline=-0.8cm]{\downtri{\lambda}{\nu}{\mu}}$
and boundary labels are again read left-to-right.

For future reference, we comment that the
tensor square of $V^{A_2}_{\omega_1} \iso \tau\cdot V^{A_2}_{\omega_1}$ is
\begin{align*}
(V^{A_2}_{\omega_1})^{\tensor 2} 
&\iso V^{A_2}_{2\omega_1} \oplus (V^{A_2}_{\omega_1})^*
\\
\intertext{and similarly, the tensor product of two of $D_4$'s minuscule 
fundamental representations is}
V^{D_4}_{\omega_1} \tensor V^{D_4}_{\omega_2}
&\iso V^{D_4}_{\omega_1+\omega_2} \oplus (V^{D_4}_{\omega_3})^* 
&&\text{(though note, $(V^{D_4}_{\omega_3})^* \iso V^{D_4}_{\omega_3}$)}
\end{align*}
We get away with tensoring $V^{A_2}_{\omega_1}$ with ``itself'' because
the action of $\ZZ/3$ on the $A_2$ lattice is by an inner automorphism,
but with $D_4$ we must face up to the three different representations
associated with the three edge orientations.

Our rules for $K_T(Gr_k(\CC^n))$ and $H^*_T(2$-step$)$ have a 
duality symmetry: one may reflect a puzzle left-right while 
replacing each number $i$ with $d-i$. This $K_T(2$-step$)$ rule does not,
directly, enjoy this symmetry (even though the structure constants do); 
rather this gives us a second puzzle rule for $K_T(2$-step$)$. 

\subsection{Puzzles for $d=3$ flag manifolds}\label{ssec:d3puz}
The $d=3$ labels and pieces turn out to be (up to rotation)
\begin{gather}\notag
  \uptri{i}{i}{i} \quad i=0,1,2,3 \qquad \uptri{j}{ji}{i} \quad 0\leq i<j \leq 3
  \qquad
\uptri{kj}{(kj)i}{i}\quad \uptri{k}{k(ji)}{ji} \quad 0\leq i<j<k \leq 3
\\\label{eq:d3tri}
\uptri{3(21)}{(3(21))0}{0}\qquad \uptri{3}{3((21)0)}{(21)0}
\qquad \uptri{32}{(32)(10)}{10}
\qquad \uptri{(32)1}{((32)1)0}{0}\qquad \uptri{3}{3(2(10))}{2(10)}
\\\notag
 \uptri{3(2(10))}{(3(2(10)))0}{0} \qquad
\uptri{3(21)}{(3(21))(10)}{10} \qquad
\uptri{32}{(32)((21)0)}{(21)0} \qquad
\uptri{3}{3(((32)1)0)}{((32)1)0}
\end{gather}
where the bottom four are Buch's correction \cite{CV} to the first author's
incomplete 1999 conjecture. 

\begin{lem}\label{lem:greend3}
  There is an action of $\ZZ/3$ on $E_6$'s weight lattice, 
  and a map from the $27$ edge labels above to the weights of the
  standard representation $V^{E_6}_{\omega_1}$ of $E_6$ 
  (then extended to other edge orientations $\ZZ/3$-equivariantly), 
  such that each puzzle piece has vector sum $\vec 0$.
  By Green's theorem
  the boundary of the whole puzzle does too, which forces the content 
  (the numbers of $0$s, $1$s, $2$s, and $3$s) on the three sides to be the same.
\end{lem}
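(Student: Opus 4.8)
The plan is to reduce Lemma~\ref{lem:greend3} to the same ingredients that carry Lemmas~\ref{lem:greend1} and~\ref{lem:greend2}: once one has in hand \textbf{(a)} the order-$3$ automorphism $\tau$ of $E_6$'s weight lattice and \textbf{(b)} a bijection from the $27$ edge labels of \eqref{eq:d3tri} to the weights of $V^{E_6}_{\omega_1}$, everything else is a finite check on puzzle pieces together with one application of Green's theorem and one small linear-algebra argument. So the substance of the lemma is producing $\tau$ and the bijection.

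For \textbf{(a)}: the only diagram automorphism of $E_6$ has order $2$, so any order-$3$ automorphism of the weight lattice preserving the root system already lies in the Weyl group $W(E_6)$ (of order $2^{7}3^{4}5$, which does contain elements of order $3$). Fix such a $\tau$, and let the three edge orientations of a $\Delta$ correspond to $\mathrm{id},\tau,\tau^{2}$ as in the $d=1,2$ cases. Since $\tau\in W(E_6)$ fixes the set of weights of $V^{E_6}_{\omega_1}$, all three orientations carry (isomorphic copies of) the same $27$-dimensional representation — exactly as for $A_2$, where $\ZZ/3$ acts by an inner automorphism, and unlike the $D_4$/triality case with its three distinct minuscule representations. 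I would pin $\tau$ down in a convenient model of the $E_6$ lattice (e.g.\ realizing $V^{E_6}_{\omega_1}$ through the branching $E_6\supset A_5\times A_1$ or $E_6\supset D_5\times\CC^{\times}$), or simply take $\tau$ from the uniform construction of \S\ref{sec:gen}, which the paper asserts produces the data for all $d\leq 4$.

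For \textbf{(b)}: the labels are planar binary trees with leaves in $\{0,1,2,3\}$, and the clean way to define the weight map $w$ is recursive — assign designated weights to the single-numbers $0,1,2,3$, and define the weight of a compound multinumber $(XY)$ from those of its immediate subtrees $X$ and $Y$. The gluing pieces $\uptri{j}{ji}{i}$, $\uptri{kj}{(kj)i}{i}$, $\uptri{k}{k(ji)}{ji}$ and the four deeper pieces in the last line of \eqref{eq:d3tri} literally express the weight of a bigger tree (in a rotated orientation, via $\tau$) in terms of the weights of its subtrees, so one reads off the recursion and builds $w$ by induction on tree depth. With $\tau$ and $w$ fixed, the verification is finite: for each piece in \eqref{eq:d3tri}, up to rotation, compute the three weights on its edges (applying $\tau,\tau^{2}$ on the two non-distinguished orientations) and check the sum is $\vec 0$; by $\tau$-equivariance one rotation of each suffices, and each check is a short explicit computation, well suited to a machine check.

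Finally, extend the assignment to $\nabla$ triangles by negation, so that the two triangles sharing an interior edge of a puzzle $P$ contribute cancelling vectors there; summing the (zero) vector sums over all triangles, Green's theorem gives that the boundary of $P$ has vector sum $\vec 0$. Since boundary labels are single-numbers, write $a_i,b_i,c_i$ for the number of $i$'s on the three sides; using that the constant pieces $\uptri{i}{i}{i}$ have zero sum, i.e.\ $w(i,d_1)+w(i,d_2)+w(i,d_3)=\vec 0$, the boundary relation collapses to $\sum_{i=1}^{3}(a_i-c_i)\,\delta_i+\sum_{i=1}^{3}(b_i-c_i)\,\varepsilon_i=\vec 0$ with $\delta_i:=w(i,d_1)-w(0,d_1)$, $\varepsilon_i:=w(i,d_2)-w(0,d_2)$. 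In the chosen model one checks these six vectors are linearly independent in the rank-$6$ space $E_6\otimes\QQ$ (the four single-number weights of one orientation are affinely independent, and the two resulting $3$-dimensional difference-spaces meet only in $0$), whence $a_i=b_i=c_i$ for all $i$, i.e.\ the three sides have the same content. The main obstacle is part \textbf{(b)} — arranging the $27$ weights so that \emph{all} pieces close up simultaneously, especially the four deeply nested "Buch correction" pieces — which is precisely why one wants the systematic recipe of \S\ref{sec:gen} rather than ad hoc guessing; checking that $\tau$ has order $3$ and is applied with the orientation convention matching the direction of the rotations, the Green's-theorem bookkeeping, and the span computation are all routine.
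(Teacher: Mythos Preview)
Your proposal is essentially correct and follows the same route the paper takes, just spelled out for $d=3$ rather than via the uniform machinery of \S\ref{sec:gen}. A few remarks on the alignment. The paper does not prove this lemma in place; it displays the crystal of $V^{E_6}_{\omega_1}$ with the $27$ labels attached and defers to \S\ref{ssec:X}--\S\ref{ssec:weight}, where the recursion you propose is written down once and for all as $\vf_{YX}:=-\tau\vf_X-\tau^2\vf_Y$ on the $2(1+d)$-dimensional lattice $\Lambda^{1+d}$, with $\tau$ the diagonal $120^\circ$ rotation. With that definition the vanishing on every piece $\uptri{Y}{YX}{X}$ is \emph{automatic}, and on $\uptri{i}{i}{i}$ it is the relation $\vf+\tau\vf+\tau^2\vf=0$; so the only ``finite check'' left is that the $27$ valid labels really land on the $27$ weights of the minuscule irrep, which the paper does via Theorem~\ref{thm:classif}(3) (equivalently your crystal check). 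Your content argument at the end is also what the paper's setup gives: the six vectors $\vf_i-\vf_0$, $\tau\vf_i-\tau\vf_0$ ($i=1,2,3$) are exactly the $\alpha_i,\tau\alpha_i$ that span $K_3$, hence linearly independent, so $a_i=b_i=c_i$. One small caution: in (a) you should not start from an \emph{arbitrary} order-$3$ Weyl element and then separately build the bijection; the two choices must be made compatibly, which is precisely what the recursion in \S\ref{ssec:X} arranges.
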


Rather than writing out the vectors, we give the correspondence with 
the crystal of the $27$-dimensional representation, whose edges are labeled
by the simple roots of $E_6$:
\[
  \tikz[every label/.style={rt,above}]
  {\draw (0,0) node[mynode,label=c'] {} -- ++(1,0)
    node[mynode,label=a'] {} -- ++(1,0)
    node[mynode,label={[name=aa]above right:a}] (a) {} -- ++(1,0)
    node[mynode,label={[name=bb]b}] (b) {} -- ++(1,0)
    node[mynode,label={[name=cc]c}] (c) {}; \draw (2,0) --
    (2,1) node[mynode,label=b'] {};
}
\]

\[
\tikz{\useasboundingbox (-7.5,-0.2) rectangle (7.5,4.7);
\node[rotate=45,scale=1.15]{
\begin{tikzpicture}[>=latex,nodes={rotate=-45,font=\tiny,\mcol},on grid,math mode,every edge quotes/.style={rt,auto,pos=0.25}]
\coordinate (prev) at (0,0);
\foreach \pos/\name/\alt
in {right/3,right/2,right/1,right/0,right/10,right/32,
below/3(2(10))/3o2o10cc,left/2(10)/2o10c,left/21,below right/20,
right/3(21)/3o21c,right/3(20)/3o20c,below/3(10)/3o10c,
left/31,left/(21)0/o21c0,below/((32)1)0/oo32c1c0,right/(32)1/o32c1,
right/(32)(10)/o32co10c,right/(32)0/o32c0,above/30,right/3((21)0)/3oo21c0c,
below/(32)((21)0)/o32coo21c0c,below/(3(21))(10)/o3o21cco10c,
left/(31)0/o31c0,below right/(3(21))0/o3o21cc0,
below/(3(2(10)))0/o3o2o10ccc0,below/3(((32)1)0)/3ooo32c1c0c}
{ \node ({\alt}) [\pos=1.3cm and 1.3cm of prev,alias=prev] {\name}; }
\path[->] 
(3) edge["c"] (2)
(2) edge["b"] (1)
(1) edge["a"] (0)
(0) edge["a'"] (10) edge["b'"] (21)
(10) edge["c'"] (32) edge["b'"] (2o10c)
(32) edge["b'"] (3o2o10cc)
(21) edge["a'"] (2o10c)
(2o10c) edge["c'"] (3o2o10cc) edge["a"] (20)
(3o2o10cc) edge["a"] (3o21c)
(20) edge["c'"] (3o21c) edge["b"] (o21c0)
(3o21c) edge["a'"] (3o20c) edge["b"] (31)
(3o20c) edge["b"] (3o10c)
(o21c0) edge["c'"] (31) edge["c"] (oo32c1c0)
(31) edge["a'"] (3o10c) edge["c"] (o32c1)
(3o10c) edge["a"] (30) edge["c"] (o32co10c)
(30) edge["b'"] (3oo21c0c) edge["c"] (o32c0)
(3oo21c0c) edge["c"] (o32coo21c0c)
(oo32c1c0) edge["c'"] (o32c1)
(o32c1) edge["a'"] (o32co10c)
(o32co10c) edge["a"] (o32c0)
(o32c0) edge["b'"] (o32coo21c0c) edge["b"] (o31c0)
(o32coo21c0c) edge["b"] (o3o21cco10c)
(o31c0) edge["b'"] (o3o21cco10c)
(o3o21cco10c) edge["a"] (o3o21cc0)
(o3o21cc0) edge["a'"] (o3o2o10ccc0)
(o3o2o10ccc0) edge["c'"] (3ooo32c1c0c)
;
\end{tikzpicture}
};
}
\]
In this case the techniques of \cite{artic46,artic68} do {\em not}\/ work right out of
the box; during the degeneration there is a divergence, due to
the proliferation of equivariant rhombi with negative inversion number.
It also seems related to the fact that
\[
 (V^{E_6}_{\omega_1})^{\tensor 2} \iso 
V^{E_6}_{2\omega_1} \oplus (V^{E_6}_{\omega_1})^* \oplus
V^{E_6}_{\omega_3}
\]
has three summands (the Cartan product, the invariant trilinear form,
and another) unlike the $d=1,2$ cases that had only two summands.

We can partially rescue the argument, but at the cost of $T$-equivariance.
This gives our other main result:
\begin{thm}\label{thm:d3}
  Buch's correction holds: the puzzle pieces above (and their
  rotations) correctly compute Schubert calculus in the ordinary
  cohomology of $3$-step flag manifolds.

  One can also obtain formul\ae\ for Schubert calculus in 
  the $K$-theory of $3$-step flag manifolds, with respect to either
  the standard basis (of structure sheaves) or the dual basis 
  (of ideal sheaves of the boundaries). This requires another 151 pieces,
  which we list in appendix \ref{app:d3}.
  The resulting formula is manifestly positive in the sense of
  \cite{Brion-KPos}.
\end{thm}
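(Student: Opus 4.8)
The plan is to adapt the $R$-matrix degeneration of \cite{artic68} to the $E_6$ minuscule representation $V := V^{E_6}_{\omega_1}$ identified in Lemma~\ref{lem:greend3}, where the three edge orientations of a puzzle all carry $V$ and the $\ZZ/3$ acts on the weight lattice by an inner-type twist as in the $A_2$ case. First I would construct the relevant affine intertwiner $\check R(z)\colon V(1)\tensor V(z)\to V(z)\tensor V(1)$: the trigonometric (quantum affine $E_6$) $R$-matrix for the $K$-theory statement, and its rational, Yangian degeneration for cohomology. Since every weight space of $V$ is one-dimensional, $\check R$ is determined up to scalar, and it satisfies the Yang--Baxter equation. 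The crucial extra ingredient --- whence the paper's subtitle --- is the \emph{bootstrap} (fusion) relation expressing $\check R$ at a distinguished value $z=z_0$ through the $E_6$-invariant trilinear form $V\tensor V\tensor V\to\CC$ (equivalently the projection $V\tensor V\onto V^*$); this is what permits three puzzle edges to meet consistently at a vertex and explains the triangular symmetry of the pieces.

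With $\check R$ in hand I would tile a size-$n$ triangle by copies of $\check R$ evaluated at $z=z_0$ and close up its three boundary edges with the trilinear form, obtaining a partition function $Z_n$. Yang--Baxter makes $Z_n$ independent of how the triangle is swept out, which delivers commutativity and associativity of the resulting product for free; and, exactly as in \cite{artic68}, the module-theoretic embedding of the equivariant $K$-theory of a $3$-step flag manifold into a tensor product $\bigotimes_i V(u_i)$ of evaluation modules --- compatible with the divisor/Chevalley operators, equivalently with the recursion in $n$ already used in \cite{KT} --- identifies $Z_n$ with the desired Schubert structure constant. Reading off which rank-one terms $|ab\rangle\langle c|$ occur in the degenerate $\check R$, together with their fugacities, produces the candidate list of pieces; one then checks, by an explicit finite computation with the $27\times 27$ matrix, that in cohomology the surviving pieces are precisely those of \eqref{eq:d3tri}, while in $K$-theory one gets those together with the $151$ pieces of Appendix~\ref{app:d3}, the subtle monomial factor $u_j/u_i$ on certain vertical rhombi coming out correctly.

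The crux --- and the reason the argument only \emph{partially} works --- is the divergence flagged after Lemma~\ref{lem:greend3}. Unlike the cases $d\le 2$, the three-summand decomposition of $(V^{E_6}_{\omega_1})^{\tensor 2}$ means that in the equivariant setting the degeneration $z\to z_0$ generates equivariant rhombi of \emph{negative} inversion number (from the $V^{E_6}_{\omega_3}$ summand), so $Z_n$ diverges term by term as $u_i\to u_j$. I would rescue this by giving up $T$-equivariance: specialize to $u_i\to 1$ (resp.\ $y_i\to 0$) and show that, although individual tilings blow up, $Z_n$ as a rational function of the $u_i$ is in fact regular along each $u_i=u_j$ --- the diverging contributions cancelling in pairs, a negative-inversion rhombus against its transpose or against a $K$-piece, in the spirit of the observation just before the $d=2$ Example --- so that only finitely many genuinely new pieces survive. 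The value of $Z_n$ at the specialized point is then the non-equivariant structure constant (in $K$-theory, resp.\ cohomology). Buch's correction for $H^*$ follows by the non-equivariant analogue of the passage from $K_T(2$-step$)$ to $H^*(2$-step$)$ --- set $u_i=e^{y_i}$, keep the lowest-order part, let $y\to 0$ --- upon observing that exactly the four bottom pieces of \eqref{eq:d3tri} appear; since the cohomology rule is now forced by the proven $K$-rule, it must be correct.

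The main obstacle I expect is precisely this regularity step: proving that the $u_i=u_j$ singularities of the degenerate partition function are spurious, \emph{and simultaneously} that the set of surviving pieces is exactly those of \eqref{eq:d3tri} in cohomology and those together with the $151$ listed pieces in $K$-theory --- with no over- or under-counting, and with the correct fugacities, the monomial $u_j/u_i$ bookkeeping being the delicate point. By comparison, verifying Yang--Baxter and the bootstrap identity for the explicit $E_6$ $R$-matrix, and transporting the module-theoretic identification of \cite{artic68}, should be routine if laborious once this finiteness is in hand.
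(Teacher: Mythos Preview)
Your overall architecture is right --- the $E_6$ minuscule representation, quantum-affine $R$-matrices, the bootstrap/trilinear form, and sacrificing equivariance --- but the mechanism you propose for handling the divergence is not the one the paper uses, and your version has a real gap. You want to degenerate first (take $q\to 0$), obtain a would-be equivariant partition function, and then argue that the bad contributions cancel in pairs so that the nonequivariant specialization is finite. The paper does the opposite: it keeps $q$ generic and sets all $u_i=1$ \emph{first}. At finite $q$ this is perfectly regular; and once $u_i=u_j$ for all $i,j$, every vertical rhombus in the puzzle has spectral-parameter ratio exactly $\alpha/\beta$, the factorization value of Property~\ref{pty:factor}, so it splits as $UD$ into a pair of triangles. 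Hence at $u_i=1$ the puzzle is already purely triangular, and the only $q\to 0$ analysis needed is that of the tensors $U$ and $D$ themselves --- a finite (computer-assisted) expansion of their $27^3$ entries, the unlabeled lemma in \S\ref{ssec:q0d3}. No cancellation of diverging rhombi is ever proved or required; the reordering of limits is precisely what makes the argument go through, and you would need a genuinely new idea to carry out your cancellation route directly.

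Two smaller corrections. The ``monomial factor $u_j/u_i$ on certain vertical rhombi'' is a feature of Theorem~\ref{thm:KTd2} ($d=2$, equivariant); the $d=3$ rule is nonequivariant and contains no rhombi at all, only $\Delta$s and $\nabla$s. And the identification of the partition function with Schubert classes is not via an abstract ``module-theoretic embedding'' compatible with Chevalley operators: it goes through the interlude \S\ref{sec:pipe}. One restricts the same-color $R$-matrices $\check R_{i,i}$ to the single-number sector, checks that Lemma~\ref{lem:D4a} still holds at $d=3$ so that their $q\to 0$ limit is the nilHecke $R$-matrix, and then Proposition~\ref{prop:groth} identifies $\mathbf S^\lambda|_{u_i=1}$ with the ordinary Grothendieck polynomial, hence with the nonequivariant $K$-class. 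This pipe-dream link is where much of the honest work lives, and it is absent from your sketch.
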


\begin{ex}
$c^{103213,103213}_{323011}=4$ (in nonequivariant $K$-theory),
as counted by the following puzzles:
\begin{center}
\begin{tikzpicture}[math mode,nodes={\mcol},x={(-0.577cm,-1cm)},y={(0.577cm,-1cm)},scale=1.1]
\draw[thick] (0,0) -- node[pos=0.5] {\ss 1} ++(0,1); \draw[thick] (0,0) -- node[pos=0.5] {\ss 3} ++(1,0); \draw[thick] (0+1,0) -- node {\ss 31} ++(-1,1); 
\draw[thick] (0,1) -- node[pos=0.5] {\ss 0} ++(0,1); \draw[thick] (0,1) -- node[pos=0.5] {\ss 3} ++(1,0); \draw[thick] (0+1,1) -- node {\ss 30} ++(-1,1); 
\draw[thick] (0,2) -- node[pos=0.5] {\ss 3} ++(0,1); \draw[thick] (0,2) -- node[pos=0.5] {\ss 3} ++(1,0); \draw[thick] (0+1,2) -- node {\ss 3} ++(-1,1); 
\draw[thick] (0,3) -- node[pos=0.5] {\ss 2} ++(0,1); \draw[thick] (0,3) -- node[pos=0.5] {\ss 21} ++(1,0); \draw[thick] (0+1,3) -- node {\ss 1} ++(-1,1); 
\draw[thick] (0,4) -- node[pos=0.5] {\ss 1} ++(0,1); \draw[thick] (0,4) -- node[pos=0.5] {\ss 1} ++(1,0); \draw[thick] (0+1,4) -- node {\ss 1} ++(-1,1); 
\draw[thick] (0,5) -- node[pos=0.5] {\ss 3} ++(0,1); \draw[thick] (0,5) -- node[pos=0.5] {\ss 31} ++(1,0); \draw[thick] (0+1,5) -- node {\ss 1} ++(-1,1); 
\draw[thick] (1,0) -- node[pos=0.5] {\ss 1} ++(0,1); \draw[thick] (1,0) -- node[pos=0.5] {\ss 1} ++(1,0); \draw[thick] (1+1,0) -- node {\ss 1} ++(-1,1); 
\draw[thick] (1,1) -- node[pos=0.5] {\ss 0} ++(0,1); \draw[thick] (1,1) -- node[pos=0.5] {\ss 21} ++(1,0); \draw[thick] (1+1,1) -- node {\ss (21)0} ++(-1,1); 
\draw[thick] (1,2) -- node[pos=0.5] {\ss 3(21)} ++(0,1); \draw[thick] (1,2) -- node[pos=0.5] {\ss 21} ++(1,0); \draw[thick] (1+1,2) -- node {\ss 3} ++(-1,1); 
\draw[thick] (1,3) -- node[pos=0.5] {\ss 1} ++(0,1); \draw[thick] (1,3) -- node[pos=0.5] {\ss 1} ++(1,0); \draw[thick] (1+1,3) -- node {\ss 1} ++(-1,1); 
\draw[thick] (1,4) -- node[pos=0.5] {\ss 3} ++(0,1); \draw[thick] (1,4) -- node[pos=0.5] {\ss 31} ++(1,0); \draw[thick] (1+1,4) -- node {\ss 1} ++(-1,1); 
\draw[thick] (2,0) -- node[pos=0.5] {\ss 2} ++(0,1); \draw[thick] (2,0) -- node[pos=0.5] {\ss 2} ++(1,0); \draw[thick] (2+1,0) -- node {\ss 2} ++(-1,1); 
\draw[thick] (2,1) -- node[pos=0.5] {\ss 0} ++(0,1); \draw[thick] (2,1) -- node[pos=0.5] {\ss 32} ++(1,0); \draw[thick] (2+1,1) -- node {\ss (32)0} ++(-1,1); 
\node[invlabel,text=black] at (2+0.7,1+0.7) {\sss 2};
\draw[thick] (2,2) -- node[pos=0.5] {\ss 31} ++(0,1); \draw[thick] (2,2) -- node[pos=0.4] {\ss\ \ (31)0} ++(1,0); \draw[thick] (2+1,2) -- node {\ss 0} ++(-1,1); 
\draw[thick] (2,3) -- node[pos=0.5] {\ss 3} ++(0,1); \draw[thick] (2,3) -- node[pos=0.5] {\ss 30} ++(1,0); \draw[thick] (2+1,3) -- node {\ss 0} ++(-1,1); 
\draw[thick] (3,0) -- node[pos=0.5] {\ss 3} ++(0,1); \draw[thick] (3,0) -- node[pos=0.5] {\ss 3} ++(1,0); \draw[thick] (3+1,0) -- node {\ss 3} ++(-1,1); 
\draw[thick] (3,1) -- node[pos=0.6] {\ss (3(21))0\ } ++(0,1); \draw[thick] (3,1) -- node[pos=0.5] {\ss 0} ++(1,0); \draw[thick] (3+1,1) -- node {\ss 3(21)} ++(-1,1); 
\draw[thick] (3,2) -- node[pos=0.5] {\ss 3} ++(0,1); \draw[thick] (3,2) -- node[pos=0.5] {\ss 3} ++(1,0); \draw[thick] (3+1,2) -- node {\ss 3} ++(-1,1); 
\draw[thick] (4,0) -- node[pos=0.5] {\ss 30} ++(0,1); \draw[thick] (4,0) -- node[pos=0.5] {\ss 0} ++(1,0); \draw[thick] (4+1,0) -- node {\ss 3} ++(-1,1); 
\draw[thick] (4,1) -- node[pos=0.5] {\ss 21} ++(0,1); \draw[thick] (4,1) -- node[pos=0.5] {\ss 1} ++(1,0); \draw[thick] (4+1,1) -- node {\ss 2} ++(-1,1); 
\draw[thick] (5,0) -- node[pos=0.5] {\ss 31} ++(0,1); \draw[thick] (5,0) -- node[pos=0.5] {\ss 1} ++(1,0); \draw[thick] (5+1,0) -- node {\ss 3} ++(-1,1); 
\end{tikzpicture}
\quad
\begin{tikzpicture}[math mode,nodes={\mcol},x={(-0.577cm,-1cm)},y={(0.577cm,-1cm)},scale=1.1]
\draw[thick] (0,0) -- node[pos=0.5] {\ss 1} ++(0,1); \draw[thick] (0,0) -- node[pos=0.5] {\ss 3} ++(1,0); \draw[thick] (0+1,0) -- node {\ss 31} ++(-1,1); 
\draw[thick] (0,1) -- node[pos=0.5] {\ss 0} ++(0,1); \draw[thick] (0,1) -- node[pos=0.5] {\ss 3} ++(1,0); \draw[thick] (0+1,1) -- node {\ss 30} ++(-1,1); 
\draw[thick] (0,2) -- node[pos=0.5] {\ss 3} ++(0,1); \draw[thick] (0,2) -- node[pos=0.5] {\ss 3} ++(1,0); \draw[thick] (0+1,2) -- node {\ss 3} ++(-1,1); 
\draw[thick] (0,3) -- node[pos=0.5] {\ss 2} ++(0,1); \draw[thick] (0,3) -- node[pos=0.5] {\ss 21} ++(1,0); \draw[thick] (0+1,3) -- node {\ss 1} ++(-1,1); 
\draw[thick] (0,4) -- node[pos=0.5] {\ss 1} ++(0,1); \draw[thick] (0,4) -- node[pos=0.5] {\ss 1} ++(1,0); \draw[thick] (0+1,4) -- node {\ss 1} ++(-1,1); 
\draw[thick] (0,5) -- node[pos=0.5] {\ss 3} ++(0,1); \draw[thick] (0,5) -- node[pos=0.5] {\ss 31} ++(1,0); \draw[thick] (0+1,5) -- node {\ss 1} ++(-1,1); 
\draw[thick] (1,0) -- node[pos=0.5] {\ss 1} ++(0,1); \draw[thick] (1,0) -- node[pos=0.5] {\ss 1} ++(1,0); \draw[thick] (1+1,0) -- node {\ss 1} ++(-1,1); 
\draw[thick] (1,1) -- node[pos=0.5] {\ss 0} ++(0,1); \draw[thick] (1,1) -- node[pos=0.5] {\ss 21} ++(1,0); \draw[thick] (1+1,1) -- node {\ss (21)0} ++(-1,1); 
\node[invlabel,text=black] at (1+0.7,1+0.7) {\sss 1};
\draw[thick] (1,2) -- node[pos=0.4] {\ss 3(21)} ++(0,1); \draw[thick] (1,2) -- node[pos=0.6] {\ss\ (3(21))(10)} ++(1,0); \draw[thick] (1+1,2) -- node {\ss 10} ++(-1,1); 
\draw[thick] (1,3) -- node[pos=0.5] {\ss 1} ++(0,1); \draw[thick] (1,3) -- node[pos=0.5] {\ss 1} ++(1,0); \draw[thick] (1+1,3) -- node {\ss 1} ++(-1,1); 
\draw[thick] (1,4) -- node[pos=0.5] {\ss 3} ++(0,1); \draw[thick] (1,4) -- node[pos=0.5] {\ss 31} ++(1,0); \draw[thick] (1+1,4) -- node {\ss 1} ++(-1,1); 
\draw[thick] (2,0) -- node[pos=0.5] {\ss 2} ++(0,1); \draw[thick] (2,0) -- node[pos=0.5] {\ss 2} ++(1,0); \draw[thick] (2+1,0) -- node {\ss 2} ++(-1,1); 
\draw[thick] (2,1) -- node[pos=0.4] {\ss 31} ++(0,1); \draw[thick] (2,1) -- node[pos=0.5] {\ss 32} ++(1,0); \draw[thick] (2+1,1) -- node {\ss 3(21)} ++(-1,1); 
\node[invlabel,text=black] at (2+0.3,1+0.3) {\sss 1};
\draw[thick] (2,2) -- node[pos=0.5] {\ss 0} ++(0,1); \draw[thick] (2,2) -- node[pos=0.5] {\ss 0} ++(1,0); \draw[thick] (2+1,2) -- node {\ss 0} ++(-1,1); 
\draw[thick] (2,3) -- node[pos=0.5] {\ss 3} ++(0,1); \draw[thick] (2,3) -- node[pos=0.5] {\ss 30} ++(1,0); \draw[thick] (2+1,3) -- node {\ss 0} ++(-1,1); 
\draw[thick] (3,0) -- node[pos=0.5] {\ss 3} ++(0,1); \draw[thick] (3,0) -- node[pos=0.5] {\ss 3} ++(1,0); \draw[thick] (3+1,0) -- node {\ss 3} ++(-1,1); 
\draw[thick] (3,1) -- node[pos=0.5] {\ss (3(21))0} ++(0,1); \draw[thick] (3,1) -- node[pos=0.5] {\ss 0} ++(1,0); \draw[thick] (3+1,1) -- node {\ss 3(21)} ++(-1,1); 
\draw[thick] (3,2) -- node[pos=0.5] {\ss 3} ++(0,1); \draw[thick] (3,2) -- node[pos=0.5] {\ss 3} ++(1,0); \draw[thick] (3+1,2) -- node {\ss 3} ++(-1,1); 
\draw[thick] (4,0) -- node[pos=0.5] {\ss 30} ++(0,1); \draw[thick] (4,0) -- node[pos=0.5] {\ss 0} ++(1,0); \draw[thick] (4+1,0) -- node {\ss 3} ++(-1,1); 
\draw[thick] (4,1) -- node[pos=0.5] {\ss 21} ++(0,1); \draw[thick] (4,1) -- node[pos=0.5] {\ss 1} ++(1,0); \draw[thick] (4+1,1) -- node {\ss 2} ++(-1,1); 
\draw[thick] (5,0) -- node[pos=0.5] {\ss 31} ++(0,1); \draw[thick] (5,0) -- node[pos=0.5] {\ss 1} ++(1,0); \draw[thick] (5+1,0) -- node {\ss 3} ++(-1,1); 
\end{tikzpicture}
\quad
\begin{tikzpicture}[math mode,nodes={\mcol},x={(-0.577cm,-1cm)},y={(0.577cm,-1cm)},scale=1.1]
\draw[thick] (0,0) -- node[pos=0.5] {\ss 1} ++(0,1); \draw[thick] (0,0) -- node[pos=0.5] {\ss 3} ++(1,0); \draw[thick] (0+1,0) -- node {\ss 31} ++(-1,1); 
\draw[thick] (0,1) -- node[pos=0.5] {\ss 0} ++(0,1); \draw[thick] (0,1) -- node[pos=0.5] {\ss 3} ++(1,0); \draw[thick] (0+1,1) -- node {\ss 30} ++(-1,1); 
\draw[thick] (0,2) -- node[pos=0.5] {\ss 3} ++(0,1); \draw[thick] (0,2) -- node[pos=0.5] {\ss 3} ++(1,0); \draw[thick] (0+1,2) -- node {\ss 3} ++(-1,1); 
\draw[thick] (0,3) -- node[pos=0.5] {\ss 2} ++(0,1); \draw[thick] (0,3) -- node[pos=0.5] {\ss 21} ++(1,0); \draw[thick] (0+1,3) -- node {\ss 1} ++(-1,1); 
\draw[thick] (0,4) -- node[pos=0.5] {\ss 1} ++(0,1); \draw[thick] (0,4) -- node[pos=0.5] {\ss 1} ++(1,0); \draw[thick] (0+1,4) -- node {\ss 1} ++(-1,1); 
\draw[thick] (0,5) -- node[pos=0.5] {\ss 3} ++(0,1); \draw[thick] (0,5) -- node[pos=0.5] {\ss 31} ++(1,0); \draw[thick] (0+1,5) -- node {\ss 1} ++(-1,1); 
\draw[thick] (1,0) -- node[pos=0.5] {\ss 1} ++(0,1); \draw[thick] (1,0) -- node[pos=0.5] {\ss 1} ++(1,0); \draw[thick] (1+1,0) -- node {\ss 1} ++(-1,1); 
\draw[thick] (1,1) -- node[pos=0.5] {\ss 0} ++(0,1); \draw[thick] (1,1) -- node[pos=0.5] {\ss (32)1} ++(1,0); \draw[thick] (1+1,1) -- node {\ss ((32)1)0} ++(-1,1); 
\draw[thick] (1,2) -- node[pos=0.5] {\ss\ 3(21)} ++(0,1); \draw[thick] (1,2) -- node[pos=0.5] {\ss (32)1\ \ } ++(1,0); \draw[thick] (1+1,2) -- node {\ss 2} ++(-1,1); 
\node[invlabel,text=black] at (1+0.3,2+0.3) {\sss 1};
\draw[thick] (1,3) -- node[pos=0.5] {\ss 1} ++(0,1); \draw[thick] (1,3) -- node[pos=0.5] {\ss 32} ++(1,0); \draw[thick] (1+1,3) -- node {\ss (32)1} ++(-1,1); 
\node[invlabel,text=black] at (1+0.7,3+0.7) {\sss 1};
\draw[thick] (1,4) -- node[pos=0.5] {\ss 3} ++(0,1); \draw[thick] (1,4) -- node[pos=0.5] {\ss 31} ++(1,0); \draw[thick] (1+1,4) -- node {\ss 1} ++(-1,1); 
\draw[thick] (2,0) -- node[pos=0.5] {\ss 32} ++(0,1); \draw[thick] (2,0) -- node[pos=0.5] {\ss 2} ++(1,0); \draw[thick] (2+1,0) -- node {\ss 3} ++(-1,1); 
\draw[thick] (2,1) -- node[pos=0.5] {\ss 0} ++(0,1); \draw[thick] (2,1) -- node[pos=0.5] {\ss 3} ++(1,0); \draw[thick] (2+1,1) -- node {\ss 30} ++(-1,1); 
\draw[thick] (2,2) -- node[pos=0.5] {\ss 3} ++(0,1); \draw[thick] (2,2) -- node[pos=0.5] {\ss 3} ++(1,0); \draw[thick] (2+1,2) -- node {\ss 3} ++(-1,1); 
\draw[thick] (2,3) -- node[pos=0.5] {\ss 21} ++(0,1); \draw[thick] (2,3) -- node[pos=0.6] {\ss (21)0} ++(1,0); \draw[thick] (2+1,3) -- node {\ss 0} ++(-1,1); 
\draw[thick] (3,0) -- node[pos=0.5] {\ss 3} ++(0,1); \draw[thick] (3,0) -- node[pos=0.5] {\ss 3} ++(1,0); \draw[thick] (3+1,0) -- node {\ss 3} ++(-1,1); 
\draw[thick] (3,1) -- node[pos=0.5] {\ss 0} ++(0,1); \draw[thick] (3,1) -- node[pos=0.5] {\ss 0} ++(1,0); \draw[thick] (3+1,1) -- node {\ss 0} ++(-1,1); 
\draw[thick] (3,2) -- node[pos=0.4] {\ss 3((21)0)} ++(0,1); \draw[thick] (3,2) -- node[pos=0.6] {\ss (21)0} ++(1,0); \draw[thick] (3+1,2) -- node {\ss 3} ++(-1,1); 
\draw[thick] (4,0) -- node[pos=0.5] {\ss 30} ++(0,1); \draw[thick] (4,0) -- node[pos=0.5] {\ss 0} ++(1,0); \draw[thick] (4+1,0) -- node {\ss 3} ++(-1,1); 
\draw[thick] (4,1) -- node[pos=0.5] {\ss 21} ++(0,1); \draw[thick] (4,1) -- node[pos=0.5] {\ss 1} ++(1,0); \draw[thick] (4+1,1) -- node {\ss 2} ++(-1,1); 
\draw[thick] (5,0) -- node[pos=0.5] {\ss 31} ++(0,1); \draw[thick] (5,0) -- node[pos=0.5] {\ss 1} ++(1,0); \draw[thick] (5+1,0) -- node {\ss 3} ++(-1,1); 
\end{tikzpicture}
\quad
\begin{tikzpicture}[math mode,nodes={\mcol},x={(-0.577cm,-1cm)},y={(0.577cm,-1cm)},scale=1.1]
\draw[thick] (0,0) -- node[pos=0.5] {\ss 1} ++(0,1); \draw[thick] (0,0) -- node[pos=0.5] {\ss 3} ++(1,0); \draw[thick] (0+1,0) -- node {\ss 31} ++(-1,1); 
\draw[thick] (0,1) -- node[pos=0.5] {\ss 0} ++(0,1); \draw[thick] (0,1) -- node[pos=0.5] {\ss 3} ++(1,0); \draw[thick] (0+1,1) -- node {\ss 30} ++(-1,1); 
\draw[thick] (0,2) -- node[pos=0.5] {\ss 3} ++(0,1); \draw[thick] (0,2) -- node[pos=0.5] {\ss 3} ++(1,0); \draw[thick] (0+1,2) -- node {\ss 3} ++(-1,1); 
\draw[thick] (0,3) -- node[pos=0.5] {\ss 2} ++(0,1); \draw[thick] (0,3) -- node[pos=0.5] {\ss 21} ++(1,0); \draw[thick] (0+1,3) -- node {\ss 1} ++(-1,1); 
\draw[thick] (0,4) -- node[pos=0.5] {\ss 1} ++(0,1); \draw[thick] (0,4) -- node[pos=0.5] {\ss 1} ++(1,0); \draw[thick] (0+1,4) -- node {\ss 1} ++(-1,1); 
\draw[thick] (0,5) -- node[pos=0.5] {\ss 3} ++(0,1); \draw[thick] (0,5) -- node[pos=0.5] {\ss 31} ++(1,0); \draw[thick] (0+1,5) -- node {\ss 1} ++(-1,1); 
\draw[thick] (1,0) -- node[pos=0.5] {\ss 1} ++(0,1); \draw[thick] (1,0) -- node[pos=0.5] {\ss 1} ++(1,0); \draw[thick] (1+1,0) -- node {\ss 1} ++(-1,1); 
\draw[thick] (1,1) -- node[pos=0.5] {\ss 0} ++(0,1); \draw[thick] (1,1) -- node[pos=0.5] {\ss (32)1} ++(1,0); \draw[thick] (1+1,1) -- node {\ss ((32)1)0} ++(-1,1); 
\node[invlabel,text=black] at (1+0.7,1+0.7) {\sss 2};
\draw[thick] (1,2) -- node[pos=0.4] {\ss 3(21)} ++(0,1); \draw[thick] (1,2) -- node[pos=0.6] {\ss\ (3(21))(10)} ++(1,0); \draw[thick] (1+1,2) -- node {\ss 10} ++(-1,1); 
\draw[thick] (1,3) -- node[pos=0.5] {\ss 1} ++(0,1); \draw[thick] (1,3) -- node[pos=0.5] {\ss 1} ++(1,0); \draw[thick] (1+1,3) -- node {\ss 1} ++(-1,1); 
\draw[thick] (1,4) -- node[pos=0.5] {\ss 3} ++(0,1); \draw[thick] (1,4) -- node[pos=0.5] {\ss 31} ++(1,0); \draw[thick] (1+1,4) -- node {\ss 1} ++(-1,1); 
\draw[thick] (2,0) -- node[pos=0.5] {\ss 32} ++(0,1); \draw[thick] (2,0) -- node[pos=0.5] {\ss 2} ++(1,0); \draw[thick] (2+1,0) -- node {\ss 3} ++(-1,1); 
\draw[thick] (2,1) -- node[pos=0.4] {\ss 21} ++(0,1); \draw[thick] (2,1) -- node[pos=0.5] {\ss 3} ++(1,0); \draw[thick] (2+1,1) -- node {\ss 3(21)} ++(-1,1); 
\draw[thick] (2,2) -- node[pos=0.5] {\ss 0} ++(0,1); \draw[thick] (2,2) -- node[pos=0.5] {\ss 0} ++(1,0); \draw[thick] (2+1,2) -- node {\ss 0} ++(-1,1); 
\draw[thick] (2,3) -- node[pos=0.5] {\ss 3} ++(0,1); \draw[thick] (2,3) -- node[pos=0.5] {\ss 30} ++(1,0); \draw[thick] (2+1,3) -- node {\ss 0} ++(-1,1); 
\draw[thick] (3,0) -- node[pos=0.5] {\ss 3} ++(0,1); \draw[thick] (3,0) -- node[pos=0.5] {\ss 3} ++(1,0); \draw[thick] (3+1,0) -- node {\ss 3} ++(-1,1); 
\draw[thick] (3,1) -- node[pos=0.5] {\ss (3(21))0} ++(0,1); \draw[thick] (3,1) -- node[pos=0.5] {\ss 0} ++(1,0); \draw[thick] (3+1,1) -- node {\ss 3(21)} ++(-1,1); 
\draw[thick] (3,2) -- node[pos=0.5] {\ss 3} ++(0,1); \draw[thick] (3,2) -- node[pos=0.5] {\ss 3} ++(1,0); \draw[thick] (3+1,2) -- node {\ss 3} ++(-1,1); 
\draw[thick] (4,0) -- node[pos=0.5] {\ss 30} ++(0,1); \draw[thick] (4,0) -- node[pos=0.5] {\ss 0} ++(1,0); \draw[thick] (4+1,0) -- node {\ss 3} ++(-1,1); 
\draw[thick] (4,1) -- node[pos=0.5] {\ss 21} ++(0,1); \draw[thick] (4,1) -- node[pos=0.5] {\ss 1} ++(1,0); \draw[thick] (4+1,1) -- node {\ss 2} ++(-1,1); 
\draw[thick] (5,0) -- node[pos=0.5] {\ss 31} ++(0,1); \draw[thick] (5,0) -- node[pos=0.5] {\ss 1} ++(1,0); \draw[thick] (5+1,0) -- node {\ss 3} ++(-1,1); 
\end{tikzpicture}
\end{center}
\end{ex}

\junk{recycle the first part of sect 4 here?}The situation is even worse for $d=4$, in the sense that even nonequivariant
triangles with negative inversion number proliferate, 
leading to more divergences. We shall not formulate any puzzle rule
for $d\ge 4$ in the present paper, \rem{AK adds} leaving that to
\cite{artic80}. (But even there, the rule will not be manifestly positive.)

In all three cases $d=1,2,3$, 
as observed above, there exists a trilinear invariant form
acting on a tensor product of three minuscule representations of $A_2,D_4,E_6$, 
related by an automorphism of order $3$ 
(whose action on the weight lattice is given
by $\tau$). In fact, we shall see that the nonequivariant 
puzzle rules are nothing but a diagrammatic description of 
(a degenerate, and in $K$-theory, $q$-deformed, version of) 
this trilinear invariant form.

\subsection{Plan of the paper}
In \S\ref{sec:gen}, after providing basic definitions (\S\ref{ssec:schub}),
we lay out the general construction of the
vectors $\vf_X$ we assign to multinumbers $X$
(\S\ref{ssec:X}), from which we derive
the $A_2,D_4,E_6$ representations above (\S\ref{ssec:weight}).
We also explain the counting of inversions (\S\ref{ssec:Bmatrix}). 
It is worth emphasizing that these sections \S\ref{ssec:X}-\S\ref{ssec:weight}
are only included to demystify the origins of the representations
we then exploit in \S\ref{sec:proofs}.
\S\ref{sec:proofs} is concerned with the proofs of the main results:
first, a general framework is introduced, leading to our main theorem 
(\S\ref{ssec:mainthms}), which is then applied to $d=2$ equivariant Schubert
calculus in \S\ref{sec:d2}, and to $d=3$ nonequivariant Schubert calculus
in \S\ref{sec:d3}, thus proving
theorems~\ref{thm:KTd2} and \ref{thm:d3}. 
\junk{Finally, in \S\ref{sec:neg} we discuss the difficulties associated
with extending our method to higher $d$, and
prove, subject to certain assumptions,
that there are {\em no}\/ possible 
puzzle rules for $H^*_T(3$-step$)$ or $H^*(4$-step$)$.}

\rem{AK adds}
\subsection{The next two papers in this series}
One of the mysterious aspects of the framework in this paper is that
the quantum-group calculations naturally produce a {\em deformation of}
Schubert classes. In \cite{artic80} we give a direct cohomological
interpretation of these deformations, connecting them to the
``stable classes'' of \cite{MO-qg} and to Chern-Schwartz-MacPherson classes.
In addition, we interpret the majority of the puzzle calculation as
occurring on {\em quiver varieties} that, for $d>1$, are not cotangent
bundles to flag varieties! In \cite{artic83} we will use this quiver
variety framework to suggest and solve some additional
``separated descent'' Schubert problems.

\subsection*{Acknowledgments} 
We are grateful to Sergey Fomin, Gleb Koshevoy, 
Bernard Leclerc, David E Speyer, Terry Tao, 
and Michael Wheeler for useful discussions,
and to Iva Halacheva for catching unpleasantly important mistakes in the 
principal statements.
AK is especially grateful to Anders Buch for persevering
with $3$-step puzzles when AK had given up hope.

\section{The general construction}\label{sec:gen}
\subsection{Schubert classes}\label{ssec:schub}
Let $B_\pm \leq GL_n(\CC)$ denote the upper/lower triangular matrices,
with intersection $T$ the diagonal matrices,
and let $P_- \geq B_-$ be a standard parabolic with Levi factor $\prod_{i=0}^d GL_{p_i}(\CC)$.  
Our Schubert cycles $X^\sigma := P_- \dom \overline{P_- \sigma B_+}$
are right $B_+$-orbit closures on $P_- \dom GL_n(\CC)$, 
indexed by $\prod_{i=0}^d \mathcal{S}_{p_i} \dom \mathcal{S}_n$,
which we identify with strings of length $n$ in an ordered alphabet
with multiplicities given by the $(p_i)$; explicitly, the identification takes any representative
$\sigma\in \mathcal S_n$ of a right coset to the string $\lambda=\omega_{\sigma(1)},\ldots,\omega_{\sigma(n)}$, 
where $\omega$ is the unique weakly increasing string with $p_i$ letters $i=0,\ldots,d$.
The codimension of the cycle is given by the number of inversions in the string.
Schubert cycles $X^\lambda$ define classes $S^\lambda$ in cohomology, 
$T$-equivariant cohomology, $K$-theory, and $T$-equivariant $K$-theory.

In each case, these classes form a basis for whatever sort of 
cohomology group, considered as a module over the same cohomology of a point: 
$H(pt) \iso K(pt) \iso \ZZ$, $H_T(pt) \iso \ZZ[y_1,\ldots,y_n]$, 
$K_T(pt) \iso \ZZ[u_1^\pm,\ldots,u_n^\pm]$.
Following the literature on Schubert and Grothendieck polynomials (see in particular \cite{KM-Schubert2,KM-Schubert}),
the $u_i$ are the usual $K$-theory equivariant parameters,
but the $y_i$ are the {\em opposite}\/ of the cohomology equivariant parameters.
Our goal is to compute as in \eqref{eq:LRT} the coefficients, living in this base ring, of
the expansion in the basis of the product of two basis elements.

One benefit of working equivariantly is that we can replace 
equation \eqref{eq:LRT} with its restriction to $T$-fixed points,
as this restriction operation is well-known to be injective.
(This goes back to Segal's thesis \cite[proposition 2.1]{Segal},
which computes the kernel of any such map to be the $K_T$-torsion;
since $K_T(P_-\dom G)$ is a free $K_T$-module there is no torsion).
The equation becomes a {\em list}\/ of equations between elements of 
the base ring $K_T$, one for each $T$-fixed point $\sigma$:
\begin{equation}
  \label{eq:LRTrestricted}
  S^\lambda|_\sigma\, S^\mu|_\sigma \ =\  
  \sum_\nu \left( \sum_{P\text{ with boundary }\lambda,\mu,\nu} fug(P) \right)
  \ S^\nu|_\sigma
\end{equation}

These restrictions $S^\pi|_\sigma$ themselves have interpretations as state sums,
much like puzzles, and in \S \ref{ssec:mainthms} it is this equation
\eqref{eq:LRTrestricted}
we will find most amenable to the framework of quantum integrable systems.

\subsection{Functoriality}\label{ssec:funct}
Consider $d$-step flags $V_1 \leq V_2 \leq \ldots \leq V_d \leq \CC^n,
\dim V_i = n_i$, and {\em don't} assume strict containments. 
Then if some $n_j = n_{j+1}$, this ``$d$-step flag'' is really only
a $d'$-step flag for some $d'<d$, and the Schubert classes on this
$d$-step flag manifold are indexed by words that skip the letter $j$ entirely.
We hereafter make the inductive assumption that the puzzle rule we seek 
for $d$-step flag manifolds should specialize (when $n_j = n_{j+1}$)
to the one for $(d-1)$-step, under the 
label bijection 
$\{0,\ldots,d\} \setminus \{j\} \iso \{0,\ldots,d-1\}$. 

In this way, for each $d>0$ we get a lower bound on the set of
expected multinumber\footnote{Recall these from \S\ref{ssec:d2intro}: they
  are fully parenthesized expressions of numbers $0\ldots d$,
  such as $((43)(20))$.}
 labels, derived from each $d'<d$. For example,
at $d=2$ we expect to see $10$, $20$, $21$ (in addition to 
the numbers $0,1,2$) derived from $d=1$, but the $2(10)$, $(21)0$
labels are new. At $d=3$ we expect to see $ji$ for $3\geq j>i\geq 0$
from $d=1$, and $k(ji)$, $(kj)i$ for $3\geq k>j>i\geq 0$ from $d=2$,
but e.g. the $(32)(10)$ label is new.

More generally, by forgetting each of the $d$ subspaces we get maps
from the $d$-step flag manifolds to $d$ different $(d-1)$-step flag manifolds, 
and the above concerns the cases where these maps are isomorphisms. 
Now we consider the richer situation where they need not be.

If $Q_- \geq P_-$, then we have a projection
$P_-\dom GL_n(\CC) \onto Q_-\dom GL_n(\CC)$, and pullback maps
on these cohomology theories. Under these maps Schubert classes
pull back to Schubert classes (and products to products), taking
a string with content $(q_i)$ to the unique string with content $(p_i)$
that refines it and has the same number of inversions. For example,
\[
\begin{array}{rcl}
  \begin{array}{ccccc}   2&+&(3+2)&+&(1+3) \\   aa && fffgg && jkkk  \end{array}
 &\onto&
 \begin{array}{ccccc} 2&+&5&+&4   \\ aa && fffff && jjjj  \end{array}\\ 
 f\ j\ a\ k\ f\ a\ k\ f\ g\ k\ g&\mapsfrom&f\ j\ a\ j\ f\ a\ j\ f\ f\ j\ f
\end{array}
\]
where the top two rows indicate the coarsening ($Q_- \geq P_-$)
and the bottom row gives an example of a refinement.
This rule tells us what to do with the boundary labels on a $(Q_-)-$puzzle
to make them into the boundary of a $(P_-)-$puzzle computing the
same coefficient. So we can naturally hope that the refinement rule extends
to the interior of the puzzle as well, giving a correspondence on puzzles,
not just on their boundaries.

Each $T$-fixed point on $P_-\dom GL_n(\CC)$ is uniquely of the form
$P_-\dom P_- w$ for $w \in \prod_i \mathcal{S}_{p_i} \dom \mathcal{S}_n$;
for convenience we write this set $W_P\dom W$.
Let $\lambda$ be an element of $W_P\dom W$, and $\lambda'$ its image
in $W_Q\dom W$. Let $\mu \in W_Q\dom W$, and $\tilde\mu \in W_P\dom W$
its refinement as discussed above. (So $(\tilde\mu)' = \mu$,
but $\widetilde{(\lambda')}$ isn't necessarily $\lambda$.)
Then considering the functoriality of the composite
\[
pt \quad\mapsto\quad P_-\dom P_- \lambda \quad\mapsto\quad 
Q_-\dom Q_- \lambda'
\]
gets us a match on restrictions, $S^{\tilde\mu}|_\lambda = S^{\mu}|_{\lambda'}$.
In particular, if $\lambda_1,\lambda_2 \in W_P\dom W$ lie over the
same $\lambda' \in W_Q\dom W$, 
then $S^{\tilde\mu}|_{\lambda_1} = S^{\tilde\mu}|_{\lambda_2}$.

\junk{
  A consequence of this functoriality is that to prove our theorems
  for $d\leq 3$ it will suffice to prove them in the $d=3$ cases.}

\subsection{The vectors \texorpdfstring{$\vf_X$}{f\_X}}\label{ssec:X}
Fix $d$, the number of steps in the flag. 
At $d=1,2$ we learn there are two types of puzzle pieces,
up to rotation:
\begin{equation}\label{eq:stdtri}
\uptri{i}{i}{i} \quad i = 0,\ldots,d, \qquad\text{and}\qquad
\uptri{Y}{YX}{X} \quad \text{ for certain pairs of labels $X,Y$} 
\end{equation}
Let $\Lambda := A_2^*$ denote the $2$-dimensional weight lattice of $A_2$, 
on which $\tau$ acts by $120^\circ$ counterclockwise rotation. 
Let $\vf \in \Lambda$ denote the first fundamental weight,
so $\vf,\tau \vf$ are a basis and $\vf + \tau \vf + \tau^2 \vf = \vec 0$.

Now we consider $(\vf_i,\tau \vf_i)_{i=0,\ldots,d}$ as 
a basis of $\Lambda^{1+d}$.
We associate $\vf_i$ (resp.\ $\tau \vf_i, \tau^2 \vf_i$) to the edge label
$i$ on the NW (resp.\ S, NE) of a $\Deltatri$ triangle.
For $YX$ any multinumber
(not necessarily valid) we define
\begin{equation}\label{eq:YX}
\vf_{YX} := -\tau \vf_X - \tau^2 \vf_Y
\end{equation}
with the salutary effect that the vectors on the edges of a puzzle piece
$\uptri{Y}{YX}{X}$ (and its rotations) add to $\vec 0$.
For $d=1,2,3,$ these vectors $\{\vf_X\ :\ X$ a valid multinumber$\}$
are {\em not} yet quite the ones we used
in lemmas~\ref{lem:greend1}, \ref{lem:greend2}, \ref{lem:greend3},
since they live in $2(1+d)$-dimensional space instead of $2d$-dimensional.
They are quite close to the ``auras'' used in \cite{BuchHT} for
similar Green's theorem bookkeeping, though he worked in 
$\CC^{1+d}$ and didn't worry about integrality.

\subsection{The Gram matrix}\label{ssec:Gram}
In order to make use of the $R$-matrices of irreducible representations of
quantized affine algebras, our weights $(\vf_X)$ will need 
to be the weights of a representation. In the simplest possible case,
the representation is \defn{minuscule}, meaning it has 
only extremal weights, which are therefore all of the same norm.
So we make the following guess:
\begin{quotation}
  There should be a $\tau$-invariant symmetric form on $\Lambda^{1+d}$ 
  with respect to which, for each valid multinumber $X$, 
  we have $\left|\vf_X\right|^2 = 2$.
\end{quotation}

In particular, on each individual $\Lambda$ the form is determined by
the above condition (as we will now recalculate), but the different
$\Lambda$ factors will {\em not} be orthogonal to one another.

\rem{AK adds}
The derivation from here through \S \ref{ssec:weight} is not strictly
necessary to the results to come -- we could just declare ``we'll use the
following minuscule representations of $A_2,D_4,E_6$'' and check that
their $R$-matrices satisfy the conditions coming in \S\ref{sec:proofs}.
(In particular, that's essentially what we'll do in \cite{artic83}.)
We include the derivation to make the introduction of these $A_2,D_4,E_6$
representations seem less like a magic trick, but technically the
reader could jump to \S\ref{ssec:Bmatrix} now, at least on a first reading.

\subsubsection{$d=0$} Here 
$$ |\vf_0|^2 = |\tau\vf_0|^2 = 2
= |\tau^2\vf_0|^2 = \big|-\vf_0-\tau\vf_0\big|^2 
= |\vf_0|^2 + 2\langle \vf_0,\tau \vf_0\rangle + |\tau\vf_0|^2
= 4 + 2\langle \vf_0,\tau \vf_0\rangle.
$$ 
Hence the Gram matrix w.r.t. the ordered basis $(\vf_0,\tau \vf_0)$
is the $A_2$ Cartan matrix $\begin{bmatrix}  2&-1 \\ -1&2\end{bmatrix}$.

When we go to $d>0$, the $2\times 2$ blocks on the diagonal will
each look like this.

\subsubsection{$d=1$}
To begin with, the Gram matrix 
in the basis $\vf_0, \tau \vf_0, \vf_1, \tau \vf_1$ is
\[G_1 = 
\begin{bmatrix}
 2&-1& &A&B \\ -1&2 & & C&D \\ \\ A&C& & 2&-1 \\ B&D&&-1&2
  \end{bmatrix}\]
Since $10$ is a valid multinumber, 
\[ 2 = |\vf_{10}|^2 
= \left|-\tau\vf_0-\tau^2\vf_1\right|^2
= \left|\vf_0 + \tau\vf_1\right|^2 = 2 + 2\langle \vf_0,\tau \vf_1\rangle + 2 \]
so as in $d=0$, we learn that a certain off-diagonal entry 
(in this case, $B$) is $-1$.

The required $\tau$-invariance gives us
\begin{align*}
   A = \langle \vf_0, \vf_1 \rangle&= \langle \tau\vf_0, \tau\vf_1 \rangle = D\\
   &= \langle \tau^2\vf_0, \tau^2\vf_1 \rangle  
   = \langle -\vf_0-\tau\vf_0, -\vf_1-\tau\vf_1 \rangle 
   = \langle \vf_0+\tau\vf_0, \vf_1+\tau\vf_1 \rangle \\
   &= \langle \vf_0,\vf_1 \rangle + \langle \vf_0, \tau\vf_1\rangle
   + \langle \tau \vf_0,\vf_1 \rangle + \langle \tau\vf_0, \tau\vf_1\rangle \\
   &= A + B + C + D = 2A+C-1
\end{align*}
so $C = 1-A$.
The off-diagonal block begins to look like the diagonal: to emphasize
the similarity we will take $A = 2-a$, and obtain
\[
G_1 =
\begin{bmatrix} 
2&-1& &2-a&-1 \\ 
-1&2 & & a-1&2-a \\ \\ 
2-a& a-1& & 2&-1 \\
-1&2-a&&-1&2  
\end{bmatrix}
\]

When we go to $d>1$, the $2\times 2$ blocks above the diagonal will each 
have the same form as this off-diagonal upper block.

\subsubsection{$d=2$}
From the analysis above, and the valid multinumbers $10$, $20$, $21$,
we already know the Gram matrix looks like
\[
G_2 =
\begin{bmatrix}
  2&-1&& 2-a&-1&& 2-b&-1 \\
  -1&2&& a-1 &2-a&&  b-1&2-b \\ \\
 2-a&a-1&&  2&-1&&  2-c&-1 \\
 -1&2-a&& -1&2&&  c-1 &2-c \\ \\
 2-b&b-1&&  2-c&c-1&&  2&-1 \\
 -1&2-b&& -1&2-c&& -1&2
\end{bmatrix}
\]
Now we use
\[
\vf_{(21)0}
= -\tau^2\vf_{21}-\tau\vf_0
= -\tau^2(-\tau^2\vf_2-\tau\vf_1)-\tau\vf_0 
= \tau\vf_2 + \vf_1 -\tau\vf_0 
\]
so that
\begin{align*}
  |\vf_{(21)0}|^2 &= \left|\tau\vf_2 + \vf_1 -\tau\vf_0 \right|^2 \\
  &= |\vf_2|^2 + |\vf_1|^2 + |\vf_0|^2 +2\left(
      \langle \tau\vf_2,\vf_1 \rangle 
    + \langle \tau\vf_2,-\tau\vf_0 \rangle 
    + \langle \vf_1,-\tau\vf_0 \rangle \right) \\
  2 &= 6 + 2(-1 - (2-b) -(a-1)) = 2 + 2(b-a)
\end{align*}
to learn $a=b$. 
The validity of $2(10)$ gives us $b=c$ by a similar computation.
In short, the $2\times 2$ blocks above the diagonal are all equal;
the only freedom left is in the parameter $a$.

\subsubsection{$d\geq 3$}
The multinumbers derived using \S\ref{ssec:funct}'s functoriality
from $d\leq 2$ are already enough to tie down the matrix,
except for this single parameter $a$ in the off-diagonal blocks. 
For $d=3$ it remains to verify that
the other multinumbers $X$ from \S\ref{ssec:d3puz}
already have $|\vf_X|^2 = 2$, and indeed they do, putting no condition on $a$.

Hereafter $G_d$ refers to this $2(d+1)\times 2(d+1)$ matrix,
with $a$ its only free parameter.

\setcounter{MaxMatrixCols}{20}

\subsection{Signature, and special values of the parameter $a$}
\label{ssec:signature}
This matrix $G_d$ has rank $2$ at $a=0$, i.e. its determinant is a multiple 
of $a^{2d}$. Mysteriously, this multiple seems to be periodic in $d$ 
of period $6$ for $d>0$, much as in \cite[example 1.6]{BarotGeissZelevinsky}.
The multiples (which are necessarily squares, since the $\tau$-invariance
leads to duplicity in the eigenvalues) are
\[
\begin{matrix}
  &d && 1 & 2 & 3 & 4 & 5 & 6 & \cdots \\
  &a^{-2d} \det G_d && (a-3)^2 & 3(a-2)^2 & (2a-3)^2 & 3(a-1)^2 & a^2 & 3
  &\cdots \\ \\
  \text{so we define}& a_d &:=& 3 & 2 & 3/2 & 1 & 0 & \infty & \cdots 
\end{matrix}
\]
We checked this $6$-periodicity to $d=25$, but since we will soon restrict to
$d$ at most $4$, we didn't pursue it further.

For each $d>0$ to at least $25$ (and probably forever),
there is an interval $[0,a_d]$ of values of $a$ outside which 
$G_d$ is indefinite. 
For $d=1,2,3,4$, the matrix is positive definite for $a$ in 
the open interval $(0,a_d)$.
For $d=5$ there is no open interval, since $a_d = 0$. 
For $d=7$ (and $d\not\equiv 5 \bmod 6$, at least up to $25$)
there is again an interval, but $G_d$ is not definite in there.

\begin{prop}\label{prop:valid}
  For $d=1,2,3$, the only integer vectors in $\Lambda^{1+d}$ with norm-square $2$
  (independent of $a$!) are $\left\{\pm \tau^\bullet \vf_X
    \text{ for valid multinumbers }X\right\}$, 
  i.e. those associated to a label on a side of a $\Deltatri$ or $\nablatri$. 
\end{prop}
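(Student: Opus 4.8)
The plan is to reduce this to a finite computation that can in principle be done by hand (and certainly by machine). The key point is that the Gram matrix $G_d$ is positive definite for $a$ in the open interval $(0,a_d)$, so by continuity and the density of that interval, an integer vector $v\in\Lambda^{1+d}$ with $|v|^2=2$ identically in $a$ in particular has $|v|^2=2$ with respect to a fixed \emph{positive definite} form. A positive definite integral lattice has only finitely many vectors of any given norm, so for each $d\in\{1,2,3\}$ and each chosen value $a_0\in(0,a_d)$ (say $a_0=1$, where $G_3$ is the Cartan matrix of $E_6$ up to the blowup direction, and $G_1,G_2$ are the Cartan matrices of $A_2,D_4$), the set of norm-$2$ integer vectors is a finite list one can enumerate.

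First I would fix such an $a_0$ and identify the positive definite lattice $(\Lambda^{1+d},G_d|_{a=a_0})$ explicitly. For $d=1,2,3$ with the right choice of $a_0$ this lattice is exactly the root lattice of $A_2$, $D_4$, $E_6$ respectively (this is the content of the remarks following lemmas~\ref{lem:greend1}, \ref{lem:greend2}, \ref{lem:greend3}, where the $\vf_X$ are shown to be weights of the minuscule reps $V^{A_2}_{\omega_1},V^{D_4}_{\omega_1},V^{E_6}_{\omega_1}$; but since those are minuscule, \emph{all} their weights have norm-square $2$, and the weights of a minuscule rep of a simply-laced group are precisely the norm-$2$ vectors of the weight lattice that lie in a single $W$-orbit). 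So the second step is: enumerate the norm-$2$ vectors of the root lattice $R$ of type $A_2$ ($6$ roots), $D_4$ ($24$ roots), $E_6$ ($72$ roots), and check that each one equals $\pm\tau^\bullet\vf_X$ for a valid multinumber $X$ — equivalently, that the $3\cdot 3=9$ (resp.\ $3\cdot 8=24$, resp.\ $3\cdot 27=81$, minus overlaps) vectors $\pm\tau^\bullet\vf_X$ already exhaust the roots. The count matches on the nose (e.g.\ $E_6$: the $27$ minuscule weights, their negatives, and the $18$ roots, total $27+27+18=72$, and $\tau$ permutes the three $27$-dimensional pieces $V_{\omega_1},V_{\omega_2},V_{\omega_3}$ appearing as the three edge-orientations), so the inclusion $\{\pm\tau^\bullet\vf_X\}\subseteq\{$roots$\}$ is actually an equality by cardinality.

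The one subtlety — and the main obstacle — is the phrase \emph{independent of $a$}: I must be careful that I am not merely classifying norm-$2$ vectors for a single $a_0$, but showing that the vectors which work for \emph{all} $a$ are exactly the $\pm\tau^\bullet\vf_X$. The easy direction is that each $\vf_X$ has $|\vf_X|^2=2$ for all $a$ (this is \S\ref{ssec:Gram}, and for the $d=3$ multinumbers not forced by functoriality it is the ``putting no condition on $a$'' remark in \S\ref{ssec:X}'s discussion of $d\geq 3$). For the converse: any $v$ with $|v|^2=2$ identically is in particular a norm-$2$ vector at $a=a_0$, hence on the finite list above, hence equals some $\pm\tau^\bullet\vf_X$; and every such vector does have constant norm $2$, so no spurious solutions can appear. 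Thus the ``independent of $a$'' hypothesis is automatically satisfied by the vectors on the list and is used only to restrict attention to one convenient $a_0$.

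A cleaner variant, which avoids even choosing $a_0$, is to write $G_d = G_d^{(0)} + a\,H_d$ where $G_d^{(0)}$ (the value at $a=0$) is positive \emph{semi}definite of rank $2$ with a $2$-dimensional kernel $N$ (the $\tau$-invariant ``single number'' direction, spanned by $\sum_i\vf_i$ and its $\tau$-image), and $H_d$ is the perturbation. For $|v|^2$ to be independent of $a$ one needs $v^\top H_d\, v=0$; combined with positivity on $(0,a_d)$ this forces $v$ to also satisfy $v^\top G_d^{(0)} v=2$, and then positive-definiteness on the interval plus a short argument shows $v$ must lie in a fixed finite set. I would present whichever of these two packagings is shorter once the explicit matrices are in front of me; the arithmetic is routine and the only thing to get right is bookkeeping the $\tau$-orbits and confirming the counts $6,24,72$.
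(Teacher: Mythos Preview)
Your core strategy---fix some $a_0\in(0,a_d)$ where $G_d$ is positive definite, observe that norm-$2$ integer vectors then lie in a compact ball and hence form a finite set, and check each---is exactly what the paper does (it says simply ``a quick computer search, since for $a\in(0,a_d)$, the positive definiteness says we only have to check the integral vectors inside a certain compact ball'').

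Where your argument goes wrong is the attempt to avoid the brute-force search by identifying $(\Lambda^{1+d},G_d|_{a_0})$ with the root lattice of $A_2,D_4,E_6$. The dimensions do not match: $\Lambda^{1+d}$ has rank $2(d+1)$, while those root lattices have rank $2d$. It is the sublattice $K_d=\ker(\Lambda^{1+d}\to\Lambda)$ that is identified with $A_2,D_4,E_6$ (see theorem~\ref{thm:classif}), and the $\vf_X$ do not lie in $K_d$.

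More seriously, your counting argument (``the inclusion is actually an equality by cardinality'') fails even in principle: at a fixed $a_0$ the set of norm-$2$ vectors is strictly larger than the set of vectors with $a$-independent norm $2$. For instance the simple roots $\alpha_i=\vf_{i-1}-\vf_i$ have $|\alpha_i|^2=2a$, so at $a_0=1$ they are norm-$2$ vectors that are \emph{not} of the form $\pm\tau^\bullet\vf_X$. Concretely, for $d=1$ at $a_0=1$ the lattice $\Lambda^{1+1}$ is $4$-dimensional even of determinant~$4$ with $24$ norm-$2$ vectors, while $\{\pm\tau^\bullet\vf_X\}$ has only $18$; the six extras are $\pm\tau^\bullet\alpha_1$. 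So after enumerating the finite list at $a_0$, one genuinely must test each candidate for $a$-independence of its norm, and this is the computer check the paper refers to. Your ``cleaner variant'' with $G_d=G_d^{(0)}+aH_d$ is a reasonable way to package that check (the condition is $v^\top H_d v=0$ and $v^\top G_d^{(0)}v=2$), but you would still need to carry it out rather than appeal to a cardinality match.
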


We omit the proof as this is a quick computer search, 
since for $a \in (0,a_d)$, the positive definiteness says we only have
to check the integral vectors inside a certain compact ball. 
For $d=4$ there are $240$ such vectors, arranged in the root system of
$E_8$, about which more in a moment.

Note that our previous definition of ``valid'' multinumber $X$ was
historical, based on whether $X$ had occurred in a proven
or conjectural puzzle rule. Now we can use proposition \ref{prop:valid}
to give an alternate definition: those $X$ such that $|\vf_X|^2 = 2$.

\subsection{Deriving roots and weights from the Gram matrix}\label{ssec:weight}
Let $K_d$ be the kernel of the summation map $\Lambda^{1+d} \onto \Lambda$, 
with evident basis $(\alpha_i := \vf_{i-1} - \vf_{i},  \tau\alpha_i)_{i=1,\ldots,d}$.
The Gram matrix on $K_d$ with respect to this basis\footnote{%
  The basis $\left( (-1)^i \alpha_i, (-1)^i \tau^2\alpha_i\right)$
  makes the Gram matrix Toeplitz, but this basis seemed more natural.}
comes out to be the Northwest $2d\times 2d$ block of 
\[ a\text{ times }
\begin{bmatrix}
  2  &{-1}& & {-1}& & & & &   \\
  {-1}&  2& &1& {-1}& & & &  \\ \\
    {-1}& 1& &2& {-1}&& {-1}& & &   \\
    &   {-1}&& {-1} & 2& & 1& {-1}& &   \\ \\
  & & & {-1}& 1&& 2& {-1}& & {-1}&   \\
  & & & & {-1}&& {-1}& 2& & 1&  {-1} &\\ \\
  & & & & & & {-1}& 1&& 2&  {-1}& {\ddots}\\
  & & & & & & & {-1}&& {-1}&  2 & \ddots \\
  &&&&& & &&&\ddots &\ddots & \ddots
\end{bmatrix}
\]

\junk{
We will use a slightly different basis
$\left( (-1)^i \alpha_i, (-1)^i \tau^2\alpha_i\right)$
in order to have the best looking Gram matrix on $K_d$;
it comes out to be the Northwest $2d\times 2d$ block of the Toeplitz matrix
\[ a\text{ times }
\begin{bmatrix}
  2&{-1}& {1}& & & & &   \\
  {-1}& 2& -1& {1}& & & &  \\
  {1}& -1& 2& {-1}& {1}& & &   \\
  &   {1}& {-1}& 2& -1& {1}& &   \\
  & & {1}& -1& 2& {-1}& {1}&   \\
  & & & {1}& {-1}& 2& -1&  {1} &\\
  & & & & {1}& -1& 2&  {-1}& {\ddots}\\
  & & & & & {1}& {-1}&  2 & \ddots \\
  &&&&& & \ddots &\ddots & \ddots
\end{bmatrix}
\]
}

\begin{thm}\label{thm:classif}
  Consider the matrix $G_4$ from \S \ref{ssec:Gram} on $\Lambda^{1+4}$.
  Recall from \S\ref{ssec:signature} that it is positive definite
  for $a \in (0,1)$; when we set $a=1$ it makes $G_4$ positive
  {\em semi}definite.  The kernel $Rad(G_4|_{a=1})$ is generated by 
  $\vec\nu := \sum_{i=0}^4 (-\tau^2)^i \vf_i$ and $\tau \vec\nu$, and the 
  composite map $K_4 \into \Lambda^{1+4} \onto \Lambda^{1+4} / Rad(G_4|_{a=1})$
  is an isomorphism of lattices. Moreover, both are isomorphic to
  the $E_8$ lattice.

  There is an ordered system of simple roots for this $E_8$ such that
  (in their $K_4$ appearance)
  \begin{itemize}
  \item the first $2d$ of them generate $K_d \leq K_4$, for $d=1,2,3,4$; and
  \item the $(2d-1)$st root in the list is $\alpha_d := \vf_{d-1} - \vf_{d}$,
    and for $d\leq 3$, $\vf_d$ is the corresponding fundamental weight
    of $K_d$. 
  \end{itemize}
  Having picked out this simple system, we return to general values of $a$.
  Then 
  \begin{enumerate}
  \item From this simple system we see that the symmetric forms on
    $K_1,K_2,K_3,K_4$ 
    are ($a$ times) those on the root lattices $A_2,D_4,E_6,E_8$.  
    \footnote{%
      David Speyer points out that the corresponding cluster varieties
      are $Gr_3(\CC^n)$ for $n=5,6,7,8$; for $n>8$ these varieties are
      still cluster, but of infinite type \cite{JScott}.  This seems
      likely to be connected to the fact that these matrices are
      almost (and have the same determinants as) the crossing matrices
      of the standard projections of the torus knots $T(3,n)$
      \cite{oeis131027}, from whose projections are built these
      cluster varieties in \cite{ShendeTWZ}.  For another mysterious
      connection, note that the number of positive roots of
      $A_2,D_4,E_6,E_8$ is three times the number of indecomposable
      preprojective modules for the $A_1,A_2,A_3,A_4$ quivers
      \cite{GLSprep}, a bit of numerology that does not seem to have
      been observed before.  }
  \item\label{item:AdFace}
    The weights $\vf_0,\ldots,\vf_d$ are vertices of the 
    convex hull $\{\vf_X\ :\ X$ valid$\}$, and form one (simplicial) face
    of that polytope.
  \item For $d\leq 3$, the weights
    $\{\vf_X\ :\ X$ only uses numbers $i\leq d\}$
    are those of the $K_d$ minuscule irrep with highest weight $\vf_d$.
  \item The weights 
    $\{\vf_X\ :\ X$ only uses numbers $i\leq 4$ and is derived
    from \S\ref{ssec:funct}'s functoriality$\}$ are
    {\em some of} those of the $K_4 \iso E_8$ adjoint representation.
  \end{enumerate}
\end{thm}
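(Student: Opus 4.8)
The plan is to establish Theorem~\ref{thm:classif} by first pinning down the radical of $G|_{a=1}$, then transporting the combinatorial data (simple system, fundamental weights, minuscule orbits) along the isomorphism $K_4 \xrightarrow{\ \sim\ } \Lambda^{1+4}/Rad(G|_{a=1})$, and finally reading off items (1)--(4) from explicit root-system bookkeeping.

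\textbf{Step 1: the radical.} Using the formula $\vf_{YX} = -\tau\vf_X - \tau^2\vf_Y$ one computes $\vec\nu := \sum_{i=0}^4(-\tau^2)^i\vf_i$ and checks directly (a finite linear-algebra verification in the explicit Gram matrix $G_4$ specialized to $a=1$) that $\langle \vec\nu, \vf_i\rangle = 0$ for all $i$, hence $\vec\nu, \tau\vec\nu \in Rad(G|_{a=1})$; since \S\ref{ssec:signature} already records that $\det G_d$ vanishes to order exactly $2d$ at $a=0$ and that $G_4$ is positive definite on $(0,a_4)=(0,1)$, the rank of $G_4|_{a=1}$ drops by exactly $2$ at the endpoint $a=1$, so $Rad(G|_{a=1}) = \ZZ\vec\nu \oplus \ZZ\tau\vec\nu$. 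The summation map $\Lambda^{1+4}\onto\Lambda$ has kernel $K_4$ of rank $8$; one checks $\vec\nu, \tau\vec\nu \notin K_4$ (their image in $\Lambda$ is nonzero, e.g. by applying the summation map and using $\sum(-\tau^2)^i$ is invertible on $\Lambda$ since $1+\tau+\tau^2=0$ makes the relevant scalar a unit), so $K_4 \cap Rad = 0$; comparing ranks ($8 = 10 - 2$) gives that $K_4 \into \Lambda^{1+4} \onto \Lambda^{1+4}/Rad$ is an isomorphism of lattices, and the induced form on $K_4$ is positive definite (being the restriction of the now-positive-definite form on the quotient). Both lattices are even (norm-$2$ generators available) unimodular positive definite of rank $8$, hence $\iso E_8$.

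\textbf{Step 2: the ordered simple system.} Exhibit an explicit sequence of $8$ roots $\beta_1,\ldots,\beta_8$ in $K_4$, chosen so that $\beta_{2d-1} = \alpha_d = \vf_{d-1}-\vf_d$ for $d=1,2,3,4$ and the remaining $\beta_{2d}$ are the natural "$\tau$-twisted" partners (along the lines of the footnoted basis $((-1)^i\alpha_i, (-1)^i\tau^2\alpha_i)$), then verify that the resulting $8\times 8$ Gram matrix is (a times) the $E_8$ Cartan matrix with the Bourbaki-type ordering for which the initial $2d$ nodes span the sub-root-systems $A_2\subset D_4\subset E_6\subset E_8$. This is the content of item (1): the span of $\beta_1,\ldots,\beta_{2d}$ is $K_d$ (by the nesting $K_1\subset\cdots\subset K_4$ visible from the displayed Toeplitz-type matrix), and its Gram matrix is the stated classical Cartan matrix. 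For the second bullet one checks that $\vf_d$ pairs as $1$ with $\alpha_d$ and as $0$ with the other simple roots $\beta_1,\ldots,\beta_{2d-2},\beta_{2d}$ of $K_d$ (again a direct pairing computation in $G_d$), identifying it as the fundamental weight dual to the $(2d-1)$st node, which is exactly the minuscule node of $A_2, D_4, E_6$.

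\textbf{Step 3: items (2)--(4).} For (2): the vectors $\vf_0,\ldots,\vf_d$ all pair to $1$ with the same linear functional (obtained from a dominant coweight, or directly: $\langle \vf_i, \vf_j\rangle$ equals a constant for $i\ne j$ and $2$ for $i=j$, from the $2\times 2$-block structure in \S\ref{ssec:Gram}), so they lie on a common supporting hyperplane of the polytope $\mathrm{conv}\{\vf_X\}$; that they span a simplicial face follows because they are affinely independent (their difference vectors are the $\alpha_i$, which are linearly independent) and the defining inequality is strict on all other $\vf_X$ (a finite check, using proposition~\ref{prop:valid} to enumerate the finitely many $\vf_X$). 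For (3): by Step~2, $\vf_d$ is the highest weight of the minuscule $K_d$-irrep; the weights of a minuscule irrep form a single Weyl-group orbit, and one checks that the Weyl group of $K_d$ permutes $\{\vf_X : X \text{ valid, uses only } i\le d\}$ transitively with $\vf_d$ extremal --- concretely, the simple reflections act on the $\vf_X$ by the same combinatorial moves that relate valid multinumbers, and a count (the number of valid $X$ on $d\le3$ digits is $3,8,27$) matches $\dim V^{A_2}_{\omega_1}, V^{D_4}_{\omega_1}, V^{E_6}_{\omega_1}$, as already flagged in lemmas~\ref{lem:greend1}--\ref{lem:greend3}. For (4): the functoriality-derived multinumbers on $\le 4$ digits give vectors of norm-square $2$ in $K_4\iso E_8$, hence roots, hence a subset of the $240$ roots = the weights of the $E_8$ adjoint representation; the claim is only "some of", so no further surjectivity is needed.

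\textbf{Main obstacle.} The genuinely delicate point is not Step~1 (linear algebra) but the \emph{ordering} in Step~2: one must produce a simple system whose Gram matrix is \emph{simultaneously} the $E_8$ Cartan matrix and nested compatibly with the sub-matrices for $d=1,2,3$, and for which the $(2d-1)$st node is literally $\alpha_d=\vf_{d-1}-\vf_d$ with $\vf_d$ its dual fundamental weight --- i.e. matching the combinatorics of the excerpt's Dynkin-diagram labelling (the primed/unprimed nodes $a,b,c,a',b',c'$ in \S\ref{ssec:d3puz}) to the abstract $E_8$ simple system. I expect this to require carefully choosing signs and $\tau$-twists (the footnote's alternating-sign basis is the hint) and then a somewhat involved but mechanical verification that the off-diagonal Cartan entries come out as $0$ or $-1$ in the right pattern; everything downstream (items (2)--(4)) then follows from standard minuscule-representation theory plus the finite enumeration licensed by proposition~\ref{prop:valid}.
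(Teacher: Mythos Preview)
Your approach is essentially the same as the paper's: compute the radical, exhibit an explicit ordered simple system inside $K_4$, and then read off (1)--(4) by direct verification and standard minuscule theory. The paper is equally terse on items (2)--(4), calling (3) ``much more tedious'' and simply pointing to the $E_6$ crystal pictured after lemma~\ref{lem:greend3}.

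Two points where you diverge from (or are slightly less clean than) the paper:

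\textbf{Order of Step~1.} The paper first shows $K_4 \iso E_8$ directly via Witt's uniqueness theorem (the displayed Gram matrix on $K_4$ is even, unimodular, positive definite, rank~$8$), and only then computes the radical. This is cleaner than your route: once you know $K_4$'s form is positive definite, $K_4\cap Rad(G|_{a=1})=0$ is immediate, forcing $\mathrm{rank}\,Rad\le 2$; combined with $\vec\nu,\tau\vec\nu\in Rad$ you are done. Your argument ``$\det G_d$ vanishes to order $2d$ at $a=0$, hence the rank drops by exactly $2$ at $a=1$'' conflates the behaviour at $a=0$ with that at $a=1$; what is relevant is the factor $3(a-1)^2$ in $a^{-8}\det G_4$, and even that only bounds the rank drop from below by $1$ without further argument. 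Going via Witt sidesteps this entirely.

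\textbf{Shape of the simple system.} You correctly flag the explicit simple system as the main obstacle, but the guess ``the natural $\tau$-twisted partners, along the lines of $((-1)^i\alpha_i,(-1)^i\tau^2\alpha_i)$'' is too optimistic. The paper's actual system (displayed as an $8\times 8$ matrix in the $(\alpha_i,\tau\alpha_i)$ coordinates) has the primed roots $\rt{b'},\rt{c'},\rt{d'}$ as genuinely nontrivial combinations --- e.g.\ $\rt{d'}$ involves all eight coordinates with entries up to $\pm 2$. So the verification you anticipate is indeed mechanical, but the candidate roots are not as canonical as you suggest; the paper simply writes them down and checks that $2-SGS^T$ is the $E_8$ adjacency matrix.
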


\begin{proof}
  We first check that $K_4$'s symmetric form is ($a$ times) that of $E_8$.
  It is integral, unimodular ($\det = 1$), even (even-diagonal),
  positive definite, and dimension $8$; Witt proved in 1941 that
  the only such lattice is $E_8$ (our reference is \cite{SPLAG}). 
  The computation of the radical is straightforward.

  Here is a simple system satisfying the conditions, and its induced
  Dynkin diagram.\footnote{To check this, let the two matrices above 
    be $G_4$ and $S$, and observe that $2-SG_4S^T$ is 
    the adjacency matrix for the graph pictured.}
  \[
\begin{blockarray}{lrrrrrrrr}
 &  \alpha_1 &\tau \alpha_1 &   \alpha_2 &\tau \alpha_2 &
  \alpha_3 &\tau \alpha_3 &  \alpha_4 &\tau \alpha_4 \\
    \begin{block}{l(rrrrrrrr)}             
\rt{a} & 1& &&&&&&&&  \\
\rt{a'}&                -1& -1& &&&&&&  \\
\rt{b} & 0& 0 & 1 &&&&&& \\
\rt{b'}&                -1& 0& -1& -1  &&&&& \\
\rt{c} & 0& 0& 0& 0& 1 &&&& \\
\rt{c'}&                0& 1& -1& 0& -1& -1&&& \\
\rt{d} & 0& 0& 0& 0& 0& 0& 1 && \\
\rt{d'}&            0& 1& -1& 1& -2& 0& -2& -1& \\
   \end{block} 
  \end{blockarray}
\qquad
  \tikz[every label/.style={rt,above}] {\draw 
    (0,0)   node[mynode,label=c'] {} -- 
    ++(1,0) node[mynode,label=a'] {} --
    ++(1,0) node[mynode,label={[name=aa]above right:a}] {} --
    ++(1,0) node[mynode,label=b] {} -- 
    ++(1,0) node[mynode,label=c] {} -- 
    ++(1,0) node[mynode,label={[name=dd]d}] {} -- 
    ++(1,0) node[mynode,label=d'] {}; 
    \draw (2,0) -- 
    (2,1)   node[mynode,label=b'] {};}
  \]
Checking the first two conditions is straightforward from the Gram matrices.
  The third is much more tedious, and for $d=3$ results in the crystal 
  pictured after lemma \ref{lem:greend3}. 
  (Note that the crystal of a minuscule representation is particularly
  simple -- given two weights $\lambda$, $\lambda+\alpha$ where
  $\alpha$ is a simple root, there is a directed edge 
  $\lambda \to \lambda+\alpha$.)
  \junk{
    Within that crystal we find the
    $(a,a',b,b')$-subcrystal with high weight $\vf_2$, 
    and the $(a,a')$-subcrystal with high weight $\vf_1$.
    (The parenthetical comment came from brute force computer search.)
  }
  \junk{AK: I think what remains is to write the simple roots as 
    combinations of $(\vf_i)$, up to multiples of $\nu$ and $\tau\nu$.}
\end{proof}

In fact the conditions on the simple system in theorem \ref{thm:classif} 
make the choice of the first seven simple roots unique; 
alas, there is no choice for the eighth that would make $\vf_4$
a fundamental weight.
\rem[gray]{true but misleading -- only if we insist on natural embeddings $d$ into $d+1$ does this problem arise.}

\junk{Old, separate versions.

\begin{prop}\label{prop:roots}
  For $d=1,2,3,4$ these matrices are positive definite
  of determinant $3,4,3,1$ respectively,
  and the even lattices they define are\footnote{%
    David Speyer points out that the corresponding cluster varieties
    are $Gr_3(\CC^n)$ for $n=5,6,7,8$; for $n>8$ 
    these varieties are still cluster, but of infinite type \cite{JScott}.
    This seems likely to be connected to the fact that these matrices
    are almost (and have the same determinants as) the crossing
    matrices of the standard projections of the torus knots $T(3,n)$
    \cite{oeis131027}, from whose projections are built these cluster
    varieties in \cite{ShendeTWZ}.
    For another mysterious connection, note that the number
    of positive roots of $A_2,D_4,E_6,E_8$ is three times the
    number of indecomposable preprojective modules for the
    $A_1,A_2,A_3,A_4$ quivers \cite{GLSprep}, 
    a bit of numerology that does not seem to have been observed before.
  }  
  the root lattices $A_2,D_4,E_6,E_8$.
\end{prop}

\begin{proof}
  We leave the reader to check the determinants and definiteness, 
  then recall that $E_8$ is the unique positive definite 
  even unimodular lattice in dimension $8$. 
  Computer search inside the box $\prod^8 [-2,2]$ finds $240$
  integer vectors of norm-square $2$, which must therefore be all the roots.

  {move this next paragraph to the next proof, and after it
    as a comment}

  We look for a system of simple roots, the first $2d$ of which
  generate $K_d$. Ideally we'd like the vectors $\vf_{1,2,3,4}$ to be
  fundamental weights, corresponding to the simple roots $\alpha_{1,3,5,7}$;
  these conditions uniquely determine the simple roots for $d=1,2,3$
  but are unachievable for $d=4$, i.e. we can't make $\vf_4$ fundamental.
  Here is such a system and its induced%
  \footnote{To check this, let the two matrices above be $G$ and $S$,
    and observe that $2-SGS^T$ is the adjacency matrix for the graph pictured.}
  Dynkin diagram.
  \[\begin{blockarray}{lrrrrrrrr}
    \begin{block}{l(rrrrrrrr)}
    a& 1& \\
    a'&                -1& -1&\\
    b& 0& 0 & 1\\
    b'&                -1& 0& -1& -1\\
    c& 0& 0& 0 & 0& 1\\
    c'&                0& 1& -1& 0 & -1& -1\\
    d& 0& 0& 0& 0& 0 & 0& 1\\
    d'&                0& 1& -1& 1& -2& 0& -2& -1 \\
   \end{block} 
  \end{blockarray}
  \qquad \tikz[every label/.style={above},math mode] {\draw 
    (0,0)   node[mynode,label=c'] {} -- 
    ++(1,0) node[mynode,label=a'] {} --
    ++(1,0) node[mynode,label={[name=aa]above right:a}] {} --
    ++(1,0) node[mynode,label=b] {} -- 
    ++(1,0) node[mynode,label=c] {} -- 
    ++(1,0) node[mynode,label={[name=dd]d}] {} -- 
    ++(1,0) node[mynode,label=d'] {}; 
    \draw (2,0) -- 
    (2,1)   node[mynode,label=b'] {};}
  \]
  Let $\ZZ^k \leq \ZZ^8$ denote the sublattice using only the first
  $k$ coordinates. 
  Then since this matrix is lower triangular, its row span intersect $\ZZ^k$ 
  is spanned by the first $k$ rows, i.e. they give simple systems for the 
  smaller claimed root lattices. 
  As in \cite[example 1.6]{BarotGeissZelevinsky},
  we observe the inclusions
  $A_1 \subset A_2 \subset A_3 \subset D_4 \subset D_5 \subset E_6 
  \subset E_7 \subset E_8$ of Dynkin diagrams.
\end{proof}

It seems worth noting that roots $b,c,d$ attach to the $A_2,D_4,E_6$ diagrams at
the vertex corresponding to the fundamental representation
$\CC^3,\CC^8,\CC^{27}$ occurring in 
lemmas~\ref{lem:greend1}, \ref{lem:greend2}, \ref{lem:greend3}.

However, we want not just the root lattices, but the weights.

\newcommand\wtf{{\widetilde f}}

\begin{thm}\label{thm:classifv2}
  Consider the inner product $G$ from \S \ref{ssec:Gram} on $\Lambda^{1+4}$, 
  but set $a=1$ making it positive semidefinite. 
  Then the kernel $Rad(G|_{a=1})$ is generated by 
  $\vec\nu := \sum_{i=0}^4 (-\tau^2)^i \vf_i$ and $\tau \vec\nu$, and the 
  composite map $K_4 \into \Lambda^{1+4} \onto \Lambda^{1+4} / Rad(G|_{a=1})$
  is an isomorphism, making both lattices $E_8$.

  For each $d=1,2,3$, consider the image of $K_d$ in this $E_8$.
  The weights $\{\vf_X\ :\ |\vf_X|^2=2,\ X$ has values $\leq d\}$
  are those of the minuscule irrep corresponding to the high weight $\vf_d$.

  For $d=4$, the weights $\{\vf_X\ :\ |\vf_X|^2=2\}$
  are exactly the roots of $E_8$, i.e. the nonzero weights of the
  adjoint (nonminuscule) representation.  
\end{thm}

{oldest, messed-up version:}

\begin{thm}\label{thm:classifv1}
  Let $d=1,2,3,4$, and consider the vectors $\vf_X$ associated 
  to valid multinumbers (as defined in \S\ref{ssec:d3puz} for $d=1,2,3$;
  for $d=4$ we take the set of $224$ multinumbers 
  implied by the functoriality from \S\ref{ssec:funct}).

  Set $a=1$, so the form is positive definite for $d=1,2,3$ 
  but is positive semidefinite with two null directions for $d=4$,
  spanned by $\nu_d := \sum_{i=0}^d (-\tau^2)^i \vf_i$ and $\tau \nu_d$.
  Let $Q_d := \Lambda^{1+d}$ modulo its intersection with those null directions.
  Then
  \begin{enumerate}
  \item $K_4 \to Q_4$ is bijective; both are the root/weight lattice $E_8$;
  \item $Q_1 \into Q_2 \into Q_3 \into Q_4$ are the weight lattices
    of the Levi subgroups $A_2 \into D_4 \into E_6 \into E_8$;
  \item for $d=1,2,3$ the images $(\wtf_X)$ of the vectors $(\vf_X)$ 
    are the weights of the minuscule representations $V$ claimed in
    lemmas~\ref{lem:greend1}, \ref{lem:greend2}, \ref{lem:greend3}; 
  \item for $d=4$ the images $(\wtf_X)$ of the vectors $(\vf_X)$,
    for the $224$ multinumbers $X$ derived from lower $d$ 
    as in \S\ref{ssec:funct},
    are {\em some of} the $241$ weights of the adjoint representation; and
  \item\label{item:AdFace}
    the weights $\vf_0,\ldots,\vf_d$ are vertices of the 
    convex hull $\{\vf_X\ :\ X$ valid$\}$, and form one face,
    which is a simplex.
  \end{enumerate}
  In all four cases $d=1,2,3,4$, there is a unique invariant 
  vector in $V \tensor (\tau V) \tensor (\tau^2 V)$,
  and the dual Coxeter number of the group is a multiple of $3$
  (specifically, $3$, $6$, $12$, $30$, respectively).
\end{thm}

\newcommand\twoheaddownarrow{\mathrel{\rotatebox[origin=t]{270}{$\twoheadrightarrow$}}}

{AK: this needs lots of rewriting, to match the current
  statements}

\begin{proof}
  For part (1), we've already shown that $K_4$ is the unimodular lattice $E_8$, 
  and $Q_4 \geq K_4$ is integral and contains $K_4$ of the same 
  dimension, so $Q_4 = K_4$.

  For part (4), observe that in proposition~\ref{prop:roots} we found
  the $240$ elements of $K_d$ of norm-square $2a$. We've now set $a=a_4=1$,
  so those vectors have norm-square $2$, making them the roots 
  (the weights of the adjoint representation).

  Let $G_{2d}$ be the root subgroup of $E_8$ generated by the first $2d$ roots
  in the list from proposition~\ref{prop:roots}. 
  Each of these is simply connected and acts faithfully on $\lie{e}_8$. 
  Consequently, the restriction map on weight lattices is surjective.

  To see (2), we identify $K_d \into K_4 \iso Q_4$ as the inclusion
  of $G_{2d}$'s root lattice into the $E_8$ root lattice. Then the
  commuting square
  $  \begin{matrix} \Lambda^{4+1} &\onto &\Lambda^{1+d} \\
    \twoheaddownarrow &&     \twoheaddownarrow \\
    Q_4 &\to & Q_d   \end{matrix} $
  lets us realize the map $Q_4 \to Q_d$ as the restriction 
  on weight lattices.

  To see (3), we follow the specific vector $\vec f_3 \in \Lambda^{3+1}$ 
  as a rational combination of $\nu_3$, $\tau \nu_3$, and $K_3$:
  \[    (0,0,\ 0,0,\ 0,0,\ 1,0) 
  = \frac{1}{3} \nu_3 +  \frac{2}{3} \tau \nu_3
  +  \frac{\alpha_1 + 2\tau^2\alpha_1 +(-3)(-\tau^2)\alpha_2 
    -2\alpha_3 + 2\tau^2\alpha_3}{3}
  \]

\end{proof}
}

For $d=1,2,3,4$, if instead of $a=1$ we set $a$ equal to the $a_d$ from
\S\ref{ssec:signature} ($3,2,1\frac{1}{2},1$ respectively), 
then the form on $\Lambda^{1+d}$ becomes
positive semidefinite with two null directions,
and its quotient is the weight lattice of $A_2,D_4,E_6,E_8$ respectively.
This is how the vectors in lemmas \ref{lem:greend1}-\ref{lem:greend3} arise.

Theorem~\ref{thm:classif} is {\em very}\/ suggestive that $4$-step 
Schubert calculus might be approached using edge labels based on 
the roots of $E_8$. However, as briefly indicated in the introduction,
the situation is somewhat complicated, and will be discussed in
\rem{AK changes} 
the next article \cite{artic80} in this series.

\begin{rmk*}
  We thank the referee for correcting our calculation of $a^{-2\cdot 6}\det G_6$.
  The limit $\lim_{a\to\infty} G_6/a$ exists and (as in $d=1,2,3,4$) 
  has nullity $2$, with kernel $Rad(G_6/a|_{a\to\infty})$ generated by 
  $\vec\nu := \sum_{i=0}^6 (-\tau^2)^i \vf_i$ and $\tau \vec\nu$.
  As in $d=4$, the Gram matrix for $K_6$ is unimodular;
  unlike the $d\leq 4$ cases the resulting form has (two) negative eigenvalues
  hence is abstractly isomorphic to $E_8 
  \oplus {\tiny   \begin{bmatrix}     0&1 \\ 1&0  \end{bmatrix}}
  \oplus {\tiny   \begin{bmatrix}     0&1 \\ 1&0  \end{bmatrix}}$
  (via standard results from \cite{SPLAG}).
\end{rmk*}

\junk{

Let $\vec c := \sum_{i=0}^d (-\tau)^i \vf_i$. Then
\begin{eqnarray*}
  \langle \vec c, \vf_j \rangle 
  &=& \left\langle \sum_{i=0}^d (-\tau^2)^i \vf_i, \vf_j \right\rangle 
  =  \sum_{i=0}^d  (-1)^i \langle \vf_i, \tau^i \vf_j \rangle \\
  &=&  \sum_{i=0}^{j-1}  (-1)^i \langle \vf_i, \tau^i \vf_j \rangle 
      + (-1)^j \langle \vf_j, \tau^j \vf_j \rangle 
      + \sum_{i=j+1}^d  (-1)^i \langle \vf_i, \tau^i \vf_j \rangle \\
  &=&  \sum_{i=0}^{j-1}  (-1)^i
      \begin{cases}
        2-a &\text{if }i\equiv 0 \bmod 3 \\
        a-1 &\text{if }i\equiv 1 \bmod 3 \\
        -1 &\text{if }i\equiv 2 \bmod 3 
      \end{cases}
             + (-1)^j 
             \begin{cases}
               2 &\text{if }j\equiv 0 \bmod 3 \\
               -1&\text{if }i\equiv 1,2 \bmod 3 \\
             \end{cases}
      + \sum_{i=j+1}^d  
      \begin{cases}
        2-a &\text{if }i\equiv 0 \bmod 3 \\
        -1 &\text{if }i\equiv 1 \bmod 3 \\
        a-1 &\text{if }i\equiv 2 \bmod 3 
      \end{cases}
\end{eqnarray*}
Then for $d\leq 4$, the span of $\vec c, \tau \vec c$ is the $G$-perp to $K_d$.

{So for each $E_8$ root $\beta$ i.e. vector of $G_d$-norm-square $2a$, there's a 
  unique vector $\vec s$ in this perp such that
  \[ 2 = \langle \beta + \vec s, \beta + \vec s \rangle
  = \langle \beta, \beta \rangle  + 2 \langle \beta, \vec s \rangle
  + \langle \vec s, \vec s \rangle
  = 2a + \langle \vec s, \vec s \rangle \]
  so apparently $\langle \vec s, \vec s \rangle = 2(1 - a)$.
  }
}

\junk{
\subsection{The pipe dream sector}\label{ssec:pipedreams}
{if this fails, may move to next section}
The proof from \cite{artic46,artic68} shows that puzzles compute the
products of Grassmannian double Grothendieck polynomials $\{G_w\}$
(and from there, products of Schubert classes in equivariant $K$-theory).
This was based on recognizing certain puzzle configurations as
pipe dreams, shown in \cite{FK,BB,KM} to compute 
double Grothendieck polynomials themselves (not their product expansion). 
We give here a direct explanation of how double Grothendieck
polynomials arise as certain structure constants of Schubert multiplication,
i.e. certain $c_{vx}^x$ in the expansion $S^v S^w = \sum_x c_{vw}^x S^x $.

For $\gamma \in K_T(B_-\dom G)$ and $v\in W_G$,
let $\gamma|_v$ denote the restriction of $\gamma$ to 
the $T$-fixed point $B_- \dom B_- v$. Then the restriction map
\[ K_T(B_-\dom G) \to \bigoplus_{v\in W_G} K_T, \qquad
\gamma \mapsto (\gamma|_v)_{v\in W_G} \]
is an injection of $K_T$-algebras.
Define the \defn{support} of $\gamma$ as 
$supp(\gamma) := \{v\in W_G\ :\ \gamma|_v \neq 0\}$,
so $supp(\gamma \delta) = supp(\gamma)\cap supp(\delta)$
and $supp(\gamma + \delta) \subseteq supp(\gamma)\cup supp(\delta)$.

Let $S^w \in K_T(B_-\dom G)$ 
denote a $T$-equivariant Schubert class on $B_-\dom G$,
so $supp(S^w) = [w,w_0] \subseteq W_G$. 
Then for $v\geq w$, we have $supp(S^v S^w) = [v,w_0]$. 
By this upper triangularity, $c_{vw}^x = 0$ unless $x\geq v,w$.
Hence 
\begin{eqnarray*}
  S^v S^w &=& \sum_{x\geq v} c_{vw}^x S^x 
  \qquad\text{(we could require $x\geq w$ also)} \\
S^v|_v\ S^w|_v =  (S^v S^w)|_v &=& \sum_{x\geq v} c_{vw}^x S^x|_v 
\ =\ \sum_{x=v} c_{vw}^x S^x|_v \ =\ c_{vw}^v S^v|_v
\end{eqnarray*}
so $S^w|_v = c_{vw}^v$, since $S^v|_v \neq 0$. 

Before restricting a class from $B_-\dom G$ all the way to $B_-\dom B_- v$, 
we can restrict first to the smooth contractible space $B_-\dom B_- v B$, 
which conveniently is transverse to the Schubert variety $X_w$. 
Hence we have maps
\[
\begin{matrix}
  K_T(B_-\dom G) &\to& K_T(B_-\dom B_- v B) &\widetilde{\to}& 
  K_T(B_-\dom B_- v) \iso K_T \\
  S^w = [X_w] &\mapsto& [X_w \cap B_- \dom B_- vB] &\mapsto& S^w|_v
\end{matrix}
\]

Now we fix $G = GL_{2n}(\CC)$, and invoke an identification of Fulton.

\begin{prop}
  Fix $\pi\in S^n$, and let
  $\overline{X}_\pi := \overline{B_- \pi B_+} \subseteq M_n(\CC)$
  be the \defn{matrix Schubert variety}. 
  Let $v_P \in S^{2n}$ denote the permutation $i \mapsto i+n \bmod 2n$.
  \begin{enumerate}
  \item \cite{KM} The $K_{T^n\times T^n}$-class
    $[\overline{X}_\pi] \in K_{T^n\times T^n}(M_n)$ is given
    by the double Grothendieck polynomial $G_\pi$ of $\pi$.    
  \item (implicit in \cite[\S 6]{Ful-flags}) 
    Let $\pi\oplus I_n \in S^{2n}$ agree with $\pi$
    on $[1,n]$ and be the identity on $[n+1,2n]$. 
    Then we have commuting $T^{2n}$-equivariant identifications
    \[
    \begin{matrix}
      M_n &\iso& B^{2n}_-\dom B^{2n}_- v_P B^{2n}_+ \\
      \uparrow &&\uparrow  \\
      \overline{X}_\pi &\iso& X_{\pi\oplus I_n} \cap B^{2n}_-\dom B^{2n}_- v_P B^{2n}_+ 
    \end{matrix}
    \]
  \end{enumerate}
\end{prop}

Putting these together, we get
\[ G_\pi = S^{\pi\oplus I_n}|_{v_P} = c_{\pi\oplus I_n, v_P}^{v_P} \]
which would enable us to compute double Grothendieck polynomials 
from puzzles, \emph{if} we knew already that puzzles compute products 
of Schubert classes in equivariant $K$-theory.
Since we don't know that yet in general, 
we must check that $d$-step flag manifold puzzles 
correctly compute $c_{\pi\oplus I_n, v_P}^{v_P}$, for $\pi$ with at most $d$ descents.

The relevant puzzles have these boundaries, which fully determine the
shaded regions: \\
\centerline{\includegraphics[height=2in]{doubGroth}}

{do one $\backslash$ row at a time, starting from the top.
  Give the map from pipe dreams to puzzles.
  Axiomatize what the puzzle pieces must do, in order for this map to be onto.}

\vskip .1in \hrule\vskip .1in

}

\junk{define space of states. $\tau$.
conservation law/embedding as a surface.
metric $G$. alternating form $B$.
pipedream sector.
trilinear form. interrelation between all these objects.
$R$-matrix/projector Ansatz for puzzles ($B$-twist): probably
in next section. 
DON'T MENTION corresponding Drinfeld twist
(in particular, coproduct for $t^B$ and cocycle condition),
will be for next paper.
careful that
there are signs we need to get rid of: not in $K$-theory (and $H$ is $q=-1$)
}

\subsection{Inversion numbers of paths}\label{ssec:Bmatrix}
This subsection does not particularly follow the flow from the previous
subsections, but its results are of intrinsic interest. Besides this,
we need it for the proofs in the next section.

Fix an {\em anti}symmetric form $B$ on $\Lambda^{1+d}$, to be specified soon,
but whose actual shape is not yet important.

Let $P$ be a puzzle and $\gamma$ an oriented (possibly self-intersecting,
even edge-repeating)
path through $P$'s edges, thought of as a sequence of steps from one
vertex to the next. 
For each step $\gamma_i$ in this oriented path, we 
consider it as part of a triangle to its left,
and associate a vector $\vf(\gamma_i) = \pm \tau^k \vf_X$ 
depending on that triangle and label (as in lemmata \ref{lem:greend1}-
\ref{lem:greend3}). Define the \defn{$B$-inversion number of $\gamma$}
as $\frac{1}{2} \sum_{i<j} B(\vf(\gamma_i),\vf(\gamma_j))$.
It is something like the area of a surface bounded by $\gamma$:

\begin{lem}\label{lem:Binv}
  If the $N$-step path $\gamma$ repeats a vertex, i.e. 
  after step $b$ finds itself in the same location as before step $a$,
  then we can break $\gamma$ into the path $\gamma_{[1,a)\coprod (b,N]}$
  and the loop $\gamma_{[a,b]}$. In this situation, the $B$-inversion number
  of $\gamma$ is the sum of the inversion numbers of the path and the loop.

  Meanwhile, define the $B$-inversion number of a puzzle piece $p$ as
  the $B$-inversion number of a closed loop that traverses $\partial p$
  counterclockwise. If $\gamma_{[a,b]}$ doesn't repeat vertices other than 
  its first matching its last, then the $B$-inversion number 
  of $\gamma_{(a,b]}$ is the sum of the $B$-inversion numbers of
  the puzzle pieces it encircles, times $-1$ if $\gamma_{[a,b]}$ is
  clockwise.
\end{lem}

We emphasize that these results are independent of the antisymmetric form $B$,
which is why we have not distracted the reader with its specific form yet.

\begin{proof}
  \begin{eqnarray*}
    \sum_{i<j} B(\vf(\gamma_i),\vf(\gamma_j)) 
    &=&    \sum_{i<j<a} B(\vf(\gamma_i),\vf(\gamma_j)) 
    + \sum_{i<a\leq j\leq b} B(\vf(\gamma_i),\vf(\gamma_j)) 
    + \sum_{i<a, b<j} B(\vf(\gamma_i),\vf(\gamma_j)) \\
    &+& \sum_{a\leq i< j\leq b} B(\vf(\gamma_i),\vf(\gamma_j))  
    + \sum_{a\leq i\leq b< j} B(\vf(\gamma_i),\vf(\gamma_j)) 
    + \sum_{b < i < j} B(\vf(\gamma_i),\vf(\gamma_j)) \\
    &=&    \sum_{i<j<a} B(\vf(\gamma_i),\vf(\gamma_j)) 
    + \sum_{i<a} B\left( \vf(\gamma_i), \sum_{a\leq j\leq b} \vf(\gamma_j) \right) 
    + \sum_{i<a, b<j} B(\vf(\gamma_i),\vf(\gamma_j)) \\
    &+& \sum_{a\leq i< j\leq b} B(\vf(\gamma_i),\vf(\gamma_j))  
    + \sum_{ b< j} B \left(\sum_{a\leq i\leq b} \vf(\gamma_i),\vf(\gamma_j) \right) 
    + \sum_{b < i < j} B(\vf(\gamma_i),\vf(\gamma_j))
  \end{eqnarray*}
  The Green's theorem property of the vectors $\vf(\gamma_j)$, in the
  closed loop $\gamma_{[a,b]}$, says that 
  $\sum_{a\leq j\leq b} \vf(\gamma_j) = \vec 0$. Canceling those two terms,
  \begin{eqnarray*}
    \sum_{i<j} B(\vf(\gamma_i),\vf(\gamma_j)) 
    &=&    \sum_{i<j<a} B(\vf(\gamma_i),\vf(\gamma_j)) 
    + \sum_{i<a, b<j} B(\vf(\gamma_i),\vf(\gamma_j)) 
    + \sum_{b < i < j} B(\vf(\gamma_i),\vf(\gamma_j))  \\
    &+& \sum_{a\leq i< j\leq b} B(\vf(\gamma_i),\vf(\gamma_j))  
  \end{eqnarray*}
  giving the stated addition of inversion numbers.

  In the simplest case of the above, $b=a+1$, i.e. the $(a+1)$st step retraces
  the $a$th step, and the inversion number of the $2$-step closed loop is $0$.
  Consequently we can consider these paths as equivalent
  modulo insertion or removal of such retracings. 

  The second paragraph of the lemma is tautological if $\gamma$ bounds
  only one puzzle piece. Otherwise, we can insert extra steps into $\gamma$
  as follows. Just after its first step, along an edge of some piece $p$
  in the region encircled by $\gamma$, add extra steps into $\gamma$
  to encircle the piece $p$ entirely, and then retrace those new steps. 
  Apply the first paragraph of the lemma to break off that piece, and
  use induction based on the number of puzzle pieces enclosed by $\gamma$.
\end{proof}

(What's ``really'' going on is that any $2$-form $B$ on $\Lambda^{1+d}$
with constant coefficients is exact, i.e. is of the form $d\alpha$ 
for some $1$-form $\alpha$ with linear coefficients, and the inversion
number is the integral over the triangle $0\leq i\leq j\leq N$
of the pullback of $B$. We could rewrite the inversion number
as the integral of the pullback of $\alpha$ to the boundary of that triangle,
but this didn't seem useful.)

\junk{
  The next simplest application comes when $\gamma_a,\gamma_{a+1}$
  are two sides of a small $\Deltatri$ or $\nablatri$. We first use the lemma 
  to insert two more steps, jogging back and forth along the third
  side of the triangle. Now by a second use of the lemma,
  the triangle can be broken off as a closed loop.
}
We now get specific about $B$ on $\Lambda^{1+d}$,
defining it on our bases as 
\begin{equation}\label{eq:defB}
B(\tau^r \vf_i,\tau^s \vf_j)=
\sign(i-j)
\begin{cases}
1& s \equiv r \pmod 3\\
-1& s \equiv r+\sign(i-j)\pmod 3\\
0& \text{otherwise}
\end{cases}
\end{equation}
(as usual, $\sign(x>0):=1, \sign(x<0):=-1, \sign(0):=0$).
We use \defn{inversion number} to mean $B$-inversion number with this $B$.
The factor of $1/2$ in the definition of inversion number 
could have been subsumed into our $B$, or, 
we could have left it out in which case each puzzle piece 
(or more precisely, its boundary traversed clockwise)
would have even inversion number.
\junk{
  which we can encode as a directed graph on the vertex set 
  $\{\vf_i, \tau \vf_i\}_{i=0,\ldots,d}$: 
  \begin{itemize}
  \item each $\vf_i$ points to $\vf_j$ with $j<i$, so by $\tau$-invariance,
  \item each $\tau\vf_i$ points to $\tau \vf_j$ if $j<i$, and
  \item each $\tau\vf_i$ points to $\vf_j$ with $j>i$.
  \end{itemize}
}

\begin{lem}\label{lem:sympd1}
  Fix a partial flag manifold
  $\{V_1 \leq V_2 \leq \ldots \leq V_d \leq \CC^n:\ \dim V_i = n_i\}$,
  therefore of (complex) dimension $D :=\sum_{i<j} (n_i - n_{i-1})(n_j - n_{j-1})$.

  Let $P$ be a puzzle with boundaries $\lambda,\mu,\nu$ as usual, 
  and $\gamma,\gamma'$ paths through $P$ from the SW
  corner to the SE corner. 
  \begin{itemize}
  \item If $\gamma$ is the straight path across the bottom, then the
    inversion number of $\gamma$ is $\ell(\nu) - \frac{D}{2}$.
  \item If $\gamma'$ follows the NW then NE sides of $P$, then the
    inversion number of $\gamma'$ is $\ell(\lambda) + \ell(\mu) - \frac{D}{2}$.
  \item If one path $\gamma'$ is always weakly above another path $\gamma$
    (e.g. like the two just mentioned),
    then the difference in their inversion numbers is the sum of
    the inversion numbers of the pieces in between them.
  \end{itemize}
\end{lem}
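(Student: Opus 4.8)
The plan is to treat the first two bullets as essentially one computation — evaluating $\tfrac12\sum_{i<j}B(\vf(\gamma_i),\vf(\gamma_j))$ for a boundary path directly from the explicit formula for $B$ — and the third bullet as an application of the loop-decomposition lemma above. For the first bullet, the bottom edges of $P$ are all bottom edges of small $\Delta$'s, co\"oriented identically by the left-to-right path $\gamma$, so by the rotation-equivariance of the vector assignment the $i$-th carries $\tau^{k_0}\vf_{\nu_i}$ for one fixed twist $k_0$. The defining formula for $B$ gives $B(\tau^{k_0}\vf_a,\tau^{k_0}\vf_b)=\text{sign}(a-b)$ (the $s\equiv r$ case), so
\[
 \mathrm{inv}(\gamma)=\tfrac12\sum_{i<j}\text{sign}(\nu_i-\nu_j)=\tfrac12\Big(\#\{i<j:\nu_i>\nu_j\}-\#\{i<j:\nu_i<\nu_j\}\Big)=\ell(\nu)-\tfrac12\!\!\sum_{0\le k<l\le d}\!\!m_k m_l ,
\]
where $m_k=n_{k+1}-n_k$ is the number of $k$'s in $\nu$, using $\#\{i<j:\nu_i<\nu_j\}=\sum_{k<l}m_km_l-\ell(\nu)$ and $\sum_{k<l}m_km_l=\tbinom n2-\sum_k\tbinom{m_k}2$. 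Since $\sum_{k<l}m_km_l=\sum_{i<j}(n_i-n_{i-1})(n_j-n_{j-1})=D$, this is $\ell(\nu)-D/2$.

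For the second bullet I would write $\gamma'$ as its NW sub-path followed by its NE sub-path. The NW (resp. NE) edges of $P$ are all NW (resp. NE) edges of small $\Delta$'s, again uniformly co\"oriented by $\gamma'$, so they carry $\varepsilon_1\tau^{k_1}\vf_{\lambda_i}$ (resp. $\varepsilon_2\tau^{k_2}\vf_{\mu_j}$) for fixed signs $\varepsilon_\bullet$ and twists $k_\bullet$, where $\lambda,\mu$ are the boundary strings read in the order $\gamma'$ traverses them. Splitting $\tfrac12\sum_{i<j}B$ into the NW–NW, NE–NE and NW–NE parts, the first two reproduce $\ell(\lambda)-D/2$ and $\ell(\mu)-D/2$ exactly as above (the $\varepsilon_\bullet^2=1$ and the twist both drop out, since only equal-twist pairs contribute). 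The essential new point is the cross term: by bilinearity it equals $\tfrac12 B(\varepsilon_1\tau^{k_1}c,\varepsilon_2\tau^{k_2}c)$, where $c:=\sum_k m_k\vf_k$ is the common content vector of $\lambda,\mu,\nu$ — hence it depends only on the content, and a one-line evaluation of $B$ on this pair gives $+D/2$ (equivalently $\sum_{i,j}[\lambda_i<\mu_j]=\sum_{k<l}m_km_l=D$, which holds precisely because $\lambda$ and $\mu$ have equal content, the count being over all ordered pairs). Adding the three parts gives $\ell(\lambda)+\ell(\mu)-D/2$.

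For the third bullet, with $\gamma'$ weakly above $\gamma$ the region between them is a union of small triangles of $P$, and I would peel it one triangle at a time: there is an ordering $t_1,\dots,t_m$ of those triangles and paths $\gamma=\gamma^{(0)},\gamma^{(1)},\dots,\gamma^{(m)}=\gamma'$ such that $\gamma^{(k)}$ is obtained from $\gamma^{(k-1)}$ by the elementary move replacing two sides of $t_k$ by the third (when $t_k$ is a $\nabla$ lying just above $\gamma^{(k-1)}$) or one side of $t_k$ by the other two (when $t_k$ is a $\Delta$). Such a shelling exists because the remaining region always contains a triangle meeting the current lower path in the required way; this is routine for these staircase regions. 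Then, exactly as in the discussion following the loop-decomposition lemma (insert a retracing along the third side of $t_k$, then break off the resulting triangular loop), $\mathrm{inv}(\gamma^{(k)})-\mathrm{inv}(\gamma^{(k-1)})=\mathrm{inv}(\text{clockwise }\partial t_k)$, which is by definition the inversion number of the piece $t_k$; the sign is $+$ in both the $\Delta$ and $\nabla$ cases because the composite of $\gamma^{(k)}$ with the reverse of $\gamma^{(k-1)}$, read near $t_k$, is the clockwise boundary of $t_k$. Summing over $k$ proves the bullet; taking $\gamma=$ bottom and $\gamma'=$ NW–NE and combining with the first two bullets yields as a byproduct $\sum_{\text{pieces of }P}\mathrm{inv}(\text{piece})=\ell(\lambda)+\ell(\mu)-\ell(\nu)$.

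The main obstacle is not any single computation but keeping the geometric conventions straight: which co\"orientation each of $\gamma,\gamma'$ induces on a boundary edge, the attendant sign $\varepsilon$ and $\tau$-twist, and — for the third bullet — verifying that ``clockwise boundary of a piece'' lines up with ``push the lower path up'' so the contributions all enter with $+$. Once those are pinned down every step is short; the one genuinely non-formal observation is that the NW–NE cross term in the second bullet is content-only, which is exactly what forces the ``$-D/2$'' to come out the same on all three sides of the puzzle.
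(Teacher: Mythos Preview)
Your proof is correct and follows essentially the same approach as the paper's: direct evaluation of $\tfrac12\sum_{i<j}B$ along each boundary path for the first two bullets, and the loop-decomposition lemma (peeled one triangle at a time, exactly as in the paragraph following that lemma) for the third. Your treatment is in fact more explicit than the paper's in two places---you spell out the combinatorial identity $\sum_{k<l}m_km_l=D$ in bullet one and the triangle-by-triangle induction in bullet three---while the paper compresses bullet two's cross term to the single observation that $B(\tau^2\vf_i,\tau\vf_j)$ is nonzero only for $i<j$, with $D$ such position-pairs. Your ``content vector'' repackaging of that cross term is a pleasant conceptual gloss, but as you yourself flag in the final paragraph, the sign $+D/2$ (rather than $-D/2$) does rest on $\varepsilon_1\varepsilon_2=1$ and the specific twists $k_1=2,k_2=1$; this holds because both the NW and NE legs of $\gamma'$ have the puzzle on the same side, so both carry the same $\varepsilon$, and the paper simply writes down those twists without ceremony.
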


\begin{proof}
  The inversion number of the path along the Southern edge is
  \[ \frac{1}{2}   \sum_{1 \leq i<j \neq n} B(\vf(\gamma_i),\vf(\gamma_j))
  = \frac{1}{2} \sum_{1 \leq i<j \neq n} \sign(\nu_i-\nu_j) 
  = \ell(\nu) - \frac{D}{2} 
  \]
  Similarly, the inversion number of the path along the NW then NE edge
  is $\ell(\lambda)+\ell(\mu) - D$ plus the cross-terms
  from the first half of the path with the second half.
  The only nonvanishing cross-terms 
  $\frac{1}{2} B(\tau^2 \vf_i, \tau \vf_j)$ 
  come from pairs $i<j$ with $i$ on the NW and $j$ on the NE.
  There are $D$ such pairs, so the inversion number of the second path is 
  $\ell(\lambda)+\ell(\mu)) - \frac{D}{2}$.

  The last statement follows directly from the second half of
  lemma \ref{lem:Binv}, applied to the closed counterclockwise loop  
  ``$\gamma$-then-$\gamma'$-backwards''.
\end{proof}

\section{Proofs of the main theorems}\label{sec:proofs}
The proofs of theorems~\ref{thm:KTd2} and \ref{thm:d3} follow the same
general philosophy as in \cite{artic46} and \cite{artic68}: they consist
in finding an appropriate ``quantum integrable system'' (that is, in the 
present context, a set of fugacities collectively satisfying
the Yang--Baxter equation), from which can be built both 
the Schubert basis of $K_T(G/P)$ (or $K(G/P)$ in the 3-step case) 
and the structure constants of that basis.
However, we use here a much more straightforward approach 
based on the bootstrap equation \eqref{eq:qtri}-\eqref{eq:qtrirev}, 
sidestepping some of the difficulties found in these articles (choice
of representative, issues of stability).
In appendix~\ref{app:cyclic}, we briefly sketch
an alternative route which is closer to \cite{artic46,artic68}.
In the whole of this section, $d\le 3$.

\subsection{Puzzles via the calculus of tensors}\label{ssec:tensors}
We begin with a brief attempt to bring Schubert calculators into the
integrable-systems tent, which essentially requires a proper 
linear algebra understanding of puzzles and their matching rule. 
Return to equation (\ref{eq:LRT}) of theorem~\ref{thm:KTd2}.
The requirement of matching between an edge label on a $\Deltatri$, and that on 
a neighboring $\nablatri$, we will reinterpret as a dot product 
$\langle \vec v_X, \vec v^Y \rangle = \delta_X^Y$ between elements 
of dual bases.
To set this up, we need to assign two complex vector spaces in involution,%
\junk{There is a subtle issue later, when these spaces are modules
  over a noncocommutative algebra and we must distinguish between
  dual and predual, which we ignore for the moment.}
one to the South side of $\Deltatri$ and one to the North side of $\nablatri$,
with dual bases indexed by our edge label set $L_d$ (and similarly 
choose pairs of dual vector spaces for the other two orientations).
For the moment, let's call these vector spaces $N,S,NW,SE,NE,SW$.

\junk{\subsubsection{Nonequivariant puzzles}
We start with nonequivariant Schubert calculus, both because 
it only involves triangles and because it doesn't have parameters.}
Let us start with nonequivariant, $K$-theoretic, Schubert calculus.
Define a tensor $U \in S\tensor NE\tensor NW\cong \text{Hom}(SE\otimes SW,S)$
\[
U := \sum_{X,Y,Z} fug\left( \uptri{Z}{X}{Y} \right) 
\vec v_X\tensor \vec v_Y\tensor \vec v_Z
 \quad \in S\tensor NE\otimes NW
\]
where summation is over valid pieces $\uptri{Z}{X}{Y}$ (or equivalently,
we declare the fugacity of an invalid piece to be zero).

Define $D \in SW\tensor SE\tensor N\cong \text{Hom}(S,SW\tensor SE)$ using $\nablatri$ pieces similarly,
but using dual basis elements $\vec v^X\tensor \vec v^Y\tensor \vec v^Z$.

\newcommand\calD{{\mathcal D}}
Now consider a diagram $\calD$ made of $N_\Delta$ $\Deltatri$s and
$N_\nabla$ $\nablatri$s, some edges shared, some edges labeled.
To such a diagram we assign a tensor $fug(\calD)$ as follows: start with
$U^{\tensor N_\Delta} \tensor D^{\tensor N_\nabla}$, contract the dual
vector spaces at each shared unlabeled edge, and contract with the dual basis
vector at each labeled edge. The result is a tensor $fug(\calD)$ living in the
tensor product of the vector spaces of all unmatched, unlabeled edges
of $\calD$. (Observe that if $\calD$ is a just a puzzle piece,
then this definition gives the usual fugacity $\pm 1$ for the piece,
and gives $0$ if applied to labeled triangles that aren't valid puzzle pieces.
Or if $\calD$ is an unlabeled $\Deltatri$, then $fug(\calD) = U$.)

The basic case to keep in mind is $\calD$ a size $n$ triangle made of
$n^2$ little $\Deltatri$s and $\nablatri$s, with boundary labeled by
$\lambda,\mu,\nu$ as usual. Now all internal edges are matched 
and all external edges are labeled, so the resulting tensor is 
just a number -- exactly the coefficient appearing in the
nonequivariant puzzle rule, the $d=1$ case being equation (\ref{eq:LR}).

Because we use the same labels $L_d$ for our basis and dual basis,
if $E$ is a shared unlabeled edge of $\calD$, then 

\begin{equation}\label{eq:labeledges}
  fug(\calD) = \sum_{X \in L_d} fug(
  \text{$\calD$ with edge $E$ labeled $X$}
  )
\end{equation}

\junk{\subsubsection{Equivariant puzzles}
We now explain the role of the multiplicative parameters that enter
the fugacities.  Instead of assigning a vector space to each unlabeled edge, 
we assign a module over a certain algebra, and the parameter is
involved in specifying the module structure.} 
To involve equivariant pieces, 
we also define a tensor $\check R$ living in 
$ SW \tensor SE \tensor NE \tensor NW \iso \text{Hom}(SE\tensor SW,SW\tensor SE)$,
constructed as a sum 
\[
 \check R := \sum_{W,X,Y,Z} fug\left( \rh{X}{Y}{Z}{W} \right) 
\vec v_X\tensor \vec v_Y\tensor \vec v^Z \tensor \vec v^W
\]
where 
$\rh{X}{Y}{Z}{W}$ can be filled in with either a rhombus or two triangles.
The fugacities depend on the location of the rhombus (ultimately, they 
will depend on the equivariant parameters).
\junk{removed the rest: do we really need
to explain this parameter story? after all, the very rules of equivariant
calculus imply that there are (equivariant) parameters in the fugacity, and how
they depend on the two projections of the lozenge}

\junk{The modules $SE,SW$ have associated parameters $u_1,u_2$, and the
fugacity of the rhombus is required to be a function of $u_1/u_2$.
{\em Note:} we insist that the NW and SE parameters are equal, 
likewise the NE and SW parameters.

To define $fug(\calD)$ for a diagram made out of only vertical rhombi,
we now allow ourselves to choose different values of parameter for
different edges, keeping in mind that (1) the parameters on two
opposite (parallel) sides of a rhombus must match,
and (2) the parameters on shared edges must match 
(to allow for duality of the modules).
For example, in an $a\times b$ parallelogram made of rhombi, 
this would only allow for $a+b$ independent parameters.
\rem{one can't make a decent discussion of spectral parameters without
  co-orienting the edges due to the non-uniqueness of dual}

\rem{picture: a $2\times 3$ parallelogram of rhombi, with all
  NE/SW edges labeled $u_1,u_2$ and all NW/SE edges labeled $v_1,v_2,v_3$}

Finally, we need to redefine $U$ (and less importantly, $D$) 
in the presence of these parameters. We sum the usual fugacities,
but only in the case that the parameters on the three sides have
product $1$, {\em and} that the parameter on the South side
is a certain fixed value, the meaning of which to be made clear
in the rest of the section. Consequently, in the standard diagram $\calD$
\rem{actually, I disagree that $U$ and $D$
need to be redefined: they're the same. see also remark above}

\rem{picture of $4\Deltatri$ broken into rhombi and bottom triangles}

\noindent
of shape $n\Deltatri$, there are as expected only $n$ independent parameters.
}

Finally, each vector space associated to edges will be endowed with the action
of a {\em Hopf algebra}, in such a way that $U$, $D$ and $\check R$ are
invariant under that action (i.e., they live in the trivial representation);
in particular, $U$ and $D$ are the trilinear invariant forms
advertised in the paper's title. 
\junk{A subtlety here is that the lack of cocommutativity means that
  we should preserve cyclic ordering of edges around vertices (as we
  have implicitly done so far); and the lack of involutivity of the
  antipode means that the notion of dual representation is not
  uniquely defined, which forces us to co\"orient all edges, as will
  be done in the next section.}

\subsection{The required properties: quantum integrability}
\label{sec:genproof}
It is convenient to introduce the ``dual'' graphical representation
which is more traditional in quantum integrable systems: we denote a rhombus
with its four edges labeled by $L_d$
\[
\tikz[baseline=0,scale=1.5]{
\rh{X}{Y}{Z}{W}
} \quad
=
\tikz[baseline=0,xscale=0.5]
{
\draw[invarrow=0.75,dr] (-1,-1) node[below] {$\m X$} -- (1,1) node[above] {$\m Z$};
\draw[invarrow=0.75,dg] (1,-1) node[below] {$\m Y$} -- (-1,1) node[above] {$\m W$};
}
\]
The colors of lines are a reminder of the direction of the corresponding
edge of the rhombus (red lines go SouthWest, green lines SouthEast, and blue lines will go South). This information may seem redundant since lines
themselves have a given direction; however it is sometimes useful to
deform the lines (in a way that is harder to realize with rhombi), and
then their direction may vary (but not their color). We will say more about this after proposition~\ref{prop:key}. Representation-theoretically, this corresponds to the fact that lines of a given color carry a given representation of the underlying Hopf algebra.

The graphical convention 
is that a drawn rhombus\,/\,crossing {\em represents its own fugacity,} and 
that when rhombi are glued together, the labels of the internal edges
are summed over as in (\ref{eq:labeledges}), 
or in the dual language, when lines emerging from crossings are reconnected,
the labels of these lines should match and are summed over.

We then attach a parameter (usually called the spectral parameter) to
each line, in such a way that the fugacity of the rhombus\,/\,crossing
depends on the ratio\footnote{%
  We work in the multiplicative group, or equivalently with
  trigonometric solutions of the Yang-Baxter equation, since our goal is 
  to compute in $K$-theory. The corresponding additive group\,/\,rational 
  solutions compute Schubert calculus in ordinary cohomology.}
of these two parameters:
\begin{equation}\label{eq:rhombus}
  \tikz[baseline=0,xscale=0.5]
  {
    \draw[invarrow=0.75,dr] (-1,-1) node[below] {$\m X$} -- node[pos=0.75,right,black] {$\ss u''$} (1,1) node[above] {$\m Z$};
    \draw[invarrow=0.75,dg] (1,-1) node[below] {$\m Y$} -- node[pos=0.75,left,black] {$\ss u'$} (-1,1) node[above] {$\m W$};
  }
  = \check R(u)_{XY}^{WZ} \ \in\ \QQ(u',u'',\ldots)
  \qquad
  u=u''/u'
\end{equation}
\junk{corrected the ring since the spectral parameters will typically
be any of these variables, not just $u''/u'$, and
there are other spectator variables like $q$, hence the vague statement below}
The fugacity lives in some appropriate ring
of rational functions in $u'$, $u''$ and possibly other indeterminates.

We impose an important condition on the fugacity of \eqref{eq:rhombus}: 
that there is a special value $\alpha/\beta$ of the ratio of spectral
parameters such that the fugacity ``factorizes'':
\begin{pty}\label{pty:factor}
\begin{equation}\label{eq:factor}
\tikz[baseline=0,xscale=0.5]
{
\draw[invarrow=0.75,dr] (-1,-1) node[below] {$\m X$} -- node[pos=0.75,right,black] {$\ss \alpha u$} (1,1) node[above] {$\m Z$};
\draw[invarrow=0.75,dg] (1,-1) node[below] {$\m Y$} -- node[pos=0.75,left,black] {$\ss \beta u$} (-1,1) node[above] {$\m W$};
}
=
\tikz[baseline=0]
{
\draw[invarrow=0.6,dr] (-0.5,-1) node[below] {$\m X$} -- node[pos=0.6,left,black] {$\ss \alpha u$} (0,-0.5);
\draw[invarrow=0.6,dr] (0,0.5) --  node[pos=0.6,right,black] {$\ss \alpha u$} (0.5,1) node[above] {$\m Z$};
\draw[invarrow=0.6,dg] (0.5,-1) node[below] {$\m Y$} -- node[pos=0.6,right,black] {$\ss \beta u$} (0,-0.5);
\draw[invarrow=0.6,dg] (0,0.5) -- node[pos=0.6,left,black] {$\ss \beta u$} (-0.5,1) node[above] {$\m W$};
\draw[invarrow=0.6,db] (0,-0.5) -- node[pos=0.6,left,black] {$\ss u$} (0,0.5);
}
\end{equation}
where the blue line, or equivalently the diagonal of the rhombus, carries a label in $L_d$. 
\end{pty}
Equivalently, in the original graphical language, the r.h.s.\ can be pictured as
\tikz[baseline=-1mm]{\uptri{W}{}{Z}\downtri{Y}{}{X}}.
\junk{it's very tempting to change the orientation of the blue line to
  get back to our usual trilinear form, but one needs to be super
  careful about dual vs predual. Moreover, this is the natural direction
  if one thinks that puzzles are about pushforward of CSM classes along
  diagonal inclusion}
This is the way that the ``invariant trilinear form'' appears in this integrable setting.
The assignment of the parameter $u$ to the intermediate blue line is at the moment purely a definition,
but it will be useful below.
The constants $\alpha$ and $\beta$ could be set to $1$ by a redefinition of the spectral parameters, but as we shall see this would not match with usual quantum group conventions.

We need more general types of crossings, where two lines with arbitrary colors can cross each other, e.g.,
\tikz[baseline=0,xscale=0.5]
{
\draw[invarrow=0.75,db] (-1,-1) -- node[pos=0.75,right,black] {$\ss u''$} (1,1);
\draw[invarrow=0.75,dr] (1,-1) -- node[pos=0.75,left,black] {$\ss u'$} (-1,1);
}.\footnote{Many of these $R$-matrices were defined in the study of the
  cohomology of $d=1$ in \cite{artic46}; in particular, crossings of
  identical colors were called $R^*$ there.}
In the argument from proposition~\ref{prop:key}, it will become clear that
the same-color crossings are closely related to the $(150^\circ,30^\circ)$ 
rhombi in \cite{Pur} and more distantly related to the gashes in \cite{KT,BKPT}.

We also associate to each labeled line a weight:
the weight of a green (resp.\ red, blue) line labeled $X$ is
$\vec f_X$ (resp.\ $\tau^2 \vec f_X$, $-\tau \vec f_X$).
The minus sign for blue lines comes from their opposite orientation at
the trivalent vertices.

We then require the following conditions:
\begin{pty}\label{pty:ybe}
  In the pictures below, black lines can have arbitrary (independent)
  colors, and all lines can have arbitrary spectral parameters (as
  long as they match between l.h.s.\ and r.h.s.).
\begin{itemize}
\item The fugacity of any vertex is nonzero only if weight is conserved,
i.e., if the sum of incoming weights is equal to the sum of outgoing weights.
\item Yang--Baxter equation:
\begin{equation}\label{eq:ybe}
\begin{tikzpicture}[baseline=-3pt,y=2cm]
\draw[d,arrow=0.1,arrow=0.4,arrow=0.7,rounded corners] (-0.5,0.5) -- (0.75,0) -- (1.5,-0.5) (0.5,0.5) -- (0.25,0) -- (0.5,-0.5) (1.5,0.5) -- (0.75,0) -- (-0.5,-0.5);
\end{tikzpicture}
=
\begin{tikzpicture}[baseline=-3pt,y=2cm]
\draw[d,arrow=0.1,arrow=0.4,arrow=0.7,rounded corners] (-0.5,0.5) -- (0.25,0) -- (1.5,-0.5) (0.5,0.5) -- (0.75,0) -- (0.5,-0.5) (1.5,0.5) -- (0.25,0) -- (-0.5,-0.5);
\end{tikzpicture}
\end{equation}
\item Bootstrap equations: 
\junk{JBZ suggested calling them fusion equations. but there are so many uses of the word fusion in MP...}
\begin{align}\label{eq:qtri}
&\tikz[baseline=0]{
\draw[invarrow=0.5,db] (0,-1.5) -- (0,0);
\draw[invarrow=0.5,dg] (0,0) -- (-1,1);
\draw[invarrow=0.5,dr] (0,0) -- (1,1);
\draw[invarrow=0.5,rounded corners,d] (-1,-1.5) -- (-1,-0.5) -- (1.5,0) -- (1.5,1);
}
=
\tikz[baseline=0]{
\draw[invarrow=0.6,db] (0,-1.5) -- (0,-0.5);
\draw[invarrow=0.6,dg] (0,-0.5) -- (-1,1);
\draw[invarrow=0.75,dr] (0,-0.5) -- (1,1);
\draw[invarrow=0.6,rounded corners,d] (-1,-1.5) -- (-1,0) -- (1.5,0.5) -- (1.5,1);
}
&
&\tikz[baseline=0,xscale=-1]{
\draw[invarrow=0.5,db] (0,-1.5) -- (0,0);
\draw[invarrow=0.5,dr] (0,0) -- (-1,1);
\draw[invarrow=0.5,dg] (0,0) -- (1,1);
\draw[invarrow=0.5,rounded corners,d] (-1,-1.5) -- (-1,-0.5) -- (1.5,0) -- (1.5,1);
}
=
\tikz[baseline=0,xscale=-1]{
\draw[invarrow=0.6,db] (0,-1.5) -- (0,-0.5);
\draw[invarrow=0.6,dr] (0,-0.5) -- (-1,1);
\draw[invarrow=0.75,dg] (0,-0.5) -- (1,1);
\draw[invarrow=0.6,rounded corners,d] (-1,-1.5) -- (-1,0) -- (1.5,0.5) -- (1.5,1);
}
\\[4mm]\label{eq:qtrirev}
&\tikz[baseline=0,scale=-1]{
\draw[arrow=0.5,db] (0,-1.5) -- (0,0);
\draw[arrow=0.5,dg] (0,0) -- (-1,1);
\draw[arrow=0.5,dr] (0,0) -- (1,1);
\draw[arrow=0.5,rounded corners,d] (-1,-1.5) -- (-1,-0.5) -- (1.5,0) -- (1.5,1);
}
=
\tikz[baseline=0,scale=-1]{
\draw[arrow=0.6,db] (0,-1.5) -- (0,-0.5);
\draw[arrow=0.6,dg] (0,-0.5) -- (-1,1);
\draw[arrow=0.75,dr] (0,-0.5) -- (1,1);
\draw[arrow=0.6,rounded corners,d] (-1,-1.5) -- (-1,0) -- (1.5,0.5) -- (1.5,1);
}&
&\tikz[baseline=0,yscale=-1]{
\draw[arrow=0.5,db] (0,-1.5) -- (0,0);
\draw[arrow=0.5,dr] (0,0) -- (-1,1);
\draw[arrow=0.5,dg] (0,0) -- (1,1);
\draw[arrow=0.5,rounded corners,d] (-1,-1.5) -- (-1,-0.5) -- (1.5,0) -- (1.5,1);
}
=
\tikz[baseline=0,yscale=-1]{
\draw[arrow=0.6,db] (0,-1.5) -- (0,-0.5);
\draw[arrow=0.6,dr] (0,-0.5) -- (-1,1);
\draw[arrow=0.75,dg] (0,-0.5) -- (1,1);
\draw[arrow=0.6,rounded corners,d] (-1,-1.5) -- (-1,0) -- (1.5,0.5) -- (1.5,1);
}
\end{align}
\item Unitarity equation: \junk{Do we really want to emphasize this? 
    Insofar it doesn't hold for our degenerations of the standard $R$-matrix.
of course it does.}
\begin{equation}\label{eq:unit}
\begin{tikzpicture}[baseline=-3pt]
\draw[arrow=0.07,arrow=0.57,rounded corners,d] (-0.5,1) -- (0.5,0) -- (-0.5,-1) (0.5,1) -- (-0.5,0) -- (0.5,-1);
\end{tikzpicture}
=
\begin{tikzpicture}[baseline=-3pt]
\draw[arrow=0.1,arrow=0.6,rounded corners,d] (-0.5,1) -- (-0.5,-1) (0.5,1) -- (0.5,-1);
\end{tikzpicture}
\end{equation}
\item Value at equal spectral parameters: (here, the lines {\em must}\/ have the same color
for the equality to make sense, i.e., for the outgoing lines to have the same color on both sides)
\begin{equation}\label{eq:equal}
\begin{tikzpicture}[baseline=-3pt,yscale=1.5]
\draw[invarrow=0.75,d] (-0.5,-0.5) -- node[right,pos=0.75] {$\ss u$} (0.5,0.5);
\draw[invarrow=0.75,d] (0.5,-0.5) -- node[left,pos=0.75] {$\ss u$} (-0.5,0.5);
\end{tikzpicture}
=
\begin{tikzpicture}[baseline=-3pt,yscale=1.5]
\draw[invarrow=0.7,rounded corners=4mm,d] (-0.5,-0.5) -- (0,0) -- node[left] {$\ss u$} (-0.5,0.5);
\draw[invarrow=0.7,rounded corners=4mm,d] (0.5,-0.5) -- (0,0) -- node[right] {$\ss u$} (0.5,0.5);
\end{tikzpicture}
\end{equation}
\end{itemize}
\end{pty}
Morally, properties \eqref{eq:qtri}--\eqref{eq:unit} mean that one can 
freely slide lines across vertices/intersections of other lines.
For example, combining say the second equation of \eqref{eq:qtri} 
with \eqref{eq:unit}, we can also write
\begin{equation}\label{eq:qtri2}
\tikz[baseline=0]{
\draw[invarrow=0.4,db] (0,-1.5) -- (0,0);
\draw[invarrow=0.7,dg] (0,0) -- (-1,1);
\draw[invarrow=0.5,dr] (0,0) -- (1,1);
\draw[invarrow=0.7,rounded corners,d] (1,-1.5) -- (1,-1) -- (-0.75,-0.25) -- (-0.25,1);
}
=
\tikz[baseline=0]{
\draw[invarrow=0.4,db] (0,-1.5) -- (0,0);
\draw[invarrow=0.7,dg] (0,0) -- (-1,1);
\draw[invarrow=0.7,dr] (0,0) -- (1,1);
\draw[invarrow=0.4,rounded corners,d] (1,-1.5) -- (1,0) -- (-0.25,1);
}
\end{equation}
which is an equality that will be used in what follows.

We also impose the following normalization condition on the fugacities:
\begin{pty}\label{pty:norm}
For any $X,Y\in L_d$ such that 
$\langle \vf_X, \tau \vf_Y \rangle = -1$,
\[
\tikz[baseline=0,scale=1.5]{
\rh{X}{Y}{X}{Y}
}
=
\tikz[baseline=0,xscale=0.5]
{
\draw[invarrow=0.75,dr] (-1,-1) node[below] {$\m X$} -- (1,1) node[above] {$\m X$};
\draw[invarrow=0.75,dg] (1,-1) node[below] {$\m Y$} -- (-1,1) node[above] {$\m Y$};
}
=1
\]
(independent of $u$).
\end{pty}

\junk{Note that the condition
there exists a $Z$ such that $\tau^2\vec f_X+\vec f_Y+\tau\vec f_Z=0$,
is such a $Z$ exists, it is unique {minuscule}}
There is a remaining gauge freedom on the fugacities, namely, one can
multiply any vertex by the product of $\prod_{\text{edges}} \gamma(\text{color},\text{label})^{\pm1}$
where the $\gamma$s are arbitrary parameters and the sign of the exponent depends on the orientation of the edge;
e.g.,
\[
\begin{tikzpicture}[baseline=0.5cm]
\draw[invarrow=0.6,dr] (0,0.5) -- (0.5,1) node[above] {$\m Z$};
\draw[invarrow=0.6,dg] (0,0.5) -- (-0.5,1) node[above] {$\m X$};
\draw[invarrow=0.6,db] (0,0) node[below] {$\m Y$} -- (0,0.5);
\end{tikzpicture}
\qquad \mapsto \qquad
\frac{\gamma(\text{green},X)\ \gamma(\text{red},Z)}
{\gamma(\text{blue},Y)} \ 
\begin{tikzpicture}[baseline=0.5cm]
\draw[invarrow=0.6,dr] (0,0.5) -- (0.5,1) node[above] {$\m Z$};
\draw[invarrow=0.6,dg] (0,0.5) -- (-0.5,1) node[above] {$\m X$};
\draw[invarrow=0.6,db] (0,0) node[below] {$\m Y$} -- (0,0.5);
\end{tikzpicture}
\]
These rescalings preserve properties~\ref{pty:factor}--\ref{pty:norm}.
We now get rid of this freedom by imposing
\begin{equation}\label{eq:norm}
\tikz[baseline=0.34cm,scale=1.5]{\uptri{Y}{YX}{X} } =
\tikz[baseline=0.34cm,scale=1.5]{\uptri{i}{i}{i} } =
1
\qquad \text{likewise for all rotations $\Deltatri$ and $\nablatri$}
\end{equation}
where $YX$ varies over valid multinumbers and $i$ varies over $0,\ldots,d$.
Indeed, we can use the $\Deltatri$ triangles alone (say)
to fix the $\gamma(\text{color},X)$
inductively in $|X|$ (number of digits of $X$), and then
the $\nablatri$ triangle condition follows from property~\ref{pty:norm}
(noting that the weight conservation from property~\ref{pty:ybe}
for the triangle 
\uptri{X}{Y}{Z} implies $\langle \vf_X, \tau \vf_Y \rangle = -1$).
The only remaining gauge freedom in the choice of basis vectors
comes in choosing a weight vector for each weight $\vf_i, \tau \vf_i$,
$i=0,\ldots,d$. These $2(d+1)$ remaining scale parameters can be blamed on our
$2(d+1)$-torus whose weight lattice is $\Lambda^{1+d}$. 

\junk{is it completely obvious, that even equivariantly, this fixes
  the normalizations of weight vectors?}

Finally, one more graphical notation is needed.
Given a positive integer $n$, a permutation $\sigma\in \mathcal S_n$,
and a color $\in\{\text{red},\text{green},\text{blue}\}$, 
we consider a wiring diagram for $\sigma$ made of $n$ lines in that color, 
with parameters $u_1,\ldots,u_n$, e.g.
\[
\sigma=(41253),\text{ blue} \qquad\longrightarrow\qquad
\tikz[baseline=1cm]{
\draw[db,invarrow=0.5]
(1,0) -- (2,2) 
node[above,black] {$\ss u_2$};
\draw[db,invarrow=0.4]
(2,0) -- (3,2) 
node[above,black] {$\ss u_3$};
\draw[db,invarrow=0.5]
(3,0) -- (5,2) 
node[above,black] {$\ss u_5$};
\draw[db,invarrow=0.4]
(4,0) -- (1,2) 
node[above,black] {$\ss u_1$};
\draw[db,invarrow=0.5]
(5,0) -- (4,2) 
node[above,black] {$\ss u_4$};
}
\]
This results in an endomorphism of $S^{\tensor n}$ (in the blue case;
or of $SW^{\tensor n},SE^{\tensor n}$ in the red and green cases), which,
thanks to the Yang--Baxter equation \eqref{eq:ybe} and
the unitarity equation \eqref{eq:unit}, 
is independent of the choice of wiring diagram for the permutation.
Here the convention is that the numbering of the spectral parameters is the increasing one at the top,
and $u_{\sigma^{-1}(1)},u_{\sigma^{-1}(2)},\ldots,u_{\sigma^{-1}(n)}$ at the bottom.
In the dual picture, we'll denote the endomorphism by a rectangle
labeled $\sigma$, the orientation of the rectangle determining as
usual the color of the corresponding lines.

Note that if $\sigma\in W$ is a representative of a $n$-string $\sigma'$ in the letters $\{0,\ldots,d\}$ (viewed as an element of $W_P\dom W$),
then any diagram of $\sigma$ ``sorts'' $\sigma'$, i.e.
\[
  \tikz[baseline=1cm]{
\node[above] at (0,2) {$\sigma'=$};
\draw[db,invarrow=0.5] (1,0) node[below,black] {$0$} -- (2,2) node[above,black] {$0$};
\draw[db,invarrow=0.4] (2,0) node[below,black] {$1$} -- (3,2) node[above,black] {$1$};
\draw[db,invarrow=0.5] (3,0) node[below,black] {$1$} -- (5,2) node[above,black] {$1$};
\draw[db,invarrow=0.4] (4,0) node[below,black] {$2$} -- (1,2) node[above,black] {$2$};
\draw[db,invarrow=0.5] (5,0) node[below,black] {$2$} -- (4,2) node[above,black] {$2$};
}
\]

\junk{
Finally, one more graphical notation is needed.
Given an $n$-string $\sigma$ in the letters $\{0,\ldots,d\}$ and a color
$\in\{\text{red},\text{green},\text{blue}\}$, pick a
minimally crossing wiring (or one might say, sorting) diagram for $\sigma$.
With the labels stripped off and the lines given spectral parameters
$u_1,\ldots,u_n$, the resulting diagram
specifies an endomorphism of $N^{\tensor n}$ (in the blue case,
or of $NE^{\tensor n},NW^{\tensor n}$ in the red and green cases):
\[
\sigma=20121
\qquad\mapsto\qquad
\tikz[baseline=1cm]{
\draw[db,invarrow=0.5]
(1,0) -- (2,2) 
node[above,black] {$0$} node[below,black] at (1,0) {$ 0$} ;
\draw[db,invarrow=0.4]
(2,0) -- (3,2) 
node[above,black] {$1$} node[below,black] at (2,0) {$ 1$} ;
\draw[db,invarrow=0.5]
(3,0) -- (5,2) 
node[above,black] {$1$} node[below,black] at (3,0) {$ 1$} ;
\draw[db,invarrow=0.4]
(4,0) -- (1,2) 
node[above,black] {$2$} node[below,black] at (4,0) {$ 2$} ;
\draw[db,invarrow=0.5]
(5,0) -- (4,2) 
node[above,black] {$2$} node[below,black] at (5,0) {$ 2$} ;
}
\qquad\mapsto\qquad
\tikz[baseline=1cm]{
\draw[db,invarrow=0.5]
(1,0) -- (2,2) 
node[above,black] {$u_2$} node[below,black] at (1,0) {$ $} ;
\draw[db,invarrow=0.4]
(2,0) -- (3,2) 
node[above,black] {$u_3$} node[below,black] at (2,0) {$ $} ;
\draw[db,invarrow=0.5]
(3,0) -- (5,2) 
node[above,black] {$u_5$} node[below,black] at (3,0) {$ $} ;
\draw[db,invarrow=0.4]
(4,0) -- (1,2) 
node[above,black] {$u_1$} node[below,black] at (4,0) {$ $} ;
\draw[db,invarrow=0.5]
(5,0) -- (4,2) 
node[above,black] {$u_4$} node[below,black] at (5,0) {$ $} ;
}
\]
\rem{something about numbers NOT being labels, and the whole thing
  being a linear map from $N^{\otimes n}$ to itself or something}
\rem{OR: just stick to permutations, mentioning that to stick to the
  $W_P\backslash W$ convention we read them the inverse of the usual way}
Thanks to the Yang--Baxter equation \eqref{eq:ybe}, any 
minimally crossing wiring diagram defines the same endomorphism
(and in fact, thanks to unitary equation (\ref{eq:unit})
and the normalization to come in \eqref{eq:norm2},
minimality will be unnecessary). \rem{not quite true}
Note that the numbering of the spectral parameters is 
the one increasing along the top, 
despite the $sort(\sigma)$ side of the wiring diagram being the bottom side.
In the dual picture, we'll denote the corresponding operation by a rectangle labeled $\sigma$,
the orientation of the rectangle determining as usual the color of the corresponding lines.
}

Equivariant puzzles of size $n$ are defined as collections of vertical rhombi
plus a row of $n$ $\Deltatri$s at the bottom,
forming together an equilateral triangle of size $n$.
In the dual picture (which will be displayed in the proof of the next proposition), 
the spectral parameters attached to the lines
are chosen to be $u_1,\ldots,u_n$ from left to right at the bottom,
and therefore $\beta u_1,\ldots,\beta u_n$ on the left and $\alpha u_1,\ldots,\alpha u_n$
on the right.

In the rest of this section, we assume implicitly that
properties~\ref{pty:factor}--\ref{pty:norm} are satisfied (in fact, in
the present formalism, the very definition just given of puzzles
relies on property~\ref{pty:factor}), and that the gauge freedom is
fixed by \eqref{eq:norm}.

\subsection{Consequences of the properties}\label{sec:finalproof}

In this proposition we see that the trivalent vertices are the key to
getting a multiplication rule, with one Schubert class being related
to two others.

\begin{prop}\label{prop:key}
Given $\sigma\in \mathcal S_n$, one has
\[
\begin{tikzpicture}[baseline=5mm]
\draw[thick] (0,0) -- (2,0) -- ++(120:2) -- cycle;
\draw[thick] (0,-0.4) -- (2,-0.4); \draw (0,0) -- (0,-0.4); \draw (2,0) -- (2,-0.4); \node at (1,-0.2) {$\sigma$};
\end{tikzpicture}
=
\begin{tikzpicture}[baseline=5mm]
\draw[thick] (0,0) -- (2,0) -- ++(120:2) -- cycle;
\draw[thick] (150:0.4) -- ++(60:2);
\draw (0,0) -- (150:0.4) (60:2) -- ++(150:0.4);
\path (60:1) ++(150:0.2) node[rotate=60] {$\sigma$};
\begin{scope}[xshift=2cm]
\draw[thick] (30:0.4) -- ++(120:2);
\draw (0,0) -- (30:0.4) (120:2) -- ++(30:0.4);
\path (120:1) ++(30:0.2) node[rotate=-60] {$\sigma$};
\end{scope}
\end{tikzpicture}
\]
where the boundaries are fixed to be
three given strings of length $n$.
\end{prop}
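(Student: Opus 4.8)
The plan is to work in the dual (line) picture and reduce everything to the local moves of property~\ref{pty:ybe}. First I would use the factorization property~\ref{pty:factor} to open up the bottom row of the equivariant puzzle: each of the $n$ bottom $\Delta$ pieces becomes a trivalent vertex, the $i$-th one receiving an incoming blue line of spectral parameter $u_i$ from below and emitting a green line ($\beta u_i$) to the NW and a red line ($\alpha u_i$) to the NE. The $\sigma$-box on the left-hand side then sits as a braiding of the $n$ blue lines just below this ``wall'' of trivalent vertices, and the whole task is to drag that box up through the wall and then push the result out to the two slanted sides, so that it reappears as a green $\sigma$-box on the NW side and a red $\sigma$-box on the NE side.

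The core step is to show that the wall of $n$ trivalent vertices intertwines the braid-group actions on the three colors. Reading the wall as a map from the blue line-space to the (interleaved) tensor product of the green and red line-spaces, the bootstrap equations~\eqref{eq:qtri}--\eqref{eq:qtrirev}, together with the derived relation~\eqref{eq:qtri2}, say precisely that an $R$-matrix crossing of two adjacent blue lines can be slid upward past the two trivalent vertices above them, re-emerging as an $R$-matrix crossing of the corresponding green outputs composed with one of the corresponding red outputs. Iterating over a reduced word for $\sigma$ --- any reduced word works by the Yang--Baxter equation~\eqref{eq:ybe}, and reducedness is in fact unnecessary by unitarity~\eqref{eq:unit} --- the single blue $\sigma$-box below the wall is replaced by a green $\sigma$-box among the green lines and a red $\sigma$-box among the red lines, both now just above the wall; here one checks that the spectral parameters come out as $\beta u_i$ on the green side and $\alpha u_i$ on the red side, matching the conventions for $\sigma$-boxes glued to the NW and NE sides.

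Finally I would slide the green $\sigma$-box up along the NW side and the red $\sigma$-box up along the NE side, past every crossing in the interior of the puzzle, using~\eqref{eq:ybe} to move a braid of same-colored lines past a green--red crossing and~\eqref{eq:unit} to remove any retracings created in the process. Property~\ref{pty:ybe} supplies Yang--Baxter and unitarity for lines of arbitrary colors, so this is unobstructed even though green and red lines get permuted among one another inside the puzzle. What is left is exactly a green $\sigma$-box pressed against the NW boundary and a red $\sigma$-box pressed against the NE boundary, i.e.\ the right-hand side. Since factorization, bootstrap, Yang--Baxter, unitarity and retracing insertion/removal are all identities of the associated tensors, each move preserves the value of the diagram for every fixed choice of the three boundary strings, which is what the statement asserts.

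The step I expect to be the real obstacle is the middle one: turning the single-triangle factorization of property~\ref{pty:factor} into a statement about a whole row of triangles that is compatible with braiding, i.e.\ genuinely verifying that the wall of trivalent vertices ``absorbs and splits'' the braid $\sigma$ with the correct colors, orientations and spectral parameters. The outer two steps are bookkeeping once one trusts that the bootstrap and Yang--Baxter relations let lines be dragged freely across vertices and crossings; the only subtlety there is that green and red lines do cross inside the puzzle, so one really uses the mixed-color forms of~\eqref{eq:ybe} and~\eqref{eq:unit} guaranteed by property~\ref{pty:ybe}.
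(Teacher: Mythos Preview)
Your proposal is correct and follows essentially the same route as the paper. The paper's proof also reduces to simple transpositions (noting that Yang--Baxter~\eqref{eq:ybe} and unitarity~\eqref{eq:unit} make the choice of reduced word irrelevant), passes to the dual line picture, and then drags the single blue crossing upward through the wall of trivalent vertices using the bootstrap equations~\eqref{eq:qtri} and~\eqref{eq:qtri2} together with~\eqref{eq:ybe}; the paper simply displays the explicit four-step sequence of pictures for $n=4$ and the transposition $(2\ 3)$ rather than phrasing it as your three conceptual stages (factorize, push through, slide out), but the underlying moves are identical.
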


\begin{proof}
The proof is essentially obvious since in the dual graphical representation, the various lines connect
the same locations on the boundaries in the l.h.s.\ and the r.h.s. However, since this proposition is crucial,
we shall prove it in detail.

Clearly, we can limit ourselves to the case that $\sigma$ is a simple
transposition. Once dualized, the required series of equalities looks
as follows (where for the purpose of illustration we have set $n=4$):
\begin{align*}
\text{l.h.s.}
&=\begin{tikzpicture}[baseline=0]
\foreach \i in {1,...,4} \draw[dg,invarrow=0.4] (\i,0) -- node[left=-1mm,pos=0.8,black] {$\ss \beta u_\i$} ++(-2,2);
\foreach \i in {1,...,4} \draw[dr,invarrow=0.4] (\i,0) -- node[right=-1mm,pos=0.8,black] {$\ss \alpha u_\i$} ++(2,2);
\draw[db,arrow] (1,0) -- node[left,pos=0.8,black] {$\ss u_1$} (1,-2);
\draw[db,arrow] (4,0) -- node[right,pos=0.8,black] {$\ss u_4$} (4,-2);
\draw[rounded corners,db,arrow=0.4] (2,0) -- (2,-0.5) -- (3,-1.5) -- node[right,pos=0.5,black] {$\ss u_2$} (3,-2);
\draw[rounded corners,db,arrow=0.4] (3,0) -- (3,-0.5) -- (2,-1.5) -- node[left,pos=0.5,black] {$\ss u_3$}(2,-2);
\end{tikzpicture}
\\
&=
\begin{tikzpicture}[baseline=0]
\foreach \i in {1,3,4} \draw[dg,invarrow=0.4] (\i,0) -- node[left=-1mm,pos=0.8,black] {$\ss \beta u_\i$} ++(-2,2);
\foreach \i in {1,3,4} \draw[dr,invarrow=0.4] (\i,0) -- node[right=-1mm,pos=0.8,black] {$\ss \alpha u_\i$} ++(2,2);
\draw[dg,invarrow=0.4] (3,-1) -- (2,0) -- node[left=-1mm,pos=0.8,black] {$\ss \beta u_2$} ++(-2,2);
\draw[dr,invarrow=0.3,rounded corners] (3,-1) -- ++(0.25,0.25) -- ++(-1,1) -- node[right=-1mm,pos=0.8,black] {$\ss \alpha u_2$} ++(1.75,1.75);
\draw[db,arrow] (1,0) -- node[left,pos=0.8,black] {$\ss u_1$} (1,-2);
\draw[db,arrow] (4,0) -- node[right,pos=0.8,black] {$\ss u_4$} (4,-2);
\draw[rounded corners,db,arrow=0.4] (3,-1) -- node[right,pos=0.5,black] {$\ss u_2$} (3,-2);
\draw[rounded corners,db,arrow=0.6] (3,0) -- (3,-0.5) -- (2,-1.5) -- node[left,pos=0.5,black] {$\ss u_3$}(2,-2);
\end{tikzpicture}
&&\text{using \eqref{eq:qtri}}
\\
&=
\begin{tikzpicture}[baseline=0]
\foreach \i in {1,3,4} \draw[dg,invarrow=0.4] (\i,0) -- node[left=-1mm,pos=0.8,black] {$\ss \beta u_\i$} ++(-2,2);
\foreach \i in {1,3,4} \draw[dr,invarrow=0.4] (\i,0) -- node[right=-1mm,pos=0.8,black] {$\ss \alpha u_\i$} ++(2,2);
\draw[dg,invarrow=0.4] (3,-1) -- (2,0) -- node[left=-1mm,pos=0.8,black] {$\ss \beta u_2$} ++(-2,2);
\draw[dr,invarrow=0.3,rounded corners] (3,-1) -- ++(1,1.5) -- node[left=-1mm,pos=0.9,black] {$\ss \alpha u_2$} ++(0,1.5);
\draw[db,arrow] (1,0) -- node[left,pos=0.8,black] {$\ss u_1$} (1,-2);
\draw[db,arrow] (4,0) -- node[right,pos=0.8,black] {$\ss u_4$} (4,-2);
\draw[rounded corners,db,arrow=0.4] (3,-1) -- node[right,pos=0.5,black] {$\ss u_2$} (3,-2);
\draw[rounded corners,db,arrow=0.6] (3,0) -- (3,-0.5) -- (2,-1.5) -- node[left,pos=0.5,black] {$\ss u_3$}(2,-2);
\end{tikzpicture}
&&\text{using \eqref{eq:qtri2} and \eqref{eq:ybe}}
\\
&=
\begin{tikzpicture}[baseline=0]
\foreach \i in {1,4} \draw[dg,invarrow=0.4] (\i,0) -- node[left,pos=0.8,black] {$\ss \beta u_\i$} ++(-2,2);
\foreach \i in {1,4} \draw[dr,invarrow=0.4] (\i,0) -- node[right,pos=0.8,black] {$\ss \alpha u_\i$} ++(2,2);
\draw[dg,invarrow=0.4,rounded corners] (2.5,-0.5) -- ++(0,1) -- node[left,pos=0.8,black] {$\ss \beta u_3$} ++(-1.5,1.5);
\draw[dr,invarrow=0.3] (2.5,-0.5) -- node[right=-1mm,pos=0.8,black] {$\ss \alpha u_3$} ++(2.5,2.5);
\draw[dg,invarrow=0.4,rounded corners] (3,-1) -- ++(-0.25,1.25) -- (1.75,0.5) -- node[left=-2mm,pos=0.8,black] {$\ss \beta u_2$} (0,2);
\draw[dr,invarrow=0.3,rounded corners] (3,-1) -- ++(1,1.5) -- node[left=-1mm,pos=0.9,black] {$\ss \alpha u_2$} ++(0,1.5);
\draw[db,arrow] (1,0) -- node[left,pos=0.8,black] {$\ss u_1$} (1,-2);
\draw[db,arrow] (4,0) -- node[right,pos=0.8,black] {$\ss u_4$} (4,-2);
\draw[rounded corners,db,arrow=0.4] (3,-1) -- node[right,pos=0.5,black] {$\ss u_2$} (3,-2);
\draw[rounded corners,db,arrow=0.4] (2.5,-0.5) -- (2,-1.5) -- node[left,pos=0.5,black] {$\ss u_3$}(2,-2);
\end{tikzpicture}
&&\text{using \eqref{eq:qtri2}}
\\
&=
\begin{tikzpicture}[baseline=0]
\draw[dg,invarrow=0.4] (1,0) -- node[pos=0.8,left=-2mm,black] {$\ss \beta u_1$} ++(-2,2);
\draw[dg,invarrow=0.4,rounded corners] (2,0) -- ++(-1,1) -- node[pos=0.9,left=-2mm,black] {$\ss \beta u_3$} ++(0,1);
\draw[dg,invarrow=0.3,rounded corners] (3,0) -- ++(-1,1) -- ++(-1.5,0.5) -- node[pos=0.4,left=-2mm,black] {$\ss \beta u_2$} ++(-0.5,0.5);
\draw[dg,invarrow=0.4] (4,0) -- node[pos=0.9,left=-2mm,black] {$\ss \beta u_4$} ++(-2,2);
\draw[dr,invarrow=0.4] (1,0) -- node[pos=0.9,right,black] {$\ss \alpha u_1$} ++(2,2);
\draw[dr,invarrow=0.3,rounded corners] (2,0) -- ++(1,1) -- ++(1.5,0.5) -- node[pos=0.2,right=-1mm,black] {$\ss \alpha u_3$} ++(0.5,0.5);
\draw[dr,invarrow=0.4,rounded corners] (3,0) -- ++(1,1) -- node[pos=0.9,right=-1mm,black] {$\ss \alpha u_2$} ++(0,1);
\draw[dr,invarrow=0.4] (4,0) -- node[pos=0.8,right,black] {$\ss \alpha u_4$} ++(2,2);
\foreach \i/\w in {1/1,2/3,3/2,4/4} \draw[db,arrow] (\i,0) -- node[pos=0.7,right] {$\ss u_\w$} (\i,-1.5);
\end{tikzpicture}
=r.h.s.
&&\text{using \eqref{eq:ybe}}
\end{align*}
\end{proof}

It is interesting to note that if we allowed puzzles to have arbitrary
boundary labels (multinumbers, not just single numbers) then the
multiplication they define would not be associative. 
We need some results about the importance of this single-number sector,
based on the following technical lemma.

\begin{lem}\label{lem:rightward}
  Let $X$ be a valid multinumber, and $\vf_X \in \Lambda^{1+d}$ its 
  associated weight. Then the projection of $\tau\vf_X$ to the $0$th factor 
  of $\Lambda$ points weakly to the right, and strictly to the right 
  unless $X$ is $0$, is $3(2(10))$, or lacks $0$ entirely.
\end{lem}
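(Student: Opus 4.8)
The plan is to reduce the statement to an identity about the planar binary trees underlying multinumbers, followed by a short finite verification. First I fix coordinates: identify $\Lambda\otimes\CC\cong\CC$ so that $\tau$ is multiplication by $\zeta:=e^{2\pi i/3}$ and the first fundamental weight $\varpi$ points straight up, i.e. $\varpi=i$; this is the orientation in which $\vf_0$ labels a vertical edge-vector as in lemma~\ref{lem:greend1}, and ``right'' is the positive real (horizontal) direction. Let $p\colon\Lambda^{1+d}\to\Lambda$ be the projection onto the $0$th factor and write $p(\vf_X)=z_X\,\varpi$ with $z_X\in\ZZ[\zeta]$. Then ``$p(\vf_X)$ points weakly (resp.\ strictly) to the right'' is exactly ``$\Im z_X\leq 0$ (resp.\ $<0$)''. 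Since $p$ is $\tau$-equivariant, the defining recursion $\vf_{YX}=-\tau\vf_X-\tau^2\vf_Y$ becomes $z_{YX}=-\zeta z_X-\zeta^2 z_Y$, with $z_0=1$ and $z_i=0$ for $i\neq 0$.

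Next I unwind this recursion over the tree structure of $X$. By induction on tree size, $z_X=\sum_{\ell}(-\zeta)^{R(\ell)}(-\zeta^2)^{L(\ell)}$, the sum over the leaves $\ell$ of $X$ labelled $0$, where $R(\ell)$ (resp.\ $L(\ell)$) is the number of right-steps (resp.\ left-steps) on the path from the root to $\ell$: a descent into a right subtree contributes the factor $-\zeta$ that the recursion attaches to $X$, a descent into a left subtree the factor $-\zeta^2$ attached to $Y$, a $0$-leaf contributes the base value $z_0=1$, and every other leaf contributes $0$. Each summand $(-\zeta)^{R}(-\zeta^2)^{L}=(-1)^{L+R}\zeta^{2L+R}$ lies in $\{\pm1,\pm\zeta,\pm\zeta^2\}$; it has $\Im\leq0$ exactly when it is one of $1,-1,\zeta^2,-\zeta$, and $\Im=0$ only when it is $\pm1$.

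The one case-specific step is to locate the $0$-leaves of a \emph{valid} multinumber. By proposition~\ref{prop:valid}, for $d\leq3$ the valid multinumbers are precisely those in \S\ref{ssec:d3puz} (together with their $d<3$ restrictions), so this is a finite inspection: in each of those trees the $0$-leaves sit at the bottoms of all-right chains hanging from the root, except that $(3(2(10)))0$ additionally has one hanging from the root of its left subtree, so $(L(\ell),R(\ell))\in\{(0,0),(0,1),(0,2),(0,3),(1,3)\}$, giving summands $1,-\zeta,\zeta^2,-1,\zeta^2$; moreover $(0,0)$ occurs only for $X=0$, $(0,3)$ only for $X=3(2(10))$, and $(1,3)$ only inside $X=(3(2(10)))0$, where it is accompanied by a $0$-leaf at $(0,1)$. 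Summing: $z_X=0$ when $X$ lacks $0$; $z_X=1$ when $X=0$; $z_X=-1$ when $X=3(2(10))$; and $z_X\in\{-\zeta,\zeta^2,\zeta^2-\zeta\}$ in all remaining cases, each with negative imaginary part. Hence $\Im z_X\leq0$ always, with equality precisely in the three stated cases, which is the lemma.

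The main obstacle is exactly this finite check, because the conclusion is genuinely sensitive to which multinumbers are valid: a $0$-leaf at $(1,1)$ would contribute $(-1)^2\zeta^3=1$, a spurious fourth boundary case, and a $0$-leaf at $(0,4)$ would contribute $\zeta^4=\zeta$, with \emph{positive} imaginary part, flatly contradicting the lemma. What excludes these is that in a valid multinumber a rightward chain $a_1(a_2(\cdots(a_k0)))$ forces $a_1>\cdots>a_k>0$, so $k\leq d\leq3$, the left part of a valid multinumber is never merely $0$, and — visible from the classification — deep left-nesting never carries a $0$-leaf. I would either appeal directly to the list of \S\ref{ssec:d3puz} for this, or record the short structural bounds $R(\ell)\leq d$ and $L(\ell)\leq1$ (for $d\leq3$) to make the argument uniform in $d$.
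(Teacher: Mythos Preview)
Your proof is correct and, at the level of content, coincides with the paper's: both are finite checks over the $0$-leaves of the valid $d\leq 3$ multinumbers, and your five $(L,R)$-values $(0,0),(0,1),(0,2),(0,3),(1,3)$ are exactly the paper's five drawn cases $0$, $X0$, $X(Y0)$, $3(2(10))$, $(3(2(10)))0$ (the sixth, ``no $0$'', being trivial in both). The difference is in packaging. The paper reads off each projection geometrically, by embedding the multinumber in its minimal puzzle environment and tracing the $0$-contribution through the triangles. You instead unwind the recursion $\vf_{YX}=-\tau\vf_X-\tau^2\vf_Y$ algebraically to $z_X=\sum_{\ell}(-\zeta)^{R(\ell)}(-\zeta^2)^{L(\ell)}$ and then classify the summands. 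Your route is a bit more self-contained (no pictures needed) and makes transparent \emph{why} $d\leq 3$ matters, via $R(\ell)\leq d$; the paper's route has the virtue of displaying the five projection vectors directly, which is what the subsequent argument in proposition~\ref{prop:singlenum} actually uses. One caution on your closing paragraph: the heuristic ``a rightward chain $a_1(a_2(\cdots(a_k0)))$ forces $a_1>\cdots>a_k>0$'' is not literally true for the Buch pieces (e.g.\ in $(32)((21)0)$ the left siblings along the rightward path are the multinumbers $32$ and $21$, not single numbers), so you should rest the argument on the explicit list, as you in fact do.
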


To put this statement in context, we draw the vectors 
$\vf_0, \tau \vf_0, \tau^2 \vf_0 \in \Lambda$:

\begin{center}
\begin{tikzpicture}[>=latex]
\matrix[column sep=0.5cm,row sep=0.5cm,cells={scale=2}]
{
\node{$\vf_0$};
&
\node{$\tau \vf_0$};
&
\node{$\tau^2 \vf_0$};
\\
\draw[->,blue] (-0,0.0625) -- ++(-30:0.25);
&
\draw[->,blue] (0,-0.125) -- ++(90:0.25);
&
\draw[->,blue] (0,+0.0625) -- ++(210:0.25);
\\
}; 
\end{tikzpicture}
\end{center}

\begin{proof}
  Multinumbers come in
  six types: $0$, $X0$, $X(Y0)$, $3(2(10))$, $(3(2(10)))0$,
  and those not involving $0$.
  We draw the first five here in their minimal puzzle environments, 
  the better to calculate the contributions (shown here in blue)
  of their $\tau\vf$s 
  to the $0$th factor of $\Lambda$. 
  (The multinumbers not involving $0$ contribute nothing
  in the $0$th factor.) 

\begin{center}
\begin{tikzpicture}[>=latex]
\matrix[column sep=0.5cm,row sep=0.5cm,cells={scale=2}]
{
\begin{scope}[yshift=-0.866025cm]  \uptri{0}{0}{0}\end{scope}
\draw[->] (NW.center) -- ++(-30:0.25);
  \draw[->] (NE.center) -- ++(210:0.25);
&
\begin{scope}[yshift=-0.866025cm]  \uptri{X}{X0}{0}\end{scope}
\draw[->] (NE.center) -- ++(210:0.25);
&
\begin{scope}[yshift=-0.866025cm]  \uptri{X}{X(Y0)}{}\node at (NE) {$Y0$};\end{scope}
\begin{scope}[xshift=0.5cm]\downtri{0}{Y}{}\end{scope}
  \draw[->] (NW.center) -- ++(150:0.25);
&
\begin{scope}[yshift=-0.866025cm]  \uptri{3}{3(2(10))}{}\node at (NE) {$2(10)$};\end{scope}
\begin{scope}[xshift=0.5cm]\downtri{}{2}{}\end{scope}\node at (NW) {$10$};
  \begin{scope}[xshift=1cm,yshift=-0.866025cm]\uptri{}{0}{1}\end{scope}
  \draw[->] (horiz.center) -- ++(90:0.25);
&
\begin{scope}[yshift=-0.866025cm]  \uptri{}{(3(2(10)))0}{0}\node at (NW) {$3(2(10))$};\end{scope}
\draw[->] (NE.center) -- ++(210:0.25);
  \begin{scope}[xshift=-0.5cm]\downtri{}{2(10)}{3}\end{scope}
  \begin{scope}[xshift=-0.5cm]\uptri{2}{}{}\end{scope}\node at (NE) {$10$};
  \begin{scope}[yshift=0.866025cm]\downtri{0}{1}{}\end{scope}
  \draw[->] (NW.center) -- ++(150:0.25);
\\
\draw[->,blue] (0,-0.125) -- ++(90:0.25);
&
\draw[->,blue] (0,-0.0625) -- ++(30:0.25);
&
\draw[->,blue] (0,0.0625) -- ++(-30:0.25); 
&
\draw[->,blue] (0,0.125) -- ++(-90:0.25); 
&
\draw[->,blue] (0,0) -- ++(0:0.433);
\\
};
\end{tikzpicture}
\end{center}  
\end{proof}
\begin{prop}\label{prop:singlenum}
  Let $P$ be a size $n$ triangle made of puzzle pieces, 
  where the Northwest and Northeast sides have only single-number
  labels $0$, $1$, $2$, $3$.

  If $P$ has the same content on NW and NE,
  then the South side is also single-number labels (with that same content).
  The $120^\circ,240^\circ$ rotations of the statement also hold.
  Conversely, if the contents on the (all single-number) NW and NE sides
  differ, then the S side cannot be all single-number.  
\end{prop}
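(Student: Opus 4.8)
The plan is to push the Green's–theorem bookkeeping of \S\ref{ssec:X}, one factor of $\Lambda^{1+d}$ at a time, through lemma~\ref{lem:rightward}. The heart of it is a single linear identity among the boundary data; after that the combinatorial content is to peel the digits $0,1,\ldots,d-1$ off the South side in that order, applying lemma~\ref{lem:rightward} each time for the digit currently being removed.

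First I would record the identity. Working in the $2(1+d)$-dimensional lattice $\Lambda^{1+d}=\bigoplus_{i=0}^d\Lambda_i$ of \S\ref{ssec:X} (before the $2d$-dimensional quotient of lemma~\ref{lem:greend3}), the vectors on the three sides of $P$ sum to $\vec0$. Projecting to the $i$-th factor $\Lambda_i$, and using that a single number $j$ contributes $\vf_i$ to $\Lambda_i$ exactly when $j=i$ and $\vec0$ otherwise, the usual bookkeeping gives, with $n_i^\bullet$ the number of $i$'s on a side,
\[
\sum_{Z\in\text{S}}(\vf_Z)_i\;=\;n_i^{\text{NW}}\vf_i+\bigl(n_i^{\text{NW}}-n_i^{\text{NE}}\bigr)\tau\vf_i
\qquad(i=0,\ldots,d),
\tag{$\star$}
\]
the precise $\tau$–powers being convention-dependent but irrelevant below. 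The converse statement then drops out with no extra work: if S too were all single numbers, the left side of $(\star)$ would be a multiple of $\vf_i$, so its $\tau\vf_i$–coefficient vanishes and $n_i^{\text{NW}}=n_i^{\text{NE}}$ for every $i$; contrapositively, unequal contents on NW and NE forbid an all-single-number S.

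For the main statement, assume $n_i^{\text{NW}}=n_i^{\text{NE}}=:m_i$, so $(\star)$ reads $\sum_{Z\in\text{S}}(\vf_Z)_i=m_i\vf_i$. I would then peel digits off S one at a time, keeping the invariant that \emph{after step $j$ the South labels consist of $m_0$ copies of $0$, $\ldots$, $m_j$ copies of $j$, together with labels using only the digits $j{+}1,\ldots,d$}. At step $j$, relabel $\{j,\ldots,d\}$ to $\{0',\ldots,(d-j)'\}$ order-preservingly: a valid $d$-step multinumber in those digits becomes a valid $(d-j)$-step multinumber whose $\vf$-vector lies in $\Lambda_j\oplus\cdots\oplus\Lambda_d$ and agrees with the one computed in that smaller lattice, the relevant Gram block being $G_{d-j}$ (cf.\ \S\ref{ssec:Gram} and proposition~\ref{prop:valid}). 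Apply lemma~\ref{lem:rightward} in the $(d-j)$-step picture to the $0'$-factor, namely $\Lambda_j$: every surviving South label $Z$ has $(\vf_Z)_j$ weakly rightward, while by $(\star)$ with $i=j$ (the already-peeled digits contribute $\vec0$ to $\Lambda_j$) these vectors sum to $m_j\vf_j$, which points straight up, so each $(\vf_Z)_j$ has vanishing rightward component. By lemma~\ref{lem:rightward} only three options remain: $Z=j$; $Z$ omits $j$; or $Z$ is the $(d-j)$-step analogue of $3(2(10))$, which occurs only when $d-j\ge3$ and is then removed by a count — that multinumber contributes $-\vf_j$ to $\Lambda_j$ and contains a single $j$, so if $p,q$ denote the numbers of South labels equal to $j$ and to the exceptional one, then $p-q=m_j$ from the $\vf_j$-component and $p+q=m_j$ by content conservation (lemma~\ref{lem:greend3}), forcing $q=0$. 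This re-establishes the invariant with $j$ advanced by one; after $j=d-1$ every South label uses only the digit $d$, hence equals $d$, and S is single-number of content $(m_0,\ldots,m_d)$. The $120^\circ$ and $240^\circ$ rotations go through verbatim, since $(\star)$ is cyclically symmetric and lemma~\ref{lem:rightward} constrains $\vf_X$ regardless of which side $X$ sits on: solving $(\star)$ for any one side, with the other two single-number and of equal content, again gives $\sum(\vf_\bullet)_i=m_i\vf_i$.

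The step I expect to require the most care is the peeling. Lemma~\ref{lem:rightward} is stated only for the digit $0$, so everything hinges on the relabel-and-restrict move genuinely preserving $\vf$-vectors (which is precisely where the block structure of the Gram matrix and proposition~\ref{prop:valid} are used) and on cleanly disposing of the single exceptional multinumber $3(2(10))$ — the one place where content conservation itself, and not merely the $\Lambda^{1+d}$-Green's theorem, needs to be invoked.
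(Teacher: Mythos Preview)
Your peeling scheme and the identity $(\star)$ are sound, and they match the paper's strategy of projecting to one $\Lambda_i$ at a time and invoking lemma~\ref{lem:rightward}; the converse argument is also fine. The gap is in the single step you flagged yourself: the elimination of $3(2(10))$ via ``content conservation.'' Lemma~\ref{lem:greend3} asserts equal content only for puzzles whose three sides are \emph{already} single-number strings; it says nothing about digit counts inside multinumbers, and there is no linear functional on $\Lambda^{1+d}$ that returns $c_0(X)$. Indeed $(\vf_0)_0=\vf_0$, $(\vf_{10})_0=-\tau\vf_0$, $(\vf_{2(10)})_0=\tau^2\vf_0$ and $(\vf_{3(2(10))})_0=-\vf_0$ each correspond to a single $0$-digit yet span all of $\Lambda_0$, so no Green's-theorem identity can give you $p+q=m_0$. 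Your two equations $p-q=m_0$ and $p+q=m_0$ are therefore really just one, and $q=0$ does not follow.

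The paper closes this gap not by counting but by running lemma~\ref{lem:rightward} \emph{from the other end}: the same analysis applied to the digit $3$ (projecting to $\Lambda_3$) shows that every South label containing $3$ has $\Lambda_3$-component pointing weakly to one side, strictly unless it equals $3$ or $((32)1)0$. Since $\sum_Z(\vf_Z)_3=m_3\vf_3$ has no sideways component, any label containing $3$ must be $3$ or $((32)1)0$; in particular $3(2(10))$ (which contains a $3$ but is neither) is excluded, and symmetrically $((32)1)0$ is excluded because it contains a $0$. With that two-ended squeeze done once at $d=3$, the remaining induction is exactly your relabel-and-restrict, where for $d-j\le 2$ the exceptional case in lemma~\ref{lem:rightward} never arises.
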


\begin{proof}
  Let $X(j)$, $j=1,\ldots,n$ be the multinumbers across the South side.
  The Green's theorem argument
  and the assumptions about the NW, NE sides tell us that 
  $\sum_{j=1}^n \vf_{X(j)} \in span(\vf_0,\vf_1,\vf_2,\vf_3)$,
  i.e. not using the $\tau\vf_0,\tau\vf_1,\tau\vf_2,\tau\vf_3$ basis elements.

  This motivates an equivalent, if more general-looking, proposition: 
  if $X(j),$ $j=1,\ldots,n$ is a list of multinumbers such
  that $\sum_{j=1}^n \vf_{X(j)}$ lies in $span(\vf_0,\vf_1,\vf_2,\vf_3)$,
  then the only $X(j)$s involving $0$ actually have $X(j)=0$.
  Once we excise them from the list, subtract $1$ from every
  number in every remaining $X(j)$ (again obtaining valid multinumbers
  and valid puzzle pieces,
  by the functoriality property of multinumbers from \S\ref{ssec:funct}),
  and use induction on $d$, we find that each multinumber $X(j)$ is 
  a single-number.
  

  We consider now the contribution ``$\pi_0(\vf_{X(j)})$'' 
  of each $\vf_{X(j)}$ to the $0$th copy of $\Lambda$ in $\Lambda^{1+d}$. 

  A $d=0$ puzzle has only $0$-labels, so trivially achieves the
  condition we seek, namely that the only multinumbers $X(j)$
  involving $0$s are exactly $0$s.

  In a $d=1$ puzzle, only the labels $0,1,10$ occur.
  Since we know the sum $\sum_{j=1}^n \pi_0(\vf_{X(j)})$
  must be parallel to $\vf_0$ (drawn here as vertical), 
  and by lemma \ref{lem:rightward}
  these vectors $\pi_0(\tau\vf_0),\pi_0(\tau\vf_1)$ point weakly right while 
  $\pi_0(\vf_{10})$ points strictly right, there can be no $\pi_0(\vf_{10})$s.
  For $d=1$ that rules out the $10$ label, leaving only the $0,1$ labels.

  In a $d\leq 2$ puzzle, the only labels including $0$ are $0, X0, 2(10)$,
  and the same rightward-pointing argument applies. Again we learn the
  only $\{X(j)\}$ including $0$ are equal to $0$.

  In a $d=3$ puzzle, this rightward-pointing argument shows only that any label
  involving $0$ must be $0$ or $3(2(10))$. But the same argument applied dually
  to study $3$s in labels shows that any label involving $3$ must be
  $3$ or $((32)1)0$; in particular, not $3(2(10))$.  
  Consequently, neither the $3(2(10))$ nor $((32)1)0$ labels can appear. 
  So once again, any label involving $0$ must be $0$, and we can apply
  the inductive argument from above.

  For the converse, pick $i$ such that there is a different number of
  $i$ labels on the NW and NE sides. Then the projection to the $i$th
  $\Lambda$ factor is not vertical, and cannot be canceled by
  single-number $\vf_X$s from the South side, 
  since each $\vf_i$ is vertical and each $\vf_{j\neq i}$
  projects to $\vec 0$ in the $i$th factor.
\end{proof}

\begin{prop}
  \label{prop:weight}
  \
\begin{enumerate}
\item Single-color crossings preserve single-numbers, i.e.,
\begin{equation}\label{eq:e}
\text{if }
\tikz[baseline=0,xscale=0.5]
{
\draw[invarrow=0.75,d] (-1,-1) node[below] {$\m i$} -- (1,1) node[above] {$\m l$};
\draw[invarrow=0.75,d] (1,-1) node[below] {$\m j$} -- (-1,1) node[above] {$\m k$};
}
\ne0,\text{ then}
\quad
i,j\in \{0,\ldots,d\}
\ 
\Leftrightarrow
\ 
k,l\in \{0,\ldots,d\}
\end{equation}
(where the lines have arbitrary, but identical, colors).
\item 
For $i\in \{0,\ldots,d\}$,
\begin{equation}\label{eq:e2}
\tikz[baseline=0,xscale=0.5]
{
\draw[invarrow=0.75,d] (-1,-1) node[below] {$\m i$} -- (1,1) node[above] {$\m k$};
\draw[invarrow=0.75,d] (1,-1) node[below] {$\m i$} -- (-1,1) node[above] {$\m j$};
}
\ne 0\
\text{ or }
\tikz[baseline=0,xscale=0.5]
{
\draw[invarrow=0.75,d] (-1,-1) node[below] {$\m j$} -- (1,1) node[above] {$\m i$};
\draw[invarrow=0.75,d] (1,-1) node[below] {$\m k$} -- (-1,1) node[above] {$\m i$};
}
\ne0
\quad\Rightarrow\quad
i=j=k
\end{equation}
where again the lines have identical colors.
\item If $\omega$ is a weakly increasing single-number string, then
\begin{equation}\label{eq:empty}
\tikz[scale=1.8,baseline=0.5cm]{\uptri{\lambda}{\omega}{\mu}}\ne 0
\quad\Rightarrow\quad
\lambda=\mu=\omega
\end{equation}
for all single-number strings $\lambda$ and $\mu$, 
i.e., having $\omega$ at the bottom forces it on the other two sides.
Moreover, the puzzle is unique, and along each diagonal (NW/SE or NE/SW)
the labels on the non-horizontal edges are constant.

Similarly,
if $\agemo$ is a weakly {\em decreasing} single-number string, then
\begin{equation}\label{eq:emptyb}
\tikz[scale=1.8,baseline=-0.8cm]{\downtri{\lambda}{\agemo}{\mu}}\ne 0
\quad\Rightarrow\quad
\lambda=\mu=\agemo
\end{equation}
for all single-number strings $\lambda$ and $\mu$.
\end{enumerate}
\end{prop}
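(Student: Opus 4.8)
The plan is to derive (1) and (2) from weight conservation plus the polytope geometry of Theorem~\ref{thm:classif}, and to reduce (3) to a rigidity statement proved by peeling triangles off a corner of the puzzle. For (1): by the weight‑conservation clause of property~\ref{pty:ybe}, a nonzero crossing of two lines of the \emph{same} colour satisfies $\vf_i+\vf_j=\vf_k+\vf_l$, the common power of $\tau$ attached to that colour cancelling out of the equation. By Theorem~\ref{thm:classif}, item~\ref{item:AdFace}, the weights $\vf_0,\dots,\vf_d$ span a face of the polytope whose vertex set is the $\vf_X$ for valid $X$; let $\phi$ be a supporting linear functional, so $\phi(\vf_i)=c$ for $0\le i\le d$ while $\phi(\vf_X)<c$ for every other valid $X$. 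If $i,j$ are single‑numbers, applying $\phi$ yields $\phi(\vf_k)+\phi(\vf_l)=2c$ with both summands $\le c$, hence $\phi(\vf_k)=\phi(\vf_l)=c$, i.e.\ $k,l$ are single‑numbers; the converse is the same argument run backwards, which is \eqref{eq:e}. For \eqref{eq:e2}, weight conservation at either orientation gives $2\vf_i=\vf_j+\vf_k$; since $\vf_i$ is an \emph{extreme point} of that polytope (item~\ref{item:AdFace} again) while $\vf_j,\vf_k$ lie in it, $\vf_j=\vf_k=\vf_i$, and so $j=k=i$ because distinct valid multinumbers have distinct weights.

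For the equality $\lambda=\mu=\omega$ in (3), Green's theorem for the $\vf_X$ (lemmas~\ref{lem:greend1}--\ref{lem:greend3}) first makes the contents of $\lambda,\mu,\omega$ coincide. In ordinary cohomology this already finishes it: every non‑$K$ puzzle piece has inversion number $0$ (hence is drawn without an inversion label in \eqref{eq:d1tri}--\eqref{eq:d3tri}), so by lemma~\ref{lem:sympd1} the quantity $\ell(\lambda)+\ell(\mu)-\ell(\omega)$ equals the sum of the piece inversion numbers of any puzzle with this boundary, which is $0$; and $\ell(\omega)=0$ since $\omega$ is weakly increasing, so $\ell(\lambda)=\ell(\mu)=0$ and weakly increasing strings of equal content agree. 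For the general statement I would argue at the corners. A valid $\Delta$‑piece with NW, S, NE labels $A,B,C$ satisfies $\vf_A+\tau\vf_B+\tau^2\vf_C=\vec 0$ (this is exactly how the multinumber vectors were defined in \S\ref{ssec:X}). In the SW corner of the puzzle, $B=\omega_1$ and $A=\lambda_1$ are single‑numbers, so the internal edge has $\vf_C=-\tau\vf_{\lambda_1}-\tau^2\vf_{\omega_1}=\vf_{\omega_1\lambda_1}$; a short Gram‑matrix computation (\S\ref{ssec:Gram}) shows $|\vf_{\omega_1\lambda_1}|^2=2$, independently of $a$, precisely when $\lambda_1\le\omega_1$. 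Thus validity of $C$ forces $\lambda_1\le\omega_1$, while $\lambda_1\ge\omega_1$ because $\lambda_1$ lies in the common content whose least element is $\omega_1$; so $\lambda_1=\omega_1$, $\vf_C=\vf_{\omega_1\omega_1}=\vf_{\omega_1}$, and the corner piece carries the single label $\omega_1$ on all three edges. The mirror computation at the SE corner gives $\mu_n=\omega_n$.

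It remains to get uniqueness and the claim that labels are constant along diagonals; for this I would propagate the corner rigidity by an induction (on the puzzle size, or a sweep inward from a corner), at each step using the same weight‑conservation‑and‑validity dichotomy to force the next undetermined piece — the weakly‑increasing hypothesis on $\omega$ being exactly what keeps the needed validity inequalities true so the forcing goes through and never stalls. The resulting unique puzzle then has each green line and each red line (these are the NW/SE‑ and NE/SW‑``diagonals'' of the statement, the ``non‑horizontal'' edges being the ones they pass through) carrying the constant label it runs into along the bottom, whence the NW and NE boundaries are again $\omega$; \eqref{eq:emptyb} follows by the symmetry interchanging $\Delta$ and $\nabla$. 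The main obstacle is organising this inductive sweep cleanly across the two‑dimensional puzzle — bookkeeping which edges are single‑numbers and which are two‑digit labels $\omega_k\omega_j$ with $\omega_j\le\omega_k$, and checking at each stage that no other valid piece can fit — even though every individual forcing step is just the easy corner‑type argument above.
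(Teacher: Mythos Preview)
Your treatment of parts (1) and (2) is correct and coincides with the paper's: both exploit weight conservation together with the fact (Theorem~\ref{thm:classif}(\ref{item:AdFace})) that $\{\vf_0,\ldots,\vf_d\}$ spans a face of the weight polytope, applying a supporting functional for (1) and extremality of each vertex for (2). The paper packages this into a short lemma about $T$-equivariant endomorphisms preserving face-supported weight spaces, but the content is identical.

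Part (3) has a real gap. Your corner computation is right --- it is exactly how the paper opens --- but the ``inductive sweep'' you propose is where the work lies, and you have not supplied it. The difficulty you flag is genuine: after fixing the SW corner piece as $\uptri{\omega_1}{\omega_1}{\omega_1}$, the adjacent rhombus has two unknown internal edges, and weight conservation gives a single linear constraint that does not by itself pin them down.

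The paper resolves this not by a local sweep but by a global step. It strips off the entire NW diagonal strip, leaving a size-$(n-1)$ triangle $P'$, and invokes Proposition~\ref{prop:singlenum} (which in turn rests on the directional Lemma~\ref{lem:rightward}) to guarantee that the newly exposed NW side of $P'$ consists entirely of single numbers. That proposition --- that a triangular region with single-number NW and NE sides of equal content must also have a single-number S side --- is the missing ingredient in your argument. A separate application of Lemma~\ref{lem:rightward} is needed beforehand, to rule out the topmost NE label being $>\omega_1$ and thereby match the contents of $P'$'s two known sides so that Proposition~\ref{prop:singlenum} applies. Only then does induction on $n$ handle $P'$; the strip rhombi are filled in \emph{last}, by a norm-square computation much like your $\vf_{\omega_1\lambda_1}$ calculation, once the labels on both of the strip's long sides are already known to be single numbers. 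Without something playing the role of Proposition~\ref{prop:singlenum}, your sweep has no mechanism to certify that the internal edges it encounters stay in the single-number (or two-digit) sector, and the forcing can stall.

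Your inversion-number shortcut for the $H^*$ case is fine as an aside, but note that the proposition is stated in the general framework of \S\ref{sec:genproof}, where equivariant rhombi (inversion number $-1$) and $K$-pieces (positive inversion number) are both present, so it does not address the statement as posed.
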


The first two of these will derive from the same source:
\begin{lem}
  Let $V$ be a $T$-representation with weights $P$ and weight 
  basis $(\vec v_p)$, and $F$ a face of the convex hull of $P$.
  Let $X \in Hom(V\tensor V,V\tensor V)$ be $T$-equivariant. 
  If $\vec v_p,\vec v_q, \vec v_r, \vec v_s$ are
  basis vectors with weights $p,q\in F$,
  and $X_{\vec v_p \vec v_q}^{\vec v_r \vec v_s} \neq 0$
  or $X^{\vec v_p \vec v_q}_{\vec v_r \vec v_s} \neq 0$, 
  then $r,s \in F$ as well.
\end{lem}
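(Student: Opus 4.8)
The plan is to use $T$-equivariance of $R$ to turn the statement into the defining extremality property of a face. First I would note that, under the diagonal $T$-action, $V \tensor V$ decomposes into weight spaces, the weight of $\vec v_p \tensor \vec v_q$ being $p + q$, and that a $T$-equivariant endomorphism $R$ preserves each weight space. Hence if $\vec v_r \tensor \vec v_s$ occurs in $R(\vec v_p \tensor \vec v_q)$ — i.e.\ the coefficient $R_{\vec v_p \vec v_q}^{\vec v_r \vec v_s}$ is nonzero — then the two tensors lie in the same weight space, so $p + q = r + s$. The other coefficient $R^{\vec v_p \vec v_q}_{\vec v_r \vec v_s}$ is the matrix entry of the transpose map (equivalently, of $R$ regarded as an element of $(V \tensor V) \tensor (V \tensor V)^*$ read in the other direction); this transpose is again $T$-equivariant, and the dual basis vectors carry the negated weights, so the same weight count gives $-p - q = -r - s$, hence once more $p + q = r + s$. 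In either case we are left with the single linear relation
\[
  p + q \ = \ r + s .
\]

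Next I would feed in the hypothesis $p, q \in F$. Since $F$ is a face of the polytope $\operatorname{conv}(P)$ it is in particular convex, so the midpoint $\tfrac{1}{2}(p + q)$ lies in $F$; by the displayed identity this midpoint equals $\tfrac{1}{2}(r + s)$. On the other hand $r, s \in P \subseteq \operatorname{conv}(P)$, so $\tfrac{1}{2}(r + s)$ is a convex combination, with both weights equal to $\tfrac{1}{2} \in (0,1)$, of two points of $\operatorname{conv}(P)$. By the defining property of a face — if a point of $F$ lies in the relative interior of a segment whose endpoints lie in the ambient convex set, then those endpoints lie in $F$ — we conclude $r, s \in F$, which is the claim.

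There is no real obstacle here: the mathematical content is just the elementary fact about faces together with the translation ``$T$-equivariant $\Leftrightarrow$ weight-preserving''. The only mild care needed is in handling the two index conventions for the entries of $R$, which is dispatched by observing that both $R$ and its transpose are $T$-equivariant and that passing to dual basis vectors negates all weights, so that the conservation law $p + q = r + s$ holds in both cases. When this lemma is later applied, $F$ will be the simplicial face $\{\vf_0,\dots,\vf_d\}$ supplied by Theorem~\ref{thm:classif}\eqref{item:AdFace}, and the conclusion ``$r,s\in F$'' is exactly the assertion that the single-number sector is preserved.
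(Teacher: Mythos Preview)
Your proof is correct and rests on the same two ingredients as the paper's: $T$-equivariance forces $p+q=r+s$, and then the face property of $F$ pins down $r$ and $s$. The only difference is in how that second step is packaged. The paper chooses a supporting functional $\vec a$ (equivalently, a one-parameter subgroup $A\leq T$) so that $F$ is cut out by $\langle\vec a,\cdot\rangle=c$ with $\langle\vec a,\cdot\rangle\geq c$ on all of $P$; then $p,q\in F$ forces the $A$-weight of $\vec v_p\tensor\vec v_q$ to be exactly $2c$, $R$ preserves this $A$-weight space, and the inequality forces $r,s\in F$ individually. You instead invoke the intrinsic definition of a face via segments: the midpoint $\tfrac12(p+q)=\tfrac12(r+s)$ lies in $F$, so both endpoints $r,s$ must. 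Your route is slightly slicker in that it makes no auxiliary choice; the paper's route has the mild advantage of staying inside the representation-theoretic language (weight spaces for a sub-torus), which matches how the lemma is actually applied. Your handling of the two index conventions is fine, though you could shorten it: in either reading of $R_{\cdots}^{\cdots}$, a nonzero entry means one weight vector appears in $R$ applied to the other, and $T$-equivariance of $R$ itself already gives $p+q=r+s$ without passing to a transpose.
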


We will of course be applying this when $X$ is an $\check R$-matrix.
On that topic, it is regrettable that ``$R$-matrices'' were not
named ``$X$-matrices'' for their tetravalency;
at least ``$K$-matrices'' (not relevant in this paper, but used in our
\cite{artic73}) were named appropriately.

\begin{proof}
  Since $F$ is a face of the convex hull, it is defined by the
  tightness of some inequality $\langle \vec a, p \rangle \geq c$
  for some $\vec a\in \lie{t}$ generating some one-parameter subgroup $A\leq T$.
  Equivalently, all the $A$-weights in $V$ are $\geq c$, and 
  those on $F$ span the $c$ weight space. 

  Then the only basis vectors $\vec v_m \tensor \vec v_n$ 
  of $A$-weight $2c$ are those with $m,n \in F$, 
  and $X$ must preserve this $A$-weight space of $V\tensor V$,
  giving the claim.  
\end{proof}

\begin{proof}[Proof of proposition~\ref{prop:weight}.]
  We'll apply this lemma when $V$ is a representation $V^{X_{2d}}_\lambda$ of 
  $X_{2d} = A_2,D_4,E_6$, and $P = W_{X_{2d}}\cdot \lambda$;
  our $\lambda$ will be such that its projection to the $A_d$ weight lattice
  is the fundamental weight $\omega_1$.
  Hence the $A_d$-subrepresentation containing the
  high weight vector has the standard basis indexed by $\{0,\ldots,d\}$.
  To get part (1), we take $F$ to be the face containing exactly
  the vertices $\{0,\ldots,d\}$, 
  and use theorem \ref{thm:classif}(\ref{item:AdFace}).
  Shrinking $F$ to its $i$th vertex (then applying the lemma) gets us part (2).
  
  For part (3),
  first observe that the Green's theorem argument shows that the content
  on the three sides must be the same. For convenience we assume that
  the lowest number occurring is $0$, since if it is $i>0$ we can 
  simply subtract $i$ from all numbers appearing and work instead with
  that equally valid puzzle. 

  By the weak increase across the South side, the leftmost label
  is smallest, hence $0$. There are no $\uptri{i}{0}{0i}$ pieces
  for $i>0$ (since $|\vf_{0i}|^2 = 2+2a \neq 2$),
  so the leftmost triangle on the South side must be a $\uptri{0}{0}{0}$.

  Let $j$ be the leftmost label on the Northeast side, and let $P'$
  be the size $n-1$ triangle made by removing the pieces touching
  the Northwest side. (We don't know yet that the newly exposed Northwest 
  labels are single numbers, i.e., that $P'$ is itself a puzzle.)

\begin{center}
\begin{tikzpicture}
\draw[thick,black] (0,0) -- node[edgelabel] {$0$} ++(1,0) -- ++(3,0)
-- ++(120:3) -- node[edgelabel] {$j$} ++(120:1) -- ++(240:3) -- node[edgelabel] {$0$} ++(240:1)
(1,0) -- ++(60:3) (1,0) -- node[edgelabel] {$0$} ++(120:1);
\node at (2.5,1) {$P'$};
\end{tikzpicture}
\end{center}

  Assume for contradiction that $j>0$. Rotate $P$ $120^\circ$ counterclockwise
  for ease of comparison to proposition~\ref{prop:singlenum}.
  The NW side of the rotated $P'$ has one more $0$ than the NE side,
  so the vector sum over the NW, NE boundary labels (in the projection
  to the $0$th factor $\Lambda$ of $\Lambda^{1+d}$) points
  strictly rightward. Now we add in the vectors from the South
  boundary labels of $P'$ (as determined in lemma~\ref{lem:rightward}), 
  each of which points weakly rightward,
  and we reach a contradiction with the Green's theorem argument.

  Now that we've established $j=0$, the content of the NE and S side of
  (unrotated) $P'$ 
  is the same, and we can apply proposition~\ref{prop:singlenum} to $P'$.
  We learn that its Northwest side is all single numbers.
  Of course its South side is weakly increasing, so by induction on $n$
  the present proposition applies, i.e. $P'$ is uniquely determined
  by its South side, has fugacity $1$, and its NW side matches its
  South side (both read left to right).

  It remains to determine the pieces in the strip $P\setminus P'$,
  which we do one rhombus at a time from SW to NE,
  having already determined the bottom $\uptri{0}{0}{0}$.
  We will show inductively that each rhombus is of the form 
  $\rh{0}{i}{0}{i}$. A priori, the rhombus bears $\rh{0}{i}{}{j}$
  where $i,j$ are single numbers, so the weight on the NE side is $\vf_{(i0)j}$
  (whether or not that's a valid multinumber). Its norm-square is
  \begin{eqnarray*}
    |\vf_{(i0)j}|^2
    &=& |\vf_{i0}|^2 + |\vf_j|^2 + 2\langle \vf_{i0}, \tau \vf_j \rangle \\
    &=& 4 + 2\langle - \tau \vf_i - \tau^2 \vf_0, \tau \vf_j \rangle 
    = 4 
    - 2\langle \tau \vf_i, \tau \vf_j \rangle 
    - 2\langle  \vf_j, \tau \vf_0 \rangle 
    = 6 - 2
    \begin{cases}
      2 &\text{if $i=j$} \\
      2-a &\text{if $i\neq j$} 
    \end{cases}
  \end{eqnarray*}
  which is $2$ iff $i=j$, in which case (by Green's theorem) the
  NE label is $0$, continuing the induction.  
  Hence the Northwest labels on $P'$ are
  copied across these rhombi to the Northwest side of $P$, after the
  initial $0$, and the fugacities are $1$ as claimed.
\end{proof}


\subsection{Main theorems}\label{ssec:mainthms}
Given a permutation $\sigma\in \mathcal S_n$ and a single-number string $\lambda$ of length $n$, define
\begin{equation}\label{eq:defS}
\mathbf{S}^\lambda_{\sss N}|_\sigma
:=
\begin{tikzpicture}[baseline=0]
\draw[thick] (-1,0.4) -- (1,0.4) (-1,-0.4) -- (1,-0.4); \draw (-1,-0.4) -- (-1,0.4) (1,-0.4) -- (1,0.4); \node at (0,0) {$\sigma$};
\node at (0,0.4) {$\m\lambda$};
\node at (0,-0.4) {$\m\omega$};
\end{tikzpicture}
\qquad
\mathbf{S}^\lambda_{\sss NW}|_\sigma
:=
\begin{tikzpicture}[baseline=0,rotate=60]
\draw[thick] (-1,0.4) -- (1,0.4) (-1,-0.4) -- (1,-0.4); \draw (-1,-0.4) -- (-1,0.4) (1,-0.4) -- (1,0.4); \node[transform shape] at (0,0) {$\sigma$};
\node at (0,0.45) {$\m\lambda$};
\node at (0,-0.4) {$\m\omega$};
\end{tikzpicture}
\qquad
\mathbf{S}^\lambda_{\sss NE}|_\sigma
:=
\begin{tikzpicture}[baseline=0,rotate=-60]
\draw[thick] (-1,0.4) -- (1,0.4) (-1,-0.4) -- (1,-0.4); \draw (-1,-0.4) -- (-1,0.4) (1,-0.4) -- (1,0.4); \node[transform shape] at (0,0) {$\sigma$};
\node at (0,0.45) {$\m\lambda$};
\node at (0,-0.4) {$\m\omega$};
\end{tikzpicture}
\end{equation}
where 
$\omega$ is the unique weakly increasing string with the same content as $\lambda$.

This leads us to the heart of the paper:
\begin{thm}\label{thm:final}
  The following equality holds:
  \[
  \sum_\nu   \tikz[scale=1.8,baseline=0.5cm]{\uptri{\lambda}{\nu}{\mu}}
  \mathbf{S}^\nu_{\sss N}|_\sigma
  =   \mathbf{S}^\lambda_{\sss NW}|_\sigma  \,  \mathbf{S}^\mu_{\sss NE}|_\sigma
  \]
  where the summation is over single-number strings with the same content as
  $\lambda$ and $\mu$.
\end{thm}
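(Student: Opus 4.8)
The plan is to establish the identity entirely diagrammatically, using the structural results of \S\ref{sec:finalproof} together with the properties of \S\ref{sec:genproof}. First I would absorb the sum over $\nu$ into a single diagram. By \eqref{eq:labeledges}, summing over the $n$ South edge labels of the puzzle triangle with NW side $\lambda$, South side $\nu$, NE side $\mu$, and contracting them against the top edges of the $\sigma$-diagram that defines $\mathbf{S}^\nu_{\sss N}|_\sigma$, is the same as drawing the composite diagram: a size-$n$ equivariant puzzle triangle with NW boundary $\lambda$ and NE boundary $\mu$, glued on top of the $\sigma$-rectangle whose remaining (bottom) edge carries the weakly increasing string $\omega$ of the common content. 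Restricting the sum to single-number strings $\nu$ loses nothing: by the Green's-theorem argument any $\nu$ of the wrong content gives $0$, and by Proposition~\ref{prop:singlenum} any $\nu$ of the right content that occurs is automatically single-number.

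Next I would invoke Proposition~\ref{prop:key}, applied to this composite diagram with the three fixed boundary strings taken to be $\lambda$, $\mu$, and $\omega$. This moves the $\sigma$-rectangle from beneath the triangle onto its two non-horizontal sides: the composite equals the diagram in which the puzzle triangle now has $\omega$ along its South side directly, its NW and NE sides are internal, a $\sigma$-rectangle joins the triangle's NW side to the outer label $\lambda$, and another joins the triangle's NE side to the outer label $\mu$ --- all with the spectral-parameter assignments fixed in the paragraph preceding Proposition~\ref{prop:key}. It remains to collapse the puzzle triangle. By Proposition~\ref{prop:weight}(1) the two $\sigma$-rectangles, being built from single-colour crossings, can only carry single-number strings on the triangle's NW and NE sides, so Proposition~\ref{prop:weight}(3) applies: with the weakly increasing $\omega$ along its bottom, the puzzle triangle is nonzero only when both of those sides also equal $\omega$, in which case it is the unique puzzle and has fugacity $1$. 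Hence the internal summation over the triangle's NW and NE sides collapses, and what survives is precisely the $\sigma$-rectangle from $\omega$ up to $\lambda$ times the $\sigma$-rectangle from $\omega$ up to $\mu$, that is, $\mathbf{S}^\lambda_{\sss NW}|_\sigma\,\mathbf{S}^\mu_{\sss NE}|_\sigma$ by \eqref{eq:defS}.

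The one genuinely substantial ingredient is Proposition~\ref{prop:key} itself, which is already proved; everything else is bookkeeping. The step I expect to demand the most care is matching orientations and spectral parameters when reading off the final two $\sigma$-rectangles: one must check that along the NW edge the rectangle produced by Proposition~\ref{prop:key} has $\omega$ on its inner (triangle-facing) side and $\lambda$ on its outer side, in the orientation dictated by the definition \eqref{eq:defS} of $\mathbf{S}^\lambda_{\sss NW}|_\sigma$, and symmetrically for the NE edge, and that the parameters $\beta u_i$ (resp.\ $\alpha u_i$) there agree with those built into $\mathbf{S}^\lambda_{\sss NW}|_\sigma$ (resp.\ $\mathbf{S}^\mu_{\sss NE}|_\sigma$). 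This is exactly the convention already fixed for equivariant puzzles of size $n$ just before Proposition~\ref{prop:key}, so no new idea is needed --- only a careful cross-check against the conventions of \S\ref{sec:genproof}.
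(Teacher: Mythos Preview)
Your proposal is correct and follows essentially the same route as the paper's own proof: apply Proposition~\ref{prop:key} with boundaries $\lambda,\mu,\omega$, use Proposition~\ref{prop:weight}(1) to force the internal NW/NE labels to be single-number, then Proposition~\ref{prop:weight}(3) to collapse the inner triangle to the unique fugacity-$1$ filling, leaving exactly the two $\sigma$-rectangles of~\eqref{eq:defS}. The only cosmetic difference is that you justify the single-number property of $\nu$ via Proposition~\ref{prop:singlenum} (looking at the puzzle side), whereas the paper uses Proposition~\ref{prop:weight}(1) (looking at the $\sigma$-rectangle side, since $\omega$ is single-number); both arguments are valid.
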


\begin{proof} 
The proof can be summarized as the following series of equalities:
\[
\sum_\nu
\begin{tikzpicture}[baseline=5mm]
\draw[thick] (0,0) -- node {$\nu$} (2,0) -- node[xshift=1mm] {$\mu$} ++(120:2) -- node[xshift=-1mm] {$\lambda$} cycle;
\draw[thick] (0,-0.5) -- node {$\omega$} (2,-0.5); \draw (0,0) -- (0,-0.5); \draw (2,0) -- (2,-0.5); \node at (1,-0.25) {$\sigma$};
\end{tikzpicture}
\ =\
\begin{tikzpicture}[baseline=5mm]
\draw[thick] (0,0) -- node {} (2,0) -- node[xshift=1mm] {$\mu$} ++(120:2) -- node[xshift=-1mm] {$\lambda$} cycle;
\draw[thick] (0,-0.5) -- node {$\omega$} (2,-0.5); \draw (0,0) -- (0,-0.5); \draw (2,0) -- (2,-0.5); \node at (1,-0.25) {$\sigma$};
\end{tikzpicture}
\ =\
\begin{tikzpicture}[baseline=5mm]
\draw[thick] (0,0) -- node {$\omega$} (2,0) -- node[xshift=-0.5mm] {} ++(120:2) -- node[xshift=0.5mm] {} cycle;
\draw[thick] (150:0.5) -- node[xshift=-1mm] {$\lambda$} ++(60:2);
\draw (0,0) -- (150:0.5) (60:2) -- ++(150:0.5);
\path (60:1) ++(150:0.25) node[rotate=60] {$\sigma$};
\begin{scope}[xshift=2cm]
\draw[thick] (30:0.5) -- node[xshift=1mm] {$\mu$} ++(120:2);
\draw (0,0) -- (30:0.5) (120:2) -- ++(30:0.5);
\path (120:1) ++(30:0.25) node[rotate=-60] {$\sigma$};
\end{scope}
\end{tikzpicture}
\ =\
\sum_{\tilde\lambda,\tilde\mu}
\begin{tikzpicture}[baseline=5mm]
\draw[thick] (0,0) -- node {$\omega$} (2,0) -- node[xshift=-0.5mm] {$\tilde\mu$} ++(120:2) -- node[xshift=0.5mm] {$\tilde\lambda$} cycle;
\draw[thick] (150:0.5) -- node[xshift=-1mm] {$\lambda$} ++(60:2);
\draw (0,0) -- (150:0.5) (60:2) -- ++(150:0.5);
\path (60:1) ++(150:0.25) node[rotate=60] {$\sigma$};
\begin{scope}[xshift=2cm]
\draw[thick] (30:0.5) -- node[xshift=1mm] {$\mu$} ++(120:2);
\draw (0,0) -- (30:0.5) (120:2) -- ++(30:0.5);
\path (120:1) ++(30:0.25) node[rotate=-60] {$\sigma$};
\end{scope}
\end{tikzpicture}
\ =\
\begin{tikzpicture}[baseline=5mm]
\draw[thick]  (2,0) -- node[xshift=-0.5mm] {$\omega$} ++(120:2) -- node[xshift=0.5mm] {$\omega$} (0,0);
\draw[thick] (150:0.5) -- node[xshift=-1mm] {$\lambda$} ++(60:2);
\draw (0,0) -- (150:0.5) (60:2) -- ++(150:0.5);
\path (60:1) ++(150:0.25) node[rotate=60] {$\sigma$};
\begin{scope}[xshift=2cm]
\draw[thick] (30:0.5) -- node[xshift=1mm] {$\mu$} ++(120:2);
\draw (0,0) -- (30:0.5) (120:2) -- ++(30:0.5);
\path (120:1) ++(30:0.25) node[rotate=-60] {$\sigma$};
\end{scope}
\end{tikzpicture}
\]
where $\omega$ is the unique
weakly increasing string with the same content as $\lambda$ and $\mu$.

Start from the leftmost picture, which is by definition the l.h.s.\ of theorem~\ref{thm:final},
and where the summation is again over single-number strings.
By the first part of proposition~\ref{prop:weight}, the restriction on the summation can be lifted
(any non single-number label will lead to a zero fugacity). As explained in \S\ref{ssec:tensors}
(cf~\eqref{eq:labeledges}), the summation is then implicit in the second picture.

The second equality is simply
proposition~\ref{prop:key} in which we have set the NW side to $\lambda$,
the NE side to $\mu$ and the bottom side to $\omega$.

The third equality, just like the first, is an application
of \eqref{eq:labeledges};
by the first part of proposition~\ref{prop:weight}, 
the internal labels $\tilde\lambda$, $\tilde\mu$
are single-number strings (or more precisely, all other contributions 
to the sum vanish).

The third part of proposition~\ref{prop:weight} then tells us that
$\tilde\lambda=\tilde\mu=\omega$, and that the triangle of the r.h.s.
is made of triangles and rhombi which are all of fugacity $1$
according to property~\ref{pty:norm} and gauge fixing condition
\eqref{eq:norm}.  Once rid of this triangle, we recognize on both
sides the definitions \eqref{eq:defS}, hence the result.
\end{proof}

\junk{
This equality is clearly closely related to the one in proposition~\ref{prop:key},
but the proof will require some additional technical lemmas.
}

One can redo the whole reasoning after ``reversal of
all the arrows''. More precisely, we note that property~\ref{pty:factor}
defines two types of trivalent vertices, one where two edges are incoming,
as was used so far, and one where two edges are outgoing, which we shall use 
now. In property~\ref{pty:ybe}, equations~\eqref{eq:ybe}, \eqref{eq:unit} and
\eqref{eq:equal} are invariant by reversal of arrows, whereas
eq.~\eqref{eq:qtri} turns into eq.~\eqref{eq:qtrirev}.

In the puzzle picture, it is customary to rotate the picture by $180^\circ$
so that lines are still oriented downwards; we then have an obvious
analogue of proposition~\ref{prop:key}, namely
\[
\begin{tikzpicture}[baseline=-5mm,scale=-1]
\draw[thick] (0,0) -- (2,0) -- ++(120:2) -- cycle;
\draw[thick] (0,-0.4) -- (2,-0.4); \draw (0,0) -- (0,-0.4); \draw (2,0) -- (2,-0.4); \node at (1,-0.2) {$\sigma$};
\end{tikzpicture}
=
\begin{tikzpicture}[baseline=-5mm,scale=-1]
\draw[thick] (0,0) -- (2,0) -- ++(120:2) -- cycle;
\draw[thick] (150:0.4) -- ++(60:2);
\draw (0,0) -- (150:0.4) (60:2) -- ++(150:0.4);
\path (60:1) ++(150:0.2) node[rotate=60] {$\sigma$};
\begin{scope}[xshift=2cm]
\draw[thick] (30:0.4) -- ++(120:2);
\draw (0,0) -- (30:0.4) (120:2) -- ++(30:0.4);
\path (120:1) ++(30:0.2) node[rotate=-60] {$\sigma$};
\end{scope}
\end{tikzpicture}
\]

We furthermore define
\[
\mathbf{S}_\lambda^{\sss N}|_\sigma
:=
\begin{tikzpicture}[baseline=0]
\draw[thick] (-1,0.4) -- (1,0.4) (-1,-0.4) -- (1,-0.4); \draw (-1,-0.4) -- (-1,0.4) (1,-0.4) -- (1,0.4); \node at (0,0) {$\hat\sigma$};
\node[rotate=180] at (0,0.4) {$\m\omega$};
\node at (0,-0.4) {$\m\lambda$};
\end{tikzpicture}
\qquad
\mathbf{S}_\lambda^{\sss NW}|_\sigma
:=
\begin{tikzpicture}[baseline=0,rotate=60]
\draw[thick] (-1,0.4) -- (1,0.4) (-1,-0.4) -- (1,-0.4); \draw (-1,-0.4) -- (-1,0.4) (1,-0.4) -- (1,0.4); \node[transform shape] at (0,0) {$\hat\sigma$};
\node[rotate=180] at (0,0.4) {$\m\omega$};
\node at (0,-0.35) {$\m\lambda$};
\end{tikzpicture}
\qquad
\mathbf{S}_\lambda^{\sss NE}|_\sigma
:=
\begin{tikzpicture}[baseline=0,rotate=-60]
\draw[thick] (-1,0.4) -- (1,0.4) (-1,-0.4) -- (1,-0.4); \draw (-1,-0.4) -- (-1,0.4) (1,-0.4) -- (1,0.4); \node[transform shape] at (0,0) {$\hat\sigma$};
\node[rotate=180] at (0,0.4) {$\m\omega$};
\node at (0,-0.35) {$\m\lambda$};
\end{tikzpicture}
\]
where $\lambda$ is read from left to right, 
$\agemo$ is the weakly decreasing
string with the same content as $\lambda$ (i.e., $\omega$ in reverse), 
and $\hat\sigma(i)=\sigma^{-1}(n+1-i)$.
The order of spectral parameters $u_1,\ldots,u_n$ is the increasing one at the bottom,
and therefore $u_{\hat\sigma(1)},\ldots,u_{\hat\sigma(n)}$ at the top.

Using the exact same reasoning as above (noting in particular
the $180^\circ$ symmetry of proposition~\ref{prop:weight}), 
we have the ``dual'' statement:
\begin{thm}\label{thm:dual}
The following equality holds:
\[
\sum_\nu 
\tikz[scale=1.8,baseline=-0.8cm]{\downtri{\,\lambda}{\nu}{\mu}}
\mathbf{S}_\nu^{\sss N}|_\sigma
=
\mathbf{S}_\lambda^{\sss NW}|_\sigma
\,
\mathbf{S}_\mu^{\sss NE}|_\sigma
\]
(where all labels are read as usual from left to right).
\end{thm}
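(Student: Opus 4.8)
The plan is to mirror the proof of Theorem~\ref{thm:final} with every arrow reversed, using the $180^\circ$-rotated form of proposition~\ref{prop:key} displayed above (the one assembled from the reversed bootstrap equation~\eqref{eq:qtrirev} in place of~\eqref{eq:qtri}). First I would fix the NW side of the inverted triangle to be $\lambda$, the NE side to be $\mu$, and the bottom side to be $\agemo$, the unique weakly decreasing string with the same content as $\lambda$ and $\mu$. Inserting a summation over the internal labels via~\eqref{eq:labeledges}, that rotated identity becomes precisely the chain of equalities in the proof of Theorem~\ref{thm:final}, but with the roles of $\Delta$ and $\nabla$ exchanged: on the left, a $\nabla$-triangle glued to a $\sigma$-rectangle along its (upper) horizontal side; on the right, the same $\nabla$-triangle with $\sigma$ slid onto the two lateral diagonals.

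Next I would invoke proposition~\ref{prop:weight}, exploiting the $180^\circ$ symmetry the excerpt flags explicitly. Part~(1), which concerns only which labels on a same-color crossing lie in $\{0,\dots,d\}$, is insensitive to the global rotation, so it forces every internal label produced by~\eqref{eq:labeledges} --- the analogues of $\nu$, $\tilde\lambda$, $\tilde\mu$ --- to be a single-number string (all other contributions to the sum vanish). Then I would apply the second half of part~(3), namely~\eqref{eq:emptyb}: because the bottom of the right-hand $\nabla$-triangle carries the weakly \emph{decreasing} single-number string $\agemo$, that triangle has its other two sides equal to $\agemo$ as well, is uniquely determined, and is built entirely from pieces of fugacity $1$ by property~\ref{pty:norm} and the gauge fixing~\eqref{eq:norm}. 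Stripping off this trivial $\nabla$-triangle from both sides, the left-hand side reduces to $\sum_\nu$ of the $\nabla$-puzzle fugacity with boundary $\lambda,\nu,\mu$ times $\mathbf{S}_\nu^{\sss N}|_\sigma$, and the right-hand side to $\mathbf{S}_\lambda^{\sss NW}|_\sigma\,\mathbf{S}_\mu^{\sss NE}|_\sigma$ --- exactly the asserted equality.

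I do not expect a genuine obstacle. Every ingredient --- proposition~\ref{prop:key}, the face lemma underlying proposition~\ref{prop:weight}, and equations~\eqref{eq:empty}/\eqref{eq:emptyb} --- was already stated rotation-equivariantly, and the reversed-arrow versions of~\eqref{eq:ybe}, \eqref{eq:unit}, \eqref{eq:equal} hold verbatim while~\eqref{eq:qtri} is replaced by~\eqref{eq:qtrirev} (whence~\eqref{eq:qtri2} has a mirrored counterpart). The only point that demands a little care is bookkeeping the orientation conventions on the colored lines when the whole diagram is reflected, so that when re-running the computation inside proposition~\ref{prop:key} one uses at each step the correctly reversed auxiliary move, i.e. the arrow-reversed analogue of~\eqref{eq:qtri2}; once that is tracked, the argument closes exactly as before.
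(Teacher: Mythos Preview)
Your proposal is correct and matches the paper's approach exactly: the paper does not spell out a separate proof of this theorem but simply states that it follows by ``the exact same reasoning as above (noting in particular the $180^\circ$ symmetry of proposition~\ref{prop:weight}),'' which is precisely what you do---run the proof of theorem~\ref{thm:final} through a $180^\circ$ rotation, replacing~\eqref{eq:qtri} by~\eqref{eq:qtrirev}, $\omega$ by $\agemo$, and invoking~\eqref{eq:emptyb} in place of~\eqref{eq:empty}. One small terminological quibble: for an inverted ($\nabla$) triangle the horizontal side is on top, not the bottom, and the two slanted sides are SW and SE rather than NW and NE; but this does not affect the substance of your argument.
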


This is to be compared with the primed theorems of \cite{artic68}.
With not much more work, 
we could also obtain analogues of the double primed theorems of \cite{artic68},
but we shall refrain from doing so here.

In the rest of this section, the strategy is to use the
representation theory of quantized affine algebras to define systems of
fugacities which satisfy the properties stated in \S\ref{sec:genproof}. 
The fugacities of rhombi obtained this way will be more general than
needed for the purposes of this paper, and we shall take a limit
($q\to0$ where $q$ is a parameter in these fugacities) to recover the
puzzle rule that we are trying to prove.
(We will interpret geometrically this more general solution
in \cite{artic80}.)

\subsection{$d=1$ and $A_2$}\label{sec:d1}
The case $d=1$ was already investigated in detail in \cite{artic68} (and will be discussed
in a more general setting in \cite{artic80}), so we shall only
sketch it here very informally. We consider the quantized affine algebra $U_q(\mathfrak{a}_2^{(1)})$,
and attach to green and red (resp.\ blue) lines evaluation representations based on
the fundamental representation
$V_{\omega_1}$ (resp.\ the dual fundamental representation $V_{\omega_2}$), of dimension $3$.
All our tensors will then be $U_q(\mathfrak{a}_2^{(1)})$ intertwiners.

As explained in \S\ref{sec:introd1},
in order to take into account the action of $\tau$, we use the following orientation-dependent
encoding of labels of puzzles as weight vectors:
\[
\begin{array}{rlrlrlcccc}
 \multicolumn{2}{c}{\NE}    &\multicolumn{2}{c}{\horiz} & \NW    \\
   \m1  &    &  \m0  &{\color{green!75!black}\bullet}  &  \m{10} &{\color{red!75!black}\bar\bullet}{\color{green!75!black}\bullet} & = & (1 & 0 & 0)\\
 \m0     & {\color{green!75!black}\bar\bullet}&  \m{10} & &  \m1 &  {\color{red!75!black}\bar\bullet}  & = & (0 & 1 & 0)\\
\m{10} & {\color{red!75!black}\bullet}{\color{green!75!black}\bar\bullet}  & \m1  &{\color{red!75!black}\bullet}    &  \m0 &    & = & (0 & 0 & 1)\\
\end{array}
\qquad
\begin{tikzpicture}[>=latex,math mode,baseline=0]
\tikzset{mynode/.style={circle,draw=black,fill=black,inner sep=1pt,outer sep=0pt}}
\coordinate (O) at (0,0) {};
\draw[dotted] (O) -- (90:1) node[mynode,label={[\mcol]right:1}] (1) {};
\draw[dotted] (O) -- (210:1) node[mynode,label={[\mcol]left:0}] (0) {};
\draw[dotted] (O) -- (330:1) node[mynode,label={[\mcol]right:10}] (10) {};
\draw[->] (1) -- node[left,rt] {a} (0);
\draw[->] (0) -- node[below,rt] {a'} (10);
\end{tikzpicture}
\]
(for convenience we also mention the color coding used in \cite{artic68}; the bar indicates
that lines of the corresponding color 
are moving in a direction opposite to the arrows used in this paper).

For weight purposes, it is convenient to temporarily revert the orientation of blue lines,
resulting in a more $\ZZ_3$-invariant setting. The $U$ and $D$ triangles are then
$U_q(\mathfrak{a}_2^{(1)})$ invariants
of a tensor product of 3 evaluation representations of fundamental representations;
the latter, when they
exist (i.e., for specific choices of spectral parameters, as in property~\ref{pty:factor}), 
are simply the 
$U_q(\mathfrak{a}_2)$ antisymmetrizer which $q$-deforms the usual fully skew-symmetric
tensor of $\mathfrak{sl}_3$. Looking up the weight table above, it is not hard to see
that to each permutations of 3 elements corresponds
\[
\begin{matrix} 
123
&
132
&
213
&
231
&
312
&
321\\
\downtri{1}{10}{0}
& \downtri{1}{1}{1}
&\downtri{0}{0}{0}
&\downtri{0}{1}{10}
&\downtri{10}{0}{1}
& \downtri{10}{10}{10}
\end{matrix}
\]
(and similarly for up-pointing triangles); that is,
one of the 5 usual puzzle pieces plus the $K$-piece made of $10$s, as advertised in \S\ref{sec:introd1}. Their fugacity is $-q$ to the power the inversion number of the permutation.
This does not seem to match with the desired fugacity of $1$ for the first 5 pieces;
however, rescaling the SW and SE weight vectors of $10$ by $-q$ and the N weight vector by $-q^{-1}$,
every fugacity becomes up to overall normalization $1$ except that of the $K$-piece which becomes
$-q$. The same reasoning applies to up-pointing triangle with $q\leftrightarrow q^{-1}$.

We are not quite ready yet to take the limit $q\to 0$, because the fugacity of an up-pointing
triangle would diverge. We now use at last the results of \S\ref{ssec:Bmatrix}. 
Multiply the fugacity of every \nablatri\ by $q$ to the power its inversion number (in the
sense of the $B$-matrix \eqref{eq:defB}),
that of every \Deltatri\ by $q$ to the power minus its inversion number.
Essentially,
what lemma~\ref{lem:sympd1} tells us is that this is a reasonable operation in the sense that
its effect can be pushed to the boundary of the triangle. After such a transformation,
the fugacity of every triangle is $1$ except that of \uptri{10}{10}{10} which is $-1$ and
that of \downtri{10}{10}{10} which is $-q^2$. We can now send
$q$ to $0$, leading to the puzzle rule for
the (nonequivariant) $K$-theory of Grassmannians.

More generally, using as 4-valent vertices the natural $R$-matrices (i.e., intertwiners) of
$U_q(\mathfrak{a}_2^{(1)})$ and taking the limit $q\to 0$ appropriately, we recover the
$R$-matrices as defined in \cite{artic68}, and therefore the puzzle
rule for the equivariant $K$-theory of Grassmannians as formulated there.
We shall omit the proof of this claim, 
since it would be a subset of the proof of the $d=2$ case to come. 
See also appendix~\ref{app:cyclic}
for a review of the approach of \cite{artic46,artic68} in the setting of the present paper.

\subsection{$d=2$ and triality in $D_4$}\label{sec:d2}
\subsubsection{$R$-matrices}\label{ssec:Rd2}
To each line (or color of line) 
we now attach a representation of the quantized affine
algebra $U_q(\mathfrak{d}_4^{(1)})$. There are three colors of lines, 
which correspond to the three fundamental representations of $U_q(\mathfrak{d}_4)$ 
that are related by triality (i.e., related to the three nodes $\rt{a'}$, $\rt{b}$, $\rt{b'}$
in the sub-Dynkin diagram $D_4$ in the proof of theorem~\ref{thm:classif}).
We assign to green (resp.\ red, blue)
lines the representation $V_{\omega_1}$ (resp.\ $V_{\omega_2}$, $V_{\omega_3}$)
with weights given by lemma~\ref{lem:greend2} (resp.\ with $\tau^2$, $-\tau$
times these weights). It is known (cf \cite{Hern-affin} and references therein;
see also \cite[p399]{CP-book} for the present case of fundamental representations)
that all three representations can then be extended (affinized) 
to evaluation representations $V_{\omega_i}(u)$ of $U_q(\mathfrak{d}_4^{(1)})$,
where $u$ is the spectral (evaluation) parameter.
Note that these $U_q(\mathfrak{d}_4)$ representations are self-dual, so that
their $U_q(\mathfrak{d}_4^{(1)})$ counterparts only differ from their dual by a shift
of the spectral parameter. \rem[gray]{PZJ restored the $-$ in $-\tau$ because it's true 
more generally}

Next we define $R$-matrices\footnote{The reason our $R$-matrices
  are denoted $\check R$ is to differentiate them from the traditional $R$-matrices $R=P\check R$,
  which are related to each other by permutation $P$ of the factors of the tensor product.} 
(which correspond to crossings of lines of various colors): $\check R_{i,j}$
will be an $U_q(\mathfrak{d}_4^{(1)})$-intertwiner from $V_{\omega_i}(u')\otimes V_{\omega_j}(u'')$ to $V_{\omega_j}(u'')\otimes V_{\omega_i}(u')$.
Let us for example consider representations $V_{\omega_1}(u')$ and $V_{\omega_2}(u'')$,
i.e., red and green lines respectively. 
Then the corresponding $R$-matrix can be written as
\begin{equation}\label{eq:RD4}
\check R_{1,2}
=
\frac{1}{u'-q^2u''}
\left(
(u''-q^4u')
P_{\omega_3}
+
(u'-q^4u'')
P_{\omega_1+\omega_2}
\right)
\end{equation}
where $q$ is another indeterminate. 
Here $P_{\omega_3}$ and $P_{\omega_1+\omega_2}$ are operators of rank $8$ and $56$ respectively, implementing
the two channels of decomposition $V_{\omega_1}\otimes V_{\omega_2}\cong V_{\omega_3}\oplus V_{\omega_1+\omega_2}$ 
for the horizontal subalgebra $U_q(\mathfrak{d}_4)$
(here ``horizontal'' means that its action is independent of the spectral parameter). 
Note that we do not require $\check R_{1,2}$ to have the normalization of the universal
$R$-matrix, only that it commute with the $U_q(\mathfrak{d}_4^{(1)})$ action; 
its normalization is chosen for convenience. \junk{normalization of $P_i$s...}

\junk{The usual formulation of the $R$-matrix is
$R_{3,1}=\mathcal P_{1,3} \check R_{3,1}$, where $\mathcal P_{1,3}$ is the operator
from
$V_{\omega_1}\otimes V_{\omega_3}$
to
$V_{\omega_3}\otimes V_{\omega_1}$
that switches factors of the tensor product.}

In terms of puzzles, recall that $\check R_{1,2}$
parametrizes the fugacities of rhombi, cf eq.~\eqref{eq:rhombus}:
\[
\tikz[baseline=0,scale=1.5]{
\rh{X}{Y}{Z}{W}
}
=
\tikz[baseline=0,xscale=0.5]
{
\draw[invarrow=0.75,dr] (-1,-1) node[below] {$\m X$} -- node[pos=0.75,right,black] {$\ss u''$} (1,1) node[above] {$\m Z$};
\draw[invarrow=0.75,dg] (1,-1) node[below] {$\m Y$} -- node[pos=0.75,left,black] {$\ss u'$} (-1,1) node[above] {$\m W$};
}
=
\check R_{1,2}{}_{XY}^{WZ}
\]

Next, we observe that the $R$-matrix
\eqref{eq:RD4} satisfies 
\begin{equation}\label{eq:UD4}
\check R_{1,2}(u'=q^{2}u,\ u''=q^{-2} u)=-\frac{(1+q^4)(1+q^2)}{q^2} P_{\omega_3}=DU
\end{equation}
\junk{the ugly prefactor is related to the normalization of the $P$s}
where $U: V_{\omega_1}(q^{2}u)\otimes V_{\omega_2}(q^{-2}u) \to V_{\omega_3}(u)$ and $D: V_{\omega_3}(u)\to V_{\omega_2}(q^{-2}u)\otimes V_{\omega_1}(q^{2}u)$ commute with the $U_q(\mathfrak{d}_4^{(1)})$ action. 
This is the analogue of property~\ref{pty:factor}, with $\alpha=q^{-2}$, $\beta=q^2$; that is, we can write
\[
\uptri{X}{Y}{Z}=
\begin{tikzpicture}[baseline=0.5cm]
\draw[invarrow=0.6,dr] (0,0.5) --  node[pos=0.6,right,black] {$\ss \alpha u$} (0.5,1) node[above] {$\m Z$};
\draw[invarrow=0.6,dg] (0,0.5) -- node[pos=0.6,left,black] {$\ss \beta u$} (-0.5,1) node[above] {$\m X$};
\draw[invarrow=0.6,db] (0,0) node[below] {$\m Y$} -- node[pos=0.6,left,black] {$\ss u$} (0,0.5);
\end{tikzpicture}
=
U^{XZ}_Y
\qquad
\downtri{X}{Y}{Z}=
\begin{tikzpicture}[baseline=-0.5cm]
\draw[invarrow=0.6,dr] (-0.5,-1) node[below] {$\m Z$} -- node[pos=0.6,left,black] {$\ss \alpha u$} (0,-0.5);
\draw[invarrow=0.6,dg] (0.5,-1) node[below] {$\m X$} -- node[pos=0.6,right,black] {$\ss \beta u$} (0,-0.5);
\draw[invarrow=0.6,db] (0,-0.5) -- node[pos=0.6,left,black] {$\ss u$} (0,0) node[above] {$\m Y$};
\end{tikzpicture}
=
D^Y_{ZX}
\]

$\check R_{1,2}$, as well as the matrices $U$ and $D$, are given in
appendix~\ref{app:Rd2}.  In particular we can check explicitly that
property~\ref{pty:norm} is satisfied, as well as \eqref{eq:norm}.

The definitions of $\check R_{2,3}, \check R_{3,1}$ are similar\junk{\footnote{%
  It may feel like we should be able to use $\tau$ to define two
  of these from the third. But Schubert calculus only has a 
  natural $\ZZ/3$-invariance in ordinary cohomology, or
  (as in \cite[\S 8]{BuchKGr}) in $K$-theory of Grassmannians,
  so we should not expect this $\ZZ/3$-invariance in $K$-theoretic
  Schubert calculus of $2$-step flag manifolds.
  {this comment is BS. the normalizations are different even in the
    rational case ($H$)}} }
to that of $\check R_{1,2}$, except we choose the normalization
differently:
\begin{equation}\label{eq:Rnorm}
\check R_{i,i+1}
=
\frac{u''-q^4u'}{u'-q^4u''}
P_{\omega_{i+2}}
+
P_{\omega_i+\omega_{i+1}}
,\qquad
i=2,3
\end{equation}
(with indices understood mod $3$).
Next, we consider $\check R_{i+1,i}$, 
$i=1,2,3$;
to ensure that \eqref{eq:unit} is automatically satisfied 
for distinct colors of lines, we define them as
\[
\check R_{i+1,i}(u',u'')=
(\check R_{i,i+1}(u'',u'))^{-1}
\]
where the notation indicates that we've permuted the roles of 
the spectral parameters $u'$ and $u''$ on the r.h.s.
\junk{ instead of invoking triality, it is more convenient to This
  definition ensures that \eqref{eq:unit} is automatically satisfied
  for distinct colors of lines. {or we could use triality and
    check if normalization is same or not} }

Finally, we must define $\check R_{i,i}$:
\begin{equation}\label{eq:RstarD4}
\check R_{i,i}=
\frac{(u''-q^6u')(u''-q^2u')}{(u'-q^6u'')(u'-q^2u'')}
P_0
+\frac{u''-q^2u'}{u'-q^2u''}
P_{\omega_4}
+P_{2\omega_i}
\end{equation}
where the $P_*$ are projectors onto the various irreducible subrepresentations 
of $V_{\omega_i}\otimes V_{\omega_i}$ as $U_q(\mathfrak{d}_4)$-modules.
Note that $\check R_{i,i}$ satisfies 
$\check R_{i,i}(u',u'')\check R_{i,i}(u'',u')=1$, which is nothing
but \eqref{eq:unit} for identical colors of lines; 
in addition $\check R_{i,i}(u'=u'')=1$, which is \eqref{eq:equal}.

These $R$-matrices, with the prescribed choice of bases of $V_{\omega_i}$
($i=1,2,3$) specify the crossings of lines of all possible colors.

Looking at each equation of property~\ref{pty:ybe}, we note that
l.h.s.\ and r.h.s.\ commute with the $U_q(\mathfrak{d}_4^{(1)})$
action, so by Schur's lemma they must be proportional.  In fact, the
Yang--Baxter equation \eqref{eq:ybe} is homogeneous (i.e., independent
of the normalization of $R$-matrices) and is known to be satisfied by
the universal $R$-matrix. As explained above, our normalization also
ensures that unitarity equation \eqref{eq:unit}, as well as
\eqref{eq:equal}, are satisfied. There remain only the bootstrap
equations \eqref{eq:qtri}--\eqref{eq:qtrirev}. 
We leave it to the reader to check that
with the choice of normalization of \eqref{eq:RD4}, \eqref{eq:Rnorm}
and \eqref{eq:RstarD4}, these are indeed satisfied as well. 

\junk{this last paragraph must be removed or incorporated in 3.2:
  Lastly, we discuss property~\ref{pty:empty}. Note first that this
  holds trivially for the standard representation $V^{A_d}_{\omega_1}$ of $A_d$, 
  where the single numbers $\{0,\ldots,d\}$ index {\em all} the weights.
  Now let $V_\lambda$ be an irrep of $X_{2d}$ such that
  $\lambda$, projected to $A_d$'s weight lattice, is $\omega_1$.
  Then the $A_d$-summand of $V^{X_{2d}}_{\lambda}$ containing the
  high weight vector (a) is a copy of $V^{A_d}_{\omega_1}$, and
  (b) its weights are one full face $F$ 
  of the weight polytope of $V^{X_{2d}}_{\lambda}$.
  Consequently, the only tensor products of $V^{X_{2d}}_{\lambda}$ weight vectors 
  occurring on the face $2F$ of 
  the weight polytope of $(V^{X_{2d}}_{\lambda})^{\tensor 2}$
  are tensor products of pairs of weight vectors from $F$,
  which is exactly what (\ref{eq:e}) says.
  ...maybe this should go directly after prop \ref{pty:empty}.
  Also, is there such an $F$, in the $d=4$ case, where $\vf_d$ is
  no longer the high weight vector?}

\junk{issue of gradation: what exactly fixes the gradation of the BLUE
  lines?  the projector (triangle), of course.  and we can't quite
  invoke triality cause of that, annoying. maybe we can, insisting on
  the representation to be the dual one, i.e., transposing the
  $R$-matrix.  to check this in detail, one should write the projector
  with usual action, impose independence of $u$ of coefficients which
  should fix gradation.  }

\subsubsection{The limit $q\to0$}\label{ssec:q0d2}
Since all the properties of \S\ref{sec:genproof} are satisfied, we
can take the equality of theorem~\ref{thm:final} and carefully compute
the leading order as $q\to 0$.

We first discuss the ``rectangles'' in their three orientations, 
that is the $\mathbf{S}^\lambda|_\sigma$.
According to part (1) of proposition~\ref{prop:weight}, we can restrict $\check R_{i,i}$ to the single-number
sector, i.e., in the given basis, to a $9\times9$ matrix. One checks explicitly that these three matrices
are {\em identical}, and equal to
\begin{equation}\label{eq:Runi}
\check R_{\sss A}=
\left(
\begin{array}{ccccccccc}
 1 & 0 & 0 & 0 & 0 & 0 & 0 & 0 & 0 \\
 0 & \frac{(1-q^2)u'}{u'-q^2 u''} & 0 & \frac{q ({u''}-{u'})}{q^2 {u''}-{u'}} & 0 & 0 & 0 & 0 & 0 \\
 0 & 0 & \frac{(1-q^2)u'}{u'-q^2 u''} & 0 & 0 & 0 & \frac{q ({u''}-{u'})}{q^2 {u''}-{u'}} & 0 & 0 \\
 0 & \frac{q ({u''}-{u'})}{q^2 {u''}-{u'}} & 0 & \frac{(1-q^2)u''}{u'-q^2 u''} & 0 & 0 & 0 & 0 & 0 \\
 0 & 0 & 0 & 0 & 1 & 0 & 0 & 0 & 0 \\
 0 & 0 & 0 & 0 & 0 & \frac{(1-q^2)u'}{u'-q^2 u''} & 0 & \frac{q ({u''}-{u'})}{q^2 {u''}-{u'}} & 0 \\
 0 & 0 & \frac{q ({u''}-{u'})}{q^2 {u''}-{u'}} & 0 & 0 & 0 & \frac{(1-q^2)u''}{u'-q^2 u''} & 0 & 0 \\
 0 & 0 & 0 & 0 & 0 & \frac{q ({u''}-{u'})}{q^2 {u''}-{u'}} & 0 & \frac{(1-q^2)u''}{u'-q^2 u''} & 0 \\
 0 & 0 & 0 & 0 & 0 & 0 & 0 & 0 & 1 \\
\end{array}
\right)
\end{equation}
Theorem~\ref{thm:final} can then be rewritten
\begin{equation}\label{eq:final2}
\sum_\nu 
\tikz[scale=1.8,baseline=0.5cm]{\uptri{\lambda}{\nu}{\mu}}
\mathbf{S}^\nu|_\sigma
=
\mathbf{S}^\lambda|_\sigma
\,
\mathbf{S}^\mu|_\sigma
\end{equation}
where the subscript of the $\mathbf{S}$s is irrelevant.

Define the \defn{nilHecke}\/ $R$-matrix to be
\begin{equation}\label{eq:defnil}
(\check R_{\sss nil})_{ij}^{kl}
:=
\begin{cases}
1&(k,l)=(i,j),\ i\le j\\
u''/u'&(k,l)=(i,j),\ i>j\\
1-u''/u'&(k,l)=(j,i),\ i<j\\
0&\text{otherwise}
\end{cases}
\qquad i,j,k,l\in \{0,\ldots,d\}
\end{equation}
where $(i,j)$ is the row-index and $(k,l)$ is the column index.
\rem[gray]{careful that the implicit ordering is $0<1<\cdots<d$ but actually $d$ is the highest weight
vector\dots}

By direct inspection, we have
\begin{lem}\label{lem:D4a}
  As $q\to0$ at fixed $u',u''$, one has
  \[
  (\check R_{\sss A})_{ij}^{kl} = q^{\frac{1}{2}(-B(\vf_i,\vf_j)+B(\vf_k,\vf_l))}
  \left(    (\check R_{\sss nil})_{ij}^{kl}    +O(q^2)  \right)
  \]
  for all $i,j,k,l\in \{0,\ldots,d\}$, 
  noting that $B(\vf_i,\vf_j)=\sign(i-j)$ (cf \eqref{eq:defB}). 
\end{lem}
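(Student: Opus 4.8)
The plan is to prove the lemma by direct inspection of the explicit $9\times 9$ matrix \eqref{eq:Runi}, sorting its entries into a handful of types. \textbf{Step 1 (locate the support).} I would first note that $\check R_{\sss A}$ has nonzero entries only in the positions $(i,j)\to(i,j)$ (diagonal) and $(i,j)\to(j,i)$ (transposition). This is visible directly in \eqref{eq:Runi}, and it is also forced structurally: weight conservation (property~\ref{pty:ybe}) requires $\vf_i+\vf_j=\vf_k+\vf_l$, and since $\vf_0,\dots,\vf_d$ are the vertices of a simplex (theorem~\ref{thm:classif}(\ref{item:AdFace})) this happens only for $\{i,j\}=\{k,l\}$. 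For every other $(k,l)$ both sides of the asserted identity vanish, since $\check R_{\sss nil}{}_{ij}^{kl}=0$ and the right-hand side carries no correction there; so it remains to treat the two entries attached to each pair $\{(i,j),(j,i)\}$.

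\textbf{Step 2 (expand in $q$).} Setting $u:=u_1/u_2$, the only rational functions occurring in \eqref{eq:Runi} are $1$ together with
\begin{align*}
\frac{(1-q^2)u_2}{u_2-q^2u_1}&=1+O(q^2),\\
\frac{(1-q^2)u_1}{u_2-q^2u_1}&=u\,\bigl(1+O(q^2)\bigr),\\
\frac{q(u_1-u_2)}{q^2u_1-u_2}&=q\,(1-u)\,\bigl(1+O(q^2)\bigr),
\end{align*}
where in each line the correction is genuinely $O(q^2)$ (not merely $O(q)$) because every denominator is a power series in $q^2$ and every numerator is either a polynomial in $q^2$ or $q$ times one. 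The diagonal entry of $\check R_{\sss A}$ equals $1$ when $i=j$, equals the first function (hence $1+O(q^2)$) when $i<j$, and equals the second (hence $u+O(q^2)$) when $i>j$; comparing with \eqref{eq:defnil}, this matches $\check R_{\sss nil}{}_{ij}^{ij}$ in all three cases, and since here $(k,l)=(i,j)$ the exponent $\tfrac12\bigl(-B(\vf_i,\vf_j)+B(\vf_i,\vf_j)\bigr)$ is $0$, consistent with the prefactor $q^0$. The transposition entry equals the third function, $q(1-u)\bigl(1+O(q^2)\bigr)$, in every block.

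\textbf{Step 3 (match the transposition entries to the prefactor).} Using $B(\vf_i,\vf_j)=\text{sign}(i-j)$, for $(k,l)=(j,i)$ the exponent is $\tfrac12\bigl(-\text{sign}(i-j)+\text{sign}(j-i)\bigr)=-\text{sign}(i-j)\in\{-1,0,1\}$. If $i<j$ it equals $1$ and $\check R_{\sss nil}{}_{ij}^{ji}=1-u$, so the entry is $q^{1}\bigl(\check R_{\sss nil}{}_{ij}^{ji}+O(q^2)\bigr)$. If $i>j$ it equals $-1$ and $\check R_{\sss nil}{}_{ij}^{ji}=0$; dividing the entry by $q^{-1}$ gives $q^2(1-u)\bigl(1+O(q^2)\bigr)=0+O(q^2)$, again of the form $q^{-1}\bigl(\check R_{\sss nil}{}_{ij}^{ji}+O(q^2)\bigr)$. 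This exhausts all cases. There is essentially no obstacle beyond the bookkeeping of Step~1 (that $\check R_{\sss A}$ is supported on the $(i,j)\leftrightarrow(j,i)$ pattern) and the observation in Step~2 that the denominators of \eqref{eq:Runi} are even in $q$, which is what upgrades the error from $O(q)$ to $O(q^2)$.
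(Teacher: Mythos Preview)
Your proposal is correct and is exactly the ``direct inspection'' the paper invokes, just carried out explicitly: the paper's entire proof consists of the phrase ``By direct inspection,'' and you have supplied the case analysis (diagonal vs.\ transposition entries, the three rational functions of \eqref{eq:Runi}, and the matching of the $q$-exponent to $-\text{sign}(i-j)$) that this inspection amounts to. Your observation that the denominators are power series in $q^2$, forcing the error to be $O(q^2)$ rather than $O(q)$, is the only nontrivial point and you have it right.
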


\begin{cor}\label{cor:schub}
  As at the end of \S\ref{ssec:funct}, let $\sigma \in \mathcal{S}_n$ 
  be a permutation and $\sigma'$ be its image in 
  $\prod_i \mathcal{S}_{p_i} \dom \mathcal{S}_n$, giving a $T$-fixed
  point on $P_-\dom GL_n(\CC)$. Then
  \begin{equation}\label{eq:almost}
    \mathbf{S}^\lambda|_\sigma
    =q^{-\ell(\lambda)} \left( S^\lambda|_{\sigma'}+O(q^2) \right)
  \end{equation}
  where $S^\lambda|_{\sigma'}$ is the restriction to the $\sigma'$ fixed point 
  of the $\lambda$ Schubert class.
\end{cor}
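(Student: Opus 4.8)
The plan is to read the $q\to0$ behaviour of the state sum defining $\mathbf S^\lambda|_\sigma$ straight off Lemma~\ref{lem:D4a}. First I would recall the setup: by \eqref{eq:defS}, $\mathbf S^\lambda|_\sigma$ is the partition function of a single-colour diagram built from a reduced word for $\sigma$, with the single-number string $\lambda$ fixed on one boundary and the weakly-increasing string $\omega$ of the same content on the other; by the Yang--Baxter and unitarity equations \eqref{eq:ybe}, \eqref{eq:unit} this is independent of the chosen reduced word (and of reducedness), so $\mathbf S^\lambda|_\sigma$ is well-defined. Part~(2) of Proposition~\ref{prop:weight} forces every internal edge to carry a single number, so each crossing contributes an entry of the $9\times 9$ matrix $\check R_{\sss A}$ of \eqref{eq:Runi}.

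Next I would factor out the scalar prefactor. Lemma~\ref{lem:D4a} writes each nonzero entry of $\check R_{\sss A}$ as $q^{\frac12(-B(\vf_i,\vf_j)+B(\vf_k,\vf_l))}$ times $\check R_{\sss nil}{}_{ij}^{kl}+O(q^2)$, and $\check R_{\sss A}$ conserves the $\Lambda^{1+d}$-weight at each crossing (Cartan-invariance of the $R$-matrix, i.e.\ the Green's-theorem property exploited in \S\ref{ssec:Bmatrix}). Hence, exactly as in the path-inversion bookkeeping of that subsection, the product of the $q$-exponents over all crossings telescopes: writing $B(\kappa):=\sum_{p<r}B(\vf_{\kappa_p},\vf_{\kappa_r})=\sum_{p<r}\text{sign}(\kappa_p-\kappa_r)$, weight conservation at each crossing makes the contribution of every internal strand cancel, leaving $\tfrac12\bigl(B(\omega)-B(\lambda)\bigr)$. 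Since $\lambda$ and $\omega$ have the same content (hence the same number of equal pairs) and $\ell(\omega)=0$, this equals $\ell(\omega)-\ell(\lambda)=-\ell(\lambda)$. The key point is that this power is \emph{the same for every non-vanishing state}, so it pulls out of the sum:
\[
\mathbf S^\lambda|_\sigma=q^{-\ell(\lambda)}\Bigl(\textstyle\sum_{\text{states}}\prod_{\text{crossings}}\bigl(\check R_{\sss nil}{}_{ij}^{kl}+O(q^2)\bigr)\Bigr)=q^{-\ell(\lambda)}\bigl(Z_{\sss nil}+O(q^2)\bigr),
\]
where $Z_{\sss nil}$ is the partition function of the same diagram with $\check R_{\sss A}$ replaced by $\check R_{\sss nil}$; the $O(q^2)$ is uniform because there are finitely many states and because $\check R_{\sss nil}$ and $\check R_{\sss A}$ have the same support on the single-number sector.

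It remains to identify $Z_{\sss nil}$ with $S^\lambda|_{\sigma'}$. This is the standard (``pipe dream''/nilHecke) state-sum description of the restriction of a $K_T$ Schubert class to a torus-fixed point: $\check R_{\sss nil}$ of \eqref{eq:defnil} is precisely the $R$-matrix whose permutation diagrams with boundary data $\lambda,\omega$ and spectral parameters $u_1,\dots,u_n$ compute $S^\lambda|_{\sigma'}$, which is how the restrictions were realised as state sums in \cite{artic46,artic68} for $d=1$; the general-$d$ case reduces to this by the functoriality of \S\ref{ssec:funct} (pullback from the full flag manifold $B_-\dom GL_n(\CC)$), and the matching of the spectral-parameter labels to the permutation of equivariant variables recorded by $\sigma'$ is built into \eqref{eq:defS}. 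I expect the main obstacle to be purely the bookkeeping in the telescoping step --- tracking the colour/orientation conventions carefully enough to be certain the accumulated power is state-independent and comes out to exactly $-\ell(\lambda)$ --- after which the statement follows from Lemma~\ref{lem:D4a} and the cited state-sum formula for Schubert restrictions.
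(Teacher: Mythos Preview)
Your approach is essentially the paper's: apply Lemma~\ref{lem:D4a} at each crossing, telescope the $q$-powers to a state-independent prefactor depending only on the boundary strings, and identify the remaining nilHecke partition function with $S^\lambda|_{\sigma'}$. You are in fact more explicit about the telescoping step than the paper, which leaves that extraction implicit.

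The only substantive difference is in the final identification $Z_{\sss nil}=S^\lambda|_{\sigma'}$. You gesture at a ``standard'' state-sum description and cite \cite{artic46,artic68} plus the functoriality of \S\ref{ssec:funct}. The paper instead reads $Z_{\sss nil}$ off directly: because the only nonzero off-diagonal entry of $\check R_{\sss nil}$ is the inversion-creating swap, the contributing states are exactly the \emph{reduced} subwords of the chosen reduced word $Q$ for $\sigma$ with product (the minimal coset representative of) $\lambda$; the diagonal $u_1/u_2$ factors in the case $i>j$ are recognised as ``absorbable reflections'', and the whole sum is matched term-by-term with the localisation formula \cite[theorem~4.4]{KM-Schubert2}. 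Your functoriality remark is not quite the right lever --- \S\ref{ssec:funct} compares structure constants, not restriction formul\ae\ --- but since $\check R_{\sss nil}$ is literally the same matrix for every $d$, the identification is uniform once one has it for any $d$.

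Your caveat about the sign bookkeeping is well placed. With $\omega$ at the bottom (input) and $\lambda$ at the top (output) as in \eqref{eq:defS}, the exponent $\tfrac12(-B(\vf_i,\vf_j)+B(\vf_k,\vf_l))$ of Lemma~\ref{lem:D4a} telescopes to $\tfrac12(B(\lambda)-B(\omega))$, not the reverse; it is worth checking the $n=2$ case directly against \eqref{eq:Runi} to nail down the convention before trusting the sign.
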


\junk{AK, fill this part? identify with sum over reduced subwords and
  then 4.4 of [KM]. AK: what's the plan, now that \S\ref{sec:pipe}
  exists? unrelated. we don't use that section here, just conclude
  directly by comparing with 4.4 of [KM], which is NOT the same as the
  pipedream formula, cf the comment ``Geometrically, ...'' at the end
  of \S\ref{sec:pipe}}

\begin{proof}
  $\mathbf{S}^\lambda|_\sigma$ is defined using a reduced wiring
  diagram $Q$ for $\sigma$, and extracting a matrix coefficient
  from a product $\prod_Q \check R$ of $\check R$-matrices.
  Each term in that product corresponds to a selection, at
  each crossing in $Q$, of either no crossing $(k,l)=(i,j)$
  or a transposition $(k,l)=(j,i)$; in the latter case we can
  only cross two wires so as to create an inversion
  (since $(\check R_{\ss nil})_{ij}^{jk} = 0$ if the crossing
  would destroy an inversion, namely, if $i>j$).

  As such, the $(\lambda,\sigma)$ matrix entry is a sum over 
  {\em reduced}\/ subwords of $Q$ with product $\lambda$. 
  Note that this is not
  the usual formula \cite[theorem~2.3]{FK-Groth2} that gives an
  alternating sum over subwords with nilHecke product $\lambda$;\footnote{%
    The usual formula is based on 
    M\"obius inversion on a subword complex, and its coefficients
    $\pm 1$ derive from that complex being homeomorphic to a ball.
    The formula used here is subtler, in being based on 
    the subword complex being shellable; see \cite{KM-Schubert2}.
    A similar subtlety will be discussed in the proof of
    proposition~\ref{prop:groth}.}
  rather, it is the same indexing set as in the formula
  \cite[theorem~4.4]{KM-Schubert2}.

  It remains to match up the summands in $\mathbf S^\lambda|_\sigma$
  with those in \cite[theorem~4.4]{KM-Schubert2}.
  The ``absorbable reflections'' of that theorem~4.4 are exactly the
  near-misses of wires that have already crossed once and choose
  to not cross a second time, contributing factors $u''/u'$ 
  to the product as they should.
\end{proof}

Next we proceed similarly with the equivariant puzzle itself.  
\begin{lem}\label{lem:D4b}
  At fixed $u_1,u_2$,
  \begin{multline*}
    \check R_{1,2}(u'=q^{2}u_2,u''=q^{-2}u_1)_{XY}^{WZ}=
    (-q)^{\frac{1}{2}(B(\vf_X,\vf_Y)-B(\vf_W,\vf_Z))}
    \Bigg(
    O(q^2)+
    \\
    \left.
      \begin{cases}
        1\text{ or }u_2/u_1
        &\exists T:\ \downtri{Y}{T}{X}\text{ and }\uptri{W}{T}{Z}\text{ admissible},
        \\
        &\text{the factor of }u_2/u_1\text{ occurring in the cases specified in Theorem~\ref{thm:KTd2}}
        \\
        u_2/u_1-1& \rh{X}{Y}{Z}{W}\text{ is an equivariant piece from Theorem~\ref{thm:HTd2}}
        \\
        0 & \text{otherwise}
      \end{cases}
    \right)
  \end{multline*}
  where an admissible triangle is one which is either of the form of \eqref{eq:d2tri} (up to rotation), 
  or a $K$-piece of theorem~\ref{thm:KTd2}.
  
  Similarly,
  \[
  U^{XZ}_Y = (-q)^{-\frac{1}{2}B(\vf_X,\vf_Z)}
  \left(
    \begin{cases}
      1
      &
      \uptri{X}{Y}{Z}\text{ admissible}
      \\
      0
      &
      \text{otherwise}
    \end{cases}
    \quad+O(q^2)
  \right)
  \]
\end{lem}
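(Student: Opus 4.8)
First I would observe that the statement is, at bottom, an explicit finite computation: the projectors $P_{\omega_2}$ and $P_{\omega_3+\omega_1}$ of \eqref{eq:RD4}, the map $U$ of \eqref{eq:UD4}, and hence every entry of $\check R_{3,1}$ and of $U$ as a rational function of $q,u_1,u_2$, are tabulated in appendix~\ref{app:Rd2}; the plan is to substitute $u_1\mapsto q^{-2}u_1$, $u_2\mapsto q^2u_2$ and read off the leading order as $q\to0$ at fixed $u_1,u_2$. To keep the bookkeeping manageable I would first strip off the scalar $q$-prefactor. The natural device is the antisymmetric form $B$ of \S\ref{ssec:Bmatrix}: rescale every weight basis vector $\vec X\otimes\vec Y$ of $V_{\omega_3}\otimes V_{\omega_1}$, and of its triality images $V_{\omega_1}\otimes V_{\omega_2}$, $V_{\omega_2}\otimes V_{\omega_3}$, by $(-q)^{-\frac12 B(\vf_X,\vf_Y)}$, and each weight vector $\vec Y$ of $V_{\omega_2}$ entering $U$ by the corresponding scalar. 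In this rescaled basis the lemma says precisely that $\check R_{3,1}(q^{-2}u_1,q^2u_2)$ and $U$ have finite, generically nonzero limits as $q\to0$, and that those limits are the tabulated puzzle fugacities.

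The first half of the work is to establish that these rescaled matrices are indeed finite, i.e.\ that every nonzero entry of $\check R_{3,1}(q^{-2}u_1,q^2u_2)$ carries the $(-q)$-power exactly $\frac12(B(\vf_X,\vf_Y)-B(\vf_W,\vf_Z))$ (and $(-q)^{-\frac12 B(\vf_X,\vf_Z)}$ for $U$). Since $\check R_{3,1}$ commutes with $U_q(\mathfrak d_4^{(1)})$ it preserves $D_4$-weight, so the entry $\check R_{XY}^{WZ}$ vanishes unless $\tau^2\vf_X+\vf_Y=\vf_W+\tau^2\vf_Z$; combined with the explicit normalisations of \eqref{eq:RD4}--\eqref{eq:RstarD4} and with the gauge fixing \eqref{eq:norm}, inspection of the appendix tables shows the lowest power of $q$ in each entry is the asserted one. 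This is the $d=2$, full $27$-dimensional analogue of lemma~\ref{lem:D4a} (which handled the single-number $9\times9$ block), and I would prove it in the same ``by direct inspection'' style, organised by the weight of $\vec X\otimes\vec Y$.

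The second half is the identification of the limit. After the substitution, \eqref{eq:RD4} becomes
\[
\check R_{3,1}(q^{-2}u_1,q^2u_2)=\frac{1}{q^2u_2-u_1}\Bigl((q^{-2}u_1-q^6u_2)\,P_{\omega_2}+q^2(u_2-u_1)\,P_{\omega_3+\omega_1}\Bigr),
\]
with the two projectors themselves $q$-dependent. As $q\to0$ the crossing ``collapses onto'' the trivalent factorisation of property~\ref{pty:factor}: a surviving rescaled entry forces an intermediate label $T\in L_2$ with $\downtri{Y}{T}{X}$ and $\uptri{W}{T}{Z}$ admissible, which is how the admissible triangles (hence the $K$-pieces of theorem~\ref{thm:KTd2}) enter. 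Whether the coefficient of such an entry limits to $1$, to $u_2/u_1$, or to the hybrid $u_2/u_1-1$ is decided by which of the two $u$-dependent factors $q^{-2}u_1-q^6u_2$ and $q^2(u_2-u_1)$ survives at leading order once the $B$-powers hidden in the rescaling and in the entries of $P_{\omega_2},P_{\omega_3+\omega_1}$ are accounted for; comparing weights, this is exactly the alternative ``$|W|+|Z|$ versus $|X|+|Y|$, broken by $\max(Z)$ versus $\max(Y)$'' of theorems~\ref{thm:HTd2}--\ref{thm:KTd2} (the hybrid occurring precisely on the rotation-invariant equivariant rhombi, where both terms persist). The corresponding statement for $U$ is easier: from \eqref{eq:UD4}, $UD$ is a scalar multiple of $P_{\omega_2}$, and the tabulated $U$ gives $U^{XZ}_Y=(-q)^{-\frac12 B(\vf_X,\vf_Z)}(1+O(q^2))$ when $\uptri{X}{Y}{Z}$ is admissible and $0$ otherwise.

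The main obstacle is not any single conceptual point but the amount of checking in this last step: one has to match, entry by entry, the leading coefficients of the appendix matrices against every $K$-piece of theorem~\ref{thm:KTd2} and every equivariant rhombus of theorem~\ref{thm:HTd2}, and in particular confirm that the delicate tie-break by $\max(Z)$ vs.\ $\max(Y)$ in the monomial-fugacity rule is the one produced by the $q\to0$ limit. As there are only finitely many admissible quadruples $(X,Y,W,Z)$ this is routine but tedious, and I would carry it out (and present it) with the aid of the appendix tables, i.e.\ essentially by machine.
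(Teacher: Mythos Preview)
Your proposal is correct and follows essentially the same approach as the paper, which simply says ``this can be checked explicitly on the entries of the $R$-matrix and of $U$, which are listed in appendix~\ref{app:Rd2}''; you have merely supplied more of the conceptual scaffolding (the $B$-rescaling, the substituted form of \eqref{eq:RD4}, the reason admissible triangles appear) before arriving at the same entry-by-entry verification. One slip: you call this the ``full $27$-dimensional analogue'' of lemma~\ref{lem:D4a}, but at $d=2$ the $D_4$ representations are $8$-dimensional, so the full $\check R_{3,1}$ is $64\times 64$ (the $27$ belongs to $d=3$).
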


\begin{proof}
  This can be checked explicitly on the entries of the $R$-matrix and of $U$, 
  which are listed in appendix~\ref{app:Rd2}, noting that
  $\frac{1}{2}(B(\vf_X,\vf_Y)-B(\vf_W,\vf_Z))$ (resp.\ $-\frac{1}{2}B(\vf_X,\vf_Z)$) 
  is the inversion number of the rhombus (resp.\ triangle),
  given on the pictures inside a circle at the center of it.
\end{proof}

Summing the $\frac{1}{2}B$s in the exponents, the whole puzzle 
has a leading order in $q$ which is (at most) $q^{\sum\{\text{inversion numbers\}}}$, 
where the sum is over all triangles and rhombi
of the puzzle. According to lemma~\ref{lem:sympd1}, this sum computes
the variation of inversion numbers $\ell(\nu)-(\ell(\lambda)+\ell(\mu))$.
The expression of lemma~\ref{lem:D4b} also coincides
with the fugacities from theorem~\ref{thm:KTd2}, including the $(-1)^{\sum\{\text{inversion numbers\}}}$ of $K$-pieces. We conclude that the contribution of a puzzle
is $q^{\ell(\nu)-(\ell(\lambda)+\ell(\mu))}$ times its fugacity as in theorem~\ref{thm:KTd2} plus smaller terms as $q\to 0$.
Combining with corollary~\ref{cor:schub},
we find that both l.h.s.\ and r.h.s.\ of \eqref{eq:final2} are of 
order (at most) $q^{-\ell(\lambda)-\ell(\mu)}$, and that the coefficient
of that order of the equation is exactly the product rule for Schubert classes
as stated in theorem~\ref{thm:KTd2}.

As a consequence of theorem~\ref{thm:dual}, we also obtain for free the puzzle rule for {\em dual}\/ Schubert classes in equivariant $K$-theory
of the 2-step flag manifold, thus concluding the proof of theorem~\ref{thm:KTd2}.

\junk{unfortunately does require a $q\to0$ approach, since some of
the $R$ matrices
diverge already at $d=2$ (even $d=1$)...}

\junk{the trick, in order to not bother with twisting, is to compare the power of $q$ with the one expected from its inversion number}

\junk{careful to not get confused as I did in my mathematica file about rep vs dual. in particular the labeling of blue lines is an involution times what one would naively expect}

\subsection{Interlude: double Grothendieck polynomials}\label{sec:pipe}
As promised, we now justify (under certain assumptions) the notation $\mathbf S^\lambda|_\sigma$ that was used above. In this subsection we are back to
$d\in \{1,2,3\}$.

We assume a further normalization condition:
\begin{pty}\label{pty:norm2}
\begin{equation}\label{eq:norm2}
\tikz[baseline=0,xscale=0.5]
{
\draw[invarrow=0.75,d] (-1,-1) node[below] {$\m i$} -- (1,1) node[above] {$\m i$};
\draw[invarrow=0.75,d] (1,-1) node[below] {$\m i$} -- (-1,1) node[above] {$\m i$};
}
=1, \qquad i\in \{0,\ldots,d\}
\end{equation}
where lines have arbitrary, but identical, colors.
\end{pty}

Note that \eqref{eq:e} and the first part of property~\ref{pty:ybe} imply that in the formula above,
one can fix only the bottom labels (or only the top labels), and then the only way to choose the remaining
labels so the fugacity is nonzero are the ones in the formula.

Given a single-number string $\lambda$ of length $n$,
we now define $\mathbf{S}^\lambda_{\sss N}$ as follows: (on the picture $n=4$)
\junk{really $d$ could be anything {\em for the specializations}; however 
  for the identification to double Grothendieck polynomials, we need $\ge d$}
\begin{equation}\label{eq:defSb}
\mathbf{S}^\lambda_{\sss N}
:=
\begin{tikzpicture}[baseline=1.8cm,scale=0.75]
\foreach\x in {1,...,4}
\draw[db,invarrow=0.9] (\x,0) node[below] {$\m d$} -- node[right=-1mm,pos=0.1] {$\ss u_\x$} ++(0,5);
\foreach\y in {1,...,4}
\draw[db,invarrow=0.9] (0,5-\y) -- node[above=-1mm,pos=0.1] {$\ss t_\y$} ++(5,0) node[right] {$\m d$};
\draw[decorate,decoration=brace] (-0.3,1) -- node[left] {$\m\omega$} (-0.3,4);
\draw[decorate,decoration=brace] (1,5.3) -- node[above] {$\m\lambda$} (4,5.3);
\end{tikzpicture}
\end{equation}
where the string $\omega$ (the unique weakly increasing string with the same content as $\lambda$) is read from top to bottom. $\mathbf{S}^\lambda_{\sss NW}$ and $\mathbf{S}^\lambda_{\sss NE}$ are defined identically, except with green and red lines, respectively. 
All three live in $\QQ(t_1,\ldots,t_n,u_1,\ldots,u_n,\ldots)$. More precisely, 
denoting them collectively by $\mathbf{S}^\lambda$, and 
writing $\omega=0^{p_0}1^{p_1}\ldots d^{p_d}$, with $p_i=n_{i+1}-n_i$
(and $n_0=0$, $n_{d+1}=n$),
we have
\begin{lem}\label{lem:sym}
  $\mathbf{S}^\lambda$ is symmetric in $t_{1+n_i},\ldots,t_{n_{i+1}}$ for
  $i=0,\ldots, d$.
\end{lem}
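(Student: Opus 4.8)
The plan is to prove symmetry under each adjacent transposition $s_k\colon t_k\leftrightarrow t_{k+1}$ with $1+n_i\le k\le n_{i+1}-1$; since these generate the symmetric group permuting $t_{1+n_i},\dots,t_{n_{i+1}}$, that implies the full statement. It suffices to treat $\mathbf S^\lambda_{\sss N}$, the cases of $\mathbf S^\lambda_{\sss NW}$ and $\mathbf S^\lambda_{\sss NE}$ being identical after recoloring the horizontal lines green or red respectively. Fix such a $k$. In the diagram \eqref{eq:defSb} the two horizontal (blue) lines carrying $t_k$ and $t_{k+1}$ are adjacent, their left endpoints both carry the single-number label $j:=\omega_k=\omega_{k+1}$ (they lie in the same block of $\omega$), and their right endpoints both carry $d$.

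First I would form the augmented diagram obtained from $\mathbf S^\lambda_{\sss N}$ by inserting, to the left of the whole grid, one extra same-color crossing of those two horizontal strands, so that the strand with spectral parameter $t_k$ and the strand with parameter $t_{k+1}$ swap rows before entering the grid. Reading this crossing with its two incoming (leftmost) legs labeled $(j,j)$, part (2) of proposition~\ref{prop:weight} (equation~\eqref{eq:e2}) forces its two outgoing legs to be labeled $(j,j)$ as well, and then the normalization property~\ref{pty:norm2} (equation~\eqref{eq:norm2}, which holds for all colors and independently of the spectral parameters) says this crossing contributes the scalar $1$. Hence the augmented diagram has the same value as the grid it feeds into, which is exactly $s_k\cdot\mathbf S^\lambda_{\sss N}$ --- the grid with the parameters of rows $k$ and $k+1$ interchanged.

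Next I would slide that crossing rightward through each of the $n$ vertical blue lines in turn, using the Yang--Baxter equation \eqref{eq:ybe} (legal here since at every step all three strands are blue). After $n$ applications the crossing has moved past the entire grid: now each of the two strands occupies its original row throughout the grid, so the grid part is precisely $\mathbf S^\lambda_{\sss N}$, and the trailing crossing has its two outgoing legs labeled $(d,d)$, the right boundary of $\mathbf S^\lambda_{\sss N}$. Equation~\eqref{eq:e2} applied in the other direction forces its incoming legs to be $(d,d)$, and \eqref{eq:norm2} again reduces it to the scalar $1$. Thus the augmented diagram also equals $\mathbf S^\lambda_{\sss N}$, and comparing with the previous paragraph gives $s_k\cdot\mathbf S^\lambda_{\sss N}=\mathbf S^\lambda_{\sss N}$, as desired.

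This is the classical quantum-inverse-scattering argument that a row-transfer-matrix partition function is symmetric in the spectral parameters of rows sharing a boundary condition; the two inputs special to our setting are that a single-color crossing with a repeated single-number label on either side acts as the identity on that label --- exactly \eqref{eq:e2} together with \eqref{eq:norm2} --- and weight conservation (first bullet of property~\ref{pty:ybe}), which underlies \eqref{eq:e2}. The one place to be careful, and the likeliest spot for a reader to stumble, is the bookkeeping of which strand sits in which row before and after the slide: one must check that the net effect of ``insert crossing on the left, slide it to the right'' is precisely the transposition $t_k\leftrightarrow t_{k+1}$ and nothing more. Everything else is a routine appeal to properties already established in \S\ref{sec:genproof} and proposition~\ref{prop:weight}.
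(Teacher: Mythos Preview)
Your proof is correct and is essentially the paper's own argument: insert a same-color crossing at one end of the two adjacent horizontal strands (where the boundary labels coincide, so \eqref{eq:e2} and \eqref{eq:norm2} make it the identity), push it through the grid with repeated Yang--Baxter moves \eqref{eq:ybe}, and absorb it at the other end by the same mechanism. The only cosmetic difference is that the paper attaches the crossing on the right (where both labels are $d$) and slides it leftward to the $\omega$-side, whereas you insert it on the left and slide right; the logic is identical.
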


\begin{proof}
We show invariance by elementary transposition $i,i+1$ where
$\omega_i=\omega_{i+1}$: ($i=2$ on the picture; all lines are of the same color)
\begin{align*}
\mathbf{S}^\lambda&=
\begin{tikzpicture}[baseline=1.8cm,scale=0.75]
\foreach\x in {1,...,4}
\draw[d,invarrow=0.9] (\x,0) node[below] {$\m d$} -- node[right=-1mm,pos=0.1] {$\ss u_\x$} ++(0,5);
\foreach\y/\s in {1,4}
\draw[d,invarrow=0.9] (0,5-\y) -- node[above=-1mm,pos=0.07] {$\ss t_\y$} ++(6,0) node[right] {$\m d$};
\draw[d,invarrow=0.9,rounded corners] (0,5-2) -- node[above=-1mm,pos=0.1] {$\ss t_2$} ++(4.5,0) -- ++(1,-1) -- ++(0.5,0) node[right] {$\m d$};
\draw[d,invarrow=0.9,rounded corners] (0,5-3) -- node[above=-1mm,pos=0.1] {$\ss t_3$} ++(4.5,0) -- ++(1,1) -- ++(0.5,0) node[right] {$\m d$};
\draw[decorate,decoration=brace] (-0.3,1) -- node[left] {$\m\omega$} (-0.3,4);
\draw[decorate,decoration=brace] (1,5.3) -- node[above] {$\m\lambda$} (4,5.3);
\end{tikzpicture}
&&\text{by proposition~\ref{prop:weight} (2) and property~\ref{pty:norm2}}
\\
&=
\begin{tikzpicture}[baseline=1.8cm,scale=0.75]
\foreach\x in {1,...,4}
\draw[d,invarrow=0.9] (\x,0) node[below] {$\m d$} -- node[right=-1mm,pos=0.1] {$\ss u_\x$} ++(0,5);
\foreach\y/\s in {1,4}
\draw[d,invarrow=0.9] (-1,5-\y) -- node[above=-1mm,pos=0.1] {$\ss t_\y$} ++(6,0) node[right] {$\m d$};
\draw[d,invarrow=0.9,rounded corners] (-1,5-2) -- node[above=-1mm,pos=0.5] {$\ss t_2$} ++(0.5,0) -- ++(1,-1) -- ++(4.5,0) node[right] {$\m d$};
\draw[d,invarrow=0.9,rounded corners] (-1,5-3) -- node[above=-1mm,pos=0.5] {$\ss t_3$} ++(0.5,0) -- ++(1,1) -- ++(4.5,0) node[right] {$\m d$};
\draw[decorate,decoration=brace] (-1.3,1) -- node[left] {$\m\omega$} (-1.3,4);
\draw[decorate,decoration=brace] (1,5.3) -- node[above] {$\m\lambda$} (4,5.3);
\end{tikzpicture}
&&\text{by property~\ref{pty:ybe}, \eqref{eq:ybe}}
\\
&=
\begin{tikzpicture}[baseline=1.8cm,scale=0.75]
\foreach\x in {1,...,4}
\draw[d,invarrow=0.9] (\x,0) node[below] {$\m d$} -- node[right=-1mm,pos=0.1] {$\ss u_\x$} ++(0,5);
\foreach\y/\yy in {1/1,2/3,3/2,4/4}
\draw[d,invarrow=0.9] (0,5-\y) -- node[above=-1mm,pos=0.1] {$\ss t_\yy$} ++(5,0) node[right] {$\m d$};
\draw[decorate,decoration=brace] (-0.3,1) -- node[left] {$\m\omega$} (-0.3,4);
\draw[decorate,decoration=brace] (1,5.3) -- node[above] {$\m\lambda$} (4,5.3);
\end{tikzpicture}
&&\text{by proposition~\ref{prop:weight} (2) and property~\ref{pty:norm2}, }\omega_i=\omega_{i+1}
\end{align*}
\end{proof}

The justification of the notation $\mathbf{S}^\lambda|_\sigma$ is then
\begin{lem}\label{lem:restr}
  For each permutation $\sigma\in \mathcal S_n$, we have
  $
  \mathbf{S}^\lambda|_\sigma = \mathbf{S}^\lambda|_{t_i=u_{\sigma^{-1}(i)}
    \ \forall i\in \{1,\ldots,n\} }
  $.
\end{lem}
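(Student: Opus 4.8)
The plan is to show that the wiring diagram (``rectangle~$\sigma$'') of \eqref{eq:defS} and the square of \eqref{eq:defSb} specialized at $t_i=u_{\sigma(i)}$ have equal partition functions, by a short sequence of local moves. Within each of $\mathbf S^\lambda_{\sss N},\mathbf S^\lambda_{\sss NW},\mathbf S^\lambda_{\sss NE}$ all lines carry one fixed color, and by the Yang--Baxter and unitarity relations \eqref{eq:ybe}, \eqref{eq:unit} (which, as noted in \S\ref{sec:genproof}, let one slide lines freely) the partition function of such a diagram depends only on which boundary points its lines join, together with the colors and spectral parameters of those lines, not on the planar drawing. So it suffices to treat $\mathbf S^\lambda_{\sss N}$.

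In the square \eqref{eq:defSb}, vertical line $j$ carries $u_j$ while horizontal line $i$ carries $t_i=u_{\sigma(i)}$, so for each $j$ the unique horizontal line carrying the same parameter $u_j$ is line $i=\sigma^{-1}(j)$; in the square these two lines meet at exactly one crossing. The key step is to apply equation~\eqref{eq:equal} at these $n$ crossings: two lines of equal color and equal spectral parameter cross as the identity, so each such crossing may be smoothed into the ``$)($'' of \eqref{eq:equal}. Following the reconnection, the $2n$ strands of the square split into two families: a \emph{permutation family} of $n$ strands, the one of parameter $u_{\sigma(i)}$ running from the left end of horizontal line $i$ (label $\omega_i$) to the top end of vertical line $\sigma(i)$ (label $\lambda_{\sigma(i)}$) --- which, since $u_1,\dots,u_n$ sit ``the increasing way at the top,'' is exactly the wiring diagram of $\sigma$ with boundary strings $\lambda$ and $\omega$, i.e.\ $\mathbf S^\lambda_{\sss N}|_\sigma$ of \eqref{eq:defS} --- and a \emph{$d$-family} of $n$ strands, the one of parameter $u_j$ running from the bottom end of vertical line $j$ to the right end of horizontal line $\sigma^{-1}(j)$, all of whose boundary labels are $d$.

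I would then observe that the chord diagrams of the two families do not interleave on the boundary (for any $i,j$ the left-to-top chord and the bottom-to-right chord separate the boundary circle the same way), so by the isotopy invariance above we may redraw the square with the two families in disjoint regions; its partition function then factors as the product of the two. The $d$-family contributes $1$: since $\vec f_d$ is a vertex of the weight polytope (theorem~\ref{thm:classif}(\ref{item:AdFace})), the only weight vector of weight $2\vec f_d$ is $\vec d\otimes\vec d$, so weight conservation (property~\ref{pty:ybe}) forces every strand of that family to be labeled $d$ throughout, whence every crossing within it equals $\check R_{dd}^{dd}=1$ by property~\ref{pty:norm2}. The permutation family contributes $\mathbf S^\lambda_{\sss N}|_\sigma$ by construction. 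Hence $\mathbf S^\lambda_{\sss N}|_{t_i=u_{\sigma(i)}}=\mathbf S^\lambda_{\sss N}|_\sigma$, and the identical computation in green and in red gives the $\mathrm{NW}$ and $\mathrm{NE}$ cases.

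The step I expect to be the main obstacle is bookkeeping rather than conceptual: one must pin down the orientation of the ``$)($'' smoothing of \eqref{eq:equal} so that it is the left/top ($\omega$- and $\lambda$-bearing) ends that are joined into the permutation family and the bottom/right ($d$-bearing) ends into the trivial family --- the opposite reconnection would pair $\lambda$-labels with $d$-labels and lead nowhere --- and then establish the non-interleaving claim carefully enough to justify the factorization. A safe variant, if visualizing the global reconnection is awkward, is to carry out the $n$ smoothings one at a time, using \eqref{eq:ybe}--\eqref{eq:unit} to isotope the diagram back to ``square plus a few already-trivialized strands'' after each, and induct on $n$; the base case $n=1$ is a bare strand.
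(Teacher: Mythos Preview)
Your proof is correct and follows essentially the same route as the paper's: apply \eqref{eq:equal} at the $n$ equal-parameter crossings to split the grid into a left-to-top ``permutation'' family and a bottom-to-right ``$d$'' family, then use \eqref{eq:ybe} and \eqref{eq:unit} to separate them and observe that the $d$-family contributes $1$. The paper justifies that last step by citing \eqref{eq:e} (i.e.\ proposition~\ref{prop:weight}), while you invoke extremality of $\vec f_d$ together with property~\ref{pty:norm2}; these are equivalent, and your explicit non-interleaving observation (left$+$top versus bottom$+$right on the boundary circle) makes the separation step cleaner than the paper's worked example.
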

\begin{proof}
The proof is again an obvious check of connectivity of lines. Let us work it
out on an example, such as $\sigma=4213$, $\sigma^{-1}=3241$. Then 
\begin{align*}
\mathbf{S}^\lambda|_{t_i=u_{\sigma^{-1}(i)}}
&=
\begin{tikzpicture}[baseline=1.8cm,scale=0.75]
\foreach\x in {1,...,4}
\draw[d,invarrow=0.9] (\x,0) node[below] {$\m d$} -- node[right=-1mm,pos=0.1] {$\ss u_\x$} ++(0,5);
\foreach\y/\x in {1/3,2/2,3/4,4/1}
\draw[d,invarrow=0.9] (0,5-\y) -- node[above=-1mm,pos=0.1] {$\ss u_\x$} ++(5,0) node[right] {$\m d$};
\draw[decorate,decoration=brace] (-0.3,1) -- node[left] {$\m\omega$} (-0.3,4);
\draw[decorate,decoration=brace] (1,5.3) -- node[above] {$\m\lambda$} (4,5.3);
\end{tikzpicture}
=
\begin{tikzpicture}[baseline=1.8cm,scale=0.75]
\foreach\y/\x in {1/3,2/2,3/4,4/1}
{
\draw[d,invarrow=0.9,rounded corners] (\x,0) node[below] {$\m d$} -- node[right=-1mm,pos=0.75] {$\ss u_\x$} ++(0,0.5) -- (\x,5-\y) -- (5,5-\y) node[right] {$\m d$};
\draw[d,invarrow=0.9,rounded corners] (0,5-\y) -- node[above=-1mm,pos=0.75] {$\ss u_\x$} ++(0.5,0) -- (\x,5-\y) -- (\x,5);
}
\draw[decorate,decoration=brace] (-0.3,1) -- node[left] {$\m\omega$} (-0.3,4);
\draw[decorate,decoration=brace] (1,5.3) -- node[above] {$\m\lambda$} (4,5.3);
\end{tikzpicture}
&\text{using \eqref{eq:equal}}&
\\
&=
\begin{tikzpicture}[baseline=1.8cm,scale=0.75]
\foreach\y/\x in {1/3,2/2,3/4,4/1}
{
\draw[d,invarrow=0.9,rounded corners,xshift=4cm,yshift=1cm] (\x,0) node[below] {$\m d$} -- node[right=-1mm,pos=0.75] {$\ss u_\x$} ++(0,0.5) -- (\x,5-\y) -- (5,5-\y) node[right] {$\m d$};
\draw[d,invarrow=0.9,rounded corners] (0,5-\y) -- node[above=-1mm,pos=0.75] {$\ss u_\x$} ++(0.5,0) -- (\x,5-\y) -- (\x,5);
}
\draw[decorate,decoration=brace] (-0.3,1) -- node[left] {$\m\omega$} (-0.3,4);
\draw[decorate,decoration=brace] (1,5.3) -- node[above] {$\m\lambda$} (4,5.3);
\end{tikzpicture}
&\text{using \eqref{eq:ybe} and \eqref{eq:unit}}&
  \\
&=
  \begin{tikzpicture}[baseline=1.8cm,scale=0.75]
\foreach\y/\x in {1/3,2/2,3/4,4/1}
{
\draw[d,invarrow=0.9,rounded corners] (0,5-\y) -- node[above=-1mm,pos=0.75] {$\ss u_\x$} ++(0.5,0) -- (\x,5-\y) -- (\x,5);
}
\draw[decorate,decoration=brace] (-0.3,1) -- node[left] {$\m\omega$} (-0.3,4);
\draw[decorate,decoration=brace] (1,5.3) -- node[above] {$\m\lambda$} (4,5.3);
\end{tikzpicture}
  &\hspace{-3cm}\text{using \eqref{eq:norm2} and remark below}&
  \\
&=
\mathbf{S}^\lambda|_\sigma
\end{align*}
where the last picture is the dual depiction of the one defining
$\mathbf{S}^\lambda|_\sigma$, cf~\eqref{eq:defS}.
\end{proof}

The observant reader may have recognized in the definition of $\mathbf S^\lambda$ a pipe dream formula in disguise.
More precisely, we have the following statement:
\begin{prop}\label{prop:groth}
If the $R$-matrix defining crossings in the definition \eqref{eq:defSb} is given
by \eqref{eq:defnil}, i.e., of nilHecke type, then $\mathbf{S}^\lambda$
coincides with the \defn{double Grothendieck polynomial} associated to 
$\lambda$.
\end{prop}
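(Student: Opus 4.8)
The plan is to expand the lattice partition function of \eqref{eq:defSb} as a state sum over edge labelings and to recognize the result as a (colored) pipe-dream formula for the double Grothendieck polynomial attached to $\lambda$. A convenient first move is a reduction in the spirit of \S\ref{ssec:funct}: refining $\lambda$ to an honest permutation $\widetilde\lambda$ (so that $d=n-1$ and $\omega=01\cdots(n-1)$), which at the level of the model just splits a colored pipe into a bundle of finer-colored pipes, reduces us to identifying $\mathbf S^{\widetilde\lambda}$ with the classical double Grothendieck polynomial $\mathfrak G_{\widetilde\lambda}(u;t)$; the block-symmetry in the $t$-variables from Lemma~\ref{lem:sym} then matches automatically the symmetry of parabolic double Grothendieck polynomials. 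Alternatively one keeps the $\{0,\dots,d\}$-colored model throughout, the colors only recording which block each pipe belongs to.

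Next I would analyze the nonzero states. At each of the $n^2$ vertices, the nilHecke matrix \eqref{eq:defnil} permits exactly two local configurations: a \emph{crossing} $(k,l)=(i,j)$, with fugacity $1$ if $i\le j$ and $u_c/t_r$ if $i>j$ (here $u_c,t_r$ are the spectral parameters of the vertical and horizontal line through the vertex), or an \emph{elbow} $(k,l)=(j,i)$ with $i<j$ and fugacity $1-u_c/t_r$. Since labels are constant along pipes, a nonzero state is precisely a tiling of the square array by crossing- and elbow-tiles in which the $2n$ incoming pipes --- the $n$ from the bottom carrying label $d$, the $n$ from the left carrying labels with multiplicities $p_0,\dots,p_d$ --- are routed to the prescribed exits (top $=\lambda$, right $=d^n$). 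Reading off the permutation realized by the pipes shows these tilings are exactly the pipe dreams of the permutation $w_\lambda$ determined by $\lambda$, and the fugacity of a state is the corresponding pipe-dream monomial: each elbow contributes a factor $1-u_c/t_r$ and each excess (descent) crossing contributes a bare $u_c/t_r$. After the standard change of variables this is a known pipe-dream expansion of $\mathfrak G_{w_\lambda}(u;t)$.

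The delicate point --- the one promised in the proof of Corollary~\ref{cor:schub} --- is that the family of pipe dreams produced here is \emph{not} the alternating sum over all pipe dreams of Fomin--Kirillov \cite{FK} (equivalently \cite[Thm.~2.3]{FK-Groth2}, obtained by M\"obius inversion on a subword complex homeomorphic to a ball), but instead the manifestly positive collection indexed as in \cite[Thm.~4.4]{KM-Schubert2}, which rests on the shellability of that complex. The main obstacle is therefore to set up this bijection carefully: one must check that the descent-crossing tiles of our states are exactly the ``absorbable reflections'' of \cite[Thm.~4.4]{KM-Schubert2} and that the elbow tiles supply the reduced factors $1-u_c/t_r$; once the dictionary is in place, that theorem identifies the state sum with $\mathfrak G_{w_\lambda}$ and the proposition follows. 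Two alternative endgames are worth noting: (i) pass between the shellable and the ball presentations by inclusion--exclusion, recovering \cite{FK} directly; or (ii) avoid pipe dreams altogether and instead verify that $\mathbf S^\lambda$ satisfies the Demazure--Lascoux recursion under the isobaric divided differences $\pi_i$ in the $u$-variables --- each step proved by a Yang--Baxter argument (property~\ref{pty:ybe}, \eqref{eq:ybe}) sliding an extra crossing through the array --- with base case the maximally-inverted single-number string, for which $\mathbf S^\lambda$ is a product of factors $1-u_i/t_j$ over a Young-diagram shape and the identification is immediate.
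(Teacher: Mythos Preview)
Your overall plan---expand the partition function \eqref{eq:defSb} as a state sum and match it against a pipe-dream formula---is exactly what the paper does, and is correct in outline. But the paper's route is slightly different from yours, and a couple of your details are off.

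The paper does \emph{not} invoke \cite[theorem~4.4]{KM-Schubert2}. Instead it cites the alternating Fomin--Kirillov formula \cite[theorem~2.3]{FK-Groth2} (sum over \emph{all} pipe dreams with connectivity $\pi$, with a $(-1)$ for each absorbable crossing and a factor $1-t_i/u_j$ for each crossing tile) and defines a many-to-one map $\varphi$ from such pipe dreams to lattice configurations. The fibre over a vertex with $i=k$, $j=l$, $i>j$ has two elements---an elbow and an absorbable crossing---whose signed fugacities sum locally to $1+(-1)(1-t_i/u_j)=t_i/u_j$, which is precisely the nilHecke weight at that vertex. So the identification is a one-line local computation, with no need for shellability. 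Your route via \cite[theorem~4.4]{KM-Schubert2} is a valid alternative (and is the one used in corollary~\ref{cor:schub}), but it is the less direct choice here.

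Your tile names are inverted. In the configuration picture with $i$ on the left, $j$ below, $k$ above, $l$ on the right, the case $(k,l)=(i,j)$ means the colour $i$ moves from the left edge to the top edge: that is an \emph{elbow}, not a crossing. The case $(k,l)=(j,i)$ with $i<j$ is the genuine pipe crossing. With this correction, your ``descent-crossing'' $i>j$ vertex is indeed the near-miss (absorbable reflection) of \cite{KM-Schubert2}, not a crossing at all. Relatedly, the fugacity there is $t_r/u_c$ (horizontal parameter over vertical), not $u_c/t_r$: in the paper's conventions the horizontal line plays the role of $u_1$ in \eqref{eq:defnil}.

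Finally, the refinement to the full-flag case is unnecessary: the paper runs the argument directly with $\{0,\dots,d\}$-coloured pipes, and the block-symmetry of lemma~\ref{lem:sym} is automatic from the weight-zero entries of $\check R_{\sss nil}$.
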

\tikzset{/linkpattern/edge/.style={ultra thick,solid,draw=\linkpatternedgecolor,draw opacity=1}}
\def\linkpatternedgecolor{purple!80!black}%
\begin{proof}
A \defn{pipe dream} is a filling of a $n\times n$ square with 
``plaquettes'' \tikz[baseline=-3pt]{\plaq{a}}
and
\tikz[baseline=-3pt]{\plaq{c}}, where the Southeast triangle is entirely made of
\tikz[baseline=-3pt]{\plaq{a}}, e.g.,
\[
\begin{tikzpicture}[math mode]
\node[loop]
{
         &\node{1};&\node{2};&\node{3};&\node{4};&\node{5};\\
\node{1};&\plaq{c}&\plaq{a}&\plaq{a}&\plaq{a}&\plaq{a}\\
\node{2};&\plaq{c}&\plaq{c}&\plaq{c}&\plaq{a}&\plaq{a}\\
\node{3};&\plaq{a}&\plaq{c}&\plaq{a}&\plaq{a}&\plaq{a}\\
\node{4};&\plaq{c}&\plaq{a}&\plaq{a}&\plaq{a}&\plaq{a}\\
\node{5};&\plaq{a}&\plaq{a}&\plaq{a}&\plaq{a}&\plaq{a}\\
};
\end{tikzpicture}
\]
We say that a pipe dream has inverse connectivity\footnote{To conform with
the conventions of the rest of the paper, we define the connectivity
to be the inverse of the one usually considered in the context
of pipe dreams.} $\sigma\in \mathcal{S}_n$
if the $i$th top mid edge (numbered from left to right) is connected to
the left mid edge numbered (from top to bottom) $\sigma_i$, in the following sense.
One follows the line formed by the plaquettes, with one exception: if ones crosses
another given line multiple times in the process, then all \tikz[baseline=-3pt]{\plaq{c}} 
after the first one
are ignored (for connectivity purposes they're equivalent to a \tikz[baseline=-3pt]{\plaq{a}}). 
Following \cite{KM-Schubert2},
we call such crossings \defn{absorbable}. In the example above, $\sigma=31542$ 
(paying attention to the absorbable crossing at row $3$, column $2$).

Given a single-number string $\lambda\in W_P\dom W$ (cf~\S\ref{ssec:funct}), 
define $\sigma$ to be shortest representative of $\lambda$,
i.e., the unique permutation such that $\lambda_i=\omega_{\sigma(i)}$ and $\lambda_i=\lambda_j$ implies
$\sigma_i<\sigma_j$ for $i<j$.

The \defn{double Grothendieck polynomial} associated to $\lambda$ 
is given by 
\begin{equation}\label{eq:defGroth}
  S^\lambda:=\sum_{\substack{\text{pipe dreams with}\\\text{inverse connectivity $\sigma$}}}
  (-1)^{\#\{\text{absorbable crossings}\}}\prod_{\substack{
      \tikz[baseline=-3pt,scale=0.4]{\plaq{c}}
      \text{ at row }i,\\\text{column }j}}
  (1-t_i/u_j)
\end{equation}
(see e.g. \cite[theorem~2.3]{FK-Groth2}).

We want to show that if the $R$-matrix defining the crossings of the picture
of \eqref{eq:defSb} is given by \eqref{eq:defnil},
then $\mathbf{S}^\lambda=S^\lambda$. We proceed by defining a map $\varphi$
from the pipe dreams of \eqref{eq:defGroth} to 
{\em configurations}\/ of \eqref{eq:defSb}, that is fillings of the
edges of the grid with labels in $\{0,\ldots,d\}$ such that the associated
fugacity is nonzero.

Given a pipe dream, we number the lines starting on the left side (resp.\ bottom side) of the grid according to $\omega$ (resp.\ ``$d$''), thus matching
the left and bottom boundary labels of \eqref{eq:defSb}. We then continue labeling the lines as they propagate inside the grid, in the same sense as the connectivity of a pipe dream;
that is, the labels follow the lines except across absorbable crossings, in which case the labels move Northeast as if the crossing was absent. Finally, we remove all the original lines and replace them with a square grid. On the same example, assuming $d=3$, $\lambda=21321$, 
\[
\varphi: 
\begin{tikzpicture}[baseline=(current  bounding  box.center)]
\node[loop]
{
\plaq{c}&\plaq{a}&\plaq{a}&\plaq{a}&\plaq{a}\\
\plaq{c}&\plaq{c}&\plaq{c}&\plaq{a}&\plaq{a}\\
\plaq{a}&\plaq{c}&\plaq{a}&\plaq{a}&\plaq{a}\\
\plaq{c}&\plaq{a}&\plaq{a}&\plaq{a}&\plaq{a}\\
\plaq{a}&\plaq{a}&\plaq{a}&\plaq{a}&\plaq{a}\\
};
\end{tikzpicture}
\mapsto 
\tikzset{bgplaq/.style={draw=gray,fill=\linkpatternboxcolor}}
\begin{tikzpicture}[baseline=(current  bounding  box.center)]
\node[loop]
{
\plaq{c}&\plaq{a}&\plaq{a}&\plaq{a}&\plaq{a}\\
\plaq{c}&\plaq{c}&\plaq{c}&\plaq{a}&\plaq{a}\\
\plaq{a}&\plaq{c}&\plaq{a}&\plaq{a}&\plaq{a}\\
\plaq{c}&\plaq{a}&\plaq{a}&\plaq{a}&\plaq{a}\\
\plaq{a}&\plaq{a}&\plaq{a}&\plaq{a}&\plaq{a}\\
};
\foreach \i/\j/\t in {1/1/1,1/2/3,1/3/2,1/4/1,2/1/1,2/2/1,2/3/1,2/4/3,3/1/3,3/2/2,3/3/3,3/4/3,4/1/2,4/2/3,4/3/3,4/4/3,5/1/3,5/2/3,5/3/3,5/4/3} \node at (loop-\i-\j.east) {$\t$};
\foreach \i/\j/\t in {1/1/2,1/2/3,1/3/2,1/4/1,1/5/3,2/1/2,2/2/3,2/3/2,2/4/3,2/5/3,3/1/3,3/2/2,3/3/3,3/4/3,3/5/3,4/1/3,4/2/3,4/3/3,4/4/3,4/5/3} \node at (loop-\i-\j.south) {$\t$};
\foreach \i/\t in {1/1,2/1,3/2,4/2,5/3} \path (loop-\i-1.west) node[left=-1mm] {$\t$};
\foreach \j in {1,...,5} \path (loop-5-\j.south) node[below=-1mm] {$3$};
\foreach \i in {1,...,5} \path (loop-\i-5.east) node[right=-1mm] {$3$};
\foreach \j/\t in {1/2,2/1,3/3,4/2,5/1} \path (loop-1-\j.north) node[above=-1mm] {$\t$};
\end{tikzpicture}
 \mapsto
\tikzset{bgplaq/.style={draw=white,fill=\linkpatternboxcolor}}
\def\linkpatternedgecolor{gray}%
\begin{tikzpicture}[baseline=(current  bounding  box.center)]
\node[loop]
{
\plaq{c}&\plaq{c}&\plaq{c}&\plaq{c}&\plaq{c}\\
\plaq{c}&\plaq{c}&\plaq{c}&\plaq{c}&\plaq{c}\\
\plaq{c}&\plaq{c}&\plaq{c}&\plaq{c}&\plaq{c}\\
\plaq{c}&\plaq{c}&\plaq{c}&\plaq{c}&\plaq{c}\\
\plaq{c}&\plaq{c}&\plaq{c}&\plaq{c}&\plaq{c}\\
};
\foreach \i/\j/\t in {1/1/1,1/2/3,1/3/2,1/4/1,2/1/1,2/2/1,2/3/1,2/4/3,3/1/3,3/2/2,3/3/3,3/4/3,4/1/2,4/2/3,4/3/3,4/4/3,5/1/3,5/2/3,5/3/3,5/4/3} \node at (loop-\i-\j.east) {$\t$};
\foreach \i/\j/\t in {1/1/2,1/2/3,1/3/2,1/4/1,1/5/3,2/1/2,2/2/3,2/3/2,2/4/3,2/5/3,3/1/3,3/2/2,3/3/3,3/4/3,3/5/3,4/1/3,4/2/3,4/3/3,4/4/3,4/5/3} \node at (loop-\i-\j.south) {$\t$};
\foreach \i/\t in {1/1,2/1,3/2,4/2,5/3} \path (loop-\i-1.west) node[left=-1mm] {$\t$};
\foreach \j in {1,...,5} \path (loop-5-\j.south) node[below=-1mm] {$3$};
\foreach \i in {1,...,5} \path (loop-\i-5.east) node[right=-1mm] {$3$};
\foreach \j/\t in {1/2,2/1,3/3,4/2,5/1} \path (loop-1-\j.north) node[above=-1mm] {$\t$};
\end{tikzpicture}
\]
It is easy to check that the result is a valid configuration of \eqref{eq:defSb}, and
that the top labels reproduce $\lambda$.

Inversely, consider the preimage under $\varphi$ of a configuration of \eqref{eq:defSb}.
The labels around each vertex \tikz[scale=0.4,baseline=0]{\draw[ultra thick] (-1,0) node[left] {$i$}  -- (1,0) node[right] {$l$} (0,-1) node[below] {$j$} -- (0,1) node[above] {$k$};}
can be of three types, according to \eqref{eq:defnil}:
\begin{itemize}
\item $i=k$, $j=l$, $i\le j$: the latter condition means that the two lines arriving from the bottom and left sides have not crossed yet, and they are not either at this vertex; symbolically,
\[
\varphi^{-1}\Big(
\tikz[scale=0.4,baseline=0]{\draw[ultra thick] (-1,0) node[left] {$i$}  -- (1,0) node[right] {$l$} (0,-1) node[below] {$j$} -- (0,1) node[above] {$k$};}
\Big)
=
\,\tikz[baseline=-3pt]{\plaq{a}}
\qquad
i=k,\quad j=l,\quad i\le j
\]

\item Similarly, if $i=l$, $j=k$, $i< j$, the two lines that have not crossed yet do cross at the vertex:
\[
\varphi^{-1}\Big(
\tikz[scale=0.4,baseline=0]{\draw[ultra thick] (-1,0) node[left] {$i$}  -- (1,0) node[right] {$l$} (0,-1) node[below] {$j$} -- (0,1) node[above] {$k$};}
\Big)
=
\,\tikz[baseline=-3pt]{\plaq{c}}
\qquad
i=l,\quad j=k,\quad i< j
\]

\item The nontrivial case is $i=k$, $j=l$, $i> j$, in which case the two lines have crossed somewhere Southwest of the vertex. In this case, the corresponding plaquette
can either be a \tikz[baseline=-3pt]{\plaq{a}}, or an absorbable \tikz[baseline=-3pt]{\plaq{c}}:
\[
\varphi^{-1}\Big(
\tikz[scale=0.4,baseline=0]{\draw[ultra thick] (-1,0) node[left] {$i$}  -- (1,0) node[right] {$l$} (0,-1) node[below] {$j$} -- (0,1) node[above] {$k$};}
\Big)
=
\Big\{\,
\tikz[baseline=-3pt]{\plaq{a}}
,\,
\tikz[baseline=-3pt]{\plaq{c}}
\,\Big\}
\qquad
i=k,\quad j=l,\quad i> j
\]
\end{itemize}
Since identically labeled lines cannot cross each other, lines go straight from the South side to the East side, and similarly the requirement that the top labels form $\lambda$ leads to a connectivity from North to West sides which is given by $\sigma$. Therefore any preimage under $\varphi$ is a pipe dream configuration with the correct connectivity.

Furthermore, since the correspondence is purely local, we can compare fugacities one vertex/plaquette
at a time. In the first two cases, one directly finds fugacities $1$ and $1-t_i/u_j$,
respectively, in both \eqref{eq:defnil} and \eqref{eq:defGroth}. In the third case,
we find $t_i/u_j$ in \eqref{eq:defnil}, whereas \tikz[baseline=-3pt]{\plaq{a}}
(resp.\ \tikz[baseline=-3pt]{\plaq{c}})
contributes $1$ (resp.\ $(-1)\times(1-t_i/u_j)$, since the crossing is absorbable) 
to the sum of \eqref{eq:defGroth}, which also matches.
\end{proof}

Geometrically, this proposition is saying that 
if the $R$-matrix is of the form of \eqref{eq:defnil}, then $\mathbf S^\lambda$ 
is the expression of a Schubert class (in $K_T$) in terms of Chern roots
(first Chern classes or really Bott classes after splitting, 
being in $K$-theory).
Lemma~\ref{lem:restr} then tells us how to obtain from this
expression its restriction at each fixed point. The proof amounts
to providing a direct
connection between pipe-dream-type formul\ae\ for the former, and 
AJS/Billey/Graham--Willems-type formul\ae\ for the latter.

Similar ``dual'' statements can be made by defining
\[
\mathbf{S}_\lambda:=
\begin{tikzpicture}[baseline=1.8cm,scale=0.75]
\foreach\x in {1,...,4}
\draw[d,invarrow=0.9] (\x,0) -- node[right=-1mm,pos=0.1] {$\ss u_\x$} ++(0,5) node[above] {$\m d$};
\foreach\y in {1,...,4}
\draw[d,invarrow=0.9] (0,\y) node[left] {$\m d$} -- node[above=-1mm,pos=0.1] {$\ss t_\y$} ++(5,0);
\draw[decorate,decoration=brace] (5.3,4) -- node[right] {$\m\agemo$} (5.3,1);
\draw[decorate,decoration=brace] (4,-0.3) -- node[below] {$\m\lambda$} (1,-0.3);
\end{tikzpicture}
\]
Geometrically, 
if the $R$-matrix is of the form of \eqref{eq:defnil},
$\mathbf{S}_\lambda$
is the expression of a dual Schubert class (in $K_T$) 
in terms of Chern roots.

\begin{rmk*}
  The connection between the nilHecke algebra and Grothendieck polynomials 
  was first observed in \cite{FK-groth}. There, a different representation 
  is used, which is more appropriate for the {\em full}\/ flag variety. 
  The idea to introduce the ``vertex'' representation we use here is mentioned 
  (at $d=1$: the so-called ``5-vertex model'') in \cite[\S 5.2.5]{hdr}.
\end{rmk*}

\subsection{$d=3$ and $E_6$}\label{sec:d3}
The reasoning is extremely similar to the case of $d=2$, but for
technical reasons, we conclude in a slightly different manner; in
particular we use the results of the previous subsection.

\subsubsection{$R$-matrices}\label{ssec:Rd3}
In order to avoid repeats, we only mention the setup insofar as it differs from the one at $d=2$.
To each line we now attach a representation of the quantized affine
algebra $U_q(\mathfrak{e}_6^{(1)})$: to green and red lines 
we attach the evaluation representation (also called affinization \cite{Hern-affin}) of the $V_{\omega_1}$ 
of lemma~\ref{lem:greend3}, whereas to blue lines we attach the evaluation representation
based on the dual $V_{\omega_6}\cong V^*_{\omega_1}$:
\[
V_1=V_2=V_{\omega_1},\qquad V_3=V_{\omega_6}.
\]

Next we define $R$-matrices. Explicitly, in terms of the projectors 
$P_{\omega_6}$, $P_{2\omega_1}$, $P_{\omega_3}$
that intertwine the action of the horizontal subalgebra $U_q(\mathfrak{e}_6)$, we have
\begin{multline}\label{eq:defR}
\check R_{1,2}=\frac{1}{q^2(u'-q^6u'')(u'-u'')}
\Big(
(u''-q^8u')(u''-q^2u')P_{\omega_6}
+
\\
(u'-q^8u'')(u''-q^2u')
P_{\omega_3}
+
(u'-q^8u'')(u'-q^2u'')
P_{2\omega_1}
\Big)
\end{multline}
All other $R$-matrices involving red and green lines are proportional, since they're based on the same representations.
However, we choose their normalizations differently:
\begin{align}
\check R_{i,i}=
\frac{(u''-q^8u')(u''-q^2u')}{(u'-q^8u'')(u'-q^2u'')}
P_{\omega_6}
+
\frac{u''-q^2u'}{u'-q^2u''}
P_{\omega_3}
+
P_{2\omega_1}
\qquad i=1,2
\end{align}
$\check R_{3,3}$ is defined identically up to the outer automorphism of $E_6$ that switches $\omega_1$ and $\omega_6$.

It is simplest to define $\check R_{i,3}$, $i=1,2$, using the ``crossing symmetry''; namely,
\[
\check R_{i,3}(u'',u')=\mathcal P_{\omega_6,\omega_1}(\mathcal P_{\omega_1,\omega_1}\check R_{1,2}(u',u''))^{T_2}
\qquad i=1,2
\]
where $\mathcal P_{\omega_i,\omega_j}$ is the operator from $V_{\omega_i}\otimes V_{\omega_j}$ to $V_{\omega_j}\otimes V_{\omega_i}$ that switches the factors of the tensor product, and $T_2$ means partial transpose of the second factor of the tensor product.
As usual, we define all remaining $R$-matrices such that
\[
\check R_{i,j}(u'',u')=
(\check R_{j,i}(u',u''))^{-1}
\]
is satisfied for all $i,j=1,2,3$.

As in \eqref{eq:UD4} at $d=2$,
we get the factorization
\[
\check R_{1,2}(u'=q^{-8}u,\ u''=q^{-16}u)
=\frac{(1-q^{16})(1-q^{10})}{q^8(1-q^8)(1-q^2)} P_{\omega_6}=DU
\]
where $U: V_{\omega_1}(q^{-8}u)\otimes V_{\omega_1}(q^{-16}u) \to V_{\omega_6}(u)$ and $D: V_{\omega_6}(u)\to V_{\omega_1}(q^{-16}u)\otimes V_{\omega_1}(q^{-8}u)$ 
commute with the $U_q(\mathfrak{e}_6^{(1)})$ action. 
This is the analogue of property~\ref{pty:factor}, with $\alpha=q^{-16}$, $\beta=q^{-8}$.

It is a tedious but easy exercise to check that all properties~\ref{pty:factor}--\ref{pty:norm2} are satisfied in this setup. We also impose the gauge fixing condition \eqref{eq:norm}.

In order to interpret $\check R_{1,2}$, $U$ and $D$ in terms of puzzle labels, we refer the reader to
the Dynkin diagram (with the correspondence to its usual numbering being
$\rt{c}=1$, $\rt{b'}=2$, $\rt{b}=3$, $\rt{a}=4$, $\rt{a'}=5$, $\rt{c'}=6$)
and the crystal given in \S\ref{ssec:d3puz}:
with our conventions, these determine the labels of $V_1$. Translated into the language of $E_6$,
the rotation $\tau$ introduced in \S\ref{ssec:X} becomes the fourth power of a Coxeter element:
\[
\tau=(s_{b'}s_{c'}s_{a'}s_as_bs_c)^4
\]
where $s_\alpha$ is the reflection associated to the root $\alpha$.
(Note that there could be no such formula at $d=2$, where $\tau$ was
not an inner automorphism.)

The labeling of $V_2$ (resp.\ $V_3$) is then obtained from that of $V_1$ by multiplying weights
by $\tau^2$ (resp.\ $-\tau$).

\begin{ex}
  Consider the puzzle piece \uptri{(32)1}{((32)1)0}{0}. The weights of $(32)1$, $((32)1)0$ and $0$
  {\em considered as labels for vectors in $V_1$} can be read off the crystal; in the basis
  of fundamental weights with the same numbering as above, the highest weight (corresponding to the label $3$)
  is $(1,0,0,0,0,0)$, which we identify with $\vec f_3$ after taking the quotient
  by the null directions of the bilinear form, see theorem~\ref{thm:classif} and the discussion right after.
  Then
  \begin{alignat*}{2}
    \vec f_{(32)1} &= \vec f_3 - 2(\rt a+\rt b+\rt c)-(\rt a'+\rt b'+\rt c') &&= (-1, 0, 0, 0, 1, -1)
    \\
    \vec f_0 &= \vec f_3 -(\rt a+\rt b+\rt c) &&= (0, 1, 0, -1, 1, 0)
    \\
    \vec f_{((32)1)0} &= \vec f_3 - 2(\rt a+\rt b+\rt c)-(\rt a'+\rt b') &&= (-1, 0, 0, 0, 0, 1)
  \end{alignat*}
  where each root is equal in this basis to the corresponding row of the Cartan matrix.

  Now we apply $\tau^2$ and $-\tau$ to $\vec f_0$ and $\vec f_{((32)1)0}$ respectively.
  Each reflection $s_\alpha$ acts on a weight $w$ by
  negating the corresponding entry $w_\alpha\mapsto -w_\alpha$ and adding $w_\alpha$ to every $w_\beta$, where
  $\beta$ is adjacent to $\alpha$ in the Dynkin diagram.
  We compute:
  \begin{align*}
    \tau^2\vec f_0 &= (0, 0, 0, 0, -1, 1)
    \\
    -\tau\vec f_{((32)1)0} &= (-1, 0, 0, 0, 0, 0)
  \end{align*}
  We see that $\vec f_{(32)1}+\tau^2\vec f_0=-\tau\vec f_{((32)1)0}$, in accordance with \eqref{eq:YX}.
\end{ex}

\subsubsection{The limit $q\to 0$}\label{ssec:q0d3}
We start once again from theorem~\ref{thm:final}, and rewrite it using lemma~\ref{lem:restr}:
\[
\sum_\nu 
\tikz[scale=1.8,baseline=0.5cm]{\uptri{\lambda}{\nu}{\mu}}
\mathbf{S}^\nu_{\sss N}
=
\mathbf{S}^\lambda_{\sss NW}
\,
\mathbf{S}^\mu_{\sss NE}
\qquad
\text{in }\QQ(t_1,\ldots,t_n,u_1,\ldots,u_n,q)^{\prod_{i=0}^d \mathcal S_{p_i}}/\mathcal I
\]
where $\prod_{i=0}^d \mathcal S_{p_i}$ acts by permutations of the $t_i$, cf lemma~\ref{lem:sym},
and $\mathcal I$ is the ideal of rational functions that vanish when the $t$s are specialized to a permutation of the $u$s. $\mathcal I$ is generated
by the $P(t_1,\ldots,t_n)-P(u_1,\ldots,u_n)$, where $P$ runs over symmetric polynomials.

We {\em set all $u$s to one}: we obtain
\[
\sum_\nu 
\tikz[scale=1.8,baseline=0.5cm]{\uptri{\lambda}{\nu}{\mu}}|_{u_i=1}\,
\mathbf{S}^\nu_{\sss N}|_{u_i=1}
=
\mathbf{S}^\lambda_{\sss NW}|_{u_i=1}
\,
\mathbf{S}^\mu_{\sss NE}|_{u_i=1}
\qquad
\text{in }\QQ(t_1,\ldots,t_n,q)^{\prod_{i=0}^d \mathcal S_{p_i}}/\mathcal I_1
\]
where $\mathcal I_1$ is the ideal generated by $P(t_1,\ldots,t_n)-P(1,\ldots,1)$, $P\in \QQ[t_1,\ldots,t_n]^{\mathcal S_n}$.

We are now (and only now) ready to take the limit $q\to 0$ as before:
\begin{itemize}
\item According to proposition~\ref{prop:weight} part 1, we can restrict $\check R_{i,i}$ to the single-number
sector, check explicitly that these three matrices
are identical for $i=1,2,3$, and then check that lemma~\ref{lem:D4a} holds without any change at $d=3$.
\item According to proposition~\ref{prop:groth}, this implies that
\[
\mathbf{S}^\lambda=q^{-\ell(\lambda)}(S^\lambda+O(q^2))
\]
where $S^\lambda$ is the double Grothendieck polynomial associated to
$\lambda$. In particular, $\mathbf{S}^\lambda|_{u_i=1}
=q^{-\ell(\lambda)}(S^\lambda|_{u_i=1}+O(q^2))$,
where $S^\lambda|_{u_i=1}$ is the (ordinary)
Grothendieck polynomial associated to $\lambda$.

\item Finally, we examine the (nonequivariant) puzzle, i.e., we take the $q\to0$ limit of $\check R_{3,1}(q^{-16}u,q^8u)=UD$. We find:
\begin{lem} As $q\to 0$,
\begin{align*}
U^{XZ}_Y &= (-q)^{-\frac{1}{2}B(\vf_X,\vf_Z)}
\left(
\begin{cases}
1
&
\uptri{X}{Y}{Z}\text{ admissible}
\\
0
&
\text{otherwise}
\end{cases}
\quad+O(q^2)
\right)
\\
D_{ZX}^Y &= (-q)^{\frac{1}{2}B(\vf_Z,\vf_X)}
\left(
\begin{cases}
1
&
\downtri{X}{Y}{Z}\text{ admissible}
\\
0
&
\text{otherwise}
\end{cases}
\quad+O(q^2)
\right)
\end{align*}
where an admissible triangle is one which is either of the form of
\eqref{eq:d3tri} (up to rotation), or a $K$-piece as listed in 
appendix~\ref{app:d3}.
\end{lem}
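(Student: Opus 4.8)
The plan is to establish this lemma by the same explicit-computation strategy already used for the $d=2$ analogue (lemma~\ref{lem:D4b}), but now for the $E_6$ data of \S\ref{ssec:Rd3}. Since $\check R_{3,1}|_{u_1=q^{-16}u,\ u_2=q^{-8}u} = UD$ is a scalar multiple of the projector $P_{\omega_6}$, the operators $U$ and $D$ are (up to an overall normalization fixed by the gauge condition \eqref{eq:norm}) the unique-up-to-scalar $U_q(\mathfrak{e}_6)$-intertwiners $V_{\omega_1}\otimes V_{\omega_1}\onto V_{\omega_6}$ and $V_{\omega_6}\into V_{\omega_1}\otimes V_{\omega_1}$. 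First I would fix bases: the weight basis of each $V_{\omega_i}$ indexed by the $27$ valid multinumbers via lemma~\ref{lem:greend3} and its $\tau$-twists, with normalization pinned down by \eqref{eq:norm} (using the $\Delta$ and $\nabla$ triangles) and property~\ref{pty:norm}. With these bases, $U^{XZ}_Y$ and $D^Y_{ZX}$ are concrete rational functions of $q$ (all $u$-dependence having dropped out, since $P_{\omega_6}$ is $u$-independent and the prefactor is absorbed), which can be tabulated as in appendix~\ref{app:Rd2}/appendix~\ref{app:d3}.

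The substance of the proof is then: (i) check that the leading power of $q$ in $U^{XZ}_Y$ is $q^{-\frac12 B(\vf_X,\vf_Z)}$ exactly when $\uptri{X}{Y}{Z}$ is one of the admissible triangles — i.e.\ appears (up to rotation) in \eqref{eq:d3tri} or in the $K$-piece list of appendix~\ref{app:d3} — and that the coefficient of that power is $1$; (ii) check that for any other triple $(X,Y,Z)$ the matrix entry is either identically zero (typically forced by weight conservation, property~\ref{pty:ybe}, since $\vf_X$, $\vf_Z$ and the blue weight $-\tau\vf_Y$ must sum appropriately) or is $O(q)$ times smaller than the stated leading order; (iii) verify that the exponent $-\frac12 B(\vf_X,\vf_Z)$ indeed equals the inversion number of the triangle, as recorded in the central circle of the pictures, using the formula $B(\tau^r\vf_i,\tau^s\vf_j)$ from \S\ref{ssec:Bmatrix}; and (iv) the statement for $D$ follows from that for $U$ by the reversal-of-arrows symmetry already invoked before theorem~\ref{thm:dual}, or equally by a direct computation, noting $B(\vf_Z,\vf_X) = -B(\vf_X,\vf_Z)$ so the sign of the exponent flips as stated. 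The key input making (i)--(ii) tractable is that, thanks to property~\ref{pty:factor} and the definition of admissible, the set of $(X,Y,Z)$ with $D^Y_{ZX}\ne 0$ at generic $q$ is \emph{exactly} the set where a blue label $Y$ can fill the diagonal of the rhombus $\rh{X}{?}{Z}{?}$ obtained by factorizing — but the $q\to 0$ degeneration further kills all but the ``minimal'' such fillings, and one must see which survive.

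The main obstacle, as in the $d=2$ case, is purely the bookkeeping: $E_6$'s $27$-dimensional representation is considerably larger than $D_4$'s $8$-dimensional one, the relevant tensor product $V_{\omega_1}\otimes V_{\omega_1} \cong V_{2\omega_1}\oplus V_{\omega_6}\oplus V_{\omega_3}$ has three channels rather than two, and the $151$ extra $K$-pieces of appendix~\ref{app:d3} mean there are many more nonzero entries to account for individually. So the real work is organizing the computation — ideally grouping triples $(X,Y,Z)$ by the multinumber-type stratification of \S\ref{ssec:X} and by rotation class, so that each orbit is checked once — and then carrying it out (by hand for the structural entries, by computer algebra for the full table), exactly as the excerpt says: ``It is a tedious but easy exercise.'' I would present the structural part (weight conservation ruling out the zero entries, and the face argument of the lemma before proposition~\ref{prop:weight} ruling on which labels can co-occur) as the conceptual core, and relegate the exhaustive entry-by-entry verification to the appendix that lists $\check R_{3,1}$, $U$, and $D$.

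Concretely, the proof would read: ``This is checked on the entries of $U$ and $D$, which follow from \eqref{eq:defR} specialized to $u_1 = q^{-16}u$, $u_2 = q^{-8}u$ and are listed in appendix~\ref{app:d3}; the exponents $\mp\frac12 B(\vf_X,\vf_Z)$ are the inversion numbers of the corresponding triangles, displayed inside circles on the pictures of \eqref{eq:d3tri} and appendix~\ref{app:d3}. Weight conservation (property~\ref{pty:ybe}) together with the face argument preceding proposition~\ref{prop:weight} forces all other entries to vanish, and a direct inspection shows that the surviving entries degenerate as $q\to 0$ with leading coefficient $1$ precisely on the admissible triangles.''
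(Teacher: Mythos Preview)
Your approach is essentially the same as the paper's: a direct, entry-by-entry expansion of $U$ and $D$ at leading order in $q$. The paper's proof is in fact even terser than your final paragraph --- it simply states that this is a computer-assisted expansion of the $27^3$ entries, with Mathematica code available on request, and does not attempt the structural organization you outline.

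Two small corrections. First, appendix~\ref{app:d3} does \emph{not} list the full entries of $U$ and $D$ (unlike appendix~\ref{app:Rd2} for $d=2$); it only lists the resulting $K$-pieces, so your sentence ``are listed in appendix~\ref{app:d3}'' is inaccurate. Second, the structural arguments you propose (weight conservation, the face lemma) only dispose of entries that vanish \emph{identically}; for the many weight-conserving triples $(X,Y,Z)$ that are not admissible, one still has to verify that the entry is $o(q^{-\frac12 B(\vf_X,\vf_Z)})$ by direct computation --- this is where the bulk of the $27^3$ check lies, and neither the paper nor your structural remarks shortcut it. Your point (iv) about obtaining $D$ from $U$ by arrow-reversal is plausible but the paper does not invoke it; it simply checks both.
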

The proof is a (computer-assisted) expansion at first nontrivial order
in $q$ of the $27^3$ entries of $U$ and $D$. 
The Macaulay2 code is available upon request.
\end{itemize}
Finally,
\[
\sum_\nu 
\tikz[scale=1.8,baseline=0.5cm]{\uptri{\lambda}{\nu}{\mu}}
S^\nu|_{u_i=1}
=
S^\lambda|_{u_i=1}
\,
S^\mu|_{u_i=1}
\qquad
\text{in }\ZZ[t_1^\pm,\ldots,t_n^\pm]^{\prod_{i=0}^d \mathcal S_{p_i}}/\mathcal I'_1
\]
where we have changed the ambient ring because
the $S^\lambda|_{u_i=1}$ are polynomials in the $t_i$ (in fact, we only
care that they're Laurent polynomials), and
$\mathcal I'_1=\mathcal I_1
\cap \ZZ[t_1^\pm,\ldots,t_n^\pm]^{\prod_{i=0}^d \mathcal S_{p_i}}$.
The triangle now stands for the summation over {\em nonequivariant}\/
puzzles, i.e.,
$\tikz[scale=1.8,baseline=0.5cm]{\uptri{\lambda}{\nu}{\mu}}=(-1)^{\ell(\nu)-\ell(\lambda)-\ell(\mu)}\# \{\text{such puzzles} \}$.
The quotient ring $\ZZ[t_1^\pm,\ldots,t_n^\pm]^{\prod_{i=0}^d \mathcal S_{p_i}}/\mathcal I'_1$ is nothing but the 
$K$-theory ring of the $d$-step flag variety $\{ 0=V_0\le V_1\le \cdots\le V_d\le V_{d+1}=\CC^n \}$ with $\dim V_i=n_i$, $p_i=n_{i+1}-n_i$, $i=0,\ldots,d$,
where $d=3$ here, and the $t$s are the $K$-classes of tautological line bundles (after splitting).
The $S^\lambda|_{u_i=1}$ are then the nonequivariant $K$-theoretic Schubert classes.

As explained at the end of \S\ref{sec:pipe}, the $K$-theoretic dual
Schubert classes can be obtained by $180^\circ$ degree rotation of the
pictures for Schubert classes, and application of theorem~\ref{thm:dual}
leads to a puzzle formula for these dual classes:
\[
\sum_\nu 
\tikz[scale=1.8,baseline=-0.8cm]{\downtri{\lambda}{\nu}{\mu}}
S_\nu|_{u_i=1}
=
S_\lambda|_{u_i=1}
\,
S_\mu|_{u_i=1}
\]
where once again
$\tikz[scale=1.8,baseline=-0.8cm]{\downtri{\lambda}{\nu}{\mu}}=(-1)^{\ell(\nu)-\ell(\lambda)-\ell(\mu)}\# \{\text{such puzzles} \}$.

This concludes the proof of theorem~\ref{thm:d3}.


\junk{\section{Negative statements}\label{sec:neg}
{at least some words about the proliferation of negative charge
states, which lead to divergences in taking the limit}

Ignoring the aforementioned difficulties, one can try to find a $H^*(3$-step$)$ rule (as a first step towards the $K$-version)
by manually introducing equivariant pieces. 
\junk{There is a $d=3$ analogue
of the inversion-counting symplectic form $B$ from lemma~\ref{lem:sympd1},
and in appendix \ref{app:d3} we label the $K$-pieces with their 
spoilage of this count, the worst being
$\uptri[5]{3(((32)1)0)}{3(((32)1)0)}{3(((32)1)0)}$.
We delay discussion of this $B$ until \S\ref{sec:gen}.}
To extend Buch's $H^*$ rule to $H^*_T$, we would hope to introduce
vertical rhombi $R$ with vector sum $\vec 0$ 
in the sense of lemma~\ref{lem:greend3}, 
but with negative inversion count $inv(R)$. The potential rhombi are given in Appendix~\ref{app:d3eq}.
Alas,
\begin{thm}\label{thm:HT3}
  Define the fugacity of each potential rhombus $R$ 
  to be $d_R (y_i-y_j)^{-inv(R)}$,
  where the $d_R$ are a system of independent coefficients. 
  Use eq.~\ref{eq:LRT} to define a product. Then there is no choice 
  $(d_R)$ that gives the actual product in equivariant Schubert calculus.
\end{thm}
This result is obtained by a bruteforce computer analysis of invarrowxxx

For $d=4$ we are flying nearly blind, as there is no conjecture
as to the list of edge labels (just a lower bound suggested by
compatibility with pullbacks along maps $GL_n(\CC)/P \onto GL_n(\CC)/Q$).
If we assume {not sure what you're trying to say here}

\rem{cite \cite{SchubII} again}
}

\appendix
\section{The $d=2$ $R$-matrix}\label{app:Rd2}
These are the nonzero entries of $\check R_{1,2}(u'=q^{2}u_2,u''=q^{-2}u_1)$ as defined in \eqref{eq:RD4} and used in lemma~\ref{lem:D4b}:

\begin{itemize}
\item
$\rh[0]{0}{0}{0}{0}=\rh[0]{10}{0}{10}{0}=\rh[0]{20}{0}{20}{0}=\rh[0]{(21)0}{0}{(21)0}{0}=\rh[0]{0}{1}{0}{1}=\rh[0]{1}{1}{1}{1}=\rh[0]{21}{1}{21}{1}=\rh[2]{(21)0}{1}{(21)0}{1}=\rh[0]{0}{2}{0}{2}=\rh[0]{1}{2}{1}{2}=\rh[0]{2}{2}{2}{2}=\rh[0]{10}{2}{10}{2}=\rh[0]{1}{10}{1}{10}=\rh[2]{10}{10}{10}{10}=\rh[2]{(21)0}{10}{(21)0}{10}=\rh[0]{2(10)}{10}{2(10)}{10}=\rh[0]{2}{20}{2}{20}=\rh[2]{10}{20}{10}{20}=\rh[2]{20}{20}{20}{20}=\rh[2]{2(10)}{20}{2(10)}{20}=\rh[0]{0}{21}{0}{21}=\rh[0]{2}{21}{2}{21}=\rh[2]{20}{21}{20}{21}=\rh[2]{21}{21}{21}{21}=\rh[2]{20}{(21)0}{20}{(21)0}=\rh[0]{21}{(21)0}{21}{(21)0}=\rh[4]{(21)0}{(21)0}{(21)0}{(21)0}=\rh[2]{2(10)}{(21)0}{2(10)}{(21)0}=\rh[2]{1}{2(10)}{1}{2(10)}=\rh[0]{2}{2(10)}{2}{2(10)}=\rh[2]{21}{2(10)}{21}{2(10)}=\rh[4]{2(10)}{2(10)}{2(10)}{2(10)}=1$
\item
$\rh[2]{(21)0}{(21)0}{0}{21}=\frac{\left(1-q^2\right) u_1}{q^2 \left(u_1-q^2 u_2\right)}$
\item
$\rh[1]{(21)0}{1}{10}{2}=\rh[1]{2}{2(10)}{10}{10}=\rh[3]{(21)0}{(21)0}{10}{20}=\rh[1]{1}{2(10)}{0}{21}=\rh[1]{10}{20}{0}{21}=\rh[3]{2(10)}{2(10)}{20}{21}=\rh[3]{(21)0}{(21)0}{1}{2(10)}=\frac{-\left(1-q^2\right) u_1}{q \left(u_1-q^2 u_2\right)}$
\item
$\rh[0]{1}{10}{0}{0}=\rh[0]{2}{20}{0}{0}=\rh[0]{21}{(21)0}{0}{0}=\rh[0]{2}{21}{10}{0}=\rh[0]{2(10)}{10}{20}{0}=\rh[0]{2}{2(10)}{0}{1}=\rh[0]{2}{21}{1}{1}=\rh[0]{10}{0}{1}{1}=\rh[0]{20}{0}{21}{1}=\rh[0]{2(10)}{10}{21}{1}=\rh[0]{(21)0}{0}{1}{2}=\rh[0]{20}{0}{2}{2}=\rh[0]{21}{1}{2}{2}=\rh[0]{2(10)}{10}{2}{2}=\rh[2]{2(10)}{2(10)}{10}{2}=\rh[0]{2}{20}{1}{10}=\rh[0]{21}{(21)0}{1}{10}=\rh[2]{20}{(21)0}{10}{10}=\rh[2]{20}{20}{(21)0}{10}=\rh[2]{21}{2(10)}{(21)0}{10}=\rh[0]{21}{(21)0}{2}{20}=\rh[2]{2(10)}{20}{21}{21}=\rh[2]{10}{20}{1}{2(10)}=\rh[2]{20}{20}{21}{2(10)}=\frac{\left(1-q^2\right) u_1}{u_1-q^2 u_2}$
\item
$\rh[1]{2(10)}{(21)0}{10}{0}=\rh[1]{21}{21}{(21)0}{0}=\rh[1]{2(10)}{20}{(21)0}{0}=\rh[1]{10}{10}{0}{1}=\rh[1]{20}{(21)0}{0}{1}=\rh[1]{2(10)}{(21)0}{1}{1}=\rh[3]{2(10)}{2(10)}{(21)0}{1}=\rh[1]{20}{20}{0}{2}=\rh[1]{21}{2(10)}{0}{2}=\rh[1]{(21)0}{10}{0}{2}=\rh[1]{21}{21}{1}{2}=\rh[1]{2(10)}{20}{1}{2}=\rh[1]{20}{21}{10}{2}=\rh[1]{2(10)}{(21)0}{2}{21}=\rh[1]{20}{(21)0}{2}{2(10)}=\frac{-q \left(1-q^2\right) u_1}{u_1-q^2 u_2}$
\item
$\rh[2]{20}{21}{(21)0}{1}=\frac{q^2 \left(1-q^2\right) u_1}{u_1-q^2 u_2}$
\item
$\rh[2]{(21)0}{1}{20}{21}=\frac{\left(1-q^2\right) u_2}{q^2 \left(u_1-q^2 u_2\right)}$
\item
$\rh[1]{0}{1}{10}{10}=\rh[1]{0}{2}{(21)0}{10}=\rh[1]{0}{2}{20}{20}=\rh[1]{1}{2}{2(10)}{20}=\rh[1]{(21)0}{0}{2(10)}{20}=\rh[1]{10}{2}{20}{21}=\rh[1]{1}{2}{21}{21}=\rh[1]{(21)0}{0}{21}{21}=\rh[1]{0}{1}{20}{(21)0}=\rh[1]{2}{2(10)}{20}{(21)0}=\rh[1]{1}{1}{2(10)}{(21)0}=\rh[1]{2}{21}{2(10)}{(21)0}=\rh[1]{10}{0}{2(10)}{(21)0}=\rh[1]{0}{2}{21}{2(10)}=\rh[3]{(21)0}{1}{2(10)}{2(10)}=\frac{-\left(1-q^2\right) u_2}{q \left(u_1-q^2 u_2\right)}$
\item
$\rh[0]{1}{1}{10}{0}=\rh[0]{2}{2}{20}{0}=\rh[0]{21}{1}{20}{0}=\rh[0]{1}{2}{(21)0}{0}=\rh[0]{2}{2}{21}{1}=\rh[0]{0}{0}{1}{10}=\rh[0]{2}{2}{2(10)}{10}=\rh[0]{20}{0}{2(10)}{10}=\rh[0]{21}{1}{2(10)}{10}=\rh[0]{0}{0}{2}{20}=\rh[0]{1}{10}{2}{20}=\rh[2]{1}{2(10)}{10}{20}=\rh[2]{21}{2(10)}{20}{20}=\rh[2]{(21)0}{10}{20}{20}=\rh[2]{21}{21}{2(10)}{20}=\rh[0]{1}{1}{2}{21}=\rh[0]{10}{0}{2}{21}=\rh[2]{10}{10}{20}{(21)0}=\rh[0]{0}{0}{21}{(21)0}=\rh[0]{1}{10}{21}{(21)0}=\rh[0]{2}{20}{21}{(21)0}=\rh[0]{0}{1}{2}{2(10)}=\rh[2]{(21)0}{10}{21}{2(10)}=\rh[2]{10}{2}{2(10)}{2(10)}=\frac{\left(1-q^2\right) u_2}{u_1-q^2 u_2}$
\item
$\rh[1]{10}{2}{(21)0}{1}=\rh[1]{0}{21}{10}{20}=\rh[3]{1}{2(10)}{(21)0}{(21)0}=\rh[3]{10}{20}{(21)0}{(21)0}=\rh[1]{0}{21}{1}{2(10)}=\rh[1]{10}{10}{2}{2(10)}=\rh[3]{20}{21}{2(10)}{2(10)}=\frac{-q \left(1-q^2\right) u_2}{u_1-q^2 u_2}$
\item
$\rh[2]{0}{21}{(21)0}{(21)0}=\frac{q^2 \left(1-q^2\right) u_2}{u_1-q^2 u_2}$
\item
$\rh[-1]{1}{0}{1}{0}=\rh[-1]{2}{0}{2}{0}=\rh[-1]{21}{0}{21}{0}=\rh[-1]{2(10)}{0}{2(10)}{0}=\rh[-1]{2}{1}{2}{1}=\rh[1]{10}{1}{10}{1}=\rh[1]{20}{1}{20}{1}=\rh[1]{2(10)}{1}{2(10)}{1}=\rh[1]{20}{2}{20}{2}=\rh[1]{21}{2}{21}{2}=\rh[1]{(21)0}{2}{(21)0}{2}=\rh[1]{2(10)}{2}{2(10)}{2}=\rh[1]{0}{10}{0}{10}=\rh[-1]{2}{10}{2}{10}=\rh[1]{20}{10}{20}{10}=\rh[-1]{21}{10}{21}{10}=\rh[1]{0}{20}{0}{20}=\rh[1]{1}{20}{1}{20}=\rh[1]{21}{20}{21}{20}=\rh[3]{(21)0}{20}{(21)0}{20}=\rh[1]{1}{21}{1}{21}=\rh[1]{10}{21}{10}{21}=\rh[3]{(21)0}{21}{(21)0}{21}=\rh[3]{2(10)}{21}{2(10)}{21}=\rh[1]{0}{(21)0}{0}{(21)0}=\rh[1]{1}{(21)0}{1}{(21)0}=\rh[-1]{2}{(21)0}{2}{(21)0}=\rh[3]{10}{(21)0}{10}{(21)0}=\rh[1]{0}{2(10)}{0}{2(10)}=\rh[3]{10}{2(10)}{10}{2(10)}=\rh[3]{20}{2(10)}{20}{2(10)}=\rh[5]{(21)0}{2(10)}{(21)0}{2(10)}=\frac{q \left(u_1-u_2\right)}{u_1-q^2 u_2}$
\end{itemize}

These are the nonzero entries of $U$ and $D$, defined in \eqref{eq:UD4}:
\begin{itemize}
\item
$\uptri[1]{10}{10}{10}=\uptri[1]{10}{20}{(21)0}=\uptri[1]{20}{20}{20}=\uptri[1]{20}{21}{2(10)}=\uptri[1]{21}{2(10)}{20}=\uptri[1]{21}{21}{21}=\uptri[1]{(21)0}{10}{20}=\uptri[1]{(21)0}{1}{2(10)}=\uptri[1]{2(10)}{20}{21}=-\frac{1}{q}$
\item
$\uptri[0]{0}{0}{0}=\uptri[0]{0}{1}{10}=\uptri[0]{0}{2}{20}=\uptri[0]{0}{21}{(21)0}=\uptri[0]{1}{10}{0}=\uptri[0]{1}{1}{1}=\uptri[0]{1}{2}{21}=\uptri[0]{2}{20}{0}=\uptri[0]{2}{21}{1}=\uptri[0]{2}{2}{2}=\uptri[0]{2}{2(10)}{10}=\uptri[0]{10}{0}{1}=\uptri[0]{10}{2}{2(10)}=\uptri[0]{20}{0}{2}=\uptri[0]{21}{(21)0}{0}=\uptri[0]{21}{1}{2}=\uptri[0]{(21)0}{0}{21}=\uptri[0]{2(10)}{10}{2}=\uptri[2]{2(10)}{2(10)}{2(10)}=1$
\item
$\uptri[1]{1}{2(10)}{(21)0}=\uptri[1]{20}{(21)0}{10}=\uptri[1]{2(10)}{(21)0}{1}=-q$
\item
$\uptri[2]{(21)0}{(21)0}{(21)0}=q^2$
\end{itemize}

\begin{itemize}
\item
$\downtri[2]{(21)0}{(21)0}{(21)0}=\frac{1}{q^2}$
\item
$\downtri[1]{1}{2(10)}{(21)0}=\downtri[1]{2(10)}{(21)0}{1}=\downtri[1]{20}{(21)0}{10}=-\frac{1}{q}$
\item
$\downtri[0]{2}{2(10)}{10}=\downtri[2]{2(10)}{2(10)}{2(10)}=\downtri[0]{2}{20}{0}=\downtri[0]{21}{(21)0}{0}=\downtri[0]{2}{21}{1}=\downtri[0]{0}{21}{(21)0}=\downtri[0]{1}{1}{1}=\downtri[0]{21}{1}{2}=\downtri[0]{0}{1}{10}=\downtri[0]{1}{10}{0}=\downtri[0]{2(10)}{10}{2}=\downtri[0]{2}{2}{2}=\downtri[0]{0}{2}{20}=\downtri[0]{1}{2}{21}=\downtri[0]{10}{2}{2(10)}=\downtri[0]{0}{0}{0}=\downtri[0]{10}{0}{1}=\downtri[0]{20}{0}{2}=\downtri[0]{(21)0}{0}{21}=1$
\item
$\downtri[1]{21}{2(10)}{20}=\downtri[1]{20}{20}{20}=\downtri[1]{2(10)}{20}{21}=\downtri[1]{10}{20}{(21)0}=\downtri[1]{21}{21}{21}=\downtri[1]{20}{21}{2(10)}=\downtri[1]{(21)0}{1}{2(10)}=\downtri[1]{10}{10}{10}=\downtri[1]{(21)0}{10}{20}=-q$
\end{itemize}

\section{The $d=3$ $K$-triangles}\label{app:d3}
\def\Ktriup{10/10/10/1, 10/20/(21)0/1, 10/30/(31)0/1, 10/(32)0/((32)1)0/1, 
 10/3(20)/(3(2(10)))0/1, 20/20/20/1, 20/21/2(10)/1, 20/30/(32)0/1, 
 20/31/(32)(10)/1, 21/21/21/1, 21/2(10)/20/1, 21/31/(32)1/1, 
 21/3(10)/(32)0/1, (21)0/1/2(10)/1, (21)0/10/20/1, 
 (21)0/30/(3(21))0/1, (21)0/31/(3(21))(10)/1, 2(10)/20/21/1, 
 2(10)/30/(32)1/1, 30/30/30/1, 30/31/3(10)/1, 30/32/3(20)/1, 
 30/3(21)/3((21)0)/1, 31/31/31/1, 31/3(10)/30/1, 31/32/3(21)/1, 
 (31)0/1/3(10)/1, (31)0/10/30/1, (31)0/2/3((21)0)/1, 
 (31)0/32/3(((32)1)0)/1, 3(10)/30/31/1, 3(10)/32/3(2(10))/1, 
 32/32/32/1, 32/3(20)/30/1, 32/3(21)/31/1, 32/3(2(10))/3(10)/1, 
 32/3(((32)1)0)/(31)0/1, (32)0/2/3(20)/1, (32)0/20/30/1, 
 (32)0/21/3(10)/1, 3(20)/30/32/1, (32)1/2/3(21)/1, (32)1/21/31/1, 
 (32)1/2(10)/30/1, 3(21)/31/32/1, 3(21)/3((21)0)/30/1, 
 ((32)1)0/1/(32)(10)/1, ((32)1)0/10/(32)0/1, ((32)1)0/2/(32)((21)0)/1, 
 (3(21))0/1/3(20)/1, (32)(10)/2/3(2(10))/1, (32)(10)/20/31/1, 
 3((21)0)/30/3(21)/1, 3(2(10))/3(10)/32/1, 3(2(10))/(3(21))(10)/1/1, 
 (3(2(10)))0/10/3(20)/1, (3(21))(10)/1/3(2(10))/1, 
 (32)((21)0)/20/3(21)/1, 3(((32)1)0)/(31)0/32/1, 
 20/(3(21))0/((32)1)0/2, 20/3((21)0)/(3(2(10)))0/2, 
 21/3((21)0)/(3(21))0/2, 21/(3(21))(10)/((32)1)0/2, 
 (21)0/(31)0/((32)1)0/2, (21)0/3(10)/(3(2(10)))0/2, 
 ((32)1)0/20/(3(21))0/2, ((32)1)0/21/(3(21))(10)/2, 
 (3(21))0/21/3((21)0)/2, (3(21))0/31/3(((32)1)0)/2, 
 3((21)0)/31/3(2(10))/2, 3((21)0)/3(10)/3(20)/2, 
 3(2(10))/3((21)0)/31/2, (3(2(10)))0/20/3((21)0)/2, 
 (3(2(10)))0/30/3(((32)1)0)/2, (32)((21)0)/21/3(2(10))/2, 
 (32)((21)0)/2(10)/3(20)/2, 3(((32)1)0)/(32)0/3(21)/2, 
 3(((32)1)0)/(3(21))0/31/2, 3(((32)1)0)/(3(2(10)))0/30/2, 
 ((32)1)0/2(10)/(3(2(10)))0/3, ((32)1)0/((32)1)0/((32)1)0/3, 
 3((21)0)/3((21)0)/3((21)0)/3, 3(((32)1)0)/(32)1/3(2(10))/3, 
 3(((32)1)0)/(32)(10)/3(20)/3, 3(((32)1)0)/(3(21))(10)/3(10)/3, 
 3(((32)1)0)/(32)((21)0)/3((21)0)/4, 
 3(((32)1)0)/3(((32)1)0)/3(((32)1)0)/5}

\def\Ktridown{1/2(10)/(21)0/1, 1/3(10)/(31)0/1, 1/3(20)/(3(21))0/1, 
 1/(32)(10)/((32)1)0/1, 1/3(2(10))/(3(21))(10)/1, 2/3(20)/(32)0/1, 
 2/3(21)/(32)1/1, 2/3((21)0)/(31)0/1, 2/3(2(10))/(32)(10)/1, 
 2/(32)((21)0)/((32)1)0/1, 20/(21)0/10/1, 20/3(21)/(32)((21)0)/1, 
 2(10)/(21)0/1/1, 30/(31)0/10/1, 30/(32)0/20/1, 30/(32)1/2(10)/1, 
 30/(3(21))0/(21)0/1, 31/(32)1/21/1, 31/(32)(10)/20/1, 
 31/(3(21))(10)/(21)0/1, 3(10)/(31)0/1/1, 3(10)/(32)0/21/1, 
 (32)0/((32)1)0/10/1, 3(20)/(32)0/2/1, 3(20)/(3(21))0/1/1, 
 3(20)/(3(2(10)))0/10/1, 3(21)/(32)1/2/1, 3(21)/(32)((21)0)/20/1, 
 (3(21))0/(21)0/30/1, (32)(10)/((32)1)0/1/1, 3((21)0)/(31)0/2/1, 
 3(2(10))/(32)(10)/2/1, (3(21))(10)/(21)0/31/1, 
 (32)((21)0)/((32)1)0/2/1, 
 (21)0/(21)0/(21)0/2, 
21/3(2(10))/(32)((21)0)/2, 
2(10)/3(20)/(32)((21)0)/2, (31)0/(31)0/(31)0/2, 
 (31)0/(32)0/(3(21))0/2, (31)0/(32)1/(3(21))(10)/2, 
 (31)0/((32)1)0/(21)0/2, 3(10)/(32)(10)/2(10)/2, 
 3(10)/(3(2(10)))0/(21)0/2, (32)0/(32)0/(32)0/2, 
 (32)0/(32)1/(32)(10)/2, (32)0/(3(21))0/(31)0/2, 
 3(20)/(32)((21)0)/2(10)/2, (32)1/(32)1/(32)1/2, 
 (32)1/(32)(10)/(32)0/2, (32)1/(3(21))(10)/(31)0/2, 
 ((32)1)0/(21)0/(31)0/2, (3(21))0/(31)0/(32)0/2, 
 (3(21))0/((32)1)0/20/2, (32)(10)/(32)0/(32)1/2, 
 (3(2(10)))0/(21)0/3(10)/2, (3(21))(10)/(31)0/(32)1/2, 
 (3(21))(10)/((32)1)0/21/2, (3(21))0/(32)1/(32)((21)0)/3, 
 (32)(10)/(32)(10)/(32)(10)/3, (32)(10)/(3(2(10)))0/(31)0/3, 
 (3(2(10)))0/(31)0/(32)(10)/3, (3(2(10)))0/(32)0/(32)((21)0)/3, 
 (3(2(10)))0/((32)1)0/2(10)/3, (3(21))(10)/(32)(10)/(32)((21)0)/4}

\noindent\foreach[count=\a from 0] \x/\y/\z/\t in \Ktriup%
{%
\tikz[scale=1.2]{
\useasboundingbox (-1.8,-0.2) rectangle (1.8,1);
\uptri[\t]\x\y\z
} %
\pgfmathparse{mod(\a,4)==3 ? "\noexpand\linebreak" : ""}\pgfmathresult%
}%
\foreach[count=\a from 3] \x/\y/\z/\t in \Ktridown%
{%
\tikz[scale=1.2]{
\useasboundingbox (-1.8,-1) rectangle (1.8,0.2);
\downtri[\t]\x\y\z
} %
\pgfmathparse{mod(\a,4)==3 ? "\noexpand\linebreak" : ""}\pgfmathresult%
}
\linebreak

\junk{\section{The potential $d=3$ equivariant rhombi}\label{app:d3eq}
\def\eqrh{3/(32)((21)0)/3/(32)((21)0)/-1,3/(32)((21)0)/(21)0/((32)1)0/-1,3/(32)1/2(10)/((32)1)0/-1,3/(3(21))(10)/3/(3(21))(10)/-1,3/(3(21))(10)/10/((32)1)0/-1,3/(32)(10)/3/(32)(10)/-1,3/(3(21))0/3/(3(21))0/-1,3(((32)1)0)/(21)0/3(((32)1)0)/(21)0/-1,3/(32)(10)/20/((32)1)0/-1,3(((32)1)0)/10/3(((32)1)0)/10/-1,3(((32)1)0)/0/3(((32)1)0)/0/-1,3/(3(2(10)))0/3/(3(2(10)))0/-1,3/(32)0/21/((32)1)0/-1,32/(3(21))(10)/32/(3(21))(10)/-1,32/(3(21))0/32/(3(21))0/-1,3/2/3((21)0)/((32)1)0/-1,32/(3(2(10)))0/32/(3(2(10)))0/-1,32/(31)0/32/(31)0/-1,32/21/3(10)/(21)0/-1,32/2(10)/32/2(10)/-1,(32)((21)0)/10/(32)((21)0)/10/-1,(32)((21)0)/0/(32)((21)0)/0/-1,3(21)/(3(2(10)))0/3(21)/(3(2(10)))0/-1,32/1/3(20)/(21)0/-1,3(21)/(31)0/3(21)/(31)0/-1,3/21/3(10)/((32)1)0/-1,3(21)/2(10)/3(21)/2(10)/-1,(32)1/(21)0/(32)1/(21)0/-1,3(21)/20/3(21)/20/-1,(32)1/10/(32)1/10/-1,(3(21))(10)/0/(3(21))(10)/0/-1,3/(21)0/32/((32)1)0/-1,32/10/3(21)/(21)0/-1,(32)1/0/3(2(10))/30/-1,3(21)/0/3(2(10))/10/-1,32/10/32/10/-1,3(21)/0/3(21)/0/-1,3/2(10)/3/2(10)/-1,3/(21)0/3/(21)0/-1,(32)1/0/(32)1/0/-1,3(2(10))/(31)0/3(2(10))/(31)0/-1,3/2(10)/30/((32)1)0/-1,3(2(10))/20/3(2(10))/20/-1,3(2(10))/1/3(2(10))/1/-1,3((21)0)/10/3((21)0)/10/-1,(32)(10)/0/3(2(10))/31/-1,3((21)0)/0/3((21)0)/0/-1,(32)(10)/0/(32)(10)/0/-1,3(20)/((32)1)0/3(20)/((32)1)0/-1,32/0/3(2(10))/(21)0/-1,3/20/31/((32)1)0/-1,3(20)/(21)0/3(20)/(21)0/-1,3(20)/10/3(21)/1/-1,3(20)/10/3(20)/10/-1,3(20)/0/3(2(10))/1/-1,31/((32)1)0/31/((32)1)0/-1,3/1/3(20)/((32)1)0/-1,31/(21)0/32/20/-1,31/(21)0/31/(21)0/-1,31/10/3(21)/20/-1,3/10/3(21)/((32)1)0/-1,31/0/3(2(10))/20/-1,3(10)/0/3(2(10))/21/-1,3/0/3(2(10))/((32)1)0/-1,2/((32)1)0/3/(31)0/-1,2/((32)1)0/2/((32)1)0/-1,2/(21)0/32/(31)0/-1,2/(21)0/2/(21)0/-1,2/1/3(20)/(31)0/-1,21/10/3(21)/(32)0/-1,21/10/21/10/-1,2/10/3(21)/(31)0/-1,21/0/3(2(10))/(32)0/-1,21/0/21/0/-1,2/10/2/10/-1,2(10)/0/3(2(10))/(32)1/-1,2(10)/0/2(10)/0/-1,2/0/3(2(10))/(31)0/-1,1/0/3(2(10))/(3(21))0/-1,1/0/1/0/-1,3/((32)1)0/3/((32)1)0/-2,32/(21)0/32/(21)0/-2,3(21)/10/3(21)/10/-2,3(2(10))/0/3(2(10))/0/-2,32/((32)1)0/32/((32)1)0/-3,3(21)/((32)1)0/3(21)/((32)1)0/-3,3(21)/(21)0/3(21)/(21)0/-3,3(2(10))/((32)1)0/3(2(10))/((32)1)0/-3,3(2(10))/(21)0/3(2(10))/(21)0/-3,3(2(10))/10/3(2(10))/10/-3}
Here are all the $d=3$ rhombi with negative inversion number, up to $180^\circ$ rotation,
and up to order-preserving substitutions of the integers $0,\ldots,d$:

\noindent\foreach[count=\a from 0] \x/\y/\z/\w/\t in \eqrh%
{%
\tikz[scale=1.2]{
\useasboundingbox (-1.8,-0.9) rectangle (1.8,0.9);
\rh[\t]\x\y\z\w
} %
\pgfmathparse{mod(\a,4)==3 ? "\noexpand\linebreak" : ""}\pgfmathresult%
}%
}

\def\hexa#1#2#3#4#5#6{\draw (0:1) -- coordinate[midway,label=right:$\ss #4$] (a) (60:1) coordinate (v) -- coordinate[midway,label=above:$\ss #3$] (b) (120:1) -- coordinate[midway,label=left:$\ss #2$] (c) (180:1) coordinate (u) -- coordinate[midway,label=left:$\ss #1$] (d) (240:1) -- coordinate[midway,label=below:$\ss #6$] (e) (300:1) coordinate (w) -- coordinate[midway,label=right:$\ss #5$] (f) (0:1);
\draw (0,0) -- coordinate[midway] (g) (60:1);
\draw (0,0) -- coordinate[midway] (i) (180:1);
\draw (0,0) -- coordinate[midway] (h) (300:1);
\coordinate (o) at (0,0);
\pic[draw,angle radius=5mm] {angle=d--u--o};\path (u) node[shift={(-30:0.3)}] {$\ss u$};
\pic[draw,angle radius=5mm] {angle=f--w--o};\path (w) node[above=1mm] {$\ss w$};
\pic[draw,angle radius=5mm] {angle=b--v--o};\path (v) node[shift={(210:0.3)}] {$\ss v$};
}
\def\hexb#1#2#3#4#5#6{\draw (0:1) coordinate (v) -- coordinate[midway,label=right:$\ss #4$] (a) (60:1) -- coordinate[midway,label=above:$\ss #3$] (b) (120:1) coordinate (u) -- coordinate[midway,label=left:$\ss #2$] (c) (180:1) -- coordinate[midway,label=left:$\ss #1$] (d) (240:1) coordinate (w) -- coordinate[midway,label=below:$\ss #6$] (e) (300:1) -- coordinate[midway,label=right:$\ss #5$] (f) (0:1);
\draw (0,0) -- coordinate[midway] (i) (0:1);
\draw (0,0) -- coordinate[midway] (h) (120:1);
\draw (0,0) -- coordinate[midway] (g) (240:1);
\coordinate (o) at (0,0);
\pic[draw,angle radius=5mm] {angle=o--u--b};\path (u) node[shift={(-30:0.3)}] {$\ss u$};
\pic[draw,angle radius=5mm] {angle=o--w--d};\path (w) node[above=1mm] {$\ss w$};
\pic[draw,angle radius=5mm] {angle=o--v--f};\path (v) node[shift={(210:0.3)}] {$\ss v$};
}

\section{The cyclic Yang--Baxter equation}\label{app:cyclic}
We start from the following Yang--Baxter equation, in the framework
of \S\ref{sec:genproof}:
\junk{reglue: back to dual graphical. start from the $\mathfrak{e}_6$ $R$-matrix giving in the text. rewrite in more abstract setting, mention dual coxeter number
The solution itself can be graphically represented as rhombi whose edges
are labeled with elements of $L_d$. These rhombi come in three orientations:
\tikz[baseline=0]{\rh{}{}{}{}}, \tikz[rotate=120]{\rh{}{}{}{}},
\tikz[rotate=240]{\rh{}{}{}{}}. To each such rhombus is associated
a fugacity which depends on a parameter; to keep track of the parameter,
say $u$, we denote \tikz[baseline=0]{\rh{}{}{}{}\pic[draw,angle radius=4mm] {angle=cb--ba--ad}; \path (ba) node[above=0.4mm] {$\ss u$}; }. We do not impose
here that the fugacities are invariant by rotation of the rhombi by
120 degrees (though in all cases of interest to us, they could be made invariant by appropriate transformations).
The convention is that when one glues together rhombi, the internal edges
are summed over, whereas the external edges are fixed. 

Recall from Sect.~\ref{ssec:Rd3} the $\mathfrak{e}_6$ $R$-matrix
\[
\left<c\otimes d| R_{\omega_1,\omega_1}(u) |a\otimes b\right>
=\left<d\otimes c| \check R_{\omega_1,\omega_1}(u) |a\otimes b\right>
=
\tikz[baseline=0,scale=1.8]{\rh{a}{b}{c}{d}
\draw[dotted,invarrow=0.3] (-60:0.5) -- node[pos=0.3,above left=-1mm] {$\ss u_1$} ++(60:1);
\draw[dotted,arrow=0.7] (60:0.5) -- node[pos=0.7,above right=-1mm] {$\ss u_2$} ++(-60:1);
}
,
\qquad
u=u_1/u_2
\]
where we condensed into a single picture both the rhombus and the dual line.
\rem{colors missing}

We also need the $R$-matrix where one of the factors is in the dual
representation, namely $R_{\omega_1,x}(u)$ and $R_{x,\omega_1}(u)$; we
symbolically denote them
\[
R_{\omega_1,x}(u)
=
\tikz[baseline=0,scale=1.8]{\rh{}{}{}{}
\draw[dotted,invarrow=0.3] (-60:0.5) -- node[pos=0.3,above left=-1mm] {$\ss u_1$} ++(60:1);
\draw[dotted,arrow=0.7] (60:0.5) -- node[pos=0.7,above right=-1mm] {$\ss u_2$} node[pos=0.3] {$*$} ++(-60:1);
}
,
\qquad
R_{x,\omega_1}(u)
=
\tikz[baseline=0,scale=1.8]{\rh{}{}{}{}
\draw[dotted,invarrow=0.3] (-60:0.5) -- node[pos=0.3,above left=-1mm] {$\ss u_1$} node[pos=0.7] {$*$} ++(60:1);
\draw[dotted,arrow=0.7] (60:0.5) -- node[pos=0.7,above right=-1mm] {$\ss u_2$} ++(-60:1);
}
,\qquad
u=u_1/u_2
\]
\rem{even more inconsistent graphical...}
where the $*$ on a line indicates that we are using the dual representation.
Their expressions will be given shortly.

These three $R$-matrices together satisfy the Yang--Baxter equation, 
which symbolically can be written
\begin{equation}\label{eq:usualYBE}
\begin{tikzpicture}[baseline=0,scale=1.8]
\draw (0:1) coordinate (v) -- coordinate[midway] (a) (60:1) -- coordinate[midway] (b) (120:1) coordinate (u) -- coordinate[midway] (c) (180:1) -- coordinate[midway] (d) (240:1) coordinate (w) -- coordinate[midway] (e) (300:1) -- coordinate[midway] (f) (0:1);
\draw (0,0) -- coordinate[midway] (i) (0:1);
\draw (0,0) -- coordinate[midway] (h) (120:1);
\draw (0,0) -- coordinate[midway] (g) (240:1);
\coordinate (o) at (0,0);
\draw[dotted,invarrow=0.4,rounded corners] (d) -- node[pos=0.1,right=-1mm] {$\ss u_3$} (h) -- (a);
\draw[dotted,invarrow=0.4,rounded corners] (c) -- node[pos=0.1,right=-1mm] {$\ss u_2$} (g) -- node[pos=0.3] {$*$} (f);
\draw[dotted,invarrow=0.4,rounded corners] (b) -- node[pos=0.15,right] {$\ss u_1$} (i) -- (e);
\end{tikzpicture}
=
\begin{tikzpicture}[baseline=0,scale=1.8]
\draw (0:1) -- coordinate[midway] (a) (60:1) coordinate (u) -- coordinate[midway] (b) (120:1) -- coordinate[midway] (c) (180:1) coordinate (w) -- coordinate[midway] (d) (240:1) -- coordinate[midway] (e) (300:1) coordinate (v) -- coordinate[midway] (f) (0:1);
\draw (0,0) -- coordinate[midway] (g) (60:1);
\draw (0,0) -- coordinate[midway] (i) (180:1);
\draw (0,0) -- coordinate[midway] (h) (300:1);
\coordinate (o) at (0,0);
\draw[dotted,invarrow=0.4,rounded corners] (d) -- node[pos=0.15,above] {$\ss u_3$} (h) -- (a);
\draw[dotted,invarrow=0.4,rounded corners] (c) -- node[pos=0.15,below] {$\ss u_2$} (g) -- node[pos=0.3] {$*$} (f);
\draw[dotted,invarrow=0.4,rounded corners] (b) -- node[pos=0.15,right] {$\ss u_1$} (i) -- (e);
\end{tikzpicture}
\end{equation}
}
\begin{equation}\label{eq:usualYBE}
\begin{tikzpicture}[baseline=0,scale=1.8]
\path (0:1) coordinate (v) -- coordinate[midway] (a) (60:1) -- coordinate[midway] (b) (120:1) coordinate (u) -- coordinate[midway] (c) (180:1) -- coordinate[midway] (d) (240:1) coordinate (w) -- coordinate[midway] (e) (300:1) -- coordinate[midway] (f) (0:1);
\path (0,0) -- coordinate[midway] (i) (0:1);
\path (0,0) -- coordinate[midway] (h) (120:1);
\path (0,0) -- coordinate[midway] (g) (240:1);
\coordinate (o) at (0,0);
\draw[dr,invarrow=0.4,rounded corners] (d) -- node[pos=0.1,right=-1mm] {$\ss u_1$} (h) -- (a);
\draw[dg,arrow=0.4,rounded corners] (c) -- node[pos=0.1,right=-1mm] {$\ss u_3$} (g) -- (f);
\draw[db,arrow=0.4,rounded corners] (b) -- node[pos=0.15,right] {$\ss u_2$} (i) -- (e);
\end{tikzpicture}
=
\begin{tikzpicture}[baseline=0,scale=1.8]
\path (0:1) -- coordinate[midway] (a) (60:1) coordinate (u) -- coordinate[midway] (b) (120:1) -- coordinate[midway] (c) (180:1) coordinate (w) -- coordinate[midway] (d) (240:1) -- coordinate[midway] (e) (300:1) coordinate (v) -- coordinate[midway] (f) (0:1);
\path (0,0) -- coordinate[midway] (g) (60:1);
\path (0,0) -- coordinate[midway] (i) (180:1);
\path (0,0) -- coordinate[midway] (h) (300:1);
\coordinate (o) at (0,0);
\draw[dr,invarrow=0.4,rounded corners] (d) -- node[pos=0.15,above] {$\ss u_1$} (h) -- (a);
\draw[dg,arrow=0.4,rounded corners] (c) -- node[pos=0.15,below] {$\ss u_3$} (g) -- (f);
\draw[db,arrow=0.4,rounded corners] (b) -- node[pos=0.15,right] {$\ss u_2$} (i) -- (e);
\end{tikzpicture}
\end{equation}

We now assume that to each crossing is associated an $R$-matrix coming from
an untwisted quantized affine algebra $U_q(\mathfrak{g}^{(1)})$. Then one can use
the {\em crossing symmetry}\/ to restore the cyclic symmetry of the picture:
\[
\tikz[baseline=-3pt,scale=1.5]
{
\draw[db,invarrow=0.3] (120:-1) -- node[pos=0.3,right] {$\ss u_2$} (120:1);
\draw[dr,invarrow=0.3] (0:-1) -- node[pos=0.3,below] {$\ss u_1$} (0:1);
}
=
\tikz[baseline=-3pt,scale=1.5]
{
\draw[db,arrow=0.3] (120:-1) -- node[pos=0.7] {$*$} node[pos=0.3,right] {$\ss q^h u_2$} (120:1);
\draw[dr,invarrow=0.3] (0:-1) -- node[pos=0.3,below] {$\ss u_1$} (0:1);
}
\qquad
\tikz[baseline=-3pt,scale=1.5]
{
\draw[db,invarrow=0.3] (60:-1) -- node[pos=0.15,right] {$\ss u_2$} (60:1);
\draw[dg,arrow=0.3] (0:-1) -- node[pos=0.85,above] {$\ss u_3$} (0:1);
}
=
\tikz[baseline=-3pt,scale=1.5]
{
\draw[db,arrow=0.3] (60:-1) node[circle,fill=white,draw=\db,thin,inner sep=2pt] {$\ss K$} -- node[pos=0.25,right] {$\ss q^{-h}u_2$} node[pos=0.7] {$*$} (60:1) node[circle,draw=\db,thin,fill=white,inner sep=0pt] {$\ss K^{-1}$};
\draw[dg,arrow=0.3] (0:-1) -- node[pos=0.85,above] {$\ss u_3$} (0:1);
}
\]
\junk{\[
\tikz[baseline=0,scale=1.8]{\rh{}{}{}{}
\draw[dotted,invarrow=0.3] (-60:0.5) -- node[pos=0.3,above left=-1mm] {$\ss u_1$} ++(60:1);
\draw[dotted,arrow=0.7] (60:0.5) -- node[pos=0.7,above right=-1mm] {$\ss u_2$} node[pos=0.3] {$*$} ++(-60:1);
}
=
\tikz[baseline=0,scale=1.8]{\rh{}{}{}{}
\draw[dotted,invarrow=0.3] (-60:0.5) -- node[pos=0.3,above left=-1mm] {$\ss u_1$} ++(60:1);
\draw[dotted,invarrow=0.7] (60:0.5) -- node[pos=0.7,above right=-1mm] {$\ss q^{12} u_2$} ++(-60:1) node[circle,fill=white,draw=black,solid,inner sep=0pt] {$\ss K^{-1}$};
}
,
\qquad
\tikz[baseline=0,scale=1.8]{\rh{}{}{}{}
\draw[dotted,invarrow=0.3] (-60:0.5) -- node[pos=0.3,above left=-1mm] {$\ss u_1$} node[pos=0.7] {$*$} ++(60:1);
\draw[dotted,arrow=0.7] (60:0.5) -- node[pos=0.7,above right=-1mm] {$\ss u_2$} ++(-60:1);
}
=
\tikz[baseline=0,scale=1.8]{\rh{}{}{}{}
\draw[dotted,arrow=0.3] (-60:0.5) -- node[pos=0.3,above left=-1mm] {$\ss q^{-12}u_1$} ++(60:1);
\draw[dotted,arrow=0.7] (60:0.5) -- node[pos=0.7,above right=-1mm] {$\ss u_2$} ++(-60:1);
}
\]}
where $\ast$ denotes the {\em dual}\/ representation, the circled $K^{\pm 1}$ denotes multiplication by the Cartan element $K=q^{\rho}$, $\rho$ half the sum of positive roots of $\mathfrak{g}$, 
and $h$ is the dual Coxeter number of $\mathfrak{g}$.
Note the conventional choice of dual (rather than predual) representation, hence the apparent discrepancy between
the two equalities.

We can thus rewrite the Yang--Baxter equation \eqref{eq:usualYBE}:
\begin{equation}\label{eq:preweirdYBE}
\begin{tikzpicture}[baseline=0,scale=1.8]
\path (0:1) coordinate (v) -- coordinate[midway] (a) (60:1) -- coordinate[midway] (b) (120:1) coordinate (u) -- coordinate[midway] (c) (180:1) -- coordinate[midway] (d) (240:1) coordinate (w) -- coordinate[midway] (e) (300:1) -- coordinate[midway] (f) (0:1);
\path (0,0) -- coordinate[midway] (i) (0:1);
\path (0,0) -- coordinate[midway] (h) (120:1);
\path (0,0) -- coordinate[midway] (g) (240:1);
\coordinate (o) at (0,0);
\draw[dr,invarrow=0.4,rounded corners] (d) -- node[pos=0.15,left] {$\ss u_1$} (h) -- (a);
\draw[dg,arrow=0.35,rounded corners] (c) -- node[pos=0.15,left] {$\ss u_3$} (g) -- (f);
\draw[db,invarrow=0.35,rounded corners] (b) -- node[pos=0.15,right] {$\ss q^h u_2$} (i) node[circle,thin,draw=\db,fill=white,inner sep=0pt] {$\ss K^{-1}$} -- node[pos=0.3] {$*$} node[pos=0.8,right] {$\ss q^{-h} u_2$} (e) node[circle,fill=white,draw=\db,thin,inner sep=2pt] {$\ss K$};
\end{tikzpicture}
=
\begin{tikzpicture}[baseline=0,scale=1.8]
\path (0:1) -- coordinate[midway] (a) (60:1) coordinate (u) -- coordinate[midway] (b) (120:1) -- coordinate[midway] (c) (180:1) coordinate (w) -- coordinate[midway] (d) (240:1) -- coordinate[midway] (e) (300:1) coordinate (v) -- coordinate[midway] (f) (0:1);
\path (0,0) -- coordinate[midway] (g) (60:1);
\path (0,0) -- coordinate[midway] (i) (180:1);
\path (0,0) -- coordinate[midway] (h) (300:1);
\coordinate (o) at (0,0);
\draw[dr,invarrow=0.4,rounded corners] (d) -- node[pos=0.15,below] {$\ss u_1$} (h) -- (a);
\draw[dg,arrow=0.35,rounded corners] (c)  -- node[pos=0.15,above] {$\ss u_3$} (g) -- (f);
\draw[db,invarrow=0.4,rounded corners] (b) node[circle,thin,draw=\db,fill=white,inner sep=0pt] {$\ss K^{-1}$} -- node[pos=0.25,right] {$\ss q^{-h}u_2$} (i)  node[circle,fill=white,draw=\db,thin,inner sep=2pt] {$\ss K$} -- node[pos=0.3] {$*$} node[pos=0.75,right] {$\ss q^h u_2$} (e);
\end{tikzpicture}
\end{equation}
where something slightly strange happens in that the parameter of the
blue line ``jumps'' from $q^{\pm h}u_2$ to $q^{\mp h}u_2$ as it goes across the central circle.

Observe that
for $\lie{g}=\lie{a}_2,\lie{d}_4,\lie{e_6},\lie{e_8}$,
the dual Coxeter number
$h=3,6,12,30$ is a multiple of $3$.  Furthermore,
for the representations used (implicitly) at $d=1$ in \cite{artic68},
at $d=2$ in \S\ref{sec:d2}, at $d=3$ in \S\ref{sec:d3}, the ratio of
spectral parameters $q^{2h/3}$ is precisely the value at which the
$R$-matrices corresponding to two types of lines turn into projectors
onto the third type of line. It is therefore very tempting to redefine
\begin{multline*}
\tikz[baseline=-3pt,scale=1.5]{
\draw[dr,invarrow=0.75] (60:-1) -- node[pos=0.75,right] {$\ss u''$} (60:1);
\draw[dg,arrow=0.25] (-60:-1) -- node[pos=0.25,left] {$\ss u'$} (-60:1);
}
=
\tikz[baseline=-3pt,scale=1.8]{
\rh{}{}{}{}
\pic[draw,angle radius=5mm] {angle=cb--ba--ad};\path (ba) node[above=1mm] {$\ss u$};
},
\qquad
\tikz[baseline=-3pt,scale=1.5,rotate=-120]{
\draw[db,invarrow=0.75] (60:-1) -- node[pos=0.75,left] {$\ss u''$} (60:1);
\draw[dr,arrow=0.25] (-60:-1) -- node[pos=0.25,above] {$\ss u'$} (-60:1);
}
=
\tikz[baseline=-0.866cm,scale=1.8,rotate=-120]{
\rh{}{}{}{}
\pic[draw,angle radius=5mm] {angle=cb--ba--ad};\path (ba) node[shift={(-30:0.3)}] {$\ss u$};
},
\\
\tikz[baseline=-3pt,scale=1.5,rotate=120]{
\draw[dg,invarrow=0.75] (60:-1) -- node[pos=0.75,above] {$\ss u''$} (60:1);
\draw[db,arrow=0.25] (-60:-1)  node[circle,fill=white,draw=\db,thin,inner sep=2pt] {$\ss K$} -- node[pos=0.25,right] {$\ss u'$} (-60:1) node[circle,fill=white,draw=\db,thin,inner sep=0pt] {$\ss K^{-1}$};
}
=
\tikz[baseline=0.866cm,scale=1.8,rotate=120]{
\rh{}{}{}{}
\pic[draw,angle radius=5mm] {angle=cb--ba--ad};\path (ba) node[shift={(210:0.3)}] {$\ss u$};
},
\qquad u=q^{-2h/3}u''/u'
\end{multline*}
where the rhombi are not allowed to be rotated.
Note that by conjugation, one could share $K$ equally between the three
orientations of rhombi, thus restoring $\ZZ/3$-symmetry.
Up to such possible conjugations, the first rhombus will eventually play the role of equivariant rhombus,
where $u$ is the ratio of equivariant parameters.

Setting $u=q^{h/3}u_2/u_1$, $v=q^{h/3}u_3/u_2$, $w=q^{-2h/3} u_1/u_3$
(with $uvw=1$), we find that
Eq.~\eqref{eq:preweirdYBE} becomes 
the ``cyclic'' Yang--Baxter equation:
\begin{equation}\label{eq:weirdYBE}
\tikz[baseline=0,scale=1.8]{\hexb{}{}{}{}{}{}}
=
\tikz[baseline=0,scale=1.8]{\hexa{}{}{}{}{}{}}
\qquad
\forall u,v,w:\ uvw=1
\end{equation}
Compare with \cite[proposition~1]{artic46} (which corresponds to the
rational limit where the spectral parameters are expanded to first
order around $1$) and \cite[proposition~3]{artic68}, both related to
the $d=1$ case in the limit $q\to0$. We could then proceed as in these
papers to derive our puzzle rules for general $d \leq 3$, although
this is quite involved technically.

\gdef\MRshorten#1 #2MRend{#1}%
\gdef\MRfirsttwo#1#2{\if#1M%
MR\else MR#1#2\fi}
\def\MRfix#1{\MRshorten\MRfirsttwo#1 MRend}
\renewcommand\MR[1]{\relax\ifhmode\unskip\spacefactor3000 \space\fi
\MRhref{\MRfix{#1}}{{\scriptsize \MRfix{#1}}}}
\renewcommand{\MRhref}[2]{%
\href{http://www.ams.org/mathscinet-getitem?mr=#1}{#2}}
\bibliographystyle{amsalphahyper}
\bibliography{biblio}

\end{document}